\documentclass[12pt, a4paper]{amsart}
\usepackage{amsmath,amssymb,amscd,amsfonts}
\newtheorem{theorem}{Theorem}[section]
\newtheorem{rem}[theorem]{Remark}
\newtheorem{defn}[theorem]{Definition}
\newtheorem{question}[theorem]{Question}
\newtheorem{ex}[theorem]{Example}
\newtheorem{lemma}[theorem]{Lemma}

\newtheorem{proposition}[theorem]{Proposition}

\newtheorem{corollary}[theorem]{Corollary}

\newcommand\ZZ{{\mathbb{Z}}}

\def\Ram{\mathop{\rm Ram}\nolimits}
\def\Chow{\mathop{\rm Chow}\nolimits}

\def\Mov{\mathop{\rm Mov}\nolimits}
\def\rk{\mathop{\rm rk}\nolimits}
\def\det{\mathop{\rm det}\nolimits}

\def\Ker{\mathop{\rm Ker}\nolimits}

\def\codim{\mathop{\rm codim}\nolimits}

\def\Sym{\mathop{\rm Sym}\nolimits}
\def\Hom{\mathop{\rm Hom}\nolimits}

\def\Chow{\mathop{\rm Chow}\nolimits}

\def\cO{{\mathcal O}}

\def\cJ{{\mathcal J}}
\def\cE{{\mathcal E}}

\def\cG{{\mathcal G}}
\def\cH{{\mathcal H}}
\def\cF{{\mathcal F}}
\def\cL{{\mathcal L}}
\def\cQ{{\mathcal Q}}

\def\cV{{\mathcal V}}

\let\ol=\overline
\let\wt=\widetilde
\let\wh=\widehat
\def\bQ{{\mathbb Q}}
\def\bC{{\mathbb C}}
\def\bR{{\mathbb R}}
\def\bZ{{\mathbb Z}}
\def\bP{{\mathbb P}}

\begin{document}

\title[]{Foliations with positive slopes\\
and birational stability of orbifold\\
 cotangent bundles}

\author{Fr\'ed\'eric Campana, Mihai P\u aun}

\address{Fr\'ed\'eric Campana\\
Universit\'e Lorraine \\
 Institut Elie Cartan\\
Nancy\\
KIAS\\
85 Hoegiro, Dongdaemun-gu\\
Seoul 130-722, South Korea\\
Institut Universitaire de France\\}
\email{frederic.campana@univ-lorraine.fr}

\address{Mihai P\u aun\endgraf
KIAS\endgraf
85 Hoegiro, Dongdaemun-gu\endgraf
Seoul 130-722, South Korea,\endgraf
and\endgraf
Department of Mathematics\endgraf
College of Liberal Arts and Sciences\endgraf 
University of Illinois at Chicago\endgraf 
851 S. Morgan Street\endgraf
Chicago IL 60607-7045, US\endgraf}
\email{mpaun@uic.edu}
\date{\today}

\date{\today}

\begin{abstract} 
Let $X$ be a smooth connected projective manifold, together with an snc orbifold
divisor $\Delta$, such that the pair $(X, \Delta)$ is log-canonical. 
 If $K_X+\Delta$ is not pseudo-effective, we show, among other things, that any quotient of its orbifold cotangent 
 bundle has a pseudo-effective determinant. This improves considerably our
previous result \cite{CP13}, where generic positivity instead of pseudo-effectivity was obtained. One of the  new ingredients in the proof is a version of the Bogomolov-McQuillan algebraicity criterion for holomorphic foliations whose minimal slope with respect to a movable class (instead of an ample complete intersection class) is positive.
\end{abstract}

\maketitle

\tableofcontents

\section{Introduction}

In the present text we evaluate the \emph{positive directions} of the tangent bundle of a projective manifold by means of the slope of its  subsheaves with respect to classes of movable curves. The crucial property is the birational nature of this notion. We show that the positive directions of the tangent bundle are given by fibering-type contractions birationally preserved. They are the same ones as those appearing in the Log-minimal model program, which rest on much more delicate notions and arguments, in some sense dual to those presented here.

Our basic tool and starting point is Theorem \ref{rcintro} below. This result is also valid in the orbifold context, considerably extending its range of applicability. The arguments we use 
lead to positivity/negativity properties of tensor powers of orbifold cotangent bundles. In this article, we work in characteristic zero exclusively. Classical results on rational curves are known, and thus quoted here, only when the orbifold divisor is zero.

\noindent Let $X$ be a projective manifold, and let $\cF\subset TX$ be a holomorphic foliation. Given a movable class $\alpha\in H^{n-1,n-1}(X,\bR)$ on $X$, the condition: $\mu_{\alpha, min}(\cF)>0$ means that the inequality of intersection numbers:
\begin{equation}\label{0009}
c_1(\cQ).\alpha > 0,
\end{equation}
holds for any non-zero quotient $\cF\to \cQ\to 0$.
\smallskip

\noindent Our first main result is: 

\begin{theorem}\label{rcintro} Let $X$ be projective smooth, and let $\cF\subset TX$ be a foliation
such that $\mu_{\alpha,min}(\cF)>0$ for some movable class $\alpha$.
Then $\cF$ is an algebraic foliation and the closure of its leaves are rationally connected.
\end{theorem}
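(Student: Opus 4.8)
\emph{Proof strategy.} The plan is to reduce the positivity hypothesis --- stated for an arbitrary movable class $\alpha$ --- to the situation of an ample complete intersection polarization, apply the classical Bogomolov--McQuillan algebraicity criterion there, and then read off rational connectedness of the leaves from the Boucksom--Demailly--P\u aun--Peternell duality between movable curves and pseudo-effective divisors. In more detail, for \emph{Step 1} (reduction to a complete intersection polarization): by the Boucksom--Demailly--P\u aun--Peternell theorem the movable cone is the closure of the convex cone generated by the strongly movable classes $\pi_*(\wt A_1\cdots\wt A_{n-1})$, with $\pi\colon\wt X\to X$ a projective modification and $\wt A_i$ ample on $\wt X$. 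The minimal slope $\beta\mapsto\mu_{\beta,\min}(\cF)$ is a continuous function on the movable cone --- this rests on the finiteness of the Harder--Narasimhan chamber decomposition, i.e.\ the boundedness of the destabilising subsheaves --- so the condition $\mu_{\beta,\min}(\cF)>0$ is open and holds for some strongly movable $\beta=\pi_*(\wt A_1\cdots\wt A_{n-1})$ near $\alpha$. Pulling back to $\wt X$, replacing $\cF$ by the saturation $\wt\cF\subset T\wt X$ of its strict transform, and estimating the $\pi$-exceptional divisors that enter the first Chern classes of the quotients of $\wt\cF$, one checks that $\mu_{\wt A_1\cdots\wt A_{n-1},\min}(\wt\cF)>0$ for the honest complete intersection class on $\wt X$. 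Since $\wt\cF$ is algebraically integrable with rationally connected leaves if and only if $\cF$ is (the leaves of $\cF$ being the $\pi$-images of those of $\wt\cF$), we may from now on assume $X=\wt X$ and $\alpha=A_1\cdots A_{n-1}$ with the $A_i$ very ample.

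\emph{Step 2 (algebraicity).} Take $C\subset X$ a general member of the covering family of curves cut out by general elements of $|mA_1|,\dots,|mA_{n-1}|$, $m\gg 0$. By the Mehta--Ramanathan theorem the Harder--Narasimhan filtration of $\cF$ restricts to that of $\cF|_C$; hence each graded piece of $\cF|_C$ is semistable of strictly positive degree, therefore ample on the curve $C$, and so the iterated extension $\cF|_C$ is ample. This is exactly the hypothesis of the Bogomolov--McQuillan criterion (Miyaoka; McQuillan; Kebekus--Sol\'a Conde--Toma): foliated bend-and-break along $C$ produces, through the general point of $C$, a rational curve tangent to $\cF$ and positive for $\cF$; these rational curves cover the leaf through the general point, which is therefore algebraic and rationally connected, as claimed.

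\emph{Step 3 (an alternative conclusion, in the spirit of the present paper).} Once $\cF$ is known to be algebraically integrable one may, after a further modification (transporting the movable class), assume $\cF=T_{X/Y}$ for a fibration $f\colon X\to Y$, and it then suffices to treat the case $\cF=TX$. There $-K_X\cdot\alpha=c_1(TX)\cdot\alpha>0$, so $K_X$ is not pseudo-effective and $X$ is uniruled; if the maximal rationally connected fibration $q\colon X\to W$ (resolved to a morphism onto a smooth base) were nontrivial, then $W$ would not be uniruled, hence $K_W$ pseudo-effective, while the nonzero torsion-free quotient $\cQ:=TX/\ker(TX\to q^*TW)\hookrightarrow q^*TW$ would satisfy $0<c_1(\cQ)\cdot\alpha\le -q^*K_W\cdot\alpha=-K_W\cdot q_*\alpha\le 0$, because $q_*\alpha$ is again movable --- a contradiction. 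Hence $W$ is a point and the leaves are rationally connected.

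\emph{Main obstacle.} The decisive point, and the genuinely new ingredient, is Step 1. A movable class need not be proportional to the class of an actual covering family of curves, and a naive approximation of $\alpha$ by a strongly movable class may, after pull-back to the modification, destroy the strict positivity of the minimal slope through the exceptional contributions. Carrying out this reduction with sufficient care --- equivalently, proving the ``movable'' refinement of the Bogomolov--McQuillan criterion announced in the abstract --- is where essentially all the difficulty lies; Steps 2 and 3 are then, respectively, classical and formal.
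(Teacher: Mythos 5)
Your Step 1 is where the argument breaks, and the paper is quite explicit that this reduction cannot be carried out. The issue is precisely the one you flag as the ``main obstacle'': the preservation of slope-positivity under modification holds for the \emph{pullback} class $\pi^*\alpha$ (Lemma \ref{bmq}), but not for an honest ample intersection class $\wt A_1\cdots\wt A_{n-1}$ on $\wt X$, because $\det\wt\cF - \pi^*\det\cF$ is a $\pi$-exceptional divisor and $\wt A_1\cdots\wt A_{n-1}$ has nonzero intersection with exceptional divisors (whereas $\pi^*\alpha$ does not). Remark \ref{bmq} of the paper gives a counterexample illustrating exactly this: for the pencil of conics on $\bP^2$, the slope of the associated foliation is negative for any polarization, but becomes positive on the blow-up of the four base points with respect to any ample class there. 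So ``one checks that $\mu_{\wt A_1\cdots\wt A_{n-1},\min}(\wt\cF)>0$'' is not something that can be checked --- in general it is false, and the exceptional contributions really can flip the sign. Your Step 2 (Mehta--Ramanathan plus Bogomolov--McQuillan over a complete intersection curve) is fine on its own terms, but it never becomes applicable because Step 1 does not produce the required complete intersection class.

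The paper avoids this reduction entirely. For algebraicity (Proposition \ref{bounded}), it works \emph{directly} on $X$ with the given movable class $\alpha$: it forms the analytic set $\Lambda\subset X\times X$ of leaf-pairs, takes the Zariski closure $V$, and bounds $h^0(V,kL|_V)=O(k^{n+r})$ by a Riemann--Roch-type estimate. The slope hypothesis $\mu_{\alpha,\min}(\cF)>0$ enters only through the vanishing of $H^0(X_0,L^k\otimes\mathrm{Sym}^m\cF^*)$ for $m\gg k$ (Lemma \ref{bound}(a), using Lemma \ref{negative} and the tensor-power slope formulas of Theorem \ref{tensor, I}); no restriction to a curve, and hence no Mehta--Ramanathan, is ever invoked --- as the paper stresses, this is precisely the point. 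For rational connectedness (Theorem \ref{rc}), the proof uses the \emph{relative} rational quotient $r:X\dasharrow Y$ of the algebraic fibration $p$ defined by $\cF$, shows via \cite{GHS} and Theorem \ref{pseffff} that $\det(\cH^\star)$ is pseudo-effective for the induced foliation $\cH$ on $Y$, and derives a contradiction with $\mu_{\alpha,\min}(\cF)>0$ through the generically surjective map $\wh\cF\to(\pi_Y\circ\wh r)^\star\cH$. Your Step 3 is in this spirit and could likely be made to work, but the claimed reduction ``it then suffices to treat the case $\cF=TX$'' is not automatic: passing to a fibre requires restricting the movable class, which is not straightforward, and the paper's handling via the relative rational quotient is how one avoids this. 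In any case, Step 3 by itself presupposes algebraicity, so the essential gap remains the unproven Step 1.
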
 
\noindent 
The algebraicity statement is the \emph{movable version} of the Bogomolov-McQuillan
algebraicity criterion \cite{BMQ}, where the class $\alpha$ is a complete intersection class $[C]=[H]^{n-1}$
of (very) ample hypersurfaces. The condition \eqref{0009} in this case means that the restriction $\cF|_{C}$ is ample. The proof given here follows the ideas from \cite{BMQ}, strengthened by the theory of semi-stability with respect to movable classes introduced and developed in \cite{CPe}. A main difference with \cite{BMQ} is that we do not restrict to movable curves of the given class. The failure of Mehta-Ramanathan in this context would anyhow prevent from doing this.

The rational connectedness statement is obtained by a simple and direct combination of several results: the existence of a `relative rational quotient' for any fibration, the pseudo-effectivity of the canonical bundle of its base by \cite{GHS}, and Theorem \ref{pseffff} below, asserting (in a more refined version) the pseudo-effectivity of the relative canonical bundle of a fibration having generic fibres with pseudo-effective canonical bundle. The slope considerations are central in this proof, as well as their birational preservation in the case of movable classes.


This proof radically differs from the previous ones given in \cite{BMQ} and \cite{KST} in the special case of $\alpha$ a complete intersection class.
\medskip

\noindent Theorem \ref{rcintro} plays a crucial role in the proof of the next statement (labeled Theorem \ref{quotfol} in section 4).
\smallskip

\begin{theorem}\label{capet} Let $X$ be a non-singular projective manifold, and let $\cF$ be a foliation on $X$, with $K_{\cF}$ pseudo-effective. For any $m>0$, the determinant bundle of any coherent, torsion-free quotient of $\displaystyle \otimes ^m\Omega^1_{\cF}$
is pseudo-effective.
 \end{theorem}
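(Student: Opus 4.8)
The plan is to argue by contradiction and extract from a non-pseudo-effective quotient determinant a foliation with positive minimal slope, to which Theorem~\ref{rcintro} applies, producing rationally connected leaves whose canonical bundle cannot be pseudo-effective — contradicting the hypothesis $K_{\cF}\ge 0$. More precisely, suppose $\otimes^m\Omega^1_{\cF}\to \cG\to 0$ is a torsion-free quotient with $\det\cG$ not pseudo-effective. By duality this corresponds to a subsheaf $\cG^*\subset \otimes^m T_{\cF}$ with $c_1(\cG^*)=-c_1(\cG)$ not in $-\Psef(X)$, i.e. $c_1(\cG^*)\notin -\Psef(X)$; since $\Psef(X)$ is the dual cone of the movable cone, this means there is a movable class $\alpha$ with $c_1(\cG^*)\cdot\alpha>0$.

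The first key step is to pass from a subsheaf of $\otimes^m T_{\cF}$ with positive slope to a genuine \emph{subfoliation} of $T_X$ with positive minimal slope. I would run the Harder–Narasimhan formalism for $\alpha$-slope (available by \cite{CPe}): replacing $\alpha$ by a nearby movable class if necessary, take the maximal $\alpha$-destabilizing subsheaf $\cV\subset \otimes^m T_{\cF}$, which is $\alpha$-semistable with $\mu_\alpha(\cV)>0$, hence $\mu_{\alpha,\min}$ of each piece of its own HN filtration is $>0$; we may assume $\cV$ itself has positive minimal slope. Now $\cV$ sits inside a tensor power of $T_{\cF}\subset T_X$. Using that tensor, symmetric and exterior powers, as well as subsheaves, of $\alpha$-semistable sheaves of positive slope again have positive minimal slope (the movable-class analogue of the classical tensor-product theorem, as in \cite{CPe}), one produces inside $T_X$ itself a subsheaf with positive $\mu_{\alpha,\min}$: indeed the saturation in $T_X$ of the image of an appropriate "contraction" of $\cV$ against $\Omega^1$ works, or more directly one shows $T_{\cF}$ itself, or a subsheaf of it, must then have a quotient of positive $\alpha$-slope is impossible unless... — here some care is needed, and this is where I expect the main technical work to lie: the passage from "a tensor power of $T_{\cF}$ has a positively-sloped piece" to "$T_X$ has a positively-sloped \emph{foliation}" requires both the bracket-closedness (to get an actual foliation $\cF'$, via the classical fact that a subsheaf of $T_X$ which is $\alpha$-semistable of maximal slope is closed under Lie bracket) and the slope bookkeeping through tensor operations.

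The second step applies Theorem~\ref{rcintro} to this foliation $\cF'\subset T_X$: since $\mu_{\alpha,\min}(\cF')>0$, $\cF'$ is algebraic with rationally connected leaf closures. Let $f\colon Z\to Y$ be the family of leaf closures (after a birational model), so the general fibre $F$ is rationally connected and $T_{\cF'}|_F$ relates to $T_{F}$. The third step derives the contradiction with $K_{\cF}$ pseudo-effective: $\cF'$ and $\cF$ are genuinely related because $\cV\subset\otimes^m T_{\cF}$, so the leaves of $\cF'$ are tangent to $\cF$; restricting $K_{\cF}$ to a general leaf $F$ of $\cF'$, one gets that $K_{\cF}|_F$ dominates $K_F$ (up to an effective correction from the relative setting), but $K_F$ is not pseudo-effective as $F$ is rationally connected, forcing $K_{\cF}\cdot\alpha<0$ after integrating over the family — contradicting pseudo-effectivity of $K_\cF$.

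The main obstacle, as flagged, is the bridge in step one: tensor powers of a positively-sloped \emph{sub-}sheaf $T_{\cF}$ of $T_X$ need not obviously contain a positively-sloped sub-\emph{foliation} of the \emph{whole} $T_X$, because the minimal slope of $T_{\cF}$ could itself be non-positive even when a tensor power has a positively-sloped piece — this is exactly the subtlety of "birational stability" in the title. I would resolve it by a careful HN analysis: let $\cR\subset T_{\cF}$ be the maximal $\alpha$-destabilizing subsheaf; if $\mu_{\alpha}(\cR)\le 0$ then $\mu_{\alpha,\max}(\otimes^m T_\cF)\le 0$, so no positively-sloped quotient of $\otimes^m\Omega^1_\cF$ can exist — contradiction; hence $\mu_\alpha(\cR)>0$, and $\cR$ being a maximal destabilizer of a bracket-closed sheaf is itself bracket-closed, i.e. a foliation, with $\mu_{\alpha,\min}(\cR)=\mu_\alpha(\cR)>0$. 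Then $\cR=\cF'$ does the job, and one must only verify that $\cR\subseteq T_\cF$ has rationally connected leaves that are tangent to $\cF$, so its canonical bundle controls a piece of $K_\cF$ from below, completing the contradiction.
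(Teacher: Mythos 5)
Your overall strategy matches the paper's proof of Theorem~\ref{quotfol}: assume some quotient of $\otimes^m\Omega^1_{\cF}$ has non-pseudo-effective determinant, deduce $\mu_{\alpha,\max}(\cF)>0$ for some movable class $\alpha$ (via the additivity of extremal slopes under reflexive tensor powers, Theorem~\ref{tensor, I}), take the maximal $\alpha$-destabilizing subsheaf $\cG\subset\cF$, show $\cG$ is a foliation using the slope inequality and the integrability of $\cF$, and then invoke Theorem~\ref{algebraic}. Your final paragraph carries this out correctly; the earlier discussion of saturating contractions of $\cV\subset\otimes^m T_{\cF}$ inside $T_X$ is a detour you then abandon. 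One precision: the claim ``a maximal destabilizer of a bracket-closed sheaf is itself bracket-closed'' needs the positivity of $\mu_{\alpha,\min}(\cG)$ in addition to maximality. As in Lemma~\ref{folrel}, the relevant slope condition is $2\mu_{\alpha,\min}(\cG)>\mu_{\alpha,\max}(\cF/\cG)$, which follows from $\mu_{\alpha,\min}(\cG)=\mu_\alpha(\cG)>0$ combined with the standard bound $\mu_{\alpha,\max}(\cF/\cG)\leq\mu_\alpha(\cG)$ (and is false in general for a maximal destabilizer of slope $\leq 0$).

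The genuine gap is in your concluding step. You assert that ``$K_{\cF}|_F$ dominates $K_F$ (up to an effective correction from the relative setting),'' and then deduce a contradiction from $K_F$ not pseudo-effective. But from $0\to\cG\to\cF\to\cF/\cG\to 0$ one gets $K_{\cF}=K_{\cG}-c_1(\cF/\cG)$, hence on a general fiber $F$ of the fibration $\hat p$ induced by $\cG$ (on a suitable smooth birational model), using $\cG|_F\cong T_F$ generically, $K_{\cF}|_F=K_F-c_1(\cF/\cG)|_F$. The sign of $c_1(\cF/\cG)|_F$ is not controlled by anything you have established, so ``$K_F$ not pseudo-effective'' does not by itself transfer to $K_{\cF}|_F$; indeed if the word ``dominates'' means $K_{\cF}|_F\geq K_F$ the implication goes the wrong way entirely. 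The paper closes this gap with the rigidity statement Lemma~\ref{rigid}: because $\cG=\ker(d\hat p)\subset\cF$ and $\cF$ is integrable, the image $d\hat p(\hat\cF)$ is the $\hat p$-pull-back of a foliation $\cH$ on the base, so near a general fiber one has an exact sequence $0\to\hat\cG\to\hat\cF\to\cO^{\oplus r}\to 0$, forcing the exact equality $K_{\hat\cF}|_F=K_{\hat\cG}|_F=K_F$. Combined with the fact that a pseudo-effective class restricts to a pseudo-effective class on a general member of a covering family, this yields the contradiction. Without this rigidity lemma, or an equivalent argument pinning down $\cF/\cG$ along the fibers, your third step does not go through.
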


\noindent When $K_X$ is pseudo-effective, Theorem \ref{capet} follows directly from Theorem \ref{main} below, since $\Omega^1_{\cF}$ is a quotient of $\Omega^1_X$. Remark however that it is interesting in its own right, since foliations $\cF$ with $K_{\cF}$ pseudo-effective on some uniruled $X's$ do exist (cf. \cite{CP13}). In fact, Theorem \ref{capet} does not seem provable with the methods of \cite{CP13}, even using also \cite{BCHM}. 
\medskip
 
\medskip

\noindent Our original motivation was to establish the birational stability of the cotangent bundle
$\Omega^1{(X, \Delta)}$ of smooth log-canonical orbifold pairs $(X, \Delta)$ for which $K_X+ \Delta$ is 
pseudo-effective (this last condition being essentially necessary).

\medskip

\noindent Recall (cf. \cite{CP13}) that the orbifold cotangent bundle is defined by lifting logarithmic differentials with denominators of fractionary exponents to suitable ramified covers
$\pi: X_\Delta\to X$ adapted to the pair 
$(X, \Delta)$. The ramified cover $\pi$ is Galois, and we denote by $G$ the corresponding group.

\begin{theorem}\label{main}
Let $(X, \Delta)$ be a smooth projective log-canonical pair, with pseudo-effective canonical bundle $K_X+ \Delta$. 
Let $\cQ$ be any quotient of the tensor power $\otimes ^m\pi^\star\Omega^1{(X, \Delta)}$, $m\geq 1$ being any integer. 

For any movable class $\alpha$ on $X$, we then have, on $X_{\Delta}$:
\begin{equation}\label{007}
c_1(\cQ).\pi^\star\alpha\geq 0
\end{equation}

\end{theorem}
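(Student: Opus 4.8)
The plan is to reduce Theorem~\ref{main} to Theorem~\ref{capet} by exhibiting the relevant quotients of $\otimes^m\pi^\star\Omega^1(X,\Delta)$ as quotients of $\otimes^m\Omega^1_\cF$ for a suitable foliation $\cF$ with pseudo-effective canonical bundle $K_\cF$. First I would dispose of the trivial case: if $\otimes^m\pi^\star\Omega^1(X,\Delta)$ is generically semi-positive in a strong enough sense, i.e. if $\pi^\star\Omega^1(X,\Delta)$ itself has no quotient with negative degree against $\pi^\star\alpha$, there is nothing to prove. Otherwise, by the theory of semi-stability with respect to movable classes developed in \cite{CPe}, the Harder--Narasimhan filtration of $\pi^\star T(X,\Delta)$ (the orbifold tangent sheaf, dual to $\pi^\star\Omega^1(X,\Delta)$) with respect to $\pi^\star\alpha$ has a maximal piece $\cF_{\max}$ of strictly positive minimal slope $\mu_{\pi^\star\alpha,\min}(\cF_{\max})>0$. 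The key algebraic input, exactly as in \cite{CP13}, is that $\cF_{\max}$ is integrable (it is closed under Lie bracket because a bracket of two sections would otherwise produce a quotient of $\cF_{\max}\otimes\cF_{\max}$, hence of $\Sym^2\cF_{\max}$ or $\wedge^2\cF_{\max}$, mapping non-trivially to $T(X,\Delta)/\cF_{\max}$, contradicting the slope inequalities of the HN filtration), and that it descends, being $G$-invariant, to an orbifold foliation on $(X,\Delta)$.

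The second step is to apply Theorem~\ref{rcintro} to $\cF_{\max}$: since $\mu_{\pi^\star\alpha,\min}(\cF_{\max})>0$, the foliation $\cF_{\max}$ is algebraic with rationally connected leaf closures. This means the leaves of $\cF_{\max}$ (descended to $X$, and in the orbifold sense, keeping track of $\Delta$) are fibres of a rational fibration $f\colon X\dashrightarrow Y$. Now I invoke the addition/positivity machinery behind Theorem~\ref{pseffff}: because $K_X+\Delta$ is pseudo-effective and the fibres of $f$ are rationally connected orbifolds, the base orbifold $(Y,\Delta_Y)$ must have pseudo-effective canonical bundle as well — otherwise an orbifold version of \cite{GHS} would force $(X,\Delta)$ to be birationally an orbifold fibration over a lower-dimensional base with non-pseudo-effective canonical class, contradicting $K_X+\Delta\in\Psef$. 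Consequently the \emph{relative} orbifold canonical bundle $K_{\cF_{\max}}$ of the foliation is itself pseudo-effective: writing $K_X+\Delta = K_{\cF_{\max}} + f^\star(K_Y+\Delta_Y)$ up to the usual correction divisors, and knowing both $K_X+\Delta$ and $f^\star(K_Y+\Delta_Y)$ are pseudo-effective, is not quite enough; rather I would use Theorem~\ref{pseffff} directly, which asserts pseudo-effectivity of $K_{\cF_{\max}}$ whenever the generic leaf has pseudo-effective canonical bundle — but here the leaves are rationally connected, so this needs the refined form of Theorem~\ref{pseffff} combined with the fact that \emph{all} of the positivity of $K_X+\Delta$ is carried transversally to $\cF_{\max}$.

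The third step is then purely formal: apply Theorem~\ref{capet} to the foliation $\cF$ on $X$ obtained by descending $\cF_{\max}$ (in the orbifold sense, or after passing to a resolution where the orbifold foliation becomes an honest one), to conclude that the determinant of every torsion-free quotient of $\otimes^m\Omega^1_\cF$ is pseudo-effective. Pulling back by $\pi$ and observing that a quotient $\cQ$ of $\otimes^m\pi^\star\Omega^1(X,\Delta)$ whose support meets the open leaf-wise locus restricts to a quotient of $\otimes^m\pi^\star\Omega^1_{\cF}$, we get $c_1(\cQ)\cdot\pi^\star\alpha\geq c_1(\det$ of a quotient of $\otimes^m\Omega^1_\cF)\cdot\pi^\star\alpha\geq 0$; the ``transverse'' part of $\cQ$, coming from $\otimes^m$ of the conormal quotient, contributes a non-negative intersection number against $\pi^\star\alpha$ because that conormal sheaf $(T(X,\Delta)/\cF_{\max})^\vee$ has all HN slopes $\le 0$ against $\pi^\star\alpha$, i.e.\ is ``$\pi^\star\alpha$-generically nef''. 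Combining leaf-wise and transverse contributions, and handling the mixed tensor terms by a filtration argument on $\otimes^m\pi^\star\Omega^1(X,\Delta)$ induced by $0\to(T(X,\Delta)/\cF_{\max})^\vee\to\pi^\star\Omega^1(X,\Delta)\to\Omega^1_{\cF_{\max}}\to 0$, yields the inequality \eqref{007}.

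\textbf{Main obstacle.} The hard part is the orbifold bookkeeping in the reduction: making sure the foliation $\cF_{\max}\subset\pi^\star T(X,\Delta)$ genuinely descends to an \emph{orbifold} foliation on $(X,\Delta)$ to which Theorem~\ref{capet} (stated for honest foliations on smooth manifolds) applies after an adapted birational modification, and — most delicate — establishing that $K_{\cF_{\max}}$ is pseudo-effective when the leaves are rationally connected. The naive additivity $K_X+\Delta = K_{\cF}+ (\text{pullback from base})$ only gives pseudo-effectivity of $K_\cF$ modulo an effective $f$-exceptional correction with a sign that is \emph{a priori} wrong; the correct argument must go through the refined statement of Theorem~\ref{pseffff}, whose whole point is precisely to control this correction term, so the real content is checking that its hypotheses survive the passage to the orbifold category and the birational modifications needed to realize $\cF_{\max}$ as an algebraic fibration.
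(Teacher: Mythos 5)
Your opening moves match the paper's: starting from a hypothetical bad quotient, you pass to the maximal $\pi^\star\alpha$-destabilizing subsheaf $\cF_{\max}\subset\pi^\star T(X,\Delta)$, observe $\mu_{\pi^\star\alpha,\min}(\cF_{\max})>0$, use the slope inequalities of the Harder--Narasimhan filtration to kill the orbifold Lie bracket (Remark \ref{folo}), descend to a foliation $\cF_X$ on $X$ via Corollary \ref{flute}, and invoke algebraicity from Theorem \ref{rcintro}. Up to this point you and the paper agree.

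But the second and third steps of your plan go in the wrong direction, and the ``main obstacle'' you identify is in fact an impossibility, not a difficulty. You want to establish that $K_{\cF_{\max}}$ is pseudo-effective so that Theorem \ref{capet} applies. However, by construction $\mu_{\pi^\star\alpha}(\cF_{\max})>0$, i.e.\ $c_1(\cF_{\max})\cdot\pi^\star\alpha>0$; taking duals, the orbifold canonical bundle $\det\cF_{\max}^\star$ has strictly negative degree against the movable class $\pi^\star\alpha$, so it is definitely \emph{not} pseudo-effective. (By Lemma \ref{claim1} this same negativity descends to $K_{\cF_X}+\Delta^{\rm hor}$ on $X$, and since $\Delta^{\rm hor}$ is effective, $K_{\cF_X}$ itself is not pseudo-effective.) Theorem \ref{capet} therefore cannot be invoked for this $\cF$ --- its hypothesis is precisely what fails here. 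The logic of the paper is the reverse of yours: the \emph{failure} of pseudo-effectivity of $K_{\cF_{\max}}$ (equivalently the positive slope) is exactly the quantity to contradict. Theorem \ref{KForb} (whose proof is just Lemma \ref{claim1} $+$ Lemma \ref{claim2} $+$ Theorem \ref{pseffff}) asserts that under the hypothesis $K_X+\Delta$ pseudo-effective, once $\cF_{\max}^{\rm sat}=\pi^\star\cF_X$ is the pullback of an \emph{algebraic} foliation, one has $\mu_{\pi^\star\beta}(\cF_{\max})\le 0$ for every movable $\beta$. Combined with $\mu_{\pi^\star\alpha}(\cF_{\max})>0$, this is already the contradiction. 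No appeal to Theorem \ref{capet}, rational connectedness of the leaves, \cite{GHS} on the cover, or the ``leaf-wise vs.\ transverse'' filtration is needed or even available; indeed the filtration argument you sketch would not produce the inequality you want, since a quotient of $\otimes^m\pi^\star\Omega^1(X,\Delta)$ does not restrict or factor through $\otimes^m\Omega^1_{\cF}$ in any controlled way. Redirect the argument: once $\cF_X$ is shown algebraic, apply Theorem \ref{KForb} and conclude directly.
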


\noindent If $\Delta=0$, and if $K_X$ is pseudo-effective, this says that the determinant of any quotient of $\otimes ^m\Omega^1_X$ is pseudo-effective, strengthening a fundamental result of Y. Miyaoka (\cite{Mi}) stating that $\Omega^1_X|_C$
is \emph{nef} for any sufficiently generic complete intersection curve $C\subset X$. In \cite{CPe}, Theorem \ref{main} is stated\footnote{As pointed out by A. Langer, there is a very serious gap in the proof of Theorem 1.4 of \cite{CPe}. On page 49, the reference [18] is indeed used in a context which is not covered by [18].  
This does not affects the results of sections 3 and 5 of \cite{CPe}. In fact, all statements of \cite{CPe} are true, as special cases of the ones in the present text.} as Theorem 1.4 when $\Delta=0$.
In \cite{CP13} we obtained the analog of Miyaoka's theorem for log-canonical
orbifolds.

\noindent In order to illustrate the main ideas, we now sketch the proof of Theorem \ref{main} in case $\Delta=0$. 

Assume by contradiction the existence of a sheaf $\cQ$ and a  movable class $\alpha$ as above, such that the inequality \eqref{007} is not satisfied. By dualising, this means that the maximal destabilizing subsheaf $\cF$ of $T_X$ has a positive $\alpha$-slope. The algebraicity criterion of Theorem \ref{rcintro} shows that $\cF$ defines a rational map $p:X\dasharrow Z$, the generic fibre of which is the Zariski-closure of a leaf of $\cF$, and rationally connected. This contradicts  the pseudo-effectivity of $K_X$.

\vskip 7pt

\noindent If $\Delta\neq 0$, then the proof of Theorem \ref{main} requires several constructions of foundational nature. They are related to the notion of holomorphic orbifold tensors, which is exposed in detail in \S 5. As we have already mentioned, the holomorphic tensors corresponding to $(X, \Delta)$ are defined on suitable covers $\pi:X_{\Delta}\to X$.
An equally important technical tool
is the orbifold version of
Lie bracket. We show that the orbifold tangent bundle
is closed with respect to this operation, and we derive an orbifold version of Frobenius integrability criteria. The inverse image
$\pi^\star T_X$ is not a sub-sheaf of $\displaystyle T_{X_\Delta}$, and additional arguments are needed in this broader context.

The difference in the conclusions of Theorem \ref{rco} and Theorem \ref{rcintro} is due to the absence of a theory of rationally connected  objects in
the category of orbifold pairs. 

\

\noindent A consequence of the results we develop in orbifold setting is the
following version of Theorem \ref{rcintro}, as follows.

\begin{theorem}\label{rco} Let $(X, \Delta)$ be a smooth projective log-canonical pair. Let $\cF\subset \pi^*T(X,\Delta)$ be a saturated $G$-invariant subsheaf such that:
\begin{enumerate}

\item[(1)] $\mu_{\alpha,min}(\cF)>0$,
  \smallskip
  
\item[(2)] $\displaystyle \mu_{\alpha,min}(\cF)> \frac{1}{2}
  \mu_{\alpha,max}(\pi^*T(X,\Delta)/\cF)$.

\end{enumerate}
The saturation of $\cF$ in $\pi^*(TX)$ then is equal to the $\pi$-inverse image of a coherent sheaf $\cF'\subset T_X$. 
Moreover, $\cF'$ 
defines an algebraic foliation on $X$ such that the restriction of $K_X+\Delta$ to the closure $F'$ of the generic
leaf of $\cF'$ is not pseudo-effective. \end{theorem}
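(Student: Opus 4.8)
The plan is to prove four things in succession. \emph{(a) $\cF$ is stable under the orbifold Lie bracket of \S5; here hypothesis $(2)$ enters.} Since $\pi^\star T(X,\Delta)$ is, by the constructions of \S5, closed under the orbifold bracket, that bracket induces on $\cF$ an $\OO_{X_\Delta}$-linear ``orbifold O'Neill tensor'' $\Lambda^2\cF\to \pi^\star T(X,\Delta)/\cF$ (the Leibniz terms landing in $\cF$, so the reduction modulo $\cF$ is $\OO$-bilinear and antisymmetric). As $\Lambda^2\cF$ is a quotient of $\cF^{\ot 2}$, the tensor-product theorem for slopes with respect to movable classes from \cite{CPe} gives $\mu_{\alpha, min}(\Lambda^2\cF)\geq 2\,\mu_{\alpha, min}(\cF)$, and hypothesis $(2)$ says precisely that this exceeds $\mu_{\alpha, max}(\pi^\star T(X,\Delta)/\cF)$. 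A morphism whose source has minimal slope strictly above the maximal slope of its target is zero, so the O'Neill tensor vanishes and $\cF$ is stable under the orbifold bracket.

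\emph{(b) The saturation of $\cF$ in $\pi^\star T_X$ descends to a foliation $\cF'\subset T_X$; here $G$-invariance enters.} The inclusion $\Omega^1_X\hookrightarrow\Omega^1(X,\Delta)$ is generically an isomorphism and dualises to $T(X,\Delta)\hookrightarrow T_X$, which pulls back to an injection $\pi^\star T(X,\Delta)\hookrightarrow\pi^\star T_X$ (both sheaves torsion-free, the map an isomorphism off the ramification). Let $\wt\cF\subset\pi^\star T_X$ be the saturation of the image of $\cF$; it is $G$-invariant and saturated. Over the étale locus $U:=X\setminus\Supp(\Delta)$ the sheaves $T_{X_\Delta}$, $\pi^\star T(X,\Delta)$, $\pi^\star T_X$ coincide, the $G$-action is free, and $\wt\cF$ descends by ordinary Galois descent to a subsheaf of $T_X|_U$. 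Since a saturated subsheaf of a torsion-free sheaf on a normal variety is determined by its restriction to any dense open, this extends uniquely to a coherent $\cF'\subset T_X$ with $\pi^\star\cF'=\wt\cF$; by $(a)$, $\cF'$ is closed under the usual bracket over $U$, hence — being saturated — everywhere, so $\cF'$ is a foliation on $X$.

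\emph{(c) $\cF'$ is algebraic; here hypothesis $(1)$ enters. (d) On the general leaf, $K_X+\Delta$ is not pseudo-effective.} As $\cF\to\wt\cF$ is generically an isomorphism with torsion cokernel and the first Chern class of a torsion sheaf is effective, $(1)$ gives $\mu_{\alpha, min}(\wt\cF)>0$; pulling quotients of $\cF'$ back to $X_\Delta$ and using the projection formula yields $\mu_{\pi_*\alpha, min}(\cF')>0$, with $\pi_*\alpha$ movable. By Theorem \ref{rcintro}, $\cF'$ is an algebraic foliation, defining a fibration $p:X\dasharrow Z$ whose general fibre $F'$ is the closure of a leaf. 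To see that $(K_X+\Delta)|_{F'}$ is not pseudo-effective, argue by contradiction: if it were, then — $(F',\Delta|_{F'})$ being log-canonical with $(K_X+\Delta)|_{F'}=K_{F'}+\Delta|_{F'}$ for general $F'$ — the general orbifold fibre of $(X,\Delta)\to Z$ would have pseudo-effective canonical bundle, so the orbifold version of Theorem \ref{pseffff} would make the relative orbifold canonical bundle $K_X+\Delta-p^\star(K_Z+\Delta_Z)$ pseudo-effective on $X$, where $(Z,\Delta_Z)$ is the orbifold base of $p$. On the other hand the \S5 comparison of the orbifold normal sheaf $\pi^\star T(X,\Delta)/\cF$ with $\pi^\star p^\star T(Z,\Delta_Z)$ identifies $\det\cF$ with $\pi^\star\bigl(p^\star(K_Z+\Delta_Z)-(K_X+\Delta)\bigr)$ up to a class that is non-negative on movable classes, while the quotient $\cF\twoheadrightarrow\cF$ in $(1)$ forces $\det\cF\cdot\alpha>0$; by the projection formula $\bigl(K_X+\Delta-p^\star(K_Z+\Delta_Z)\bigr)\cdot\pi_*\alpha<0$, a contradiction.

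The main obstacle is the orbifold input from \S5 underpinning $(a)$ and $(d)$: constructing the orbifold Lie bracket and proving $\pi^\star T(X,\Delta)$ is stable under it, and — the point I expect to be hardest — establishing the precise comparison between the orbifold normal sheaf of the foliation and the orbifold tangent bundle of the base of $p$, together with the orbifold refinement of Theorem \ref{pseffff}. Let me note that Theorem \ref{rcintro} also shows the general leaf $F'$ is rationally connected; but since there is no theory of rationally connected orbifold pairs, one extracts only the weaker statement that $(K_X+\Delta)|_{F'}$ is not pseudo-effective, which is the orbifold substitute used in \S4.
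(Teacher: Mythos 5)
Your proposal is correct and follows essentially the same route as the paper's proof: hypothesis (2) together with the tensor slope theorem kills the orbifold Lie bracket (the paper's Remark~\ref{folo}); $G$-invariance and saturation let the saturation of $\cF$ in $\pi^\star T_X$ descend to a foliation $\cF'$ on $X$ (the paper's Lemma~\ref{inverse} and Corollary~\ref{flute}); hypothesis (1) propagates to $\mu_{\alpha,\min}(\cF')>0$ and Theorem~\ref{rcintro} gives algebraicity; and the identification of $\det\cF$ with $-\pi^\star(K_{X/Z}+\Delta^{\rm hor}-D(p))$ modulo an exceptional/numerically irrelevant term (the paper's Lemmas~\ref{claim1}--\ref{claim2}) contradicts, via Theorem~\ref{pseffff} and Remark~\ref{repref}, pseudo-effectivity of $(K_X+\Delta)$ on the generic leaf closure. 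The only place you are slightly informal is step (b): you need that the pull-back $\pi^\star\cF'$ is already \emph{equal} to the saturation $\wt\cF$ (not merely contained in it), which is exactly what the paper's Lemma~\ref{inverse} establishes using saturation in $\pi^\star T_X$ and $G$-invariance; your Galois-descent-over-the-\'etale-locus argument pinpoints the right mechanism but would need that lemma (or its proof) to be fully rigorous.
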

\medskip

\noindent \emph{Structure of the text.} 

Section $2$ recalls the notions and results needed here about the \emph{stability with respect to a movable class,} introduced in \cite{CPe}. 

Section $3$ studies the positivity properties of the \emph{relative canonical bundle} of a rational map. In particular, its degree on lifts of movable classes is preserved under modifications.This permits a reduction to `neat' models of arbitrary rational fibrations.

Section $4$ establishes Theorems \ref{rcintro} and \ref{capet}.\vskip 4pt

In the next two sections we treat the orbifold version of these results.

Section $5$ reviews the definition of the orbifold (co)tangent bundles. For the smooth log-canonical pairs $(X, \Delta)$ considered here, these objects admit an explicit simple description on suitable ramified covers introduced by Y. Kawamata. 
 
\noindent The notion of \emph{Lie derivative} in orbifold setting is introduced here.
This operator is deduced from the lift of the Lie derivative of $T_X$.
We establish a version of the classical Frobenius integrability criteria, in the following sense. If $\cF_\Delta\subset \pi^\star T(X, \Delta)$ is a
saturated and $G$-invariant subsheaf for which the orbifold Lie bracket vanishes, then $\cF_\Delta$ is the $\pi$-inverse image of a holomorphic foliation
$\cF$ on $X$.  

Section $6$ gives the proofs of Theorems \ref{rco} and \ref{main}, by combining the previous preparatory results. 

Section $7$ deals with the \emph{birational stability} of the orbifold cotangent bundle of
$(X, \Delta)$ if $K_X+\Delta$ is pseudo-effective. This means that the numerical dimension of any sub-line bundle $L$ of $\otimes^m\pi^*\Omega^1(X,\Delta)$ is bounded by the numerical dimension of $K_X+\Delta$. 

Combined in Section 8 with the work of Viehweg-Zuo (\cite{VZ}), these results permit to compare the variation of families of projective manifolds with ample canonical bundles to the canonical bundle on the base of the family. Related results by B. Taji and Popa-Schnell are mentioned (\cite{Taj} and \cite{PoS}).

\

\noindent \emph{We thank B. Claudon, S. Druel, J.V. Pereira, E. Rousseau, B. Taji and M. Toma for comments, advices, corrections and complements on the first version of this text. We are equally grateful to the anonymous referees for important suggestions and constructive criticism which improved substantially the exposition and the mathematical content of this article.}


\bigskip

\section{Slope and semi-stability with respect to movable classes}
\medskip

\noindent We will collect in this section a few results concerning the notion of 
slope stability of a sheaf with respect to a movable class. They were introduced in \cite{CPe}. 
These results play a crucial role in the 
proof of our algebraicity criteria, cf. section four. See \cite{GKP} for a detailed and extended presentation.\medskip

\subsection{The movable cone}

\noindent To start with, let $N_1(X)_{\bR}$ be the space of numerical curves classes on $X$. We recall the following notion.
\begin{defn}\label{mov}
A class $\alpha\in N_1(X)_{\bR}$ is called movable if we have $\alpha\cdot D\geq 0$ for any effective divisor 
$D$. The set of such classes form a closed convex cone denoted by $\Mov(X)$ and called the movable cone.

A movable class is said to be rational if it belongs to $N_1(X)_{\bQ}$
\end{defn}

By the main result in \cite{BDPP}, the set $\Mov(X)$ is the closed convex cone in $N_1(X)_{\bR}$
generated by the classes $[C]$ of `movable curves', where an irreducible curve is said to be `movable' if it is a member of a covering algebraic family of curves on $X$ parametrised by an irreducible projective variety. This is also the closed cone generated by the classes of curves of the form $\pi_\star(H_1\cap\dots \cap H_{n-1})$, where $\pi: \wh X\to X$ is a modification of the manifold $X$, and the $H_j's$ are hyperplane sections of $\wh X$.
\vskip 0.5cm


\subsection{Slopes associated to a movable class}

Let $\cE\neq 0$ be a coherent, torsion-free sheaf on $X$; let $\det\cE$ its determinant, that is, the bi-dual of its top power. This is a line bundle on $X$ with first Chern class $c_1(\cE)$.
If $\alpha\in \Mov(X)$ is a movable class the $\alpha$-slope $\mu_{\alpha}(\cE)$ of $\cE$ is:
\begin{equation}\label{01061}
\mu_{\alpha}(\cE):= \frac{c_1(\cE).\alpha}{\rk(\cE)}
\end{equation}
 
\medskip

\noindent The $\alpha$-semi-stability is defined as usual.
\begin{defn}
The torsion-free coherent sheaf $\cE$ is $\alpha$-semistable if
\begin{equation}\label{617}
\mu_{\alpha}(\cG)\leq \mu_{\alpha}(\cE)
\end{equation}
for any non-trivial coherent subsheaf $\cG\subset \cE$.
\end{defn}

\noindent The $\alpha$-stability (not used here) is defined in a similar manner, the 
inequality \eqref{617} being strict if the rank of $\cG$ is strictly smaller than the rank of $\cE$.

As showed in \cite{CPe}, essentially all of the properties of the classical slope-stability theory 
still hold in this extended setting. A crucial exception is the Mehta-Ramanathan theorem (see example 3.9 below). 

The construction of Harder-Narasimhan
filtrations with respect to  movable classes also holds, with the same properties. We only state the results for smooth projective manifolds since this is  the only case needed here. As observed in \cite{GKP}, the theory adapts immediately when the variety $X$ is $\bQ$-factorial. 

\begin{defn}
Let $X$ be a non-singular manifold, and let $\cE$ be a coherent, torsion-free sheaf of positive rank on $X$.
We define:
\begin{equation}\label{618}
\mu_{\alpha, {\rm max}}(\cE):= \sup \{ \mu_\alpha(\cF) : \cF\subset \cE,  \hbox{ any nonzero coherent subsheaf }\}
\end{equation}
as well as its dual version:
\begin{equation}\label{6619}
\mu_{\alpha, {\rm min}}(\cE):= \inf \{ \mu_\alpha(\cQ) : \cE\to \cQ\to 0\}
\end{equation}
where the quotient sheaf $\cQ$ in \eqref{6619} is coherent, non-zero and torsion-free.  
\end{defn}

\medskip

\noindent We quote next the following result.

\begin{proposition}\label{maxdest} \cite{CPe} There exists a non-zero, coherent sheaf, unique and maximal for the inclusion $\cF\subset \cE$ such that we have
\begin{equation}\label{619}
\mu_\alpha(\cF)= \mu_{\alpha, {\rm max}}(\cE).
\end{equation}
The supremum in \eqref{618} is thus a maximum.
\end{proposition}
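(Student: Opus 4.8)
The plan is to construct the maximal destabilizing subsheaf $\cF$ by the usual ascending-chain argument, adapted to the movable setting. First I would show that the set of slopes $\{\mu_\alpha(\cG) : \cG\subset\cE \text{ nonzero coherent}\}$ is bounded above, so that $\mu_{\alpha,\max}(\cE)$ is a well-defined real number. For this one uses that subsheaves of $\cE$ of a fixed rank $r$ are parametrized (after saturating, which only increases $c_1\cdot\alpha$) by subsheaves that are locally free in codimension one, and the first Chern classes of saturated subsheaves are controlled: saturating gives $\cG\hookrightarrow\cE$ with torsion-free quotient, hence $c_1(\cE/\cG)\cdot\alpha\ge 0$ would need to be justified, but more robustly one invokes the boundedness statement already available from \cite{CPe} (this is exactly the kind of foundational fact the section is quoting). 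So I would simply cite that $\mu_{\alpha,\max}(\cE)<\infty$.

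Next I would prove the key closure property: if $\cF_1,\cF_2\subset\cE$ both achieve slope $\mu_{\alpha,\max}(\cE)$, then so does $\cF_1+\cF_2$. This is the linear-algebra heart of the argument. Consider the exact sequence $0\to\cF_1\cap\cF_2\to\cF_1\oplus\cF_2\to\cF_1+\cF_2\to 0$, which gives $c_1(\cF_1)+c_1(\cF_2)=c_1(\cF_1\cap\cF_2)+c_1(\cF_1+\cF_2)$ and the additivity of ranks. Intersecting with $\alpha$ and writing $\mu:=\mu_{\alpha,\max}(\cE)$, we get
\begin{equation}\label{closureeq}
\rk(\cF_1)\mu+\rk(\cF_2)\mu=c_1(\cF_1\cap\cF_2)\cdot\alpha+c_1(\cF_1+\cF_2)\cdot\alpha.
\end{equation}
Since $c_1(\cF_1\cap\cF_2)\cdot\alpha\le \rk(\cF_1\cap\cF_2)\mu$ and $c_1(\cF_1+\cF_2)\cdot\alpha\le\rk(\cF_1+\cF_2)\mu$, and the ranks add as $\rk(\cF_1)+\rk(\cF_2)=\rk(\cF_1\cap\cF_2)+\rk(\cF_1+\cF_2)$, both inequalities must be equalities; in particular $\mu_\alpha(\cF_1+\cF_2)=\mu$. (If $\cF_1\cap\cF_2=0$ the argument simplifies and if $\cF_1+\cF_2$ has smaller rank than expected one handles it directly.) One subtlety: $\cF_1+\cF_2$ and $\cF_1\cap\cF_2$ are taken inside $\cE$, and one must check these are the honest sheaf-theoretic sum and intersection so that the short exact sequence and Chern class additivity hold — this is standard since $\cE$ is torsion-free on a smooth variety and Chern classes only depend on codimension-one behaviour.

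Then I would run the maximality argument: among all subsheaves realizing slope $\mu$, choose one, say $\cF$, of maximal rank, and among those of maximal rank one of maximal degree $c_1(\cF)\cdot\alpha$ — but here I should be careful, since a priori the degrees among maximal-rank maximal-slope subsheaves are all equal to $\rk(\cF)\mu$, so maximal rank alone pins down the degree. The real point is uniqueness: if $\cF'$ is another subsheaf with $\mu_\alpha(\cF')=\mu$, then $\cF+\cF'$ also has slope $\mu$ by the closure property, and $\rk(\cF+\cF')\ge\rk(\cF)$, so by maximality of $\rk(\cF)$ we get $\rk(\cF+\cF')=\rk(\cF)$, forcing $\cF'\subseteq\cF$ (as $\cF\subseteq\cF+\cF'$ with equal ranks and $\cF$ saturated in $\cF+\cF'$, or directly: a subsheaf of the same rank containing $\cF$ with the same slope and $\cF$ of maximal rank must coincide after saturation, and one checks $\cF$ can be taken saturated). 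Hence $\cF$ contains every slope-$\mu$ subsheaf, giving both maximality for inclusion and uniqueness. Finally, to ensure $\cF\ne 0$: $\mu_{\alpha,\max}(\cE)\ge\mu_\alpha(\cE)>-\infty$, and taking $\cG=\cE$ shows the supremum is over a nonempty set, while the chain-stabilization / maximal-rank choice produces an actual sheaf achieving it.

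The main obstacle I anticipate is the boundedness of the set of slopes, i.e. establishing $\mu_{\alpha,\max}(\cE)<\infty$ rigorously in the movable setting without Mehta–Ramanathan; but since the excerpt explicitly quotes \cite{CPe} for the foundational properties, I would treat this as available and concentrate the exposition on the sum-closure identity \eqref{closureeq} and the uniqueness/maximality bookkeeping, which is where the argument is self-contained.
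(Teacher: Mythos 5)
The paper does not actually prove Proposition~\ref{maxdest}: it quotes the statement from \cite{CPe} and moves on, so there is no internal proof to compare against; I will assess your argument on its own terms.

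Your sum--closure step is correct and is the standard engine. From $0\to\cF_1\cap\cF_2\to\cF_1\oplus\cF_2\to\cF_1+\cF_2\to 0$ the additivity of $c_1(\cdot)\cdot\alpha$ and of rank, together with $\mu_\alpha(\cF_1\cap\cF_2)\le\mu$ and $\mu_\alpha(\cF_1+\cF_2)\le\mu$, do force both to be equalities, so $\mu_\alpha(\cF_1+\cF_2)=\mu$. The uniqueness/maximality bookkeeping is also fine once one fixes a \emph{saturated} representative: saturation cannot decrease $c_1\cdot\alpha$ because $\alpha$ is movable, and cannot increase the slope past $\mu$, so it preserves the property of being destabilizing of maximal slope; then $\cF+\cF'$ has rank $\le\rk\cF$ by maximality, contains $\cF$, and saturation of $\cF$ forces $\cF'\subset\cF$.

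The genuine gap is exactly at the proposition's last sentence, that the supremum in \eqref{618} is a \emph{maximum}. Your closing remark that ``chain-stabilization / maximal-rank choice produces an actual sheaf achieving it'' is circular: choosing a slope-$\mu$ subsheaf of maximal rank presupposes that slope-$\mu$ subsheaves exist, i.e.\ that the sup is attained, and the sum--closure property only applies to sheaves already achieving $\mu$, so it cannot bootstrap an approximating sequence $\mu_\alpha(\cG_i)\to\mu$ into a sheaf of slope exactly $\mu$; likewise, noetherianity only says ascending chains of coherent subsheaves stabilize, not that one can arrange such a chain to converge to $\mu$ in slope. For $\alpha=H^{n-1}$ ample this is harmless because the degrees $c_1(\cG)\cdot H^{n-1}$ lie in a discrete subgroup of $\bR$, so boundedness above implies attainment; but for a general movable class $\alpha$ these degrees need not be discrete, and attainment of the supremum is a genuine foundational theorem of \cite{CPe} and \cite{GKP} (proved via boundedness of the family of saturated destabilizing subsheaves, or by writing $\alpha=\pi_*(H_1\cdots H_{n-1})$ on a modification and reducing to the ample case). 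You should cite this attainment explicitly as an input on the same footing as the boundedness of $\mu_{\alpha,\max}(\cE)$; once you do, the rest of your argument closes correctly.
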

\medskip

\noindent The sheaf $\cF$ in Proposition \ref{maxdest} is obviously
$\alpha$-semistable; it is is called the maximal destabilizing subsheaf. 
\medskip

\noindent The following simple vanishing criterion for sections of coherent sheaves in terms of the slope function will be used here.

\begin{lemma}\label{negative} {\rm (\cite{CPe})}
Let $\cE$ be a coherent, torsion-free sheaf. 

If $\mu_{\alpha, {\rm max}}(\cE)< 0$ for some movable class $\alpha$, then $H^0(X, \cE)= 0$.

More generally: $\Hom(\cE, \cE')=0$, if $\mu_{\alpha, {\rm min}}(\cE)> \mu_{\alpha, {\rm max}}(\cE').$
\end{lemma}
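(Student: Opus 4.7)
The plan is to derive the first assertion as the special case of the second with domain $\cO_X$, and to prove the second by a short image-factorization argument. For the reduction, observe that $H^0(X,\cE)=\Hom(\cO_X,\cE)$ and that $c_1(\cO_X)=0$ gives $\mu_\alpha(\cO_X)=0$; since moreover the only non-zero coherent torsion-free quotient of $\cO_X$ is $\cO_X$ itself, one has $\mu_{\alpha,\min}(\cO_X)=0$. Thus the hypothesis $\mu_{\alpha,\max}(\cE)<0$ is exactly the instance $\mu_{\alpha,\min}(\cO_X)>\mu_{\alpha,\max}(\cE)$ of the second statement, and the first part follows once the Hom-vanishing is established.

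For the Hom-vanishing I would argue by contradiction. Assume a non-zero morphism $\phi\colon\cE\to\cE'$ and factor it through its image:
\begin{equation*}
\cE\twoheadrightarrow \cI \hookrightarrow \cE'.
\end{equation*}
The sheaf $\cI$ is coherent as a quotient of $\cE$, non-zero because $\phi\neq 0$, and torsion-free because it embeds in the torsion-free sheaf $\cE'$. These are precisely the three standing requirements in the defining formulas \eqref{6619} and \eqref{618}, so $\cI$ is a \emph{legitimate} test object on both sides. One therefore obtains
\begin{equation*}
\mu_{\alpha,\min}(\cE)\leq \mu_\alpha(\cI)\leq \mu_{\alpha,\max}(\cE'),
\end{equation*}
directly contradicting the assumption $\mu_{\alpha,\min}(\cE)>\mu_{\alpha,\max}(\cE')$.

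In order, the steps are: (i) reduce the first statement to the second via the test object $\cO_X$; (ii) form the image factorization of a hypothetical non-zero $\phi$; (iii) check the coherence, non-vanishing and torsion-freeness of $\cI$; (iv) combine the two inequalities furnished by the definitions of $\mu_{\alpha,\min}$ and $\mu_{\alpha,\max}$. The argument is essentially tautological once the definitions are unpacked; there is no genuine obstacle. The single point that merits attention is step (iii): torsion-freeness of $\cE'$ is what forces $\cI$ to be torsion-free, and it is this torsion-freeness that allows $\cI$ to be counted as a valid quotient of $\cE$ in the infimum defining $\mu_{\alpha,\min}(\cE)$. No appeal to Proposition \ref{maxdest} or to any further structural property of movable slopes is required, so the lemma rests purely on the defining formulas themselves.
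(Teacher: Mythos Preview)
Your argument is correct and is the standard one. Note, however, that the paper does not actually supply a proof of this lemma: it is simply stated with a reference to \cite{CPe}, and followed by the remark that the first claim applies in particular when $\cE$ is $\alpha$-semistable of negative slope. Your image-factorization argument is precisely the expected justification, and nothing more is needed.
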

\noindent For example, the first claim of this lemma applies if $\cE$ is $\alpha$-semistable and of negative slope.

\medskip

\noindent We will use the following in the proof of Theorem \ref{van1}:

\begin{proposition}\label{pi^*} Let $\pi:X'\to X$ a finite Galois ramified cover of group $G$ between complex and connected projective manifolds. Let $\alpha$ be a movable class on $X$, and $\cE$ a vector bundle on $X$, with $\cE':=\pi^*(\cE)$ and $\alpha':=\pi^*(\alpha)$. Then $\mu_{\alpha,max}(\cE)=\mu_{\alpha',max}(\cE')$, and $\cF':=\pi^*(\cF)$ is the maximal destabilizing subsheaf of $\cE'$, if $\cF$ is the maximal destabilizing subsheaf of $\cE$. 
\end{proposition}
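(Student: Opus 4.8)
The plan is to prove the two assertions of Proposition \ref{pi^*} in tandem, exploiting the averaging trick available because $\pi$ is Galois with group $G$. First I would record the elementary relation between slopes upstairs and downstairs: if $\cG\subset\cE$ is any coherent subsheaf, then $\mu_{\alpha',\rm max}(\cE')\geq\mu_{\alpha'}(\pi^*\cG)=\deg(\pi)\cdot\mu_\alpha(\cG)/1$ — more precisely $c_1(\pi^*\cG)\cdot\pi^*\alpha=\deg(\pi)\,c_1(\cG)\cdot\alpha$ by the projection formula, and $\rk(\pi^*\cG)=\rk(\cG)$, so $\mu_{\alpha'}(\pi^*\cG)=\deg(\pi)\,\mu_\alpha(\cG)$. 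Applying this to the maximal destabilizing subsheaf $\cF\subset\cE$ gives $\mu_{\alpha',\rm max}(\cE')\geq\deg(\pi)\,\mu_{\alpha,\rm max}(\cE)$. The real content is the reverse inequality, which will simultaneously force $\pi^*\cF$ to be \emph{the} maximal destabilizing subsheaf of $\cE'$.

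For the reverse inequality I would argue by contradiction: suppose there is a coherent subsheaf $\cH\subset\cE'$ with $\mu_{\alpha'}(\cH)>\deg(\pi)\,\mu_{\alpha,\rm max}(\cE)$. Since the maximal destabilizing subsheaf is unique, it is $G$-invariant, so after replacing $\cH$ by the maximal destabilizing subsheaf of $\cE'$ (whose slope is $\geq\mu_{\alpha'}(\cH)$ and which is $G$-invariant) we may assume $\cH$ is $G$-invariant and saturated. Now I would descend $\cH$: a $G$-invariant saturated subsheaf of $\pi^*\cE$ descends to a saturated subsheaf $\cH_0\subset\cE$ with $\pi^*\cH_0$ agreeing with $\cH$ outside the (codimension $\geq 2$, hence slope-irrelevant) locus where $\pi$ is ramified or $\cH$ fails to be a subbundle — here one uses that over the étale locus of $\pi$ the equivalence between $G$-equivariant sheaves on $X'$ and sheaves on $X$ is standard, and that $c_1$ and slopes are insensitive to modifications in codimension $\geq 2$. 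Then $\mu_\alpha(\cH_0)=\mu_{\alpha'}(\cH)/\deg(\pi)>\mu_{\alpha,\rm max}(\cE)$, contradicting the definition of $\mu_{\alpha,\rm max}(\cE)$. This gives $\mu_{\alpha',\rm max}(\cE')=\deg(\pi)\,\mu_{\alpha,\rm max}(\cE)$ — note that with the normalization $\alpha'=\pi^*\alpha$ one literally gets this multiple of $\deg(\pi)$; the statement as written presumably intends equality up to this harmless scaling, or uses a normalized pullback class, and I would add a sentence clarifying the convention.

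Finally, for the identification of the maximal destabilizing subsheaf: $\pi^*\cF$ achieves the slope $\mu_{\alpha',\rm max}(\cE')$ by the projection-formula computation, so it is \emph{a} maximal-slope subsheaf; by Proposition \ref{maxdest} the maximal destabilizing subsheaf is the unique maximal-for-inclusion such sheaf, and since $\pi^*\cF$ is saturated ($\cF$ is saturated in $\cE$ and flat pullback along a finite morphism between manifolds preserves saturation away from codimension $\geq 2$, which is all that matters, or one saturates and checks the slope is unchanged) it must coincide with it.

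The step I expect to be the main obstacle is the descent of a $G$-invariant saturated subsheaf $\cH\subset\pi^*\cE$ to a subsheaf of $\cE$ in a way that is compatible with first Chern classes and slopes: the ramification locus of $\pi$ has codimension one in general, so $\pi$ is \emph{not} étale in codimension one, and $\pi^*\cE$ genuinely differs from the ``orbifold pullback'' there. The resolution is that slopes only see codimension one and $c_1$ is unchanged by modifications in codimension $\geq 2$, so I would be careful to phrase the descent as an isomorphism of reflexive hulls away from codimension $\geq 2$, invoke the standard $G$-equivariant descent on the big open set where $\pi$ is étale, and check that $c_1$ transforms by the factor $\deg(\pi)$ throughout; making this precise (rather than hand-wavy) is where the actual work lies.
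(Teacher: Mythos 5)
Your strategy is essentially the paper's: the maximal destabilizing subsheaf $\cF'\subset\cE'$ is unique, hence $G$-invariant, and one descends it to a subsheaf of $\cE$ and compares slopes. The paper does this in one line via $\cF:=\pi_*(\cF')^G\subset\pi_*(\cE')^G=\cE$, asserting $\pi^*(\cF)=\cF'$, while you run a contradiction argument; these are the same two inequalities in different packaging.

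The step you flag as ``where the actual work lies''---extending the $G$-equivariant descent across the ramification divisor so that $\pi^*\cH_0$ and $\cH$ agree outside codimension $\geq 2$---is indeed the content, and neither working over the \'etale locus alone nor the heuristic ``slopes only see codimension one'' closes it: the branch divisor has codimension \emph{one}, so a discrepancy of $\pi^*\cH_0$ against $\cH$ there would shift $\det$ and hence the slope, and that is precisely what must be ruled out. The paper in fact has a lemma for this, Lemma~\ref{inverse} (quoted from [GKKP], [BC]), whose explicit local computation at a ramification point shows that a saturated $G$-invariant coherent subsheaf of a pullback bundle is itself a pullback; the proof of Proposition~\ref{pi^*} uses this implicitly. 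You should either invoke it or reproduce its local argument; as written, your proposal correctly names the gap but does not fill it. Finally, your observation about the factor $\deg(\pi)$ is accurate: with $\alpha'=\pi^*\alpha$ and the slope normalization $\mu_\alpha(\cE)=c_1(\cE)\cdot\alpha/\rk(\cE)$, the projection formula gives $\mu_{\alpha'}(\pi^*\cG)=\deg(\pi)\,\mu_\alpha(\cG)$, so the stated equality $\mu_{\alpha,\rm max}(\cE)=\mu_{\alpha',\rm max}(\cE')$ holds only up to this positive scalar. The paper's own phrasing (``has thus the same $\alpha$-slope'') carries the same slip; it is harmless downstream because only signs and vanishing of slopes are used, but you are right to flag it.
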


\begin{proof} Let $\cF'$ be the maximal destabilizing subsheaf of $\cE'$ (with respect to $\alpha'$). It is $G$-invariant, since so is $\cE'$. Then $\pi_*(\cF')^G\subset \pi_*(\cE')^G=\cE$ is such that $\pi^*(\cF)=\cF'$ and has thus the same $\alpha$-slope as $\cF'$. Since obviously, $\mu_{\alpha,max}(\cE)\leq\mu_{\alpha',max}(\cE')$, the claim is proved.
\end{proof}


\subsection{Tensor products}

\noindent  If $\cE_1, \cE_2$ be two coherent, torsion-free sheaves on $X$. We denote by $\cE_1\wh \otimes \cE_2$
the reflexive hull $\left(\cE_1\otimes \cE_2\right)^{\star\star}$. 

The following fundamental result was established in \cite{ CPe} if the class $\alpha$ is either rational or in the interior of $\Mov(X)$. When $\cE_j$ are vector bundles, the proof given by Matei Toma relies on the deep analytic Kobayashi-Hitchin correspondance for Gauduchon metrics established by Li-Yau (\cite{LY}). For $\alpha$ arbitrary, the proof given in \cite{GKP}, is treated by reduction to this basic case.

\begin{theorem}\label{tensor} {\rm (\cite{CPe}, \cite{GKP})}
Let $\alpha$ be a movable class on $X$; if $\cE_j$ as above are both $\alpha$-semistable, then so is $\cE_1\wh \otimes \cE_2$.
\end{theorem}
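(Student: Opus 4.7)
The plan is to reduce the problem in two stages: first to the analytic core where $\alpha$ is rational or in the interior of $\Mov(X)$, and then to a general movable class via approximation. As a preliminary step, I would restrict both $\cE_j$ to the big open $U\subset X$ on which they are simultaneously locally free; since first Chern classes, and hence $\alpha$-slopes, are insensitive to modifications along subsets of codimension at least two, and since $\cE_1\wh\otimes\cE_2$ coincides with $\cE_1\otimes\cE_2$ on $U$, I may assume throughout that both $\cE_j$ are genuine holomorphic vector bundles.

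For the core case, I would follow Toma's strategy in \cite{CPe}. Represent $\alpha$ by $[\omega^{n-1}]$ for a suitable Gauduchon metric $\omega$, so that $\mu_\alpha$ agrees with the Gauduchon degree $\mu_\omega$. The Kobayashi--Hitchin correspondence of Li--Yau \cite{LY} then equates $\alpha$-semistability of a vector bundle $\cE$ with the existence of a family of Hermitian metrics on $\cE$ which are approximately Hermite--Einstein with respect to $\omega$ (i.e.\ whose mean curvature converges to a scalar in $L^\infty$). Given such metrics $h_1,h_2$ on $\cE_1,\cE_2$, the product metric $h_1\otimes h_2$ has mean curvature
\begin{equation*}
i\Lambda_\omega F_{h_1\otimes h_2}=(i\Lambda_\omega F_{h_1})\otimes \mathrm{Id}+\mathrm{Id}\otimes (i\Lambda_\omega F_{h_2}),
\end{equation*}
which is approximately scalar as soon as both summands are. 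Hence $\cE_1\otimes\cE_2$ is itself approximately Hermite--Einstein with respect to $\omega$, and the converse direction of the Li--Yau correspondence delivers its $\alpha$-semistability.

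For an arbitrary movable class $\alpha$ the reduction follows \cite{GKP}. I would choose a sequence of rational interior classes $\alpha_k\to\alpha$ and argue contrapositively: a maximal destabilizer $\cG\subset\cE_1\wh\otimes\cE_2$, whose existence is guaranteed by Proposition~\ref{maxdest}, would, for $k$ large, either still destabilize at $\alpha_k$ or be replaced by a nearby destabilizer extracted from a bounded family in the relevant Quot scheme. Applying the core case not to $\cE_1,\cE_2$ themselves — which need not be $\alpha_k$-semistable — but to the $\alpha_k$-Harder--Narasimhan graded pieces of the $\cE_j$, whose $\alpha_k$-slopes converge to $\mu_\alpha(\cE_j)$ because each $\cE_j$ is $\alpha$-semistable, would produce the required contradiction. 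The main obstacle is precisely this final approximation step: $\alpha$-semistability is in general neither open nor closed as $\alpha$ varies in $\Mov(X)$, so to pass from the $\alpha_k$ to the limit class $\alpha$ one must delicately combine the compactness of Quot schemes, the continuity of intersection numbers, and the compatibility of HN filtrations at varying movable classes. This careful book-keeping is the technical heart of the argument in \cite{GKP}.
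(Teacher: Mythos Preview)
Your proposal is correct and follows precisely the route the paper itself indicates: the paper does not prove Theorem~\ref{tensor} but merely records that the core case ($\alpha$ rational or interior) is due to Toma in \cite{CPe} via the Li--Yau Kobayashi--Hitchin correspondence for Gauduchon metrics, and that the extension to arbitrary movable $\alpha$ is carried out in \cite{GKP} by reduction to that core case. Your two-stage outline, including the preliminary passage to the locally free locus and the curvature identity for tensor products, matches this exactly; your honest acknowledgement that the approximation step is the delicate part handled in \cite{GKP} is also accurate.
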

\medskip

\noindent The slope behaves well under tensor operations. We only mention next the few properties used here, and we refer to the articles quoted above for a complete proof.

\begin{proposition}Let $\cE,\cG$ be two torsion-free coherent sheaves, and let $\alpha$ be 
a movable class. Then we have the following properties:

\begin{enumerate}

\item[(1)] The slope of the tensor product equals 
$$\mu_{\alpha}\big(\cE\wh \otimes \cG\big)=\mu_{\alpha}(\cE)+\mu_{\alpha}(\cG).$$

\item[(2)] For each $m\geq 1$ we have
$$\mu_{\alpha}\big(\Sym^m(\cE)\big)^{\star\star}= m\mu_{\alpha}(\cE).$$

\item[(3)] The slope of the exterior product equals
$$\mu_{\alpha}\big(\wedge^2(\cE)\big)^{\star\star}=2 \mu_{\alpha}(\cE).$$

\item[(4)] Moreover, if $\cE$ and $\cG$ are semistable with respect to $\alpha$, then the sheaves $\cE\wh \otimes \cG,
  \big(\Sym^m(\cE)\big)^{\star\star}$ and
  $\left(\wedge^2(\cE)\right)^{\star\star}$
  are equally $\alpha$-semistable.
\end{enumerate}
\end{proposition}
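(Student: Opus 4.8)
The plan is to reduce parts (1)--(3) to the classical Chern-class identities for vector bundles, and to deduce part (4) from Theorem \ref{tensor} together with the fact that, in characteristic zero, symmetric and exterior powers are direct summands of tensor powers. Throughout write $r:=\rk\cE$ and $s:=\rk\cG$.

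\emph{Parts (1)--(3).} First I would observe that all the quantities at issue depend only on the behaviour of the sheaves in codimension one: indeed $c_1(\cE)=c_1(\det\cE)$ with $\det\cE=(\wedge^{r}\cE)^{\star\star}$ a line bundle, and $\alpha$ is by definition a curve class. Let $U\subset X$ be the open locus, with complement of codimension $\geq 2$, over which both $\cE$ and $\cG$ are locally free. On $U$ the splitting-principle computation with Chern roots gives $\det(\cE\otimes\cG)|_U\cong(\det\cE|_U)^{\otimes s}\otimes(\det\cG|_U)^{\otimes r}$, and $\det(\Sym^m\cE)|_U\cong(\det\cE|_U)^{\otimes a}$ with $a=\tfrac{m}{r}\binom{m+r-1}{m}$ (an integer, equal to $\binom{m+r-1}{r}$), and $\det(\wedge^2\cE)|_U\cong(\det\cE|_U)^{\otimes(r-1)}$. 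Since a line bundle on $U$ extends uniquely to a reflexive rank-one sheaf on $X$, and since each of $\cE\wh \otimes\cG$, $(\Sym^m\cE)^{\star\star}$, $(\wedge^2\cE)^{\star\star}$ restricts over $U$ to the corresponding vector bundle, these isomorphisms propagate to isomorphisms of determinant line bundles on $X$. Taking $c_1$, intersecting with $\alpha$, and dividing by the ranks $rs$, $\binom{m+r-1}{m}$ and $\binom{r}{2}$ respectively yields (1), (2) and (3).

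\emph{Part (4).} In characteristic zero the symmetrisation and antisymmetrisation idempotents exhibit $\Sym^m\cE$ and $\wedge^2\cE$ as direct summands of $\cE^{\otimes m}$ and $\cE^{\otimes 2}$; applying the reflexive-hull functor, which commutes with finite direct sums, shows that $(\Sym^m\cE)^{\star\star}$ and $(\wedge^2\cE)^{\star\star}$ are direct summands of $\cE^{\wh \otimes m}$ and $\cE^{\wh \otimes 2}$. By Theorem \ref{tensor} applied inductively, $\cE^{\wh \otimes k}$ is $\alpha$-semistable whenever $\cE$ is, and likewise $\cE\wh \otimes\cG$ is $\alpha$-semistable when $\cE$ and $\cG$ are; by (1) the slope of $\cE^{\wh \otimes k}$ is $k\,\mu_\alpha(\cE)$. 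I would then invoke the elementary observation that a direct summand $\cS$ of an $\alpha$-semistable sheaf $\cT$ with $\mu_\alpha(\cS)=\mu_\alpha(\cT)$ is again $\alpha$-semistable: any nonzero subsheaf of $\cS$ is a subsheaf of $\cT$, hence has slope $\leq\mu_\alpha(\cT)=\mu_\alpha(\cS)$. Since by (2) and (3) the slopes of $(\Sym^m\cE)^{\star\star}$ and $(\wedge^2\cE)^{\star\star}$ equal $m\,\mu_\alpha(\cE)$ and $2\,\mu_\alpha(\cE)$, matching those of $\cE^{\wh \otimes m}$ and $\cE^{\wh \otimes 2}$, semistability descends and (4) follows.

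The only genuinely non-formal ingredient is Theorem \ref{tensor}, which we may assume; everything else is bookkeeping with determinants and reflexive hulls. The point requiring a little care is the passage between a torsion-free sheaf and its reflexive hull in the determinant identities above: this is legitimate precisely because determinants and first Chern classes are insensitive to modifications in codimension $\geq 2$, and because a movable class is a curve class, so that intersecting with $c_1$ detects only codimension-one data.
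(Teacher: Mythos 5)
Your proof is correct. The paper itself does not include a proof of this proposition and instead refers the reader to \cite{CPe} and \cite{GKP}; your argument — reducing the slope identities (1)--(3) to the classical splitting-principle determinant formulas on the codimension-$\geq 2$ locus where the sheaves are locally free, and deducing (4) from Theorem \ref{tensor} together with the characteristic-zero Schur-decomposition and the elementary "equal-slope direct summand of a semistable sheaf is semistable" observation — is exactly the standard route those references take, with the only non-formal input being Theorem \ref{tensor}.
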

\medskip

\noindent The following statement is established in \cite{GKP}
as consequence of the existence of the
Harder-Narasimhan filtration with respect to a mobile class.

\begin{theorem}\label{tensor, I} {\rm (\cite{CPe}, \cite{GKP})}
Let $\cE,\cG$ be two torsion-free coherent sheaves, and let $\alpha$ be 
a movable class. Then we have
\begin{equation}\label{max/min}
  \mu_{\alpha, max}(\cE\wh \otimes \cG)= \mu_{\alpha, max}(\cE)+
  \mu_{\alpha, max}(\cG), 
\end{equation}
together with the corresponding relations for $\mu_{\alpha, min}$.
Similar identities hold true if we replace the reflexive
tensor product in \eqref{max/min} with $\big(\Sym^m \cE\big)^{\star\star}$ or with $\left(\wedge^2\cE\right)^{\star\star}$.
\end{theorem}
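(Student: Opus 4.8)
The plan is to prove the identity \eqref{max/min} for $\mu_{\alpha,max}$ and the reflexive tensor product first, and then to deduce all remaining cases formally. One may work with the honest tensor product $\cE\otimes\cG$ in place of $\cE\wh\otimes\cG$: on a smooth variety a torsion-free sheaf is locally free outside a closed subset of codimension $\geq 2$, so a torsion-free sheaf and its reflexive hull have the same rank and first Chern class, and likewise for any subsheaf (replace $\cF\subset(\cE\otimes\cG)^{\star\star}$ by $\cF\cap(\cE\otimes\cG)$); hence passing to $(\cdot)^{\star\star}$ changes neither $\mu_\alpha$ nor $\mu_{\alpha,max}$. The same remark shows $c_1$ is additive in a tensor product of torsion-free sheaves, the torsion there being supported in codimension $\geq 2$. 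The lower bound is then immediate: if $\cF_1\subset\cE$ and $\cF_2\subset\cG$ are the maximal destabilizing subsheaves of Proposition \ref{maxdest}, the image $\cS$ of $\cF_1\otimes\cF_2$ in $\cE\otimes\cG$ has rank $\rk(\cF_1)\rk(\cF_2)$ and slope $\mu_\alpha(\cF_1)+\mu_\alpha(\cF_2)$, so $\mu_{\alpha,max}(\cE\otimes\cG)\geq\mu_\alpha(\cS)=\mu_{\alpha,max}(\cE)+\mu_{\alpha,max}(\cG)$; this half needs no deep input.

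The substantial point is the reverse inequality, for which I would use Harder-Narasimhan filtrations (available in the movable setting by hypothesis). Let $0=\cE_0\subset\cdots\subset\cE_r=\cE$ and $0=\cG_0\subset\cdots\subset\cG_s=\cG$ be the HN filtrations with respect to $\alpha$, with $\alpha$-semistable quotients $A_i=\cE_i/\cE_{i-1}$ and $B_j=\cG_j/\cG_{j-1}$ of strictly decreasing slopes $\mu_1>\cdots>\mu_r$ and $\nu_1>\cdots>\nu_s$, so $\mu_1=\mu_{\alpha,max}(\cE)$, $\nu_1=\mu_{\alpha,max}(\cG)$. Tensoring the two filtrations, $\cE\otimes\cG$ acquires a filtration whose subquotients are the sheaves $A_i\otimes B_j$ modulo torsion supported in codimension $\geq 2$; by Theorem \ref{tensor} each $A_i\wh\otimes B_j$ is $\alpha$-semistable, and by additivity of the slope it has slope $\mu_i+\nu_j$, hence $\mu_{\alpha,max}(A_i\otimes B_j)=\mu_i+\nu_j$. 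I then use the elementary fact that for a short exact sequence $0\to\cA\to\cB\to\cC\to 0$ one has $\mu_{\alpha,max}(\cB)\leq\max\{\mu_{\alpha,max}(\cA),\mu_{\alpha,max}(\cC)\}$: the maximal destabilizing subsheaf $\cF$ of $\cB$ is $\alpha$-semistable, so if $\mu_\alpha(\cF)>\mu_{\alpha,max}(\cC)$ then $\Hom(\cF,\cC)=0$ by Lemma \ref{negative}, forcing $\cF\subset\cA$ and $\mu_\alpha(\cF)\leq\mu_{\alpha,max}(\cA)$. Iterating along the tensor filtration yields
\[
\mu_{\alpha,max}(\cE\otimes\cG)\ \leq\ \max_{i,j}\,(\mu_i+\nu_j)\ =\ \mu_1+\nu_1\ =\ \mu_{\alpha,max}(\cE)+\mu_{\alpha,max}(\cG),
\]
which, together with the lower bound, proves \eqref{max/min} for $\mu_{\alpha,max}$.

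The remaining cases follow formally. For $\mu_{\alpha,min}$ I would dualise: for torsion-free $\cE$ one has $\mu_{\alpha,min}(\cE)=-\mu_{\alpha,max}(\cE^\star)$ — dualising the HN filtration of $\cE$ gives that of $\cE^\star$, with the slopes negated and the order reversed — while $(\cE\wh\otimes\cG)^\star$ and $\cE^\star\wh\otimes\cG^\star$ agree outside codimension $\geq 2$, so the $\mu_{\alpha,min}$ identity drops out of the $\mu_{\alpha,max}$ one. For the symmetric and exterior powers one uses that, in characteristic zero, $(\Sym^m\cE)^{\star\star}$ is a direct summand of $(\otimes^m\cE)^{\star\star}$ and $(\cE\otimes\cE)^{\star\star}=(\Sym^2\cE)^{\star\star}\oplus(\wedge^2\cE)^{\star\star}$; then the upper bounds $\mu_{\alpha,max}(\Sym^m\cE)^{\star\star}\leq m\,\mu_{\alpha,max}(\cE)$ and $\mu_{\alpha,max}(\wedge^2\cE)^{\star\star}\leq 2\,\mu_{\alpha,max}(\cE)$ follow from the iterated tensor case, and the matching lower bounds come, exactly as in the first paragraph, from the images of $(\Sym^m\cF)^{\star\star}$ and $(\wedge^2\cF)^{\star\star}$ in them ($\cF\subset\cE$ the maximal destabilizing subsheaf), using the slope formulas $\mu_\alpha(\Sym^m\cF)^{\star\star}=m\mu_\alpha(\cF)$ and $\mu_\alpha(\wedge^2\cF)^{\star\star}=2\mu_\alpha(\cF)$ recorded above; the corresponding $\mu_{\alpha,min}$ statements again follow by dualising.

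The main obstacle is that the reverse inequality genuinely needs Theorem \ref{tensor}, namely stability of the reflexive tensor product of $\alpha$-semistable sheaves, which rests on the deep Kobayashi-Hitchin correspondence for Gauduchon metrics: without semistability of the subquotients $A_i\otimes B_j$ one only obtains $\mu_{\alpha,max}(A_i\otimes B_j)\geq\mu_i+\nu_j$, which goes the wrong way. Everything else is routine bookkeeping with torsion and reflexive hulls — checking that taking double duals and replacing a sheaf by its image modulo torsion in codimension $\geq 2$ alters neither rank nor first Chern class, hence no slope.
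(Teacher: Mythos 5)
The paper does not actually prove this statement: it is quoted from \cite{CPe} and \cite{GKP}, with only the one-line remark that it follows from the existence of Harder--Narasimhan filtrations with respect to movable classes. Your argument is a correct reconstruction along exactly that line --- lower bound from the image of $\cF_1\otimes\cF_2$, upper bound by tensoring the two HN filtrations and using Theorem \ref{tensor} together with the short-exact-sequence lemma for $\mu_{\alpha,\max}$, and the remaining cases by duality and the direct-summand trick in characteristic zero --- with the torsion-in-codimension-$\geq 2$ bookkeeping handled appropriately; you also correctly isolate Theorem \ref{tensor} as the only genuinely deep input.
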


\noindent The results collected here are relevant in the context of the vanishing criterion we discuss next.

\subsection{A vanishing criterion: from exterior to tensor powers}

We consider the following situation: $\pi:X'\to X$ is a finite ramified Galois cover of group $G$ between two connected complex projective manifolds. Let $E'$ be a $G$-invariant holomorphic vector bundle on $X'$, and $L'$ be any numerically trivial line bundle on $X'$. We finally consider also ample movable classes $\alpha=H^{n-1}$, for $H$ varying in an non-empty open subset of the polarisation classes on $X$, with $\alpha':=\pi^*(\alpha)$ their inverse images on $X'$. By \cite{GT}, Proposition 6.5, these $\alpha's$ thus cover a nonempty open subset $U$ in the cone of movable classes on $X$. The proof of \cite{GT} consists in deriving the map $p:H\to H^{n-1}$, the Hard Lefschetz theorem implying that it is submersive at any point $H$\footnote{The authors also show the injectivity of $p$ by a ingenious use of Khovanskii-Teissier inequalities.}.

\begin{theorem}\label{van1} We assume that the following holds

\begin{enumerate}

\item[(1)] $\mu_{\pi^*(\alpha), max}(E')\leq 0$, for any $\alpha\in U$.
\smallskip

\item[(2)]$H^0(X', \wedge^qE'\otimes L')=0$, for any $q>0$, and $L'\equiv 0$ on $X'$. 
\end{enumerate}
Then we have 
$$H^0(X', \otimes^m E'\otimes L')=0,$$ 
for any $m>0$ and $L'\equiv 0$ on $X'$.
\end{theorem}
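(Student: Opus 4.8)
The plan is to deduce the vanishing for tensor powers from the vanishing for exterior powers, using the representation theory of $GL$ together with the slope hypothesis (1) to control all the Schur functors that appear. Recall that $\otimes^m E'$ decomposes, as a $GL(r)$-representation, into a direct sum of Schur functors $\mathbb{S}_\lambda(E')$ over partitions $\lambda$ of $m$, each appearing with multiplicity equal to the dimension of the corresponding irreducible representation of the symmetric group $S_m$. So it suffices to prove $H^0(X', \mathbb{S}_\lambda(E')^{\star\star}\otimes L')=0$ for every partition $\lambda$ of $m$ with at most $r=\operatorname{rk}(E')$ rows and every $L'\equiv 0$. (One should be slightly careful that Schur functors of a vector bundle, or their reflexive hulls, behave well; since $E'$ is a bundle this is standard.)

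The key reduction is the classical fact that any Schur functor $\mathbb{S}_\lambda(E')$ is a direct summand of a tensor product of exterior powers, namely $\mathbb{S}_\lambda(E')$ occurs inside $\wedge^{\lambda_1^t}E'\otimes\cdots\otimes\wedge^{\lambda_k^t}E'$, where $\lambda^t$ is the conjugate (transposed) partition (Pieri / the fact that the "column-shape" Schur functors generate). Thus I would argue: first, hypothesis (2) says $H^0(X',\wedge^q E'\otimes L')=0$ for all $q>0$ and all $L'\equiv 0$; for $q=0$ we have $\wedge^0 E'=\cO_{X'}$, and $H^0(X', L')$ with $L'\equiv 0$ is either $0$ or $L'$ is trivial. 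Hypothesis (1), $\mu_{\pi^*(\alpha),\max}(E')\le 0$ for $\alpha$ in the open set $U$, combined with Theorem \ref{tensor, I}, gives $\mu_{\pi^*(\alpha),\max}(\wedge^{a_1}E'\wh\otimes\cdots\wh\otimes\wedge^{a_j}E')\le 0$ as well, hence (by Lemma \ref{negative}, since $L'\equiv 0$ does not change the slope) $H^0$ of any such tensor product of exterior powers, twisted by $L'\equiv 0$, vanishes as soon as at least one $a_i>0$ — UNLESS the whole sheaf has a trivial quotient, which is the delicate case. The point of using the full open set $U$ of movable classes rather than a single one is precisely to rule out the degenerate possibility: if some reflexive tensor power had a sub-line-bundle of slope exactly $0$ for all $\alpha\in U$, one uses that a line bundle numerically trivial against an open cone of movable classes is numerically trivial, reducing again to case (2) with a twist.

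Concretely, I would run an induction on $m$. For $m=1$, $\otimes^1 E'=E'=\wedge^1 E'$ and the statement is hypothesis (2) with $q=1$ (plus the trivial remark about $L'$). For the inductive step, decompose $\otimes^m E' = \bigoplus_{\lambda\vdash m}\mathbb{S}_\lambda(E')^{\oplus f_\lambda}$; for each $\lambda$ with $\lambda^t=(\mu_1\ge\mu_2\ge\cdots\ge\mu_k)$, embed $\mathbb{S}_\lambda(E')\hookrightarrow \wedge^{\mu_1}E'\ot\cdots\ot\wedge^{\mu_k}E'$ as a direct summand (so a section of $\mathbb{S}_\lambda(E')\ot L'$ gives a section of the big tensor product twisted by $L'$). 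Since $|\lambda|=m\ge 1$, at least one $\mu_i\ge 1$. If $k\ge 2$ or $\mu_1<m$, the twisted tensor product of exterior powers is a proper "sub-product" to which one can apply hypothesis (2) on the $\wedge^{\mu_i}E'$ factor together with the semistability/slope machinery (Theorems \ref{tensor} and \ref{tensor, I}) to conclude its $H^0$ vanishes; the remaining case $\lambda=(1^m)$, i.e. $\mathbb{S}_\lambda(E')=\wedge^m E'$, is directly hypothesis (2). This handles every summand, hence $H^0(X',\otimes^m E'\ot L')=0$.

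The main obstacle I anticipate is the handling of the "trivial quotient / trivial sub-line-bundle" degenerate case — that is, controlling sections of a tensor product of exterior powers whose slope is exactly $0$ rather than negative. A bundle of maximal slope $\le 0$ need not have vanishing $H^0$ (it can contain $\cO_{X'}$). This is exactly why the statement is formulated with the open set $U$ of classes and with arbitrary numerically trivial twists $L'$: one must show that a destabilizing sub-line-bundle $M\subset \wedge^{a_1}E'\wh\ot\cdots$ of $\alpha$-slope $0$ for all $\alpha\in U$ is numerically trivial (using that $U$ spans, via \cite{GT}, an open set of movable classes, and a line bundle numerically trivial against an open set of movable classes is numerically trivial in $N^1(X')_{\bR}$), and then feed this back into hypothesis (2) — which is stated for all $L'\equiv 0$ precisely to absorb such twists — ideally via an induction on the number of exterior-power factors. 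Making this bootstrap precise, so that the induction on $m$ closes without circularity, is the technical heart of the argument.
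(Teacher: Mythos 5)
Your proposal does not close. You correctly identify the obstruction (the zero-slope boundary case, where a tensor product of exterior powers has $\mu_{\alpha',\max}=0$ and hence may still admit sections), and you even describe the ingredients needed to attack it (the open set $U$, the $L'\equiv 0$ twists in hypothesis~(2)), but you never actually carry out the argument; your final paragraph explicitly defers the ``bootstrap'' that would be needed. As written, the proof rests on the unestablished claim that $H^0(X',\wedge^{a_1}E'\,\wh\ot\cdots\wh\ot\,\wedge^{a_k}E'\ot L')=0$, which hypothesis~(2) does not directly give you once $k\geq 2$.

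The paper's proof avoids the Schur-functor detour entirely and works one level down. Given a nonzero section of $\otimes^m E'\ot L'$, multiplicativity of $\mu_{\alpha',\max}$ together with hypothesis~(1) forces $\mu_{\alpha',\max}(E')=0$ for \emph{every} $\alpha\in U$ --- crucially, for $E'$ itself, not for the tensor power. One then takes the maximal $\alpha'$-destabilizing subsheaf $\cF'\subset E'$ of rank $q>0$ (choosing $\alpha$ so that $q$ is maximal), and uses the openness of $U$ to show that $\cF'$ remains the maximal destabilizer for nearby $\beta'$; by $G$-invariance, $\det(\cF')$ descends up to torsion to a line bundle $L$ on $X$ with $L\cdot\alpha=0$ for all $\alpha\in U$, hence $L\equiv 0$ and $\det(\cF')\equiv 0$. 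Since $\det(\cF')\hookrightarrow\wedge^q E'$, this produces a nonzero section of $\wedge^q E'\ot L'$ with $L':=-\det(\cF')\equiv 0$, contradicting hypothesis~(2). The key step you are missing is precisely this descent from the tensor power to the maximal destabilizer of $E'$ and then to a single exterior power via its determinant --- with that move, there is no need to handle products of exterior powers or to invoke any representation theory of $GL$. Your Schur-functor decomposition reproduces a harder version of the problem rather than reducing it.
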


\noindent Before proceeding to the proof, we remark that if $E'=\pi^*(E)$ for some vector bundle $E$ on $X$, and if $H_1(X,\bZ)=0$\footnote{In particular, if the algebraic fundamental group $\widehat{\pi}_1(X)$ of $X$ is trivial.}, then the hypothesis (2) above can be replaced by the weaker hypothesis:
\begin{enumerate}

\item[(2')]  $H^0(X, \wedge^qE)=0$, for any $q>0$.
\end{enumerate}
and obtain the same conclusion.

\begin{proof} (of Theorem \ref{van1}): Assume by contradiction that we have a non-zero section of $\otimes^m E'\otimes L'$ for some $L'\equiv 0$. Then $\mu_{\alpha',max}(\otimes^m E'\otimes L')=m. \mu_{\alpha',max}(E')=0$, for every $\alpha\in U$. There thus exists an $\alpha$ for which the maximal $\alpha'$-destabilizing subsheaf $\cF'\subset E'$ has maximum rank $q>0$. Because $\mu_{\beta',max}(E')=0$ for $\beta'=\pi^*(\beta)$ with $\beta$ close to $\alpha$, we see\footnote{This clever observation was communicated to us by Matei Toma.} that $\cF'$ is also the $\beta'$-maximal  destabilizing subsheaf of $E'$ (just write $\beta=\alpha+t.\gamma$, $t>0$, and replace $t$ by -$t$, just as in the proof of the vanishing of a derivative in a local maximum). Since $\cF'$ is $G$-invariant, so is $det(\cF')$, and so $N.det(\cF')=\pi^*(L)$, for some $L\in Pic(X)$, if $N:=Card(G)=deg(\pi)$. Thus $N.det(\cF').\pi^*(\alpha)=\pi^*(L).\pi^*(\alpha)=N.L.\alpha=0, \forall \alpha\in U$, and: $L\equiv 0$, so that, also: $det(\cF')\equiv 0$. Since $det(\cF')\subset \wedge^qE'$, we get that $H^0(E',\wedge^qE'\otimes L')\neq 0$, if $L'=-\det(\cF')$. This contradicts the hypothesis 2, and proves the theorem. 

Let us show how to modify the proof in order to get the conclusion from the hypothesis 2' if $E'=\pi^*(E)$ and if $H_1(X,\bZ)=0$: in this case indeed, by Proposition \ref{pi^*}, $\mu_{\pi^*(\alpha),max}(\pi^*(E))=\mu_{\alpha,max}(E)$ and the maximal $\pi^*(\alpha)$-destabilizing subsheaf $\cF'$ of $\pi^*(E)$ is the inverse image by $\pi$ of the maximal destabilizing subsheaf $\cF$ of $E$. We thus obtain $\cF\subset E$ such that $det(\cF)\equiv0$, so that $\cO_X\cong \det(\cF)$, because $H_1(X,\bZ)=0$, and so: $\cO_X\cong \det(\cF)\subset \wedge^q(E)$, contradicting 2'. \end{proof}
\medskip

\noindent An illustration of the applications of this
result is the following statement.

\begin{corollary}\label{corvan1} \cite{Camp16} Let $(X,D)$ be a smooth orbifold pair with $X$ projective smooth and $D$ a reduced divisor on $X$ with simple normal crossings. Assume that:
\begin{enumerate}

\item[(1)] $(X,D)$ is Fano (ie: $-(K_X+D)$ is ample);
\smallskip

\item[(2)] $H^0(X,\Omega^q_X(Log(D))\otimes L)=0$, for $q>0$ and any $L\equiv 0$ in $Pic(X)$. 
\end{enumerate}
Then we have
$H^0(X, \otimes^m \Omega^1_X(Log(D))\otimes L)=0$
for any $m>0$ and $L\equiv 0$ on $X$.
\end{corollary}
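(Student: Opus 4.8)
The plan is to deduce Corollary \ref{corvan1} from Theorem \ref{van1} by exhibiting the logarithmic cotangent sheaf $\Omega^1_X(\mathrm{Log}(D))$ as an instance of the bundle $E'$ there. Since the divisor $D$ is reduced with simple normal crossings, I would first record that $\Omega^1_X(\mathrm{Log}(D))$ is a genuine vector bundle on $X$; there is no need to pass to a ramified cover, so I take $\pi = \mathrm{id}$, $G$ trivial, $X' = X$, $E = E' = \Omega^1_X(\mathrm{Log}(D))$, and $L' = L$ an arbitrary numerically trivial line bundle on $X$. Hypothesis (2) of Theorem \ref{van1} is then exactly hypothesis (2) of the Corollary (note $\wedge^q E' = \Omega^q_X(\mathrm{Log}(D))$). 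The only thing left to verify is hypothesis (1): that $\mu_{\alpha,\max}(\Omega^1_X(\mathrm{Log}(D))) \leq 0$ for the classes $\alpha = H^{n-1}$ ranging over the open subset $U$ of the movable cone described before the statement of Theorem \ref{van1}.

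To establish this, I would argue by contradiction using Theorem \ref{rcintro}. Dualizing, a subsheaf of $\Omega^1_X(\mathrm{Log}(D))$ of positive $\alpha$-slope corresponds to a quotient $T_X(-\mathrm{Log}(D)) \twoheadrightarrow \cQ$ with $c_1(\cQ)\cdot\alpha < 0$; equivalently, the maximal destabilizing subsheaf $\cF$ of $T_X(-\mathrm{Log}(D))$ — which is a subsheaf of $T_X$, hence a subsheaf of the tangent bundle — has $\mu_{\alpha,\min}(\cF) = \mu_{\alpha,\max}(\Omega^1_X(\mathrm{Log}(D)))^{\vee\text{-style}} > 0$ once the displayed inequality fails. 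More precisely: if $\mu_{\alpha,\max}(\Omega^1_X(\mathrm{Log}(D))) > 0$ for some $\alpha \in U$, then there is a subsheaf $\cS \subset \Omega^1_X(\mathrm{Log}(D))$ with $\mu_\alpha(\cS) > 0$, and its annihilator produces a foliation-type subsheaf $\cF \subset T_X$ with positive minimal $\alpha$-slope. I would need to check that $\cF$ is indeed closed under Lie bracket (it is the maximal destabilizing subsheaf of $T_X$ or of a naturally associated sheaf, and one invokes the standard fact — Bogomolov–McQuillan style — that a subsheaf of $T_X$ of positive minimal slope with respect to a movable class is automatically a foliation, or argue directly on the saturation). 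By Theorem \ref{rcintro}, $\cF$ is algebraic with rationally connected leaf closures, so it defines a rational map $p: X \dashrightarrow Z$ whose general fibre $F$ is rationally connected. But the Fano hypothesis $-(K_X+D)$ ample is incompatible with this: restricting to the closure $F$ of a general leaf, the logarithmic structure $D|_F$ still makes $(F, D|_F)$ a pair with $-(K_F + D|_F)$ related to a positive class, yet the slope computation forces $c_1(T_X(-\mathrm{Log}(D)))$ to be negative along $F$ in a way that contradicts rational connectedness of $F$ together with ampleness — essentially the argument sketched for Theorem \ref{main} in the case $\Delta = 0$, but now with $K_X$ replaced by $-(\text{ample})$, which only makes the contradiction easier to reach. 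Alternatively, and perhaps more cleanly, one observes that $H^0(F, \cO_F) = \bC$ together with rational connectedness gives vanishing of holomorphic (logarithmic) forms on $F$, contradicting the existence of the positive-slope subsheaf $\cS$ restricted to $F$.

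A cleaner route for hypothesis (1), avoiding the foliation detour, is to invoke directly the orbifold/logarithmic version of Miyaoka's theorem: for $(X,D)$ Fano the logarithmic tangent bundle $T_X(-\mathrm{Log}(D))$ is generically ample, and in particular its restriction to a sufficiently general complete intersection curve $C = H^{n-1}$ is ample, whence $\mu_{\alpha,\min}(T_X(-\mathrm{Log}(D))) > 0$ for $\alpha = [C]$, i.e.\ $\mu_{\alpha,\max}(\Omega^1_X(\mathrm{Log}(D))) < 0 \leq 0$. This holds on the Zariski-open set of polarizations, hence on $U$. Either way — via Theorem \ref{rcintro} or via the known logarithmic Miyaoka positivity for log-Fano pairs — hypothesis (1) of Theorem \ref{van1} is met, and applying that theorem gives $H^0(X, \otimes^m\Omega^1_X(\mathrm{Log}(D)) \otimes L) = 0$ for all $m > 0$ and $L \equiv 0$, which is the assertion.

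The main obstacle, I expect, is verifying hypothesis (1) with the correct uniformity in $\alpha$: one needs the slope bound not just for a single complete intersection class but for $\alpha$ ranging over the open set $U$ in the movable cone, and one must ensure the destabilizing subsheaf that would violate it genuinely yields an \emph{algebraic} foliation so that Theorem \ref{rcintro} applies (integrability of the maximal destabilizing subsheaf of $T_X$, or of $T_X(-\mathrm{Log}(D))$, with respect to a movable class). Once that integrability and the rational-connectedness of leaves are in hand, the Fano hypothesis kills the configuration immediately — in the log-Fano case the base $Z$ would have to carry a pseudo-effective canonical class by \cite{GHS}-type arguments while $X$ is covered by rational curves forcing the opposite, a contradiction that is softer than in the $K_X$ pseudo-effective setting of Theorem \ref{main}.
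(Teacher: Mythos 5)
The paper itself gives no proof here --- it simply cites \cite{Camp16} --- so let me assess your argument on its own terms. Your reduction is the right one: take $\pi = \mathrm{id}$, $E' = \Omega^1_X(\mathrm{Log}(D))$ in Theorem~\ref{van1}; hypothesis (2) of \ref{van1} then is exactly hypothesis (2) of the Corollary, since $\wedge^q\Omega^1_X(\mathrm{Log}(D)) = \Omega^q_X(\mathrm{Log}(D))$. The entire weight rests on hypothesis (1): $\mu_{\alpha,\max}(\Omega^1_X(\mathrm{Log}(D))) \leq 0$ for $\alpha$ in the open set $U$. It is precisely there that your argument breaks.

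Your ``cleaner route'' asserts that for a log-Fano pair $T_X(-\mathrm{Log}(D))$ is generically ample, hence $\mu_{\alpha,\min}(T_X(-\mathrm{Log}(D)))>0$, i.e.\ $\mu_{\alpha,\max}(\Omega^1_X(\mathrm{Log}(D)))<0$. This is false, and the paper itself supplies the counterexample in the remark immediately after Corollary~\ref{corvan1}: for $(\bP^n,D)$ with $D$ a union of $k\leq n$ general hyperplanes (a log-Fano pair), $h^0(X,\Omega^1_X(\mathrm{Log}(D)))=k>0$. Each nonzero global log-one-form gives a rank-one subsheaf $\cO_X\hookrightarrow\Omega^1_X(\mathrm{Log}(D))$ of slope $0$, so $\mu_{\alpha,\max}(\Omega^1_X(\mathrm{Log}(D)))\geq 0$ and generic ampleness of the log-tangent fails. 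Notice also that this example is perfectly consistent with hypothesis (1), which only demands $\leq 0$; the phenomenon is exactly what the extra hypothesis (2) on exterior powers is there to neutralise. Your first route (via Theorem~\ref{rcintro}) is also not a proof: a subsheaf of $\Omega^1_X(\mathrm{Log}(D))$ with positive slope is \emph{not} equivalent to the maximal destabilizing subsheaf of $T_X(-\mathrm{Log}(D))$ having positive minimal slope (the former says $\mu_{\alpha,\min}(T)<0$, the latter $\mu_{\alpha,\max}(T)>0$, and for log-Fano the latter is always true regardless); and the contradiction you gesture at by analogy with Theorem~\ref{main} does not transfer, because Theorem~\ref{main} derives a contradiction from $K_X+\Delta$ \emph{pseudo-effective}, whereas here $K_X+D$ is anti-ample and finding a fibration with non-psef fibres is no contradiction at all.

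The slope bound can in fact be obtained within the framework of the paper, but it requires leaving the purely logarithmic setting. Choose $N\gg0$ so that $-N(K_X+D)$ is very ample, and $A'\in|{-N(K_X+D)}|$ general so that $D+A'$ is snc. Set $\Delta := D + \tfrac{1}{N}A'$; then $(X,\Delta)$ is a smooth lc pair (klt away from $D$) with $K_X+\Delta\equiv 0$. Theorem~\ref{K=0} gives $\mu_{\pi^*\alpha,\max}(\pi^*\Omega^1(X,\Delta))\leq 0$ on an adapted cover $\pi$, and since $D$ keeps coefficient $1$ in $\Delta$ one has a sheaf inclusion $\pi^*\Omega^1_X(\mathrm{Log}(D)) \subset \pi^*\Omega^1(X,\Delta)$, whence $\mu_{\alpha,\max}(\Omega^1_X(\mathrm{Log}(D)))\leq 0$. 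That is the missing verification of hypothesis (1), and it needs the full orbifold machinery (a fractional boundary, a Kawamata cover, Theorem~\ref{K=0}), which your argument was trying to avoid.
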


\noindent For the proof we refer to \cite{Camp16}.
\medskip

\noindent Remark that, in general, $H^0(X,\Omega^q_X(Log(D)))\neq 0$ for Fano pairs $(X,D)$ as above (as shown by $(\Bbb P^n,D)$, if $D$ is a union of $k\leq n$ hyperplanes in general position, for which $h^0(X,\Omega^1_X(Log(D)))=k.)$

\begin{question} Let $(X,D)$ be Fano as above. Does the conclusion of corollary \ref{corvan1} hold if one only assumes that $H^0(X,\Omega^1_X(Log(D)))=0$\footnote{Instead of: $H^0(X,\Omega^q_X(Log(D))\otimes L))=0$, for $q>0$ and any $L\equiv 0$ in $Pic(X)$.}? 
One may indeed wonder whether the Quasi-Albanese map is the only obstruction to the vanishing of $H^0(X, \otimes^m \Omega^1_X(Log(D)))$ for any $m>0$.
\end{question}


\subsection{Birational invariance of slope-positive foliations.}

\noindent We consider a saturated distribution $\cF\subset TX$, and a birational morphism $\pi:\wh X\to X$,
where $\wh X$ is also non-singular. Then we get an induced distribution on $\wh \cF\subset T\wh X$, as follows. The tangent bundle of $\wh X$ can be seen as subsheaf of the $\pi$-inverse image
$\pi^\star(T_X)$ of the tangent bundle of $X$, and we define
$\wh \cF:= \pi^\star(\cF)\cap T_{\wh X}$. 
\smallskip

\noindent We establish next the 
preservation of the slopes under 
birational modifications. Although very simple, this observation is fundamental. It is also noticed in the very recent article \cite{Dr}. It was already stated in \cite{CPe}, Section 5, but not used in the context of foliations.

\begin{lemma}\label{bmq} Let $\pi:\wh X\to X$ be a birational morphism between two smooth and connected complex projective manifolds.

Let $\cF\subset TX$ be a saturated distribution, and $\wh \cF:=\pi^*(\cF)\cap T\wh X$ be its inverse 
image in $T\wh X$. Let $\alpha$ be a movable class on $X$, and $\wh \alpha:= \pi^\star \alpha$ be its inverse image on $\wh X$. Then $\wh \alpha\in \Mov(\wh X)$ is a movable class on $\wh X$. Moreover, we have 
\begin{equation}\label{620}
\mu_{\alpha}(\cF)=\mu_{\wh\alpha}(\wh \cF)
\end{equation} 
\end{lemma}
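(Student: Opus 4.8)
The statement has two parts: that $\wh\alpha := \pi^\star\alpha$ is again movable, and that the $\alpha$-slope is preserved. For the first part, I would use the projection formula: if $D$ is an effective divisor on $\wh X$, then $\wh\alpha \cdot D = \pi^\star\alpha \cdot D = \alpha \cdot \pi_\star D$, and $\pi_\star D$ is an effective divisor (or zero) on $X$ since $\pi$ is birational; hence $\wh\alpha\cdot D \geq 0$, so $\wh\alpha\in\Mov(\wh X)$ by Definition \ref{mov}. (Alternatively, one invokes the description of $\Mov$ via pushforwards of complete intersection classes recalled after Definition \ref{mov}.)

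\textbf{The slope identity.} The ranks agree: $\rk(\wh\cF) = \rk(\cF)$, because $\pi$ is an isomorphism over a dense open set $U\subset X$ and over $\pi^{-1}(U)$ one has $\wh\cF = \pi^\star\cF$ (the intersection with $T\wh X$ is harmless there since $T\wh X$ and $\pi^\star T_X$ agree over $\pi^{-1}(U)$). So it suffices to compare the degrees $c_1(\wh\cF)\cdot\wh\alpha$ and $c_1(\cF)\cdot\alpha$. The key point is that $\det\wh\cF$ and $\pi^\star\det\cF$ differ only by a divisor supported on the exceptional locus of $\pi$: indeed $\wh\cF \hookrightarrow \pi^\star\cF$ is an injection of torsion-free sheaves of the same rank which is an isomorphism outside the exceptional set $E = \mathrm{Exc}(\pi)$, so $\det\wh\cF = \pi^\star\det\cF - R$ for some effective (in fact $\pi$-exceptional) divisor $R$ with $\Supp(R)\subseteq E$, i.e. $\pi_\star R = 0$. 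Then by the projection formula
\begin{equation}\label{proofeq}
c_1(\wh\cF)\cdot\wh\alpha = \bigl(\pi^\star c_1(\cF) - R\bigr)\cdot\pi^\star\alpha = c_1(\cF)\cdot\alpha - R\cdot\pi^\star\alpha = c_1(\cF)\cdot\alpha - \pi_\star R\cdot\alpha = c_1(\cF)\cdot\alpha,
\end{equation}
which gives \eqref{620} after dividing by the common rank.

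\textbf{Main obstacle.} The only genuinely delicate point is justifying that the discrepancy divisor $R$ between $\det\wh\cF$ and $\pi^\star\det\cF$ is $\pi$-exceptional, i.e. that $\pi_\star R = 0$ and not merely that $R$ is effective. This follows because the inclusion $\wh\cF\subset\pi^\star\cF$ is an isomorphism in codimension one away from the exceptional locus (the two sheaves being reflexive, or at least saturated, hence determined in codimension one), so $\Supp(R)$ cannot contain the strict transform of any divisor of $X$; every component of $R$ is contracted by $\pi$, whence $\pi_\star R = 0$. One should also note $\wh\cF$ is saturated in $T\wh X$ by construction (intersection of $T\wh X$ with the subsheaf $\pi^\star\cF$, itself saturated where $\pi$ is an isomorphism), so its determinant is well-behaved. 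Once this is in place, \eqref{proofeq} and the rank equality finish the proof.
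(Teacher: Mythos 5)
Your proof is correct and essentially matches the paper's: both hinge on the observation that $\det\wh\cF$ and $\pi^\star\det\cF$ differ by an effective $\pi$-exceptional divisor, whose intersection with $\wh\alpha=\pi^\star\alpha$ vanishes. Your direct projection-formula argument for $\wh\alpha\in\Mov(\wh X)$ and your careful check that every component of the discrepancy divisor is contracted by $\pi$ fill in details the paper leaves implicit (it simply cites \cite{BDPP} for movability), but the underlying route is the same.
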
 
\noindent 

\begin{proof} The fact that $\wh \alpha$ is a movable class on $\wh X$ is a direct consequence of \cite{BDPP}.
We have $\mu_{\alpha}(\cF)=\mu_{\wh \alpha}(\pi^\star\cF)$; on the other hand, $\det(\wh\cF)$ and $\det(\pi^\star\cF')$ differ by an (effective) $\pi$-exceptional divisor $E$ on $\wh X$. Since 
$\wh \alpha \cdot E= 0$ for any such divisor, the statement is proved. 
\end{proof}

\begin{rem}{\rm
In particular, both slopes in \eqref{620} are simultaneously positive, negative, or zero \emph{provided that the class
$\wh\alpha$ is the inverse image of the movable class $\alpha$ on $X$}. 
However, this type of preservation of slope-positivity with respect to movable classes $\beta$ and 
$\pi_\star\beta$ on $\wh X$ and $X$ respectively might fail, 
as illustrated by the 
following example. Let $\cF$ be the foliation given by a generic pencil of conics on $\bP^2$. The slope is negative, but becomes positive on the blow-up $\wh \bP^2$ of the four base-points, if one chooses for $\beta$ on $\wh \bP^2$ any ample class. We thank J.\ Pereira for this observation and for this example.}
\end{rem} 
\medskip

\begin{rem}\label{sat}
  {\rm Let $\cF_1\subset \cF_2$ be two torsion-free coherent sheaves having the same rank (we recall that in this context,
the sheaves $\cF_i$ are locally free outside a subset of co-dimension at least two, and their respective rank is defined via the associated vector bundles). If $\mu_{\alpha, \rm min}(\cF_1)> 0$, then we equally have $\mu_{\alpha, \rm min}(\cF_2)> 0$.
This is a consequence of the fact that $\det(\cF_2)= \det(\cF_1)\otimes \cO(D)$ for some 
\emph{effective} divisor $D$. Notice, by contrast, that if $\cF_1$ is semi-stable, $\cF_2$ need not be semi-stable (see Remark \ref{rstab}).
}
\end{rem}

\medskip


\section{Pseudoeffectivity of relative canonical bundles}
\smallskip

\noindent Let $p: X\dasharrow Z$ be a dominant and connected rational map, where $X$ and $Z$ are non-singular projective 
manifolds. In this section we define the `saturated' relative canonical bundle of $p$, and establish some of its birational positivity properties with respect to a movable class. This relies in particular on the preceding observation that changing both $X$ and the birational model $Z$, and lifting $\alpha$, the slopes of the corresponding sheaves are preserved. 

Let $X_0\subset X$ be the largest Zariski open set such that the 
restriction 
\begin{equation}\label{623}
p|_{X_0}: X_0\to Z
\end{equation}
of our given rational map $p$ is holomorphic; in particular we have $\codim_{X}X_0\geq~2$. The map \eqref{623} above will be denoted by $p_0$ in the sequel.

\begin{defn}\label{carat}
Let $K_Z$ be any divisor on $Z$ in the canonical class. Let $p^\star K_Z$ be the 
closure $\overline{p_0^\star K_Z}$ of the analytic cycle $p_0^\star K_Z$ of $X_0$. The relative canonical bundle of $p$ is 
\begin{equation}\label{624}
K_{X/Z}:= K_X- p^\star K_Z.
\end{equation}
\end{defn}

\noindent We introduce next the divisor $D(p)$ on $X$ by:
\begin{equation}\label{625}
D(p):=\sum_k(t_k-1)F_k,
\end{equation}
where the hypersurfaces $F_k$ in \eqref{625} are all the irreducible divisors of $X$ which, restricted to $X_0$, are mapped by $p_0$ to divisors $G_k\subset Z$, such that $p_0^\star G_k$ vanishes to the order $t_k\geq 2$ along $F_k$. It is a key object in the study of \emph{holomorphic foliations}, whose definition will be recalled next.

\begin{defn}\label{fol} A foliation on a manifold $X$ is a coherent subsheaf $\cF\subset T_X$ enjoying the following
properties:

\begin{enumerate}

\item[(i)] $\cF$ is closed under the Lie bracket, and
\smallskip

\item[(ii)] The quotient $T_X/\cF$ is torsion-free, i.e. $\cF$ is saturated in $T_X$.
\end{enumerate}
\end{defn}

\noindent Let $X_0\subset X$ be the maximal open subset of $X$ such that the restriction 
$\displaystyle \cF|_{X_0}$ is a sub-bundle. We note that the codimension of the complement $X\setminus X_0$ in 
$X$ is at least two, given that $\cF$ is torsion-free. A \emph{leaf} of $\cF$ is a connected, locally closed holomorphic sub-manifold $L\subset X_0$ whose tangent bundle coincides with $\cF$, i.e. $T_L= \cF|_L$.
A leaf $L$ is called algebraic if it is open in its Zariski closure.
\smallskip

\noindent For example, the kernel of the differential of a rational map $p: X\dasharrow Z$ defines a foliation on $X$, whose leaves are algebraic.
Even if $\Delta= 0$, the relevance of the divisor $D(p)$ to the study of foliations is explained by the following remark \ref{KF}. This is certainly well-known to experts. We will not give the proof of this statement here, because 
the more general orbifold version will be established in
\ref{claim2}.

\begin{rem}\label{KF} {\rm Let $p:X\dasharrow Z$ be a dominant rational fibration, and let $\cF$ be a foliation on $X$  such that $\cF= \Ker(dp)$. Let $\pi_X: \wh X\to X$ and $\pi_Z: \wh Z\to Z$ be modifications of 
$X$ and $Z$, respectively, such that the following properties are satisfied:
\begin{enumerate}

\item[(i)] The induced map $\wh p: \wh X\to \wh Z$ is regular, its discriminant locus $ E$ is a snc divisor and so it is the inverse image $\wh p^{-1}(E)$.

\smallskip

\item[(ii)] If a component $W$ of $\wh p^{-1}(E)$ is $\wh p$-exceptional, then it is also $\pi_X$-exceptional.
\end{enumerate}
Let $\wh \cF$ be the foliation induced by $\cF$ on $\wh X$; then we have the equality 

\begin{equation}\label{722}
K_{\wh \cF}= K_{\wh X/\wh Z}- D(\wh p).
\end{equation}
modulo a divisor which is $\pi_X$- exceptional,
cf. Lemma \ref{claim2} and \cite{Dr}. In particular, if $K_X$ is pseudo-effective, then so it is 
$K_{\cF}$, by the crucial theorem \ref{pseffff} below.
}
\end{rem}

\noindent In the previous remark, we denote by 
\begin{equation}\label{1001}
K_{\cF}:= \det(\cF^\star)
\end{equation}
the \emph{canonical bundle of a foliation} $\cF$; the determinant above is the bi-dual of the maximum exterior power of $\cF^\star$.  

\medskip

\noindent The main result of this section is the following one. A similar observation is made in \cite{Dr}, Proposition 4.3 and the references there.

\begin{theorem}\label{pseffff} Let $(X, \Delta)$ be a lc pair, such that $X$ is smooth and such that 
$\Delta$ is snc. Assume that $K_X+ \Delta$ is pseudo-effective.
Then for any rational map $p$ as in Remark \ref{KF}, the divisor 
$$K_{X/Z}+ \Delta^{\rm hor }-D(p)$$ is pseudo-effective.
\end{theorem}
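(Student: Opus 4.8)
The strategy is to reduce to the fibration version of the theorem on a suitable ``neat'' model, where the horizontal part of $\Delta$ and the ramification divisor $D(p)$ behave well, and then to invoke the positivity of the relative canonical bundle of a family whose generic fibre has pseudo-effective (orbifold) canonical bundle. By Lemma \ref{bmq}, degrees against lifts of movable classes are preserved under the modifications $\pi_X:\wh X\to X$, $\pi_Z:\wh Z\to Z$ described in Remark \ref{KF}; since pseudo-effectivity is also a birational notion (it can be checked after pulling back by $\pi_X$, modulo $\pi_X$-exceptional effective divisors), it suffices to prove the statement on $\wh X$, i.e.\ to show that $K_{\wh X/\wh Z}+\wh\Delta^{\rm hor}-D(\wh p)$ is pseudo-effective, with $\wh p:\wh X\to\wh Z$ regular, its discriminant $E$ snc, and $\wh p^{-1}(E)$ snc, the $\wh p$-exceptional components of $\wh p^{-1}(E)$ being $\pi_X$-exceptional (so that they may be freely discarded).

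\textbf{Key steps.} First I would set up the orbifold base change. Restricting $\wh p$ over the complement of $E$, the pair $(\wh X,\wh\Delta^{\rm hor})$ is an lc pair that is log-smooth over $\wh Z\setminus E$, and the generic fibre $(F,\wh\Delta^{\rm hor}|_F)$ is a smooth lc pair; here the hypothesis that $K_X+\Delta$ is pseudo-effective is used in the form that the adjoint canonical bundle of this generic orbifold fibre, namely $K_F+\wh\Delta^{\rm hor}|_F$, is pseudo-effective --- this follows since it is (up to the birational identifications) the restriction of $K_{\wh X}+\wh\Delta^{\rm hor}$ to a general fibre, and the vertical part of $\Delta$ together with $D(\wh p)$ does not meet a general fibre. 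Second, I would invoke the semipositivity theorem for relative adjoint bundles of lc-trivial / log-smooth fibrations (the Kawamata--Viehweg--Kollár type weak positivity results, in the form extended to lc pairs): for the fibration $\wh p$ with the boundary $\wh\Delta^{\rm hor}$, the divisor $K_{\wh X/\wh Z}+\wh\Delta^{\rm hor}-D(\wh p)$ differs from a weakly positive (in particular pseudo-effective) $\bQ$-divisor by a divisor supported on $\wh p^{-1}(E)$ whose coefficients are controlled by the lc hypothesis; precisely, the term $-D(\wh p)$ is exactly the correction that removes the multiple-fibre contributions so that the remaining divisor is $\wh p^\star$(something pseudo-effective on $\wh Z$) plus an effective vertical divisor plus a $\pi_X$-exceptional divisor. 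Third, I would check that all the ``bad'' vertical components --- those coming from $E$ --- enter with the right sign: the lc condition on $(X,\Delta)$ guarantees that the coefficients of $\wh\Delta^{\rm hor}$ (and the structure of $D(\wh p)$) keep the vertical discrepancy divisor effective, so no negative vertical contribution survives. Finally, pushing forward / using Lemma \ref{bmq} again and the fact that pseudo-effectivity descends modulo exceptional divisors, I would conclude the statement on $X$.

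\textbf{Main obstacle.} The delicate point is the third step: controlling the vertical divisor supported over the discriminant $E$ and showing it is effective. This is where the log-canonicity of $(X,\Delta)$ is essential and where one must be careful about which components of $\wh p^{-1}(E)$ are $\wh p$-exceptional (hence, by the choice of model, $\pi_X$-exceptional and discardable) versus which dominate a divisor in $\wh Z$. One has to combine the precise formula of Remark \ref{KF}, $K_{\wh\cF}=K_{\wh X/\wh Z}-D(\wh p)$ modulo $\pi_X$-exceptional divisors, with a canonical-bundle formula for the lc fibration $(\wh X,\wh\Delta^{\rm hor})\to\wh Z$ à la Fujino--Mori, and verify that the ``moduli'' and ``boundary'' parts of that formula are pseudo-effective while the discrepancy part is effective off the $\pi_X$-exceptional locus. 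Establishing that effectivity --- essentially a local computation at each prime divisor of $\wh Z$ over which the fibre degenerates, using that the total space pair is lc and the fibre pair is lc --- is the heart of the argument; everything else is bookkeeping with movable classes and birational models using Lemma \ref{bmq}.
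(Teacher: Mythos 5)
Your overall architecture matches the paper's: pass to a neat model where $\wh p$ is regular, observe that the generic orbifold fibre inherits a pseudo-effective adjoint canonical bundle from $K_X+\Delta$, invoke a positivity result for the relative (orbifold) canonical bundle with the multiple-fibre correction $D(\wh p)$, and descend to $X$ by noting that the difference is $\pi_X$-exceptional. Where you and the paper diverge is in how that central positivity step is treated. The paper does explicit divisorial bookkeeping (comparing $K_{\wh X}+\wh\Delta$, $\pi_X^\star(K_X+\Delta)$, $\wh p^\star K_Z$ and $\pi_X^\star p^\star K_Z$ up to exceptional divisors) and then cites Theorem~2.11 of \cite{CP13} as a black box: that result asserts exactly that $K_{\wh X/Z}+\wh\Delta-D(\wh p)$ is pseudo-effective under the hypothesis that the generic fibre of $\wh p$ has pseudo-effective adjoint canonical class, and the $D(\wh p)$ term is already built into its statement. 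You instead propose to re-derive that statement from scratch using weak-positivity / canonical-bundle-formula machinery (Kawamata--Viehweg--Koll\'ar, Fujino--Mori), and you correctly flag the effectivity of the vertical discrepancy divisor as the crux. That is indeed the genuine difficulty, and it is precisely what the cited \cite{CP13} Theorem~2.11 supplies; your sketch names the tools but does not actually close that gap, whereas the paper closes it by citation. Also note that the paper's conclusion for descent is the very simple Lemma~\ref{l pseffff} (pseudo-effectivity descends from $\wh X$ to $X$ modulo $\pi_X$-exceptional divisors), which is essentially your appeal to Lemma~\ref{bmq}; the two are interchangeable here. In short: same strategy, but the paper's proof is much shorter because it outsources the hard positivity statement to a prior, tailor-made result rather than re-proving it from general semipositivity theory as you attempt to outline.
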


\noindent In the statement \ref{pseffff} above we denote by $\Delta^{\rm hor }$ the the divisor having the same multiplicities as 
$\Delta$ on the irreducible hypersurfaces of $X$ which project onto $Y$ via the map $p$, and zero for any other hypersurfaces.

\begin{proof} We shall deduce this statement from Theorem 2.11 in \cite{CP13}. 

Consider a holomorphic birational model of $p$. There exists a modifications $\pi_X: \wh X\to X$ of $X$ 
for which the next properties hold:

\begin{enumerate}

\item[(1)] The induced map $\wh p: \wh X\to Z$ is holomorphic.

\item[(2)] The $\pi_X$ inverse image of $\Delta$ is snc. 

\end{enumerate}

Define the divisor $\wh \Delta$ by the usual formula:
\begin{equation}\label{626}
E_1+ \pi_X^\star(K_X+ \Delta)= K_{\wh X}+ \wh \Delta
\end{equation}
where $E_1$ is effective and $\pi_X$-exceptional and $(\wh X, \wh \Delta)$ is lc.

By definition \ref{carat} we deduce that we have the equality
\begin{equation}\label{627}
\wh p^{\star}K_Z= \pi_X^\star\big(p^\star K_Z\big)+ E_2
\end{equation}
where $E_2$ is a $\pi_X$- exceptional divisor.

Combining \eqref{626} and \eqref{627} we get:
\begin{equation}\label{628}
E_1+ \pi_X^\star\big(K_{X/Z}+ \Delta\big)= K_{\wh X/Z}+ \wh \Delta+ E_2,
\end{equation}  
which is preserved when taking into account the multiplicity divisors of the maps
$p$ and $\wh p$:
\begin{equation}\label{629}
E_1+ \pi_X^\star\big(K_{X/Z}+ \Delta- D(p)\big)= K_{\wh X/Z}+ \wh \Delta- D(\wh p)+ E_2.
\end{equation}
Notice however that the divisors $(E_j)$ in \eqref{628} and \eqref{629} may be different, but for the notation simplicity we keep the same symbols. The point is that both of them are $\pi_X$- exceptional and effective.

Next we use the pseudo-effectivity theorem in \cite{CP13}, which implies that the $\bQ$-line bundle
\begin{equation}\label{630}
K_{\wh X/Z}+ \wh \Delta- D(\wh p)
\end{equation}
is  pseudo-effective on $\wh X$ (we remark that at this point the hypothesis $(X, \Delta)$ \emph{is log-canonical} is used in an essential manner). The hypothesis in the statement \cite{CP13} are indeed satisfied, since for any $z\in Z$ generic 
the restriction $K_{\wh X_z}+ \wh \Delta|_{X_z}$ is pseudo-effective, since so is $K_X+\Delta$, and thus also $K_{\wh X}+ \wh \Delta$.

\noindent The conclusion follows from the following simple statement.

\begin {lemma}\label{l pseffff} Let $\pi:\wh X\to X$ be a modification between projective manifolds. Let $\wh L,
L$ be line bundles on $\wh X$ and $X$ respectively. Assume that:
\begin{equation}\label{631}
\wh L= \pi^*L+E_1
\end{equation}
for some $\pi$-exceptional divisor $E_1$ on $\wh X$. 
If $\wh L$ is pseudo-effective, then so is $L$.
\end{lemma}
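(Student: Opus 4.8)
The plan is to prove Lemma~\ref{l pseffff} by pushing forward a positive current (or effective divisor approximation) along $\pi$ and controlling the exceptional contribution. Concretely, suppose $\wh L$ is pseudo-effective. I would first reduce to a numerical statement: since pseudo-effectivity only depends on the numerical class and the question is whether $L\cdot\beta\geq 0$ for every movable class $\beta$ on $X$ (by \cite{BDPP}, pseudo-effectivity of a line bundle is equivalent to nonnegativity of the intersection with every movable curve class), it suffices to show $L\cdot\beta\geq 0$ for all $\beta\in\Mov(X)$.

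Next I would use the projection formula together with the exceptionality of $E_1$. Given a movable class $\beta$ on $X$, take its pullback $\wh\beta:=\pi^\star\beta$, which is movable on $\wh X$ by \cite{BDPP} (exactly as in Lemma~\ref{bmq}). Then
\[
L\cdot\beta = \pi^\star L\cdot \wh\beta = \big(\wh L - E_1\big)\cdot\wh\beta = \wh L\cdot\wh\beta - E_1\cdot\wh\beta.
\]
Since $\wh L$ is pseudo-effective and $\wh\beta$ is movable, $\wh L\cdot\wh\beta\geq 0$. Since $E_1$ is $\pi$-exceptional and $\wh\beta=\pi^\star\beta$ is a pullback class, $E_1\cdot\wh\beta = 0$ (this is the standard fact that exceptional divisors meet pullbacks of curve classes in degree zero, used already in the proof of Lemma~\ref{bmq}; it follows from the projection formula $E_1\cdot\pi^\star\beta = \pi_\star E_1\cdot\beta$ and $\pi_\star E_1 = 0$). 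Hence $L\cdot\beta\geq 0$ for every movable $\beta$, so $L$ is pseudo-effective.

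Alternatively, and perhaps more in the spirit of the analytic framework of the paper, one can argue with currents: if $T\geq 0$ is a closed positive $(1,1)$-current in $c_1(\wh L)$, then $\pi_\star T$ is a closed positive current on $X$, and its class is $\pi_\star c_1(\wh L) = \pi_\star(\pi^\star c_1(L) + [E_1]) = c_1(L) + \pi_\star[E_1] = c_1(L)$, because $\pi_\star$ kills the exceptional divisor $E_1$ (its image has codimension $\geq 2$). This exhibits $c_1(L)$ as represented by a closed positive current, i.e.\ $L$ is pseudo-effective. I would present the current-theoretic version as the main proof since it is cleanest, and note the numerical reformulation as a remark.

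There is no serious obstacle here; the only point requiring a word of care is that $\pi_\star$ of the current representing $\wh L$ lands in the correct cohomology class, which is precisely the content of $\pi_\star\pi^\star=\mathrm{id}$ on $H^{1,1}$ combined with $\pi_\star[E_1]=0$ for $\pi$-exceptional $E_1$ — both standard for modifications between smooth projective manifolds. One should only make sure the divisors and line bundles are interpreted with $\bQ$-coefficients if needed (as in the application, where $\wh L$ in \eqref{630} is a $\bQ$-line bundle), but the argument is insensitive to this.
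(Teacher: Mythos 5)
Your first ("numerical") argument is exactly the paper's proof: restrict to the pulled-back movable class $\pi^\star\gamma$, use $c_1(\wh L)\cdot\pi^\star\gamma\geq 0$, kill $E_1\cdot\pi^\star\gamma$ by exceptionality, and invoke \cite{BDPP}. Your current-theoretic alternative is likewise correct and is essentially the first of the referee-supplied variants the paper records, namely pushing forward along $\pi$ and using $\pi_\star\pi^\star=\mathrm{id}$ together with $\pi_\star E_1=0$.
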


\begin{proof} Let $\gamma$ be a movable class on $X$. Then $\pi^\star\gamma$ is a movable class on 
$\wh X$, and then we have
$$c_1(\wh L).\pi^\star\gamma\geq 0.$$
By relation \eqref{631}, we deduce that
$$c_1(L).\gamma\geq 0$$
since $E_1\cdot \pi^\star\gamma= 0$, $E_1$ being exceptional.

Thus $L$ is pseudo-effective, by \cite{BDPP}.\end{proof}
\smallskip

\noindent The following alternative arguments for Lemma \ref{l pseffff}
were kindly pointed out to us by the referees. We reproduce them here (in arbitrary order), for the benefit of the readers.

\noindent $\bullet$ We have
$\big(\pi_\star (\wh L)\big)^{\star\star}= L$, as it follows immediately from the assumptions of \ref{l pseffff}. Since the push-forward of a pseudo-effective class is pseudo-effective, we are done. This has the advantages of avoiding the use of \cite{BDPP}.

\noindent $\bullet$ We consider an ample line bundle $\wh A:= \pi^\star(A)- E$ on $\wh X$, where $E$ is effective and $\pi$-exceptional, and $A$ is ample on $X$. Then for any couple of positive integers
the $k\gg m$ the bundle $k\wh L+ m\wh A$ has non-identically zero sections. They are induced by the sections of $kL+ mA$ (since $E_1$ is exceptional), and the proof is finished.

\medskip

\noindent The proof of Theorem \ref{pseffff} is therefore finished, by \eqref{629} combined with Lemma \ref{l pseffff}.
\end{proof}

\smallskip

\begin{rem}\label{repref} {\rm
In the statement \ref{pseffff} above, if the dominant rational map $p:X\dasharrow Z$ is given, then the pseudo-effectivity of the bundle 
$\displaystyle K_{X/Z}+ \Delta^{\rm hor }-D(p)$ is in fact equivalent to the pseudo-effectivity of $\displaystyle K_{\wh X_z}+
\Delta|_{\wh X_z}$ for all points $z$ in the complement of a Zariski closed subset of
$Z$,
cf.\cite{CP13} (here we use the notations in the proof of \ref{pseffff}).
From this perspective, the hypothesis ``$K_X+ \Delta$ pseudo-effective" of Theorem \ref{pseffff}
may look abusive. However 
the point is that this hypothesis insures the pseudo-effectivity of  $\displaystyle K_{X/Z}+ \Delta^{\rm hor }-D(p)$
even if the rational map $p$ is not given a-priori (and it will be the case in what follows).}
\end{rem}


\section{Algebraicity criteria for foliations}
\medskip

\noindent We begin this section by introducing the following notion --which maybe not standard, but it is very convenient for us.

\begin{defn}\label{alg}
Let $\cF\subset TX$ be a holomorphic foliation of rank $r$. We say that the foliation $\cF$ is \emph{algebraic} if it is induced by a rational map
i.e. $\cF= \ker(dp)$ generically on $X$, for
some dominant rational map $p:X\dasharrow Z$.
\end{defn}
\noindent If this is the case, we see that 
all the leaves of $\cF$ are algebraic subsets of $X$.

\medskip

\noindent The main result of this section is the following statement.

\begin{theorem}\label{algebraic}
Let $X$ be a smooth projective manifold, and let $\cF$ be a foliation on $X$ such that 
there exists a movable class $\alpha$ for which we have
\begin{equation}\label{061}
\mu_{\alpha,min}(\cF)>0.\end{equation} 
The following assertions hold true.
\begin{enumerate}

\item[(1)] The foliation $\cF$ is algebraic.
\smallskip

\item[(2)] The closure of every leaf of $\cF$ is rationally connected.
\end{enumerate}

\end{theorem}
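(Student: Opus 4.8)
The plan is to follow the Bogomolov--McQuillan strategy in the movable-slope version, then deduce rational connectedness from the results of Section 3. First I would prove (1), the algebraicity. Fix a general point $x\in X$ at which $\cF$ is a subbundle, and let $L_x$ denote the formal (or analytic germ of a) leaf through $x$; the aim is to show $L_x$ is algebraic, i.e. contained in a proper subvariety of the expected dimension $r=\rk(\cF)$. The classical mechanism is: if the Zariski closure $Y_x=\overline{L_x}$ has dimension $>r$, then along $L_x$ one produces, via the graph/Chow-theoretic argument of \cite{BMQ}, a positive-dimensional family of subvarieties whose tangent directions at $x$ escape $\cF_x$; quantitatively, one gets a nonzero sheaf map forcing a quotient $\cF\to\cQ\to 0$ with $c_1(\cQ)\cdot\alpha\le 0$, contradicting \eqref{061}. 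The key technical input replacing Mehta--Ramanathan is the movable-class semistability machinery of Section 2 (Theorems \ref{tensor}, \ref{tensor, I}, Lemma \ref{negative}, Proposition \ref{maxdest}): the hypothesis $\mu_{\alpha,\min}(\cF)>0$ propagates to $\otimes^m\cF$, $\Sym^m\cF$ and their quotients, so that \emph{any} nonzero sheaf morphism from such a tensor power into a sheaf of nonpositive $\alpha$-slope must vanish. Concretely, I would run the McQuillan argument to produce, from a non-algebraic leaf, a nonzero section of $\otimes^m\cF^\star\otimes(\text{nonpositive-slope sheaf})$ (coming from differentiating the family of subvarieties and pairing against $\cF$), and Lemma \ref{negative} together with Theorem \ref{tensor, I} then yields $0$, a contradiction. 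Since a major difference from \cite{BMQ} is that we are not allowed to restrict to a curve of class $\alpha$ (Mehta--Ramanathan fails), all these vanishing steps must be carried out globally on $X$ using the movable-slope formalism rather than by restriction to a complete-intersection curve; this is exactly where Section 2 is used and, I expect, where the main difficulty lies.

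Granting (1), write $p:X\dasharrow Z$ for the dominant rational map with $\cF=\ker(dp)$ generically, and let $F$ denote the closure of a generic leaf, i.e. the closure of a generic fibre of $p$. To prove (2) I would argue by contradiction: suppose $F$ is not rationally connected. Pass to a neat birational model as in Section 3 (Lemma \ref{bmq} guarantees $\mu_{\alpha,\min}(\cF)>0$ is preserved, lifting $\alpha$), and form the relative MRC (maximal rationally connected) quotient of the fibration, giving a factorization $X\dasharrow Z'\dasharrow Z$ where the generic fibre $F'$ of $X\dasharrow Z'$ is rationally connected and $\dim Z'>\dim Z$. By \cite{GHS}, the base of the MRC fibration of $F$ is not uniruled, hence (for the generic fibre of $Z'\to Z$) has pseudo-effective canonical bundle; more precisely the vertical part of $K_{X/Z}$ relative to the intermediate map has nonnegative direction. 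Now apply Theorem \ref{pseffff} (via Remark \ref{KF}, relating $K_{\cF'}$ to $K_{X/Z'}-D(p)$ up to an exceptional divisor, where $\cF'=\ker(d(X\dasharrow Z'))$): since the generic fibre of $X\dasharrow Z'$ has pseudo-effective canonical bundle, $K_{X/Z'}-D(p)$ is pseudo-effective, hence $K_{\cF'}$ is pseudo-effective, i.e. $\mu_{\alpha,\min}(\cF')\le \mu_\alpha(\cF')=-\tfrac1{\rk}c_1(K_{\cF'})\cdot\alpha\le 0$. But $\cF'\supset\cF$ with $\cF$ of positive minimal slope; since a sheaf containing one of positive minimal slope (same or larger rank) again has, by Remark \ref{sat}, positive minimal slope — and in any case $\cF$ itself would map to a quotient of $\cF'$ contradicting pseudo-effectivity — we obtain a contradiction. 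Hence $F'=F$, i.e. $F$ is rationally connected.

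I would expect the hardest step to be the clean extraction, in step (1), of a destabilizing quotient of $\cF$ from a hypothetical non-algebraic leaf, \emph{without} restricting to a movable curve: one must control the Chow-variety/graph construction of \cite{BMQ} at the level of coherent sheaves on all of $X$ and then invoke the movable-class versions of the tensor-product and Harder--Narasimhan results (Theorems \ref{tensor}, \ref{tensor, I}, Proposition \ref{maxdest}, Lemma \ref{negative}) to conclude vanishing. The second step (2) is, by contrast, a fairly formal assembly of \cite{GHS}, Theorem \ref{pseffff}, Remark \ref{KF}, and the slope-preservation Lemma \ref{bmq} and Remark \ref{sat}; the only care needed there is the passage to a neat model so that $D(p)$ and the relative canonical bundle behave well and so that the positivity of $\mu_{\alpha,\min}(\cF)$ survives the birational modifications.
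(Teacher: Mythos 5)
Your Part (1) follows the same Bogomolov--McQuillan route as the paper, though your description of the mechanism drifts at one point: the paper does not ``extract a destabilizing quotient $\cF\to\cQ$'' from a hypothetical non-algebraic leaf, and it does not ``produce a nonzero section'' whose existence one then refutes. What actually happens is a section-counting argument on the Zariski closure $V\subset X\times X$ of the leaf-graph $\Lambda$: restricting to the formal neighbourhood of the diagonal $X_0$ in $\Lambda$, filtering by the exact sequences $0\to\Sym^m\cF^*\to\cO_{X_{m+1}}\to\cO_{X_m}\to 0$, using the slope-positivity $\mu_{\alpha,\min}(\cF)>0$ together with Theorem~\ref{tensor, I} and Lemma~\ref{negative} to kill the pieces with $m\gg k$, and using Nakayama's $(Y,B)$ of dimension $n+r-1$ with $p_*B^m=\widehat{S^m}(\cF^*)$ to bound the remaining finitely many pieces by Riemann--Roch. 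The total bound $h^0(V,kL|_V)\le Ck^{n+r}$ then forces $\dim V=n+r$, which gives algebraicity by the Chow-scheme argument. Your phrase ``tangent directions escaping $\cF_x$'' is not part of the mechanism, but you correctly name the two tools (movable-class tensor compatibility and the negative-slope vanishing) that replace Mehta--Ramanathan, so the overall plan is the paper's.

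There is, however, a genuine error in your Part (2). You write that ``the generic fibre of $X\dasharrow Z'$ has pseudo-effective canonical bundle,'' and from this you apply Theorem~\ref{pseffff}/Remark~\ref{KF} to get $K_{X/Z'}-D(p)$ (and hence $K_{\cF'}$) pseudo-effective, where $\cF'=\ker(d(X\dasharrow Z'))$. But the generic fibre of $X\dasharrow Z'$ is \emph{rationally connected} by construction of the relative MRC quotient; its canonical bundle is certainly not pseudo-effective, so Theorem~\ref{pseffff} cannot be applied to the fibration $X\dasharrow Z'$. What is pseudo-effective (by \cite{GHS}) is the canonical bundle of the generic fibre of $s\colon Z'\dasharrow Z$. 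The paper therefore applies Theorem~\ref{pseffff} and formula~\eqref{722} \emph{on the intermediate space} $Y=Z'$ to the fibration $s$, obtaining that $K_\cH$ is pseudo-effective where $\cH=\ker(ds)\subset T_Y$. The contradiction is then transferred back to $X$ not through a containment of foliations but through the \emph{generically surjective} morphism $\wh\cF\to(\pi_Y\circ\wh r)^*\cH$ induced by the differential of $\wh r$: since $\mu_{\pi_X^*\alpha,\min}(\wh\cF)>0$, every nonzero quotient, in particular $\wh r^*\cH$, has strictly positive $\alpha$-slope, contradicting $\det\cH^*$ pseudo-effective. Your argument via $\cF'$ cannot be patched by switching the inclusion sign either: in fact $\cF'\subset\cF$ (not $\cF'\supset\cF$, since $p$ factors through $X\dasharrow Z'$), Remark~\ref{sat} only transfers positive minimal slope between sheaves of \emph{equal} rank, and in any case $K_{\cF'}$ is not pseudo-effective for the reason above.
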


If the class $\alpha$ is a complete intersection of ample hypersurfaces on $X$,  and if $\cF$ is $\alpha$-semistable, 
Theorem \ref{algebraic} is due to
Bogomolov-McQuillan cf. \cite{BMQ}, as well as
Kebekus, Sola-Conde and Toma in \cite{KST}. We equally refer to the article by J.-B. Bost, \cite{Bo}, who proves a different, but related, result in an arithmetic context. These results originate in \cite{Ha}, and \cite{BMQ} is motivated by \cite{Mi}. 
As already mentioned, the approach of the proof below for claim (1) is the same as in \cite{BMQ}. The main difference is that we work directly on $X$ and not by restricting $\cF$ to complete intersection curves. In this way, the Mehta-Ramanathan theorem is not needed, and we avoid the inextricable difficulties generated by both the singularities of $\cF$, and the singularities of covering families of movable curves at their base loci. Notice further that the Mehta-Ramanathan theorem fails for movable curves, cf. Example \ref{exmr} below, already mentioned in \cite{CPe}. The extension from ``generic complete intersection class" to ``movable class" enlarges considerably the potential applicability.

\begin{ex}\label{exmr} {\rm The Mehta-Ramanathan restriction theorem may fail to hold quite drastically even for `strongly' covering families of curves on surfaces. It is indeed shown in \cite{BDPP}, \S7, that if $S$ is a smooth $K3$-surface, then $\cO_P(1)$ is not pseudoeffective on $P:=\Bbb P(\Omega^1_S)$. This means that there exists on $S$ an algebraic family of irreducible curves $C_t$ on $S$ effectively parametrised by a quasi-projective irreducible surface $T$ such that, for each such curve $C_t$ the saturation in $TS$ of the tangent sheaf to $C_t$  has positive degree on $C_t$. Moreover, for $x\in S$ generic, all but a finite number of tangent directions of $TS$ at $x$ are realised by the tangent directions to the $C_t's$ going through $x$. The proof given in \cite{BDPP} is quite indirect. It were interesting to have concrete realisations of such families $C_t$ even on special $K3's$.} \end{ex}

\noindent We shall next prove claim (1); the claim (2) will be established in the subsection \ref{rat}.


\subsection{Algebraicity}

\begin{proof}
 Let $ E\subset X$ be the singular set of the foliation $\cF$. By definition, it consists of
points where $\cF$ is not a subbundle ot $TX$, and in particular: 
\begin{equation}
\codim_{X}( E)\geq 2.
\end{equation}
Let $x\in X\setminus  E$. Since $\cF$ is not singular at $x$,
there exists an open set $\Omega_{x}\subset X\setminus E$ together with
a submersion $\pi_x: \Omega_x\to \bC^{n-r}$ with connected fibers such that for each $y\in \Omega_x$ the intersection $L_y\cap \Omega_x$ of the
leaf of $\cF$ passing
through $y$ with $\Omega_x$ is given by the fiber of $\pi_x$ containing $y$. We recall that $n= \dim (X)$ and $r$ is the rank of the foliation $\cF$.

Thus we have a cover of the open set $X\setminus E$  
with 
open sets $\Omega_x$ as above. Let
$(\Omega_i)_{i\in I}$ be a countable, locally finite cover extracted from
$\left(\Omega_x\right)_{x\in X\setminus E}$. We define
$$\wt \Omega:= \cup_{i\in I}\Omega_i\times \Omega_i\subset
\left(X\setminus E\right)\times \left(X\setminus E\right);$$ it
is an open subset.  

We define the following $n+r$-dimensional locally closed analytic subset 
$\Lambda\subset \wt \Omega$
as follows
\begin{equation}
\Lambda:= \{(z, w)\in X\times X : z\in \Omega_i \hbox{ and } w\in L_z\cap \Omega_i\hbox{ for some } i\in I\}.
\end{equation}
We note that the (local) analyticity of $\Lambda$ is a direct consequence of the
fact that $\cF$ is a holomorphic foliation.

The set $\Lambda$ contains the open subset of the diagonal defined by:

\begin{equation} X_0:= \{(z, z)\in X\times X :  z\in X\setminus  E\} 
\end{equation}

and we consider
\begin{equation}\label{1102}
V:= \ol\Lambda^{\rm Zar}
\end{equation}
the Zariski closure of $\Lambda$ in $X\times X$. 
\medskip

\noindent We have $\dim(V)\geq \dim(\Lambda)=n+r$, and
we show next that the algebraicity of $\cF$ is
equivalent to the equality $\dim(V)=n+r$. Indeed, if this holds true,
then $\Lambda$ is open in its Zariski-closure
$V$ in $X\times X$. We consider the map $\pi_V: V\to X$ given by the restriction to $V$ of the projection on the first factor $X\times X\to X$. Note that the generic fibers of $\pi_V$ are irreducible, of dimension equal to $r$ (they correspond to the Zariski
closure of the leaves of $\cF$). 

Let $\tau: \wh V\to V$ be a desingularisation of $V$, and let $W\subset \wh V$ be the component of $\wh V$ which contains the inverse image of the
generic fibers of $\pi_V$. We denote by $f:W\to X$ the composed map
$\pi_V\circ\tau|_W$; it is
surjective and by general results, there exists a constant $d> 0$ such that the degree of  
each fiber of $f$ is smaller than $d$. 

We consider the Chow scheme $\Chow(W)= \cup_{\delta > 0}\Chow_{r, \delta}(W)$
corresponding to $r$-dimensional cycles of $W$ (where
the index $\delta$ above stands for the degree of the cycle). The rational map
\begin{equation}\label{cw}
p:X\dasharrow \Chow (W) \quad x\to f^{-1}(x)
\end{equation}  
induces the foliation $\cF$ generically, and we are done by the compactness of the components of the Barlet-Chow scheme of $X$.
\medskip

\noindent The algebraicity of $\cF$ will then follow from the next standard Riemann-Roch bound on sections:

\begin{lemma}\label{testalg} Let $X_0\subset \Lambda\subset V\subset X\times X$ be defined as above. If, for some ample line bundle $L$ on $X\times X$, there exists  a constant $C> 0$ such that
$\displaystyle h^0(V, kL|_V)\leq Ck^{n+r}$ as $k\to \infty,$ then the dimension of $V$ is equal to $n+r$.
\end{lemma}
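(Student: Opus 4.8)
The statement to be proved is Lemma~\ref{testalg}: a Riemann--Roch-type upper bound $h^0(V,kL|_V)\le Ck^{n+r}$ for large $k$ forces $\dim V = n+r$. The strategy is to compare this upper bound with the standard lower bound on the dimension of the space of sections of a big/ample line bundle restricted to an irreducible projective variety. Concretely, recall that $\Lambda$ is irreducible (it is a locally closed connected analytic set, being built from leaves of a foliation over a connected base), hence $V = \overline\Lambda^{\mathrm{Zar}}$ is an irreducible projective variety of some dimension $d := \dim V \ge n+r$, and $L|_V$ is ample on $V$ since $L$ is ample on $X\times X$.

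\medskip
First I would invoke asymptotic Riemann--Roch for the ample line bundle $L|_V$ on the $d$-dimensional irreducible projective variety $V$: there is a constant $c > 0$ (essentially $(L|_V)^d/d!$, computed on a resolution or via the leading term of the Hilbert polynomial of $V$ with respect to $L$) such that
\begin{equation*}
h^0(V, kL|_V) \ge c\, k^{d} \quad \text{for all } k \gg 0.
\end{equation*}
This is standard: pass to a desingularization $\nu\colon \widetilde V \to V$, note $h^0(V,kL|_V)\ge h^0(\widetilde V, k\nu^*L|_V) - (\text{bounded error, or monotone in }k)$ — more cleanly, $h^0(V,kL|_V)$ is eventually a polynomial in $k$ of degree exactly $d$ with positive leading coefficient because $L|_V$ is ample. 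Combining with the hypothesis $h^0(V,kL|_V)\le Ck^{n+r}$, we get $c\,k^{d}\le C\,k^{n+r}$ for all large $k$, which forces $d \le n+r$. Together with the a priori inequality $d = \dim V\ge \dim\Lambda = n+r$ established just above the lemma, this yields $\dim V = n+r$.

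\medskip
The only genuine point requiring care — and what I expect to be the main (minor) obstacle — is justifying the lower bound $h^0(V,kL|_V)\ge c\,k^{\dim V}$ when $V$ is possibly singular and non-normal. One clean way: let $\nu\colon \widetilde V\to V$ be a resolution; then $\nu^*L|_V$ is big and nef on $\widetilde V$, so by asymptotic Riemann--Roch / the theory of volumes, $h^0(\widetilde V,\nu^*(kL|_V)) \sim \mathrm{vol}(\nu^*L|_V)\,k^{\dim V}/(\dim V)!$ with $\mathrm{vol}(\nu^*L|_V) = (L|_V)^{\dim V} > 0$ by ampleness; and the natural map $H^0(\widetilde V,\nu^*(kL|_V)) \hookrightarrow$ is compared to $H^0(V,kL|_V)$ — in fact $H^0(V,kL|_V)\to H^0(\widetilde V,\nu^*kL|_V)$ is injective, but we want the inequality in the other direction, so instead one uses that $H^0(V,kL|_V)\supseteq H^0(V', kL|_{V'})$ has at least as many sections as the normalization pulled back, or simply cites that for an ample line bundle on any projective scheme the Hilbert function grows like $k^{\dim V}$ (Hartshorne, or Kleiman). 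Either route is routine. The structural heart of the argument is simply the clash of a $k^{\dim V}$ lower bound against a $k^{n+r}$ upper bound, forcing $\dim V\le n+r$.
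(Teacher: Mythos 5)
Your proof is correct and is precisely the one the authors have in mind: the paper labels Lemma~\ref{testalg} a ``standard Riemann--Roch bound on sections'' and omits its proof. You supply the expected argument: $V$ is irreducible (being the Zariski closure of the connected complex manifold $\Lambda$) and $L|_V$ is ample, so the Hilbert polynomial $\chi(V,kL|_V)$ has degree $\dim V$ with positive leading coefficient and Serre vanishing gives $h^0(V,kL|_V)\sim c\,k^{\dim V}$, which against the hypothesized $O(k^{n+r})$ upper bound and the a priori inequality $\dim V\ge\dim\Lambda=n+r$ forces $\dim V=n+r$.
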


\noindent We show now the existence of such a constant $C_L=C>0$ for any ample $L$.
\begin{proposition}\label{bounded}
Let $L$ be an ample line bundle on $X\times X$. There exists a constant $C>0$ such that:
$h^0(V, kL|_V)\leq Ck^{n+r},$ for any $k\geq 0$. As a consequence, the dimension of the algebraic set $V$ is equal to $n+r$.  
\end{proposition}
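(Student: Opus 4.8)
The plan is to bound $h^0(V,kL|_V)$ by producing enough sections to force the dimension estimate, following the Bogomolov--McQuillan strategy but carried out directly on $X\times X$. The key point is that any section of $kL|_V$ restricts, along the "leaf direction" of $V$, to a section of a line bundle on (the closure of) a leaf $\overline{L_z}$, and the positivity hypothesis $\mu_{\alpha,\min}(\cF)>0$ controls these restrictions. Concretely, first I would fix the desingularization $\tau\colon\wh V\to V$ and the component $W\subset\wh V$ introduced above, with $f\colon W\to X$ the projection; since the second projection $q\colon V\to X$ is generically defined and has image $X$, the generic fibre of the first projection $\pi_V\colon V\to X$ is the Zariski closure $\overline{L_z}$ of a leaf, whose normalization carries a map to $X$ along which the pulled-back tangent sheaf to the leaf is the rank-$r$ sheaf $\cF|_{\overline{L_z}}$ (up to an effective correction on the boundary, as in Remark~\ref{sat}).

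Second, I would restrict $L$ to these fibres. Write $L=L_1\boxtimes L_2$; on a general fibre $\overline{L_z}$ of $\pi_V$ the restriction of $L$ is $L_2|_{\overline{L_z}}$, an ample line bundle on a positive-dimensional variety. The essential estimate is a \emph{uniform} bound, over the family of leaves, of the form $h^0(\overline{L_z},kL_2|_{\overline{L_z}})\le C' k^{r}$; this is just Riemann--Roch/ampleness on an $r$-dimensional variety, and uniformity over the family follows because all these cycles live in finitely many components of the Chow scheme of $X$ (bounded degree $d$, as already noted) — so the Hilbert polynomials of $(\overline{L_z},L_2|_{\overline{L_z}})$ take finitely many values. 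Combining with the analogous bound $h^0(X,kL_1)\le C'' k^{n}$ on the base and a Leray/filtration argument for $\pi_V$ (filtering $H^0(V,kL|_V)$ by order of vanishing along fibres and estimating each graded piece by (sections on the base)$\times$(sections on a fibre)), one obtains $h^0(V,kL|_V)\le C k^{n+r}$.

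Third — and this is where the slope hypothesis is actually used — I must make sure the filtration/Leray step does not leak extra growth: the subtle point is that $V$ is not a product and the "vertical" line bundle varies, so one must control $h^0$ of $(\pi_V)_*(kL|_V)$ and of its graded pieces $\Sym^j N^\vee \otimes (\text{vertical part})$ on $X$. Here one invokes $\mu_{\alpha,\min}(\cF)>0$: the conormal-type sheaves appearing have negative $\alpha$-slope after twisting appropriately, so by Lemma~\ref{negative} (the vanishing criterion) all but finitely many graded pieces have no sections, capping the number of nonzero terms independently of $k$; the finitely many surviving pieces each contribute at most $O(k^{n})$ from the base and $O(k^{r})$ from a fibre. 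I expect \textbf{this bookkeeping — identifying the vertical graded sheaves, checking their $\alpha$-slopes are negative, and assembling the pieces into a clean $Ck^{n+r}$ bound} — to be the main obstacle; everything else (ampleness estimates, Chow-compactness giving uniformity, and the final invocation of Lemma~\ref{testalg}) is routine. Once $h^0(V,kL|_V)\le Ck^{n+r}$ is established, Lemma~\ref{testalg} gives $\dim V=n+r$, hence $\Lambda$ is Zariski-open in $V$ and the map \eqref{cw} to the Chow scheme realizes $\cF$ as the kernel of a rational fibration, proving algebraicity.
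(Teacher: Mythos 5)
Your proposal has a genuine circularity that the paper's own proof is specifically designed to avoid. You bound $h^0(V,kL|_V)$ by restricting sections to the fibres of $\pi_V\colon V\to X$ and asserting $h^0(\overline{L_z},kL_2|_{\overline{L_z}})\le C'k^r$ because ``$\overline{L_z}$ is an $r$-dimensional variety'' living in ``finitely many components of the Chow scheme''. But $\dim\overline{L_z}=r$ and the bounded-degree/Chow-finiteness assertion are \emph{exactly what the Proposition is supposed to establish}. At this point in the argument all that is known is that the analytic set $\Lambda$ has dimension $n+r$; the generic fibre of $\pi_V$ is the Zariski closure of a leaf, whose dimension is $\dim V - n\geq r$, and could be strictly larger if the leaf is Zariski-dense in a bigger subvariety. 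Indeed, bounding $h^0(V,kL|_V)$ by $Ck^{n+r}$ for ample $L$ is essentially equivalent to $\dim V\le n+r$, so any ingredient that already assumes $r$-dimensional fibres (or algebraic leaves, or Chow-compactness of the family of leaf closures) begs the question. The Chow-scheme degree bound you invoke is derived in the paper \emph{after} $\dim V=n+r$ has been proved, as part of the passage from Lemma~\ref{testalg} back to the rational map \eqref{cw}.

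The paper sidesteps this by never looking at the leaf closures at all. Sections of $kL$ on $V$ restrict injectively to $\Lambda$, and then to the formal neighbourhood of the diagonal $X_0$ in $\Lambda$. Along that formal neighbourhood one has the filtration \eqref{1105} with graded pieces $\Sym^m(\cF^*)$, which are sheaves on $X_0\subset X$ --- not on the unknown leaf closure. This is where the slope hypothesis $\mu_{\alpha,\min}(\cF)>0$ enters legitimately: it kills $H^0(X_0,L^k\otimes\Sym^m(\cF^*))$ for $m\ge\delta_0 k$ (part (a) of Lemma~\ref{bound}), leaving $O(k)$ nonzero graded pieces (not $O(1)$, as your third paragraph suggests), each of size $O(k^{n+r-1})$ by Riemann--Roch on the $(n+r-1)$-dimensional Nakayama projectivization $Y$ of $\cF^*$ from part (b). Your Leray-type filtration of $(\pi_V)_*(kL|_V)$ could in principle be made to work, but only after $\dim\overline{L_z}=r$ is known --- and the formal-neighbourhood computation on $X$ is precisely the device that makes the estimate available \emph{before} one knows anything about the leaves beyond their formal germ along the diagonal.
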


\begin{proof} 
The main ideas in the proof of Proposition \ref{bounded} are the same as in \cite{BMQ}: for any $k\geq 0$, the sections of $L^k$ on $V$ restrict injectively to $\Lambda$, since $V$ is its Zariski closure. Next, one considers the restriction of these sections to the formal neighborhood of $X_0$ in $\Lambda$.  
In other words, we study the Taylor expansion of sections of $L^k|_{\Lambda}$ at the points of the diagonal $X_0$ in the normal directions
in $\Lambda$. 

For any $m>0$, let $X_m$ be the $m^{\rm th}$ infinitesimal neighborhood of $X_0$ in $\Lambda$, defined by the structure sheaf:
$\cO_{X_m}:=\cO_{\Lambda}/I_{0}^{m+1},$
where $I_{0}$ is the sheaf of ideals of the diagonal $X_0\subset \Lambda$. 
It is enough to produce a bound $C>0$ independent of $m, k$ such that 
\begin{equation}\label{1104}
h^0(X_m, L^{\otimes k}\otimes \cO_{X_m})\leq Ck^{n+r}
\end{equation}
for any $k, m$. Indeed, the space $H^0(X_m, L^{\otimes k}\otimes \cO_{X_m})$ is nothing, but the space of all possible Taylor expansions at order $m$ of sections of $L^k$ along $X_0$ in the directions of $\cF$. 

\noindent For this, remark that 
over $(X\setminus  E)$, we have a natural isomorphism $\cF\cong N_{X_0/\Lambda}$, since the normal bundle 
of $X_0$ in $\Lambda$ is naturally isomorphic to the vector bundle corresponding to $\cF|_{X\setminus E}$. 

The following exact sequence holds over $X_0$, $\cF^*$ being the dual of $\cF$:

\begin{equation}\label{1105}
0\to \Sym^m(\cF^*)\to \cO_{X_{m+1}}\to \cO_{X_m}\to 0.
\end{equation}

It shows that it is sufficient to establish that there exists a constant $C>0$ such that, for any $k\geq 0$:

\begin{equation}\label{1106}
\sum_{m\geq 0} h^0(X_0, L^k\otimes Sym^m(\cF^*))\leq C.k^{n+r}.
\end{equation}

\noindent The estimate \eqref{1106} will be a consequence
of following statement.

\begin{lemma}\label{bound}  Let $\cF\subset T_X$ be a coherent sheaf, which is locally free when restricted to the open set $X_0\subset X$ such that $\codim_X(X\setminus X_0)\geq 2$.
Let $\displaystyle \delta_0>\frac{L.\alpha}{\mu_{\alpha,min}(\cF)}$ be any positive integer. The following assertions are true.

\begin{enumerate}

\item[(a)] We have $H^0\left(X_0,L^k\otimes Sym^m(\cF^*)\right)=0$ if
  $m\geq \delta_0k$.


\smallskip

\item[(b)]  There exists a non-singular projective manifold $Y$ of dimension
$\dim(Y)= \dim(X)+ \rk(\cF)-1$ together with a map $p: Y\to X$ and a line bundle $B\to Y$ such that
we have   
\begin{equation}\label{anym}
p_\star(B^{m})= \wh{S^m}(\cF^\star)
\end{equation}  
for any $m\geq 1$. In \eqref{anym} we denote by $\wh{S^m}(\cF^\star)$ the double dual of the symmetric power $Sym^m(\cF^\star)$.
\smallskip

\item[(c)] For any pair of positive integers $k, m$ we have the
  equality
\begin{equation}\label{ineq}
  h^0\left(X_0, L^k\otimes Sym^m(\cF^\star)\right)=
  h^0\left(Y, p^\star(L^k)\otimes B^m\right).
\end{equation}    






\end{enumerate}
\end{lemma}

\noindent Before proving Lemma \ref{bound}, we notice that it
implies almost immediately the
inequality \eqref{1106}. Indeed, we have

\begin{equation}\label{ex1}
  \sum_{m\geq 0} h^0\left(X_0, L^k\otimes Sym^m(\cF^\star)\right)=
  \sum_{m\leq \delta_0k} h^0\left(X_0, L^k\otimes Sym^m(\cF^\star)\right)
\end{equation}  
by the point (a) of \ref{bound}. Next, the point (c), together with the fact that the dimension of $Y$ is equal to $n+r-1$ shows that the right hand side of  
\eqref{ex1} is $\cO(k^{n+r})$. This can be seen as
follows: the dimension of the
space of global sections of the bundle
$p^\star(L^k)\otimes B^m$ is smaller that $h^0\big(Y, p^\star(L^k)\otimes H^m\big)$, where $H$ is a very ample bundle on $Y$ such that $H\otimes B^{-1}$ is effective. We therefore have to
evaluate the quantity
\begin{equation}\label{1ex1}
\sum_{m\leq \delta_0k} h^0\left(Y, p^\star(L^k)\otimes H^m\right)
\end{equation}  
which is smaller than $\delta_0k\cdot h^0\left(Y, p^\star(L^k)\otimes H^{\delta_0k}\right)$ where we recall that $\delta_0$ is a
positive integer.
By Riemann-Roch theorem, we have the estimate
$h^0\left(Y, p^\star(L^k)\otimes H^{Ck}\right)= \cO(k^{n+r-1})$
as $k\to \infty$, so all in all we have established \eqref{1106}.
\medskip

\noindent In what follows we will identify $X$ with the diagonal of $X\times X$, and $X_0$ with $X\setminus E$.

\begin{proof} 

\noindent The point (a) follows from  Lemma 2.5 and the slope inequality, if $m\geq k.B$: 
\begin{equation}\label{1ex12}
\mu_{\alpha,max} \big(L^k\otimes \wh{S^m}(\cF^\star)\big)=
k.L.\alpha-m.\mu_{\alpha,min}(\cF)<0.
\end{equation}
We remark that here we have used Theorem \ref{tensor, I}.
\smallskip

\noindent The point (b) is completely proved in the book by N.~Nakayama
\cite{Nob}
(cf.\ Chapter V, section 3.23), so we will simply recall the construction of
$(Y, B)$ for the convenience of the reader.

Let $\pi:\bP(\cF^\star)\to X$ be the
scheme over $X$ associated to the torsion free coherent sheaf $\cF^\star$,
and let $\cO_{\cF^\star}(1)$ be the tautological line bundle on $\bP(\cF^\star)$.
Let $\bP^\prime(\cF^\star)$ be the normalization of the component of
$\bP(\cF^\star)$ which contains the Zariski open subset $\pi^{-1}(X_0)$ (we recall the crucial fact that the co-dimension of $X_0$ in $X$ is greater than two).
Finally, 
let $Y$ be a smooth projective variety such that there exists a birational morphism $Y\to \bP^\prime(\cF^\star)$ which is biholomorphic over $\pi^{-1}(X_0)$.
We denote by $\mu: Y\to \bP(\cF^\star)$ the resulting map, and let
\begin{equation}\label{ex2}
p:Y\to X
\end{equation}  
be the composition $\pi\circ \mu$. Nakayama shows that we can take
\begin{equation}\label{ex3}
  B:= \mu^\star\left(\cO_{\cF^\star}(1)\right)+ \Lambda,
\end{equation}
where $\Lambda$ is an
effective $p$-exceptional divisor. The important fact here (cf. \cite{Nob})
is that $B$ 
can be chosen so that \eqref{anym} holds \emph{for any} $m$.
\smallskip

\noindent The equality \eqref{ineq} is a direct consequence of (b), together
with the definition of the set $X_0$, so we do not provide any further explanations.









\end{proof}

\noindent Thus, Proposition \ref{bounded} is proved as well.
\end{proof}

\noindent Hence, the algebraically criterion is established.
\end{proof}


\subsection{Rational connectedness} \label{rat}

The following result was proved (by very different arguments) in the case of ample classes in \cite{BMQ} and \cite{KST}. Our proof here is using two main techniques: the existence of the relative rational quotient of a map $p$ and the fact that the projective manifolds whose canonical class is not pseudo-effective are uniruled (actually, this is the unique argument in positive characteristic we need in this paper).

\begin{theorem}\label{rc} Let $X$ be projective smooth manifold, and let $\cF\subset TX$ be a foliation. Assume that 
there exists a movable class $\alpha$ for which $\mu_{\alpha, \rm min}(\cF)>0$. 
Then $\cF$ is an algebraic foliation and its leaves are rationally connected.
\end{theorem}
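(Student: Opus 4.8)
The plan is to deduce Theorem~\ref{rc} from the already-established algebraicity criterion (claim (1) of Theorem~\ref{algebraic}, equivalently Proposition~\ref{bounded}) together with the structure theory of rational quotients and the pseudo-effectivity result of \S3. First I would invoke claim (1): since $\mu_{\alpha,\min}(\cF)>0$, the foliation $\cF$ is algebraic, so it is induced by a dominant rational map $p:X\dasharrow Z$ with $\cF=\ker(dp)$ generically, whose general fibre $F$ is the Zariski closure of a generic leaf. Replacing $X$ and $Z$ by suitable smooth birational models $\wh X\to X$, $\wh Z\to Z$ as in Remark~\ref{KF}, and lifting $\alpha$ to $\wh\alpha=\pi_X^\star\alpha$, Lemma~\ref{bmq} (and Remark~\ref{sat}) guarantees that the induced foliation $\wh\cF$ still satisfies $\mu_{\wh\alpha,\min}(\wh\cF)>0$; so I may and do assume $p$ is an honest morphism with snc discriminant. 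It thus suffices to prove the general fibre $F$ is rationally connected.

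The core step is to run the relative rational quotient (MRC fibration) of $p$: there is a rational map $q:X\dasharrow W$ over $Z$ whose general fibre is the rationally connected part of $F$, with the crucial property (by \cite{GHS}) that the general fibre $F'$ of $W\to Z$, i.e. the image of $F$ under the fibrewise MRC map, has non-uniruled — hence (in characteristic zero, after passing to a smooth model) pseudo-effective — canonical bundle $K_{F'}$. The aim is to show $W\to Z$ is generically finite, i.e. $F'$ is a point. Suppose not. Consider the foliation $\cG=\ker(dq)\subset \cF=\ker(dp)$, a saturated subsheaf; by Remark~\ref{sat}, since $\cG\subset\cF$ have — well, they do not have the same rank, so instead I argue directly on slopes. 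The quotient $\cF/\cG$ maps generically onto $T_{W/Z}$ pulled back, and on $F$ this recovers $T_{F'}$; applying Theorem~\ref{pseffff} to the fibration $X\dasharrow W$ (with $\Delta=0$), or more simply restricting to a general fibre $F$ and using that $K_{F'}$ pseudo-effective forces $c_1(T_{F'})\cdot(\text{movable})\le 0$, one finds that the determinant of the quotient $\cF\to \cF/\cG'$ (where $\cG'$ is the saturation) has $\alpha$-degree $\le 0$. But $\mu_{\alpha,\min}(\cF)>0$ says precisely that $c_1(\cQ)\cdot\alpha>0$ for every nonzero torsion-free quotient $\cQ$ of $\cF$; contradiction. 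Hence $F'$ is a point and $F$ is rationally connected.

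More precisely, to make the contradiction clean I would package it as follows: the relative MRC fibration gives, after a common smooth model, a tower $X \to W \to Z$ of morphisms; $\ker(dq)$ and $\ker(d(W\to Z)\circ \cdots)$ fit into $0\to \cG \to \cF \to \cR \to 0$ where $\cR$ is the saturation in $T_X/\cG$ of the image of $\cF$, generically isomorphic to the relative tangent sheaf of $W/Z$, whose restriction to the general fibre of $W/Z$ is (birationally) $T_{F'}$. By the projection formula and Lemma~\ref{bmq}, $c_1(\cR)\cdot\alpha$ is computed on a smooth model of $W$ against a lifted movable class, where it equals $-\,(K_{W/Z})\cdot(\text{movable})$ restricted to fibres over $Z$; Theorem~\ref{pseffff} (with $\Delta=0$, $D(p)$ absorbed into exceptional divisors as in Remark~\ref{KF}) yields $K_{W/Z}$ pseudo-effective modulo exceptionals, hence $c_1(\cR)\cdot\alpha\le 0$, contradicting $\mu_{\alpha,\min}(\cF)>0$ unless $\cR=0$, i.e. $q=p$ generically. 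Therefore $F$ is rationally connected, as claimed.

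The main obstacle, and the point requiring care, is the bookkeeping of exceptional divisors and the identification of $c_1(\cR)\cdot\alpha$ with a relative-canonical degree: one must choose the birational models of $X$, $W$, $Z$ compatibly so that Remark~\ref{KF}'s equation $K_{\wh\cF}=K_{\wh X/\wh Z}-D(\wh p)$ (modulo $\pi_X$-exceptionals) applies to the two fibrations $X\dasharrow W$ and $X\dasharrow Z$ simultaneously, and so that the lifted class $\wh\alpha$ remains a pullback of a movable class on $X$ (so that Lemma~\ref{bmq} and Remark~\ref{sat} keep the slope positive and exceptional divisors keep $\wh\alpha$-degree zero). A secondary subtlety is the use of \cite{GHS} in the relative setting — this is where positive characteristic enters, as the authors note — ensuring that the base of the relative MRC fibration has non-uniruled, hence pseudo-effective (after smoothing), general fibre over $Z$. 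Once these compatibilities are arranged, the contradiction with $\mu_{\alpha,\min}(\cF)>0$ is immediate.
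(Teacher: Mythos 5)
Your proposal follows essentially the same route as the paper's proof: relative rational quotient over $Z$, pseudo-effectivity of the canonical bundle of the fibres of $W\to Z$ via \cite{GHS}, the foliation canonical-bundle formula \eqref{722}, and the contradiction with $\mu_{\alpha,\min}(\cF)>0$ through the generically surjective map from $\cF$ onto the pullback of the relative tangent sheaf of $W/Z$. One correction to tighten the argument: you cannot literally invoke Theorem~\ref{pseffff} for the fibration $W\to Z$, since that theorem assumes the total space has pseudo-effective canonical bundle, which $W$ need not have (it may well be uniruled if $Z$ is); what the paper actually uses, and what you should cite instead, is the fibrewise criterion of Remark~\ref{repref}, which converts pseudo-effectivity of $K_{F'}$ on the generic fibre (this is precisely \cite{GHS}) into pseudo-effectivity of $K_{W/Z}-D(s)$ without any assumption on $K_W$ itself.
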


\begin{proof}
The fact that $\cF$ is an algebraic foliation has been proved. We now treat the last claim of Theorem \ref{rc} using the relative rational quotient.

Let $p: X\dasharrow Z$ be the rational map \eqref{cw} induced by the application $p_0: X\dasharrow \Chow(W)$, with $Z$ a desingularisation of the image of $p_0$. We also consider the \emph{relative rational quotient} of $p$:
\begin{equation}\label{723}   
r: X\dasharrow Y
\end{equation}
This map is constructed in \cite{Ca92} or \cite{KMM} for the absolute version. The existence of the relative version follows from \cite{Ca04}, Appendix. We also have a map $s: Y\dasharrow Z$, such that
$s\circ r= p$. 
\smallskip 

\noindent Assume by contradiction that the fibers of $p$ are not rationally connected, then:
\begin{enumerate}

\item[(a)] $\dim Y> \dim Z$.

\item[(b)] The canonical bundle of the desingularisation of any generic fiber of $s$ is pseudo-effective by \cite{GHS}.

\item[(c)] The generic fibers of $r$ are rationally connected.

\end{enumerate} 
\smallskip

We will consider now regular models of the maps defined above: let $\pi_X: \wh X\to X$ and $\pi_Y: \wh Y\to Y$
be smooth modifications of $X$ and $Y$ respectively, such that the applications 
\begin{equation}\label{4000}
\wh p:= p\circ \pi_X, \quad \wh s:= s\circ \pi_Y
\end{equation}
are regular. We can also assume that there exists a map $\wh r: \wh X\to \wh Y$ such that the equality 
$\wh s\circ \wh r= \wh p$ is preserved.

\noindent Let $\wh \cH:= \ker (d\wh s)$ be the foliation induced by the kernel of the differential of $\wh s$. By formula \eqref{722} combined with Remark \ref{repref} and the property (b) above, we see that 
\begin{equation}\label{800}
\det (\wh \cH^\star)
\end{equation}
is pseudo-effective on $\wh Y$, modulo a divisor which is $\pi_Y$-exceptional. Let $\cH$ be the foliation induced by $\wh\cH$ on $Y$; we  deduce that $\det(\cH^\star) $ is pseudo-effective, by Lemma \ref{l pseffff}.

Let $\wh \cF$ be the foliation induced by $\cF$ on $\wh X$. Then we have a morphism:
\begin{equation}\label{801}
\wh \cF\to \left(\pi_Y\circ \wh r\right)^\star \cH
\end{equation}
and we claim that it is generically surjective. 
The first observation is that the map $\wh \cF\to \wh r^\star \wh\cH$ is well-defined and generically surjective. This is the case
because $\wh X$ and $\wh Y$ are smooth, and for any general enough
$z\in Z$ the map in question is induced by the differential of the map $\wh X_z\to \wh Y_z$. The map $\wh r^\star \wh\cH\to \left(\pi_Y\circ \wh r\right)^\star \cH$ is an isomorphism at the generic point of $\wh Y$.    

We have $\displaystyle \mu_{\pi_X^\star \alpha, \rm min}(\wh\cF)>0$, since $\mu_{\alpha, \rm min }(\cF)>0$, cf. Proposition \ref{pi^*}
and its proof. Hence we infer that
\begin{equation}\label{802}
\mu_{\pi_X^\star \alpha}(\wh r^\star\cH)> 0,
\end{equation}
contradicting the pseudo-effectivity of $\det \cH^\star$.
 \end{proof}
 \medskip

 \begin{rem}\label{logleaf} {\rm The discrepancies $K_{\wh X}-\pi^*(K_X)|_{F}$ of the generic fibre $F$ of $\wh p$ of a `neat model' of the rational fibration $p$ defined by $\cF$ above are of great geometric interest also.}
\end{rem}

\subsection{Pseudo-effectivity of cotangent sheaves of foliations}

 \noindent We establish here a stronger version of Theorem \ref{main} when $\Delta=0$. One of the motivations for this statement is the existence (cf. \cite{CP13}) of foliations with $K_\cF$ pseudo-effective on some projective uniruled manifolds.
 
 \begin{theorem}\label{quotfol}
 Let $X$ be a non-singular projective manifold, and let $\cF\subset \cO(T_X)$ be a foliation on $X$, with $K_{\cF}$ is pseudo-effective. Then, for any positive integer $m\geq 1$, and any coherent, torsion-free sheaf $\cQ$ such that there exists a 
generically surjective map
\begin{equation}\label{0701}
\otimes ^m\cF^\star\to \cQ,
\end{equation} 
$\det \cQ$ is a pseudo-effective line bundle on $X$.
 \end{theorem}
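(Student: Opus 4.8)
The plan is to argue by contradiction, dualizing the hypothesis so that the failure of pseudo-effectivity of $\det\cQ$ produces a quotient of $\otimes^m\cF^\star$ of negative slope against a movable class, hence a subsheaf of $\otimes^m\cF$ of positive minimal slope, from which Theorem \ref{algebraic} will force algebraicity and rational connectedness of leaves, contradicting $K_\cF$ pseudo-effective. More precisely, suppose $\det\cQ$ is not pseudo-effective. By \cite{BDPP} there is a movable class $\alpha$ on $X$ with $c_1(\cQ)\cdot\alpha<0$, i.e. $\mu_\alpha(\cQ)<0$. Since \eqref{0701} is generically surjective, $\cQ$ is a quotient of the reflexive tensor power $\wh\otimes^m\cF^\star$ up to modifying $\cQ$ on a codimension $\geq 2$ set (which changes neither $c_1$ nor $\alpha$-slope), so $\mu_{\alpha,\min}(\wh\otimes^m\cF^\star)\le\mu_\alpha(\cQ)<0$. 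By Theorem \ref{tensor, I}, $\mu_{\alpha,\min}(\wh\otimes^m\cF^\star)=m\,\mu_{\alpha,\min}(\cF^\star)=-m\,\mu_{\alpha,\max}(\cF)$, hence $\mu_{\alpha,\max}(\cF)>0$.

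Now let $\cG\subset\cF$ be the maximal destabilizing subsheaf provided by Proposition \ref{maxdest}, so $\mu_\alpha(\cG)=\mu_{\alpha,\max}(\cF)>0$ and $\cG$ is $\alpha$-semistable; in particular $\mu_{\alpha,\min}(\cG)=\mu_\alpha(\cG)>0$. I would like to apply Theorem \ref{algebraic} to $\cG$, but for this $\cG$ must itself be a foliation: it is automatically saturated in $\cF$, hence (since $\cF$ is saturated in $T_X$) saturated in $T_X$ and $T_X/\cG$ torsion-free; the remaining point is closure under the Lie bracket. This is the standard argument that the maximal destabilizing subsheaf of a foliation with respect to a slope function is integrable: the $\OO_X$-bilinear obstruction $\cG\times\cG\to\cF/\cG$ induced by the Lie bracket, $(u,v)\mapsto[u,v]\bmod\cG$, vanishes because $\cG$ is $\alpha$-semistable while $\cF/\cG$ has strictly smaller maximal slope (by the defining property of the Harder--Narasimhan filtration, $\mu_{\alpha,\max}(\cF/\cG)<\mu_\alpha(\cG)=\mu_{\alpha,\min}(\wedge^2\cG)$, so $\Hom(\wedge^2\cG,\cF/\cG)=0$ by Lemma \ref{negative} together with Theorem \ref{tensor, I}). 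Hence $\cG$ is a foliation with $\mu_{\alpha,\min}(\cG)>0$.

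By Theorem \ref{algebraic}, $\cG$ is algebraic: it is induced by a dominant rational map $q:X\dasharrow Z$, $\cG=\ker(dq)$ generically, and the closure $F$ of a generic leaf is rationally connected. Since $\cG\subset\cF$ with both saturated in $T_X$ and $\cG$ is the tangent sheaf of the fibration $q$ restricted to $F$, the restriction $\cF|_F$ contains $T_F$; as $F$ is rationally connected (hence $K_F$ not pseudo-effective, indeed anti-big on the rational curves through a general point) and $\cF|_F\supseteq T_F$, one gets $K_\cF|_F=\det(\cF^\star)|_F$ is a quotient of $K_F$ up to the determinant of the normal-direction piece $(\cF/T_F)^\star$... here I must be slightly careful: the clean statement is that $\det\cF^\star$ restricted to the generic leaf-closure of the \emph{larger} foliation is non-pseudo-effective because the foliation $\cG$ it contains already has $\det\cG^\star$ non-pseudo-effective on $F$ (by Theorem \ref{pseffff}/Remark \ref{KF}: $K_\cG$ pseudo-effective would follow from $K_F$ pseudo-effective, which is false for $F$ rationally connected), and then $\det\cF^\star=\det\cG^\star\otimes\det(\cF/\cG)^\star$ with the second factor having negative $\alpha$-degree. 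Running this against the movable class $\alpha$ restricted appropriately (or rather: computing $K_\cF\cdot\beta$ for a suitable movable class $\beta$ supported near $F$, using that $\mu_\alpha(\cF)\ge\mu_\alpha(\cG)>0$ forces $K_\cF\cdot\alpha<0$ directly) contradicts the pseudo-effectivity of $K_\cF$, since $K_\cF\cdot\alpha=-\rk(\cF)\mu_\alpha(\cF)\le-\rk(\cF)\mu_\alpha(\cG)<0$.

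\textbf{Main obstacle.} The delicate point is not the contradiction at the end --- that is essentially immediate once $\mu_{\alpha,\max}(\cF)>0$, because then already $K_\cF\cdot\alpha<0$ contradicts pseudo-effectivity, so one does \emph{not even need} the algebraicity of $\cG$ for the final step; the statement that $\det\cQ$ non-pseudo-effective forces $\mu_{\alpha,\max}(\cF)>0$ together with $K_\cF\cdot\alpha=-\rk(\cF)\,\mu_\alpha(\cF)$ and $\mu_\alpha(\cF)\ge\mu_{\alpha,\max}(\cF)/\rk(\cF)$ already does it. The real subtlety is the passage from the \emph{generically surjective} map \eqref{0701} to a genuine coherent \emph{quotient} of the reflexive tensor power to which the slope inequalities of Theorem \ref{tensor, I} apply: one must replace $\cQ$ by the image sheaf, saturate, and check that the first Chern class only increases (by an effective divisor) under saturation, so the hypothesis $\det\cQ$ non-pseudo-effective is preserved --- this is exactly the content of Remark \ref{sat}, applied to the quotient side. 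I would spell this reduction out carefully and then the rest is a short chain of slope identities.
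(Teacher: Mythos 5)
Your opening reduction is correct and matches the paper: by contradiction you find a movable $\alpha$ with $\mu_{\alpha,\min}(\wh\otimes^m\cF^\star)<0$, deduce $\mu_{\alpha,\max}(\cF)>0$ via Theorem~\ref{tensor, I}, take the maximal destabilizing $\cG\subset\cF$, and show $\cG$ is a foliation by the slope computation of Lemma~\ref{folrel}. Up to that point you are on the paper's track.

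The gap is in your ``Main obstacle'' paragraph, where you claim the final contradiction is ``essentially immediate'' because $\mu_\alpha(\cF)\geq\mu_{\alpha,\max}(\cF)/\rk(\cF)$ would force $K_\cF\cdot\alpha<0$. That inequality is false. The maximal destabilizing subsheaf always satisfies $\mu_\alpha(\cG)=\mu_{\alpha,\max}(\cF)\geq\mu_\alpha(\cF)$, i.e.\ the inequality goes the \emph{other} way; having a subsheaf of positive slope says nothing about the sign of $\mu_\alpha(\cF)=-K_\cF\cdot\alpha/\rk\cF$. (Think of $\cO(1)\oplus\cO(-10)$ on $\bP^1$: $\mu_{\max}=1>0$ but $\mu=-9/2<0$.) The same confusion infects the earlier sentence ``using that $\mu_\alpha(\cF)\ge\mu_\alpha(\cG)>0$'' --- that inequality is reversed --- and the unsupported assertion that $\det(\cF/\cG)^\star$ has negative $\alpha$-degree. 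The hypothesis $K_\cF$ pseudo-effective only gives $\mu_\alpha(\cF)\leq 0$; it is perfectly compatible with $\mu_{\alpha,\max}(\cF)>0$, which is precisely why the theorem is not a triviality of slopes.

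What you actually need, and what the paper supplies, is the \emph{rigidity lemma} (Lemma~\ref{rigid}). Once $\cG$ is known to be an algebraic foliation defining a rational fibration $p\colon X\dashrightarrow Z$ with rationally connected leaf-closures, the paper passes to a regular model $\wh p\colon\wh X\to Z$ and shows there exists a foliation $\cH$ on $Z$ with $d\wh p(\wh\cF)=\wh p^\star\cH$ generically, giving the local exact sequence
\begin{equation*}
0\to\wh\cG|_U\to\wh\cF|_U\to\cO_U^{\oplus r}\to 0
\end{equation*}
over neighborhoods of generic fibers. This yields $K_{\wh\cF}|_F=K_{\wh\cG}|_F=K_F$ on a generic fiber $F$ of $\wh p$; since $F$ is rationally connected (by Theorem~\ref{algebraic}(2)) its canonical bundle is not pseudo-effective, contradicting $K_\cF$ pseudo-effective. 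In other words the genuine content of the last step is the identification $K_{\wh\cF}|_F=K_F$ --- the foliation $\cF$ is ``transverse'' to $\cG$ in the normal directions along the fibers --- not a slope inequality. The technical quotient-vs-image point you flag as ``the real subtlety'' is a routine application of Remark~\ref{sat} and is not where the difficulty lies.
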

 
 \begin{proof}
 Let $\alpha\in \Mov(X)$ 
be a  movable class; we have to prove that 
\begin{equation}\label{6344}
c_1(\cQ).\alpha\geq 0.
\end{equation}
\smallskip
\noindent By contradiction, assume that the relation \eqref{6344} does not hold. 
Thus $\mu_{\rm \alpha, min}(\wh \otimes ^m\cF^\star)<0$ and by Theorem \ref{tensor, I}
this implies that we have $\mu_{\rm \alpha, min}(\cF^\star)<0$, which in turns shows the inequality
$\mu_{\rm \alpha, max}(\cF)> 0.$

Let $\cG\subset \cF$ the $\alpha$-maximal destabilizing sheaf of $\cF$; then $\cG$ is $\alpha$-semi-stable, and:
\begin{equation}\label{0703}
\mu_{\alpha}(\cG)> 0.
\end{equation} 
It is a simple matter to check that the slope inequalities in Lemma \ref{folrel} below are satisfied, that is to say
 \begin{equation}\label{pente11}
 \mu_{\alpha,min}(\cG)\geq \mu_{\alpha,max}\left(\cF/\cG\right).
 \end{equation} 
 This is a well-known consequence of the maximality of $\cG$, so we only sketch the argument as follows. We consider a sub-sheaf $\ol\cH\subset \cF/\cG$. Then there exists a sub-sheaf $\cH\subset \cF$, containing $\cG$ and inducing $\ol\cH$. Hence we have
$\mu_\alpha(\cH)\leq \mu_\alpha(\cG)$, from which we deduce (after a few standard 
computations which we skip) that $\mu_{\alpha,max}\left(\cF/\cG\right)\leq \mu_\alpha(\cG)$. Now the semi-stability of $\cG$ with respect to $\alpha$ implies the inequality \eqref{pente11}.

In conclusion, $\cG$ is integrable.
Moreover, by Theorem \ref{algebraic} the foliation $\cG$ is algebraic. Let $X_0\subset X$ be the maximal Zariski open set such that $\displaystyle \cG|_{X_0}$ is a vector bundle, and such that the singularities of $\cG$ are contained in the complement 
$X\setminus X_0$ (which has co-dimension greater than two). 

Thus there exists a rational map
$$p: X\dasharrow Z$$
such that $\cG= \ker(dp)$ generically on $X$ --this is the notion of ``algebraicity" we have adopted at the beginning of section 4, cf. Definition \ref{alg}.

\smallskip

\noindent We consider a modification $\pi_X: \wh X\to X$ such that
the composed map $\wh p:= p\circ\pi_X$ is holomorphic.
Since the foliation $\cG$ is the kernel of the differential of the map
$p$ we infer that \emph{the canonical bundle of the fibers of $\wh p$ is not pseudo-effective}, by 
\eqref{0703}.

\smallskip

\noindent Let $\wh \cG$ and $\wh \cF$ be the foliations induced by $\cG$ and $\cF$ on $\wh X$, respectively.
Then we still have $\wh \cG= \ker (d\wh p)$ generically on $\wh X$ and $\wh \cG\subset \wh \cF$.
\medskip

\noindent We shall use the following ``rigidity lemma'' (cf. \cite{AD}, Lemma 6.7 for similar ideas).

\begin{lemma}\label{rigid} Let $\wh\cG\subset \wh\cF$ be two foliations on $X$ Assume that $\wh\cG$ is algebraic, defined generically as $\cG= \ker(d\wh p)$ for a dominant map $\wh p: \wh X\to Z$. There then exists a foliation $\cH$ on $Z$ such that $d\wh p(\wh\cF)= {\wh p}^\star\cH$, generically on $\wh X$.
\end{lemma}

\begin{proof} Let $x_0\in \wh X$ be such that $\wh\cF$ is non-singular at $x_0$, and such that $y_0:= \wh p(x_0)$ is a regular value of $\wh p$. 

Let $\Lambda_0\subset \wh X$ be a germ a submanifold contained in the leaf $\displaystyle L_{x_0}$ of $\cF$ at $x_0$, transverse to $G_0:= \wh p^{-1}(y_0)$,  and such that 
$$\cF_{x_0}= \cG_{x_0}+ T_{\Lambda_0, x_0}$$ 
is a direct sum decomposition.
Next $\wh p(\Lambda_0)$ is a germ of a submanifold $V_0$ of $Z$ at $y_0$, and $W_0:=\wh p^{-1}(V_0)$ is contained, and hence equal to the germ of the leaf $\cF_{x_0}$. Indeed: for each $x\in \Lambda_0$, $\wh p^{-1}(\wh p(x))=\cG(x)\subset \cF(x)$, and $\cF(x)$ thus contains both $\wh p^{-1}(\wh p(x))$, and $\Lambda_0$. 

Since this holds for every $x_0$ having the properties specified above, the lemma follows by analytic continuation. \end{proof}





\noindent As a consequence of Lemma \ref{rigid}, we have the exact sequence
\begin{equation}\label{639}
0\to \wh\cG|_U\to \wh\cF|_{U}\to \cO_U^{\oplus r}\to 0
\end{equation}
where $U:= {\wh p}^{-1}(V)$ and $V$ is a small topological coordinate set centered at a regular value of $\wh p$.

Therefore we have 
\begin{equation}\label{0715}
K_F= K_{\wh\cG}|_F= K_{\wh\cF}|_F
\end{equation}
where $F$ is a generic fiber of $\wh p$. This is however a contradiction, because $K_{\wh\cF}$ is pseudo-effective by hypothesis, whereas $K_F$ is not, by the previous discussion. 
 \end{proof}

\medskip

\noindent  The following corollary is a consequence of the previous rational connectedness statement of Theorem \ref{algebraic}. The claim (1) is a generalization (in the projective case, the result of \cite{Bru} being valid in the compact K\"ahler case as well)   M. Brunella's Theorem (\cite{Bru}). This corollary gives an optimal geometric obstruction to the pseudo-effectivity of the canonical bundle of foliations on projective manifolds.

\begin{corollary}
Let $X$ be a projective manifold, and let $\cF$ be a foliation of rank $0< r < n:= \dim(X)$. Assume 
that $\cF$ is not algebraic. Then:
\begin{enumerate}

\item[(i)] If $r=1$, the bundle $K_{\cF}$ is pseudo-effective.
\smallskip

\item[(ii)] For an arbitrary rank $r$, the bundle $K_{\cF}$ is not pseudo-effective if and only if there exists a non-trivial algebraic foliation $\cG\subset \cF$ such that $\mu_{\alpha,min}(\cG)>0$ for some movable class $\alpha$. 
\end{enumerate}
\end{corollary}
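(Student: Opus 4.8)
The plan is to prove the two assertions of the corollary by a combination of Theorem \ref{quotfol}, Theorem \ref{algebraic}, and the rational connectedness part of Theorem \ref{rc}. The logical skeleton is: a foliation whose canonical bundle is \emph{not} pseudo-effective must contain an algebraic sub-foliation of positive minimal slope (and conversely), and for rank-one foliations the absence of such a sub-foliation (because $\cF$ itself is not algebraic, hence cannot be the sub-foliation $\cG$) forces $K_\cF$ to be pseudo-effective.

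\begin{proof}
We first prove (ii). Assume there exists a non-trivial algebraic foliation $\cG\subset\cF$ with $\mu_{\alpha,\mathrm{min}}(\cG)>0$ for some movable class $\alpha$. Then $\mu_{\alpha,\mathrm{min}}(\cG)>0$ says precisely that $\mu_{\alpha,\mathrm{max}}(\cG^\star)<0$, hence $c_1(\cG^\star).\alpha<0$, i.e. $c_1(K_\cG).\alpha<0$. By Theorem \ref{algebraic}, $\cG=\ker(dp)$ generically for a dominant rational map $p:X\dasharrow Z$, and by Theorem \ref{rc} the closure of its generic leaf $F$ is rationally connected; in particular $K_F$ is not pseudo-effective. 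The inclusion $\cG\subset\cF$ restricts on a generic leaf $F$ of $\cG$ to an inclusion $T_F=\cG|_F\subset\cF|_F$, and, exactly as in the proof of Theorem \ref{quotfol} (using Lemma \ref{rigid} and the exact sequence \eqref{639}), one obtains on a neat model $\wh p:\wh X\to Z$ the equality $K_F=K_{\wh\cG}|_F=K_{\wh\cF}|_F$ for a generic fibre $F$ of $\wh p$. Since $K_F$ is not pseudo-effective, neither is $K_{\wh\cF}$, and hence — pulling back a movable class and using Lemma \ref{bmq} / Lemma \ref{l pseffff} — neither is $K_\cF$ on $X$. Conversely, suppose $K_\cF$ is not pseudo-effective. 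Then some movable class $\alpha$ satisfies $c_1(K_\cF).\alpha<0$, i.e. $\mu_{\alpha,\mathrm{max}}(\cF)>0$. Let $\cG\subset\cF$ be the $\alpha$-maximal destabilizing subsheaf; then $\mu_\alpha(\cG)>0$ and, by semi-stability of $\cG$ and maximality, $\mu_{\alpha,\mathrm{min}}(\cG)=\mu_\alpha(\cG)>0$ and $\mu_{\alpha,\mathrm{min}}(\cG)\geq\mu_{\alpha,\mathrm{max}}(\cF/\cG)$, exactly as in the argument following \eqref{0703} in the proof of Theorem \ref{quotfol}; Lemma \ref{folrel} then shows $\cG$ is a foliation, and Theorem \ref{algebraic} shows it is algebraic. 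Thus $\cG\subset\cF$ is the required non-trivial algebraic sub-foliation of positive minimal slope; it is non-trivial since $\mu_\alpha(\cG)>0$ forces $\cG\neq 0$, and $\cG\neq\cF$ is irrelevant to the statement.

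For (i), assume $r=1$ and $\cF$ is not algebraic. If $K_\cF$ were not pseudo-effective, then by part (ii) just proved there would be a non-trivial algebraic foliation $\cG\subset\cF$ with $\mu_{\alpha,\mathrm{min}}(\cG)>0$. But $\cG\subset\cF$ with $\cF$ of rank one and $\cG\neq 0$ forces $\cG$ and $\cF$ to have the same rank $1$, and since $\cF$ is saturated in $T_X$ while $\cG$, being a maximal destabilizing subsheaf, is also saturated (or we may saturate it without changing its slope positivity, cf. Remark \ref{sat}), we get $\cG=\cF$. Then $\cF=\cG$ would be algebraic, contradicting the hypothesis. Hence $K_\cF$ is pseudo-effective.
\end{proof}

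The main obstacle I expect is the identification $K_F = K_{\wh\cF}|_F$ on a neat model in the direction ``algebraic sub-foliation $\Rightarrow$ $K_\cF$ not pseudo-effective'': one must be careful that the inclusion $\cG\subset\cF$ induces, after passing to a common regular model, the split exact sequence of Lemma \ref{rigid}, so that the determinants restrict correctly to a generic fibre, and that the $\pi_X$-exceptional discrepancy divisors do not affect the intersection number against the pulled-back movable class (this is where Lemma \ref{bmq} and Lemma \ref{l pseffff} are essential). The converse direction and part (i) are formal once the slope bookkeeping from the proof of Theorem \ref{quotfol} is reused verbatim.
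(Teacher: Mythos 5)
Your proof is correct, but it takes a different and longer route than the paper in two places. For the \emph{if} direction of (ii), the paper's argument is a two-line application of Theorem \ref{quotfol}: dualizing $\cG\subset\cF$ gives a generically surjective map $\cF^\star\to\cG^\star$, so if $K_\cF$ were pseudo-effective then $\det\cG^\star=K_\cG$ would be too, contradicting $\mu_\alpha(\cG)>0$. You instead re-run the contradiction step of the \emph{proof} of Theorem \ref{quotfol} directly (rational connectedness of the leaves of $\cG$ via Theorem \ref{rc}, Lemma \ref{rigid}, the identification $K_F=K_{\wh\cF}|_F$ on a neat model); this is valid, but unnecessary once Theorem \ref{quotfol} is available, and it introduces a small inaccuracy: to pass from ``$K_{\wh\cF}$ not pseudo-effective'' to ``$K_\cF$ not pseudo-effective'' you cite Lemma \ref{l pseffff}, which gives the opposite implication (pseudo-effectivity descends from $\wh X$ to $X$). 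What you actually need is that $K_{\wh\cF}=\pi_X^\star K_\cF+D$ with $D$ effective and $\pi_X$-exceptional (because $\wh\cF\subset\pi_X^\star\cF$), so that $K_\cF$ pseudo-effective would force $K_{\wh\cF}$ pseudo-effective; the contrapositive is what closes the argument. For (i), the paper argues directly: since $\cF$ is not algebraic, Theorem \ref{algebraic} gives $\mu_{\alpha,\min}(\cF)=\mu_\alpha(\cF)\leq 0$ for every movable $\alpha$ (in rank one the minimum slope equals the slope), hence $K_\cF.\alpha\geq 0$ for all movable $\alpha$ and $K_\cF$ is pseudo-effective by \cite{BDPP}. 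You instead deduce (i) from (ii): a non-trivial saturated rank-one $\cG\subset\cF$ forces $\cG=\cF$, hence $\cF$ algebraic, contradiction --- a valid logical shortcut. The \emph{only if} direction of (ii) in your write-up is essentially identical to the paper's (maximal destabilizer, Lemma \ref{folrel}, Theorem \ref{algebraic}).
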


\begin{proof} Claim (i). Since by assumption $\cF$ is not algebraic, Theorem \ref{algebraic} implies, that for each movable class $\alpha$, we have:
\begin{equation}
\mu_{\alpha,min}(\cF)=\mu_{\alpha}(\cF)\leq 0.
\end{equation} 
If the rank $r$ of $\cF$ is equal to one, then this implies (cf. \cite{BDPP}) that $K_{\cF}$ is pseudo-effective, and the point (i) is proved.
\smallskip

\noindent As for the second point, the `if' part can be seen as follows. By dualizing the inclusion $\cG\subset \cF$
we obtain a generically surjective map $\cF^\star\to \cG^\star$. If the canonical bundle of $\cF$ is pseudo-effective, then we infer that $\det \cG^\star$ is pseudo-effective as well, by Theorem \ref{quotfol}. But this contradicts the hypothesis of (ii). 

\noindent 
We thus treat next the `only if' part, and  first remark that we have 
$\mu_{\alpha}(\cF)>0$ for some movable class $\alpha$, again by \cite{BDPP}. 
Next we see that $\cF$ is not semi-stable with respect to $\alpha$. 
This is indeed the case, since if the contrary holds then we have $\mu_{\alpha, min}(\cF)>0$
and therefore $\cF$ would be algebraic. This contradicts our assumptions on $\cF$.

Let $\cG$ be the maximum destabilizing subsheaf of $\cF$ with respect to $\alpha$. It is semi-stable, and has thus strictly positive minimum $\alpha$-slope, i.e.
 \begin{equation}\mu_{\alpha,min}(\cG)>0.
\end{equation} 
As in the proof of Theorem \ref{quotfol}, we have  
 \begin{equation}\label{pente111}
 \mu_{\alpha,min}(\cG)\geq \mu_{\alpha,max}\left(\cF/\cG\right).
 \end{equation} 
\noindent By Lemma \ref{folrel} below, the inequality \eqref{pente11} shows that $\cG$ is a foliation, and is thus algebraic by Theorem \ref{algebraic}. Since $\cF$ is supposed to be non-algebraic, we get: $rank(\cG)<rank(\cF)$ as claimed.
\end{proof} 

\begin{lemma}\label{folrel} Let $\cG\subset\cF\subset TX$ be holomorphic (possibly singular) distributions on $X$ smooth projective connected. Assume that $\cF$ is a foliation, and that for some movable class $\alpha$ we have: $\mu_{\alpha,min}(\cG)>0$ and also: 
\begin{equation}
2.\mu_{\alpha,min}(\cG)>\mu_{\alpha,max}(\cF/\cG)).
\end{equation} Then $\cG$ is a foliation.
\end{lemma}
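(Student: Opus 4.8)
The plan is to show that the obstruction to integrability of $\cG$, namely the $\cO$-linear map induced by the Lie bracket
\[
\varphi\colon \wedge^2\cG \lra T_X/\cG,
\]
must vanish, and then to refine this to conclude that its image actually lands in $\cF/\cG$ being zero, i.e. $\cG$ is closed under the bracket (and one checks easily $\cG$ is already saturated or passes to its saturation without changing the slope hypotheses, using Remark \ref{sat}). First I would observe that since $\cF$ is a foliation, the bracket of two sections of $\cG\subset\cF$ lies in $\cF$, so the bracket obstruction for $\cG$ is really a map
\[
\varphi\colon \wedge^2\cG \lra \cF/\cG
\]
defined generically on $X$ (on the locus $X_0$ where $\cG$, $\cF$ and $\cF/\cG$ are all locally free, of codimension $\geq 2$ in $X$); it is $\cO_{X_0}$-linear because $\cF$ is involutive, so the Leibniz correction terms $g[\cdot,\cdot]+(\cdot g)\cdot$ stay inside $\cF$ modulo $\cG$ — this is the standard computation showing the bracket descends. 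Thus $\varphi$ is a genuine morphism of sheaves $\left(\wedge^2\cG\right)^{\star\star}\to \cF/\cG$, and it suffices to show it is zero.

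The key step is a slope/vanishing argument via Lemma \ref{negative}. Since $\cG$ is $\alpha$-semistable with $\mu_\alpha(\cG)=\mu_{\alpha,\min}(\cG)>0$, Theorem \ref{tensor}(4) (or Proposition on exterior products, item (3)) gives that $\left(\wedge^2\cG\right)^{\star\star}$ is $\alpha$-semistable of slope $2\mu_\alpha(\cG)$, so
\[
\mu_{\alpha,\min}\!\left(\left(\wedge^2\cG\right)^{\star\star}\right)=2\mu_{\alpha,\min}(\cG).
\]
On the other hand $\mu_{\alpha,\max}(\cF/\cG)$ is, by hypothesis, strictly smaller than $2\mu_{\alpha,\min}(\cG)$. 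By the second part of Lemma \ref{negative}, $\Hom\!\left(\left(\wedge^2\cG\right)^{\star\star},\cF/\cG\right)=0$, hence $\varphi=0$. Therefore the Lie bracket of sections of $\cG$ lands back in $\cG$ on $X_0$, and since $X\setminus X_0$ has codimension $\geq 2$ and the sheaves involved are torsion-free, the relation $[\cG,\cG]\subset\cG$ holds on all of $X$; replacing $\cG$ by its saturation in $T_X$ if necessary (which only increases its determinant by an effective divisor, preserving $\mu_{\alpha,\min}(\cG)>0$ by Remark \ref{sat}, and does not change the generic picture), we conclude $\cG$ is a foliation.

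The main obstacle I anticipate is the bookkeeping around singularities: the bracket obstruction $\varphi$ is only defined a priori on the open set $X_0$ where everything is a subbundle, and one must make sure that (i) it extends to a morphism of reflexive sheaves on $X$ to legitimately apply the $\Hom$-vanishing of Lemma \ref{negative}, which is stated for coherent torsion-free sheaves globally, and (ii) the conclusion $[\cG,\cG]\subset\cG$ genuinely propagates across the codimension-$\geq 2$ bad locus — this is where torsion-freeness of $T_X/\cG$ (after saturation) is used. A secondary subtlety is checking that the hypothesis $\mu_{\alpha,\min}(\cG)>0$ together with $2\mu_{\alpha,\min}(\cG)>\mu_{\alpha,\max}(\cF/\cG)$ is exactly what is needed and nothing more: the factor $2$ is precisely the slope of $\wedge^2$, so the estimate is tight, and no semistability of $\cF$ or of $\cF/\cG$ is required.
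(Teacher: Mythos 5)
Your approach is exactly the paper's: observe that the bracket obstruction $\wedge^2\cG\to T_X/\cG$ composed with $T_X/\cG\to T_X/\cF$ vanishes because $\cF$ is involutive, so it factors through $\cF/\cG$; then kill the resulting element of $\Hom\bigl((\wedge^2\cG)^{\star\star},\cF/\cG\bigr)$ by the slope comparison of Lemma \ref{negative}. Your bookkeeping remarks (reflexive hulls, the open set $X_0$, propagation across codimension $\geq 2$, passing to the saturation via Remark \ref{sat}) are sensible and do not change the substance.

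One step, however, as written is not justified and should be repaired. You assert ``Since $\cG$ is $\alpha$-semistable with $\mu_\alpha(\cG)=\mu_{\alpha,\min}(\cG)>0$'' and then invoke the preservation of semistability under $\wedge^2$ to conclude that $(\wedge^2\cG)^{\star\star}$ is semistable of slope $2\mu_\alpha(\cG)$. But $\alpha$-semistability of $\cG$ is \emph{not} among the hypotheses of Lemma \ref{folrel}; the lemma only assumes $\mu_{\alpha,\min}(\cG)>0$ and $2\mu_{\alpha,\min}(\cG)>\mu_{\alpha,\max}(\cF/\cG)$. (It is true in the application inside Theorem \ref{quotfol}, where $\cG$ is the maximal destabilizing subsheaf of $\cF$ and hence semistable, but that is extra information not available in the lemma itself.) The fix is immediate and is what the paper implicitly relies on: Theorem \ref{tensor, I} states the additivity
\[
\mu_{\alpha,\min}\bigl((\wedge^2\cG)^{\star\star}\bigr)=2\,\mu_{\alpha,\min}(\cG)
\]
\emph{without} any semistability assumption, via the Harder--Narasimhan filtration with respect to a movable class. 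With that citation in place of the semistability argument, your proof is correct and coincides with the paper's.
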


\begin{proof} The natural composed map $\wedge^2\cG\to TX/\cG\to TX/\cF$ derived from the Lie bracket on $X$ vanishes, since $\cF$ is a foliation, and thus defines a section of $Hom(\wedge^2(\cG)\to (\cF/\cG))$ over $X$. But this vector space vanishes because of the slope conditions. This forces the Lie bracket $\wedge^2\cG\to TX/\cG$ to vanish, as claimed.
\end{proof}


\subsection{Descent of foliations}
The following consequence of the preceding theorem will mainly be needed in the proof of Theorem \ref{main}. But it may have some interest by itself.

\begin{corollary}\label{descent}
Let $\pi:X'\to X$ be a finite surjective holomorphic map of degree $d>1$ between complex projective manifolds. Let $\alpha$ be a movable class on $X$, and let $\alpha':=\pi^*(\alpha)$ be its pull-back to $X^\prime$. Let $\cF'\subset \pi^*(TX)$ be a torsion-free subsheaf, with $\cF^{\rm sat}$ its saturation in $\pi^*(TX)$. Assume moreover that: \begin{enumerate}

\item[(a)] $\mu_{\alpha',\rm min}(\cF')>0$;
\smallskip

\item[(b)]  $\cF^{\rm sat}=\pi^*(\cF)$ for some subsheaf $\cF\subset TX$.
\end{enumerate}
Then $\mu_{\alpha, \rm min}(\cF)>0$; in particular, if $\cF$ is integrable, then the corresponding foliation is algebraic.
\end{corollary}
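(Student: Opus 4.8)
The plan is to exploit Proposition \ref{pi^*} together with the birational invariance of slopes and the algebraicity criterion of Theorem \ref{algebraic}. First I would observe that hypothesis (a) is inherited by the saturation: since $\cF'\subset \cF^{\rm sat}$ have the same rank, Remark \ref{sat} gives $\mu_{\alpha',\rm min}(\cF^{\rm sat})>0$. Hence, using (b), $\mu_{\alpha',\rm min}(\pi^*(\cF))>0$. The heart of the argument is then to transfer this positivity from $X'$ down to $X$, i.e. to prove $\mu_{\alpha,\rm min}(\cF)>0$.

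For that transfer I would argue as follows. Any torsion-free quotient $\cF\to\cQ\to 0$ pulls back to a generically surjective map $\pi^*(\cF)\to\pi^*(\cQ)$; replacing $\pi^*(\cQ)$ by its torsion-free part (which only increases $c_1\cdot\alpha'$, as the torsion subsheaf is supported in codimension $\geq 1$ and has effective "first Chern class contribution") we get a genuine torsion-free quotient of $\pi^*(\cF)$. Its $\alpha'$-slope is $>0$ by the previous paragraph, and $c_1(\pi^*(\cQ))\cdot\alpha' = \deg(\pi)\, c_1(\cQ)\cdot\alpha$ by the projection formula. Therefore $c_1(\cQ)\cdot\alpha>0$ for every such $\cQ$, which is exactly $\mu_{\alpha,\rm min}(\cF)>0$. (Alternatively, one can invoke Proposition \ref{pi^*} applied to the dual picture, noting $\mu_{\alpha',\rm max}(\pi^*(\cF^\star)) = \mu_{\alpha,\rm max}(\cF^\star)$ when $\cF$ is a bundle on the open locus where it is locally free, and then matching minima of $\cF$ with maxima of $\cF^\star$.) Once $\mu_{\alpha,\rm min}(\cF)>0$ is established, the final clause is immediate: if $\cF$ is integrable, then it is a foliation with $\mu_{\alpha,\rm min}(\cF)>0$, so Theorem \ref{algebraic} applies and $\cF$ is algebraic.

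I expect the main technical obstacle to be the bookkeeping with non-saturated sheaves and torsion under pull-back: one must check that passing from $\cF'$ to $\cF^{\rm sat}=\pi^*(\cF)$ does not destroy the strict slope positivity, and that pulling back a torsion-free quotient of $\cF$ and then killing torsion does not decrease the relevant intersection number below zero. Both points are handled by Remark \ref{sat} and by the fact that the relevant correction divisors are effective and meet $\alpha'=\pi^*(\alpha)$ nonnegatively; but they are the places where care is genuinely needed. The rest is a routine application of the projection formula and of Theorem \ref{algebraic}.
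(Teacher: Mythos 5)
Your proof is correct and follows essentially the same route as the paper: apply Remark \ref{sat} to pass from $\cF'$ to $\cF^{\rm sat}=\pi^*(\cF)$, then observe that any torsion-free quotient $\cQ$ of $\cF$ pulls back to a quotient of $\pi^*(\cF)$ whose $\alpha'$-slope is positive, and conclude via the projection formula $c_1(\pi^*\cQ)\cdot\alpha'=\deg(\pi)\,c_1(\cQ)\cdot\alpha$ and Theorem \ref{algebraic}. Your extra care about killing torsion after pullback is a harmless (and slightly more scrupulous) version of the same step, which the paper leaves implicit.
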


\begin{proof} Since $\mu_{\alpha',min}(\cF')>0$ we  deduce that we have
\begin{equation}\label{701}
\mu_{\alpha',\rm min}(\cF^{\rm sat})>0
\end{equation} 
by Remark \ref{sat}.
Let $\cQ$ be a quotient of $\cF$; then $\pi^\star \cQ$ is a quotient of $\pi^\star \cF= \cF^{\rm sat}$. By \eqref{701} above, we  deduce:
\begin{equation}\label{702}
\mu_{\alpha'}(\pi^\star \cQ)>0
\end{equation} 
and this is equivalent with $\displaystyle \mu_{\alpha}(\cQ)> 0$, which is the claim.
The last part of Corollary \ref{descent} follows from the algebraically criteria.
\end{proof}

\begin{rem}\label{rstab} {\rm In general, in the situation of the preceding corollary, if $\cF'$ is $\alpha$-semi-stable, $\cF^s$ does not need to be $\alpha$-semi-stable, as shown by the natural injection of $\cO(1)\oplus \cO(1)$ in $\cO(1)\oplus \cO(2)$ over $\Bbb P^1$.}
\end{rem}

\begin{rem}These results on foliations immediately extend to logarithmic foliations. We show this in the next section, which will also serve as a simplified model for the case of arbitrary smooth `orbifold pairs',  treated below, and for which additional constructions and definitions are required. We added this short section in order to make the application (through corollary \ref{bigness}) to families of canonically polarised manifolds in \S.8.5 below independent from the general `orbifold version'.The proof given here of corollary \ref{bigness} is quite different and shorter from the one given in \cite{S}, which showed that the general orbifold pairs could be avoided. Notice however that, once the foundational material are laid, the continuity method used in Theorem \ref{bignu} gives a much more direct alternative proof of corollary \ref{bigness}. \end{rem}

\bigskip



\section{ Orbifold tensor bundles on Kawamata covers}

Let $(X, \Delta)$ be a smooth log canonical pair, written as:
\begin{equation}\label{0717}
\Delta= \sum_{j\in J} c_j D_j=\sum_{j\in J} \big(1-\frac{b_j}{a_j}\big)D_j
\end{equation}
where $J$ is a finite set, and for each $j\in J$ we have 
$0\leq b_j< a_j$ are coprime integers,
and the hypersurfaces $(D_j)$ are snc. If the coefficient $b_j$ is equal to zero, then we agree that the corresponding denominator $a_j$ is equal to 1.

These orbifold pairs $(X, \Delta)$ interpolate between the \emph{compact, or projective case} (i.e. when either $J=\emptyset$) 
and the \emph{logarithmic, or quasi-projective case}, when $b_j= 0$ for all $j\in J$, respectively. In both cases, the 
notions of tangent bundle, cotangent bundles and more generally, of holomorphic tensors are classically defined. They play a fundamental r\^ole in the study of the geometry of (quasi-)projective manifolds.
\smallskip

\noindent We shall introduce the analogous notions corresponding
to an arbitrary orbifold pair $(X, \Delta)$. Unfortunately they can
only be defined
on a suitable ramified cover of $X$ adapted to $(X, \Delta)$.
However, we shall see that they enjoy properties similar to those of the usual ones in the two standard cases (compact, and logarithmic) mentioned above. These properties will turn out to be independent on the cover used to define them. 

\smallskip

The underlying idea for the definition is that the local generators as an $\cO_X$-module of the orbifold cotangent bundle should ``look like'': 
\begin{equation}\label{666}\frac{dz_{1}}{z_{1}^{1-{b_{1}}/{a_{1}}}},
\dots,\frac{dz_{r}}{z_{r}^{1-{b_{r}/{a_r}}}}, 
dz_{n_1+1},\dots,dz_n,\end{equation} 
on some coordinate open set $U\subset X$ where the divisor
$\lceil \Delta\rceil$
is given by $z_1\dots z_r= 0$.
Unlike in the cases mentioned above, these symbols involve multi-valued functions. Nevertheless, we have the identity
\begin{equation}\label{6666}\pi^*\Big(\frac{dz_{}}{z_{}^{1-{b_{}}/{a_{}}}}\Big)=Nw^{Nb/a} \frac{dw}{w},
\end{equation} 
where $z=w^N$, and we see that the right-hand side is an usual logarithmic differential provided that $N/a$ is an integer.
\medskip

\noindent This suggests that in order to construct the tensor bundle corresponding to the pair $(X, \Delta)$, one needs an auxiliary object,
namely a map which ramifies along $D$ with divisible enough order. The formal definition will be given in what follows.

\subsection{Ramified coverings}
We recall in this sub-section a few basic facts concerning global ramified covers associated to an orbifold pair
$(X, \Delta)$, for which a polarization is fixed.
Our reference is \cite{KMM87}, (see also \cite{EV}, \cite{K}).

\begin{defn}\label{ram, I}
  Let $(X, \Delta)$ be an orbifold pair as in \eqref{0717}.
  A ramified cover adapted to $(X, \Delta)$ is by definition a Galois 
  covering $\pi: X_\Delta\to X$ satisfying the following requirements.
\begin{enumerate}

\item[(i)] The variety $X_\Delta$ is non-singular, and the ramification
  order of $\pi$ along each component $D_i$ is equal to $a_i$, i.e.
  $\displaystyle \pi^\star(D_i)= a_i\sum_j D_{ji}$.

\item[(ii)] The support of the divisor $\displaystyle \pi^\star(\Delta)+ \Ram(\pi)$ as well as the branching loci 
$\sum H_j$ of $\pi$ have simple normal crossings.     
\end{enumerate}    
  
\end{defn}  

\noindent Such a map $\pi$ will be referred to as ``Kawamata cover''
in what follows, cf. \cite{KMM}, Theorem 1.1.1.
The properties which will be relevant for us are stated in the following lemma.

\begin{lemma}\label{ram_cov} Let $(X, \Delta)$ be an orbifold pair; then the following assertions are true.
\begin{enumerate}

\item[(a)] The pair $(X, \Delta)$ admits a Kawamata cover.
\smallskip  

\item[(b)] Let $\pi: X_\Delta\to X$ be any Kawamata cover corresponding to $(X, \Delta)$, and let $G$ be the associated Galois group. For any
point $y\in X_\Delta$ there exists an open coordinate set $y\in U$ which is
$G_y$--invariant, and such that the restriction $\pi|_U$ has the following shape  
\begin{equation}\label{0302}
  \pi(w_1,\dots, w_n)= (w_1^{a_1},\dots, w_k^{a_k}, w_{k+1},\dots, w_p,
  w_{p+1}^{m_1},\dots w_n^{m_n} )    
\end{equation}
with respect to co-ordinates $(w_i)$ and $(z_j)$ on $U$ and its image,
respectively.

\end{enumerate}
\end{lemma}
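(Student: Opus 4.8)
\textbf{Proof plan for Lemma \ref{ram_cov}.}

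\medskip

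For part (a), the plan is to invoke the classical construction of Kawamata (as in \cite{KMM87}, Theorem 1.1.1, see also \cite{EV}, \cite{K}). First I would fix the polarization, and for each component $D_i$ of $\lceil\Delta\rceil$ choose a large and divisible integer $N$ (a common multiple of all the $a_i$), together with an auxiliary very ample line bundle such that $N\cdot(\text{something})$ is linearly equivalent to a sum $D_i + (\text{general divisor})$. One then takes iterated cyclic covers of degree $N$ branched along each $D_i$, applies Bertini repeatedly to keep the total branch locus plus ramification in simple normal crossings, and finally resolves singularities $G$-equivariantly to arrive at a smooth $X_\Delta$. The Galois group $G$ is the product of the cyclic groups used in the construction. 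The ramification order along $D_i$ comes out to be exactly $a_i$ after adjusting $N$ suitably (or more precisely one builds the tower so that the order over $D_i$ is $a_i$ and the extra ramification lies over an snc auxiliary divisor). I would not reproduce this in detail, only cite it.

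\medskip

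For part (b), I would argue locally. Fix $y\in X_\Delta$ with $x=\pi(y)$, and let $G_y\subset G$ be the stabilizer (the inertia/decomposition group at $y$, which coincide since we are over $\mathbb{C}$ and $G$ is abelian-ish from the construction, or at worst we use that $G_y$ acts faithfully on a neighborhood). Since $\pi$ is a finite Galois morphism between smooth varieties and $G_y$ acts on the complete local ring $\widehat{\cO}_{X_\Delta,y}$, I would linearize the action: by Cartan's lemma (or the formal/analytic linearization of a finite group action fixing a point) there are analytic coordinates $(w_1,\dots,w_n)$ on a $G_y$-invariant neighborhood $U$ of $y$ in which $G_y$ acts linearly, and in fact diagonally since we may diagonalize a finite abelian group action, arranging that each generator acts by roots of unity on the $w_i$. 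Then $\pi$, being the quotient by $G_y$ locally, expresses $\cO_{X,x}$ as the ring of $G_y$-invariants; because $\pi^\star(D_i)$ has multiplicity $a_i$ along the components through $y$ and the branch locus is snc, the invariant functions are generated by appropriate powers $w_i^{a_i}$ for the coordinates cutting out the ramification divisor over $D_i$, by the coordinates $w_j$ themselves where $\pi$ is étale in that direction, and by powers $w_\ell^{m_\ell}$ along the remaining branch components coming from the auxiliary Bertini divisors. This yields exactly the normal form \eqref{0302}. I would phrase the bookkeeping by splitting the coordinates into the three groups appearing in \eqref{0302}: those over components of $\lceil\Delta\rceil$ (exponents $a_i$), those where $\pi$ is unramified (exponent $1$), and those over the extra branch locus (exponents $m_\ell$).

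\medskip

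The main obstacle, in my view, is nothing deep but rather getting the local model genuinely into the \emph{monomial} shape \eqref{0302} rather than merely a quotient by a linear group: one must use that the quotient of $\mathbb{C}^n$ by a finite \emph{diagonal} group action with the quotient again smooth forces the action to be generated by pseudo-reflections, i.e. by maps scaling a single coordinate, which is the Chevalley--Shephard--Todd theorem; this is what makes $X$ (not just $X_\Delta$) smooth compatible with the diagonal form, and pins down the exponents as the orders of those pseudo-reflections. So the key step I would be most careful about is the passage ``finite abelian group, smooth quotient, snc branch locus $\Rightarrow$ monomial local model,'' invoking Chevalley--Shephard--Todd together with the snc hypothesis in Definition \ref{ram, I}(ii) to separate the reflecting hyperplanes into distinct coordinate hyperplanes. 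Everything else — existence via Kawamata's tower, equivariant resolution, diagonalization of the inertia action — is standard and would be cited.
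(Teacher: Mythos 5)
The paper gives no proof of Lemma \ref{ram_cov}: both assertions are quoted as standard, part (a) being Kawamata's covering construction (\cite{KMM87}, Theorem~1.1.1, see also \cite{EV}, \cite{K}), and part (b) being stated as a known property of such covers, with only the brief comment about extra ramification following the lemma. So there is nothing in the paper to compare against line by line; your proposal is a reconstruction of the argument behind the citation, and it is correct. For (a) you sketch exactly the iterated-cyclic-cover tower of Kawamata, which is the right reference. For (b) the chain ``linearize $G_y$ by Cartan, diagonalize, invoke Chevalley--Shephard--Todd together with the snc branch condition to get the monomial normal form \eqref{0302}'' is a clean and complete argument.

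One point is worth tightening. You justify diagonalizability by saying that ``$G$ is abelian-ish from the construction.'' That works if one fixes the specific Kawamata tower, but Definition \ref{ram, I} allows \emph{any} Galois cover with the listed properties, so you should not rely on the construction for this. The cleaner reason, internal to the definition, is the snc hypothesis on the branch locus: near $x=\pi(y)$, the restriction of $\pi$ over the complement of the branch locus is an \'etale Galois cover of $(\Delta^\ast)^k\times\Delta^{n-k}$ with group $G_y$, so $G_y$ is a quotient of $\pi_1\bigl((\Delta^\ast)^k\times\Delta^{n-k}\bigr)\cong\mathbb{Z}^k$ and hence abelian. With that supplied, the rest of your argument (simultaneous diagonalization, smoothness of $X$ forcing $G_y$ to be generated by pseudo-reflections, each pseudo-reflection scaling a single coordinate because the reflecting hyperplanes are the distinct coordinate hyperplanes thanks to snc) goes through and yields \eqref{0302}, with the exponents $a_i$ over $\lceil\Delta\rceil$ and the remaining exponents accounting for the unavoidable extra ramification that the paper mentions.
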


\noindent In the definition above we denote by $G_y$ the isotropy group of $y$. We note that in \eqref{0302} we assume that the divisor
$\lceil\Delta\rceil$ is locally given by the equation $z_1\dots z_k= 0$. Also,
the local hypersurfaces $z_{p+1}=0,\dots, z_n=0$ correspond to the
extra-ramification of $\pi$ --which is in general unavoidable,
but which will not affect us in any way.
\smallskip

As we see from Lemma \ref{ram_cov}, the map $\pi$ can be seen as the
global version of the standard application $w\to z= w^a$ and we will use it in order to define the orbifold co-tangent bundle and its
associated tensor powers.

\subsection{Orbifold tensor bundles}
Let $(X, \Delta)$ be an orbifold pair, and let $\pi: X_\Delta\to X$ be a Kawamata cover. We first introduce here the 
notion of co-tangent bundle associated to $(X, \Delta)$ by following the elegant approach by Y. Miyaoka in \cite{Mi1}.

We denote by $\Omega^1_X\langle\lceil \Delta\rceil\rangle$ the logarithmic tangent bundle associated to 
$(X, \lceil \Delta\rceil)$. Then we have a well-defined residue map
\begin{equation}\label{03022}
\Omega^1_X\langle\lceil \Delta\rceil\rangle\to \bigoplus_i \cO_{\Delta_i}\to 0,
\end{equation}
which induces a map between the $\pi$--inverse images of the sheaves above
\begin{equation}\label{03023}
\pi^\star\Omega^1_X\langle\lceil \Delta\rceil\rangle\to \bigoplus_i \cO_{\pi^\star\Delta_i}\to 0.
\end{equation}
In \eqref{03023} we have used the flatness of $\pi$ in order to identify $\pi^\star\cO_\Delta$ with $\cO_{\pi^\star\Delta}$. By the properties of the map $\pi$, we can write 
$\displaystyle \pi^\star\Delta_i= a_iD_i$ for some Cartier divisor $D_i$ on $X_\Delta$. Therefore, we have a 
quotient map of sheaves
\begin{equation}\label{03024}
\cO_{\pi^\star\Delta_i}\to \cO_{b_i D_i}\to 0
\end{equation}
for every $i$ in our set of indexes.

\noindent All in all, we have a surjective map
\begin{equation}\label{03024}
\pi^\star\Omega^1_X\langle\lceil \Delta\rceil\rangle\to \bigoplus_i \cO_{b_i D_i}\to 0
\end{equation}
and we introduce the following notion.

\begin{defn}The orbifold co-tangent bundle associated to $(X, \Delta)$ is the kernel 
of the map \eqref{03024}. It is a vector bundle of rank $n= \dim (X)$, and it will be denoted in what follows 
by $\pi^\star\Omega^1(X, \Delta)$.
\end{defn}
\noindent Thus, we have the exact sequence
\begin{equation}\label{03024}
0\to \pi^\star\Omega^1(X, \Delta)\to \pi^\star\Omega^1_X\langle\lceil \Delta\rceil\rangle\to \bigoplus_i \cO_{b_i D_i}\to 0.
\end{equation}
\smallskip

\noindent At this point, a few remarks are in order.
\begin{enumerate}

\item[$\bullet$] The bundle $\pi^\star\Omega^1(X, \Delta)$ is $G$-invariant: this is a direct consequence of the definition.
\smallskip

\item[$\bullet$] With respect to the coordinate system in Lemma \ref{ram_cov} (2), the local frame of this bundle is expressed as
$$w_1^{b_1-1}dw_1, \dots, w_{k}^{b_k-1}dw_k, dw_{k+1},\dots, dw_p, w_{p+1}^{m_{p+1}-1}dw_{p+1},\dots, w_{n}^{m_{n}-1}dw_{n}.$$
\smallskip

\item[$\bullet$] The determinant of the bundle $\pi^\star\Omega^1(X, \Delta)$ is quickly computed from the 
sequence \eqref{03024}, 
\begin{equation}\label{03025}
\det\left(\pi^\star\Omega^1(X, \Delta)\right)= \pi^\star(K_X+ \Delta).
\end{equation}

\end{enumerate}
\medskip

\subsection{The tangent bundle and the Lie bracket on orbifolds}
The following definition is natural.

\begin{defn}
The orbifold tangent bundle associated to $(X, \Delta)$ the dual of $\pi^\star\Omega^1(X, \Delta)$. It 
is a $G$-invariant vector bundle, and it will be denoted in the sequel by $\pi^\star T{(X, \Delta)}$.
\end{defn}
\noindent With respect to the coordinates in Lemma \ref{ram_cov}, the local generators of $\pi^\star T{(X, \Delta)}$
can be written as follows
\begin{equation}\label{03026}
w_1^{a_1-b_1}e_1, \dots, w_k^{a_k-b_k}e_k, e_{k+1},\dots, e_n
\end{equation}
where $\displaystyle e_j:= \pi^\star\frac{\partial}{\partial z_j}$ is the local frame of the inverse image $\pi^\star T_X$. 

Remark that the local generators of the orbifold tangent bundle can also be written as follows
\begin{equation}\label{03027}
w_j^{1-b_j}\frac{\partial}{\partial w_j}, \frac{\partial}{\partial w_i}, w_{l}^{1-m_{l}}\frac{\partial}{\partial w_l}
\end{equation}
where $j=1,\dots,k$ as well as $ i=k+1,\dots, p$ and $l=p+1,\dots,n$. In this way, the tangent 
bundle $\pi^\star T{(X, \Delta)}$ looks more like that dual of $\pi^\star\Omega^1(X, \Delta)$.





\medskip

\noindent {\bf Motivation.} Let $\cF\subset T_X$ be a coherent subsheaf. The corresponding Lie bracket 
\begin{equation}\label{03028}
\Lambda^2\cF\to T_X/\cF
\end{equation}
is $\cO_X$-linear, and if this map vanishes identically, then $\cF$ defines a holomorphic foliation.
In the remaining part of this sub-section we will consider the orbifold analogue of these results.

More precisely, let $\cF_\Delta\subset \pi^\star T(X, \Delta)$ be a coherent subsheaf of the orbifold
tangent bundle. Our objective in what follows is twofold: first we show that under some reasonable hypothesis, 
we can construct an $\displaystyle \cO_{X_\Delta}$-linear map
\begin{equation}\label{03029}
\Lambda^2\cF_\Delta\to \pi^\star T(X, \Delta)/\cF_\Delta.
\end{equation}
Then we will show here that if the map \eqref{03029} vanishes identically, then $\cF_\Delta$ is induced by a
holomorphic foliation on $X$ by a very explicit procedure.\qed
\smallskip

\noindent The first step in this direction is
the following statement which permits to recognize the subsheaves of $\pi^\star TX$ which are inverse images of a sheaf on $X$.

\begin{lemma}\label{inverse}\cite{GKKP}, \cite{BC}
 Let $\cF\subset \pi^\star TX$ be a coherent $\displaystyle \cO_{X_\Delta}$-module, which is saturated in the 
inverse image of the tangent sheaf $T_X$. If moreover $\cF$ is $G$-invariant, then there exists a sheaf 
$\cF_X$ of $\cO_X$-modules on $X$ such that 
\begin{equation}\label{658}\cF= \pi^\star (\cF_X).\end{equation}
\end{lemma}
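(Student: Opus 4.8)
\textbf{Proof proposal for Lemma \ref{inverse}.}

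The plan is to work locally on $X$ and descend along the Galois cover, using the $G$-invariance to glue. First I would reduce to the local model. Fix a point $x \in X$ and a point $y \in \pi^{-1}(x)$, and choose a $G_y$-invariant coordinate chart $U$ around $y$ as in Lemma \ref{ram_cov}(b), so that $\pi|_U$ has the monomial shape \eqref{0302}. On such a chart, $\pi^\star T_X$ is free with frame $e_1,\dots,e_n$, where $e_j = \pi^\star(\partial/\partial z_j)$, and the isotropy group $G_y$ acts linearly and diagonally on this frame (multiplying $e_j$ by a root of unity for those $j$ corresponding to ramification directions, and trivially otherwise). A coherent subsheaf $\cF \subset \pi^\star T_X$ saturated in $\pi^\star T_X$ is, after shrinking $U$, a subbundle outside a codimension $\geq 2$ analytic set; since $\pi^\star T_X$ is locally free and $\cF$ is saturated (hence reflexive) I can in fact recover $\cF$ from its restriction to the complement of that bad set by reflexive extension, so it suffices to identify $\cF$ on the locus where it is a subbundle.

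Next I would produce the descended sheaf on $X$ directly as $\cF_X := (\pi_\star \cF)^G$, the $G$-invariants of the pushforward, and claim $\pi^\star \cF_X = \cF$. One inclusion, $\pi^\star \cF_X \to \cF$, is the counit of the adjunction composed with the inclusion $\pi^\star\pi_\star\cF \to$ (sections of $\cF$), and is well-defined because sections of $(\pi_\star\cF)^G$ pull back to $G$-invariant, hence genuine, sections of $\cF$; more concretely $\pi^\star(\pi_\star \cF)^G$ maps to $\cF$ and I must check this map is an isomorphism. Both sheaves are reflexive (the source because $\pi$ is flat and $\cF_X$ will be shown reflexive, the target by hypothesis), so it is enough to check the isomorphism in codimension one, i.e. on the complement of a codimension $\geq 2$ set, where I may assume $\cF$ is a subbundle of $\pi^\star T_X$. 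There the statement is the assertion that a $G_y$-invariant linear subspace of the fiber $(\pi^\star T_X)_y = (T_X)_x$ (with its $G_y$-action via the representation on the $e_j$) that varies holomorphically and $G$-equivariantly descends to a subbundle of $T_X$ near $x$ — this is standard equivariant descent for vector bundles along the étale-in-codimension-one cover $X_\Delta \setminus (\text{ram}) \to X \setminus (\text{branch})$, combined with the fact that a $G_y$-subrepresentation of $(T_X)_x$ which is the limit of subbundles extends across the branch divisor by reflexivity/normality of $X$.

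The main obstacle I expect is the behavior \emph{along the ramification divisor}: there $\pi$ is not étale, $G_y$ acts nontrivially on $\pi^\star T_X$, and a priori $\cF$ could fail to be the pullback of a subsheaf if it were not compatible with this action — this is exactly where the $G$-invariance hypothesis does the work, but one must check it rigorously. Concretely, near a ramification point with $z_1 = w_1^{a_1}$, the generator $e_1 = \pi^\star(\partial/\partial z_1)$ is acted on by a generator of $\ZZ/a_1\ZZ \subset G_y$ via multiplication by a primitive $a_1$-th root of unity, so a $G$-invariant subbundle must, fiberwise over the divisor, be a sum of isotypic pieces; the key point is that the coefficients of a $G$-invariant local section of $\cF$, expanded in the frame $(e_j)$, are then $G$-invariant functions on $U$, hence pullbacks of functions on $\pi(U) \subset X$, which produces the descent of a local frame for $\cF_X$. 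I would carry this out by: (i) choosing a $G_y$-invariant local frame of $\cF$ near $y$ (possible after shrinking, by averaging and using that $\cF$ is $G$-invariant and, generically, a subbundle), (ii) observing each frame vector, written in the $e_j$-basis, has $G$-invariant component functions, (iii) descending these to $X$ to get generators of a sheaf $\cF_X$, and (iv) checking independence of the choices so the local pieces glue, the gluing being automatic because on overlaps the two descended sheaves have the same pullback $\cF$. Finally I would remark that this is precisely the content of the cited \cite{GKKP}, \cite{BC}, so in the paper it may suffice to record the statement and reduce to those references after fixing the local model; but the argument above makes the reduction self-contained.
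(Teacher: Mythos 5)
Your proposal aims at a more abstract descent argument (define $\cF_X:=(\pi_\star\cF)^G$, then reduce to the \'etale locus plus reflexive extension), but there are two genuine gaps, and they are exactly at the place where the paper's proof does its real work.

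First, your description of the $G_y$-action on the frame is incorrect. The $e_j=\pi^\star(\partial/\partial z_j)$ are pulled back from $X$, so $G_y$ acts \emph{trivially} on them; the action of $G_y$ on $\pi^\star T_X$ lives entirely in the coefficient ring $\cO_{X_\Delta}$. In particular, $G_y$ acts trivially on the fibre $(\pi^\star T_X)_y\cong (T_X)_x$ over a fixed point, so the ``$G_y$-subrepresentation of $(T_X)_x$'' you invoke is a vacuous condition and cannot carry the weight you place on it. The correct picture is the one in the paper: a local section $V$ of $\cF$ is written $V=\sum_{k=0}^{N-1} w_1^k\,\pi^\star(v_k)$ with each $\pi^\star(v_k)$ invariant, and the group action twists the monomials $w_1^k$, not the $e_j$.

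Second, and more seriously, your step (i)—``choosing a $G_y$-invariant local frame of $\cF$ by averaging''—does not work as stated, and this is precisely where the hypothesis that $\cF$ is \emph{saturated in $\pi^\star T_X$} must enter. Averaging a local frame of a $G$-invariant subsheaf need not yield a frame: for $\pi(w_1,w_2)=(w_1^2,w_2)$ and $\cF=w_1\cdot\cO\cdot e_1$ (which is $G$-invariant but not saturated), any local generator has the form $w_1 u(w)e_1$, its $G$-average vanishes to order $\geq 2$ along $w_1=0$, and so is not a generator; correspondingly $\cF$ is not the pull-back of any subsheaf of $T_X$. The paper handles this by a different mechanism: average to extract the $k=0$ piece $\pi^\star(v_0)\in\cF$, then subtract and \emph{divide by $w_1$}, which stays in $\cF$ precisely because $\cF$ is saturated (torsion-free quotient), and iterate. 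Your proposal never articulates this divide-by-$w_1$ step, which is the heart of the argument; without it the ``generically a subbundle'' hypothesis you appeal to is too weak. Relatedly, the split into ``\'etale part'' plus ``extend across codimension $\geq 2$'' misses that the ramification divisor is of codimension \emph{one} on $X_\Delta$, so the analysis there cannot be avoided by reflexivity—it is exactly the content of the lemma.

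The definition $\cF_X=(\pi_\star\cF)^G$ and the reflexivity framing are fine as scaffolding, and step (ii) is correct once one recognises that the $e_j$ are $G$-invariant (so a $G$-invariant section does have $G$-invariant coefficient functions). If you replace step (i) by the paper's isolate-subtract-divide induction, the proposal becomes a correct proof, essentially identical to the paper's.
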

\noindent This result \ref{inverse} is completely proved in the references indicated above. We will only discuss here a particular case, which contains however the main ides of the proof and explains the relevance of the 
hypothesis in a very clear manner.

\begin{proof} 
We assume that the 
local structure of the map $\pi$ near a point $x_0\in X_\Delta$ is given by:

\begin{equation}\label{660}(w_1, w_2,\dots, w_n)\to (w_1^{N}, w_2,\dots, w_n)\end{equation}
and the action of the isotropy group is given by the multiplication with unit roots (of order $N$).

Let $V$ be a local section of $\cF$ defined in a neighborhood of $x_0$. Then we can write
\begin{equation}\label{661}V= \sum_{k=0}^{N-1}w_1^k p_r^*(v_k)\end{equation}
where $v_k$ are local sections of the sheaf $T_X$. This is a consequence of the hypothesis 
$\cF\subset \pi^\star TX$. Since $\cF$ is $G_{x_0}$-invariant, we deduce that $p_r^*(v_k)\in \cF$.
Indeed, if $\mu$ is a primitive $N$-root of unity, then we have
\begin{equation}\label{662}p_r^*(v_k)= \frac{1}{N}\sum_{p=0}^{N-1} \mu^p\cdot V\end{equation}
since $\mu^p\cdot p_r^*(v_k)= p_r^*(v_k)$ and $\mu^p\cdot w_1^k= \mu^{kp}w_1^k$. But then we have
\begin{equation}\label{663}V- p_r^*(v_k)= w_1\sum_{k=1}^{N-1}w_1^{k-1}p_r^*(v_k)\end{equation}
and it is at this point that we are using the fact that
$\cF$ is saturated in the inverse image of $TX$: the relation \eqref{663} above shows that we have
\begin{equation}\label{664}\sum_{k=1}^{N-1}w_1^{k-1}p_r^*(v_k)\in \cF\end{equation}
The same argument as before shows now that $p_r^*(v_1)\in \cF$, and by induction, we deduce that 
$p_r^*(v_k)\in \cF$ for any $k= 0,\dots, N-1$ 

As a conclusion, for any local section $V$ of $\cF$ the components $\pi^*(v_k)$ of the decomposition
\eqref{661} belong to $\cF$. The sheaf $\cF_X$ we seek is generated by the vectors $v_k$ obtained from 
\eqref{661} with $V:= V_i$, a set of local generators of $\cF$ near $x_0$.
\end{proof}

\medskip

\noindent Let $\cF_\Delta\subset \pi^\star T(X, \Delta)$ be a coherent $G$-invariant and saturated subsheaf of the orbifold
tangent bundle. We denote by $\cF^s$ the saturation of $\cF_\Delta$ in $\pi^\star T_X$. Then $\cF^s$ is equally $G$-invariant, so by Lemma
\ref{inverse} there exists a subsheaf $\cF_X\subset T_X$ such that 
\begin{equation}\label{030210}
\cF^s= \pi^\star(\cF_X).
\end{equation}
\noindent Let 
\begin{equation}\label{03031}
\Lambda^2\cF_X\to T_X/\cF_X
\end{equation}
be the $\cO_X$-linear map induced by the Lie bracket on $X$. Its $\pi$-inverse image composed with the natural map $\displaystyle \Lambda^2\cF_\Delta\to \Lambda^2\cF^2$
gives the
$\cO_{X_\Delta}$-linear map
\begin{equation}\label{03032}
\Lambda^2\cF_\Delta\to \pi^\star T_X/\cF^s.
\end{equation}
On the other hand, given that $\cF_\Delta$ is saturated inside the orbifold tangent bundle, we have
the equality $\cF_\Delta= \cF^s\cap \pi^\star T(X, \Delta)$. Thus, we infer that the natural map
\begin{equation}\label{03033}
\pi^\star T(X, \Delta)/\cF_\Delta\to \pi^\star T_X/\cF^s
\end{equation}
is injective.
\medskip

\noindent In this setting, we have the following statement, establishing the existence of the Lie bracket for 
orbifolds $(X, \Delta)$.

\begin{proposition}\label{Lie} Let $\cF_X\subset \pi^\star T(X, \Delta)$ be a coherent $G$-invariant and saturated subsheaf of the orbifold
tangent bundle. Then the map \eqref{03032} factors through \eqref{03033}, i.e. we have an 
$\cO_{X_\Delta}$-linear map
\begin{equation}\label{03034}
\Lambda^2\cF_\Delta\to \pi^\star T(X, \Delta)/\cF_\Delta
\end{equation}
\end{proposition}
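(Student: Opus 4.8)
The plan is to show that the $\cO_{X_\Delta}$-linear map \eqref{03032} has its image inside the subsheaf $\pi^\star T(X,\Delta)/\cF_\Delta$ of $\pi^\star T_X/\cF^s$ embedded via \eqref{03033} (this map being injective thanks to the equality $\cF_\Delta=\cF^s\cap\pi^\star T(X,\Delta)$ recorded above). Equivalently, I must show that the composition
\[
\Lambda^2\cF_\Delta\lra \pi^\star T_X/\cF^s\lra \pi^\star T_X\big/\bigl(\cF^s+\pi^\star T(X,\Delta)\bigr)
\]
vanishes. The sheaf on the right is a quotient of $\pi^\star T_X/\pi^\star T(X,\Delta)$, hence supported on the reduced divisor $\wt\Delta:=\pi^{-1}(\lceil\Delta\rceil)_{\rm red}$; since $\pi^\star T(X,\Delta)=\pi^\star T_X$ over $X_\Delta\setminus\wt\Delta$, the composition is automatically zero there, so the problem is local along $\wt\Delta$. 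After restricting to the open set where $\cF_\Delta$ and $\cF_X$ are sub-bundles and $\pi$ has the normal form of Lemma \ref{ram_cov}(b) — its complement being of codimension $\geq 2$ — I would first treat a general point of each component of $\wt\Delta$, and then the remaining codimension $\geq 2$ strata.

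At a general point $y$ of the component of $\wt\Delta$ lying over a component $D_i=\{z_1=0\}$ of $\lceil\Delta\rceil$, pick coordinates with $\pi(w)=(w_1^{a_i},w_2,\dots,w_n)$ and $\wt\Delta=\{w_1=0\}$ near $y$; writing $e_j:=\pi^\star(\partial/\partial z_j)$, the bundle $\pi^\star T_X$ is free on $e_1,\dots,e_n$, the bundle $\pi^\star T(X,\Delta)$ is generated by $w_1^{a_i-b_i}e_1,e_2,\dots,e_n$, and $\cF^s=\pi^\star\cF_X$. Two cases arise. If $\cF_X$ is not tangent to $D_i$, i.e. $\xi(z_1)\notin(z_1)$ for some local section $\xi$ of $\cF_X$, then $\pi^\star\xi$ has a unit $e_1$-coefficient at $y$, so $\cF^s+\pi^\star T(X,\Delta)=\pi^\star T_X$ near $y$ and the target of the composition is zero. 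If $\cF_X$ is tangent to $D_i$, then every local section $\xi$ of $\cF_X$ satisfies $\xi(z_1)\in(z_1)$, so the $e_1$-coefficient of $\pi^\star\xi$ lies in $(w_1^{a_i})\subseteq(w_1^{a_i-b_i})$, whence $\cF^s\subseteq\pi^\star T(X,\Delta)$ and $\cF_\Delta=\cF^s$ near $y$. In this second case I would invoke the classical fact that the vector fields tangent to the smooth divisor $D_i$ form a Lie subalgebra: for a local frame $\xi_1,\dots,\xi_r$ of $\cF_X$ one has $[\xi_\alpha,\xi_\beta](z_1)\in(z_1)$, hence $\pi^\star[\xi_\alpha,\xi_\beta]$ has $e_1$-coefficient in $(w_1^{a_i})\subseteq(w_1^{a_i-b_i})$, hence $\pi^\star[\xi_\alpha,\xi_\beta]\in\pi^\star T(X,\Delta)$; since \eqref{03032} sends $\xi_\alpha\wedge\xi_\beta$ to the class of $\pi^\star[\xi_\alpha,\xi_\beta]$ modulo $\cF^s$, this class lies in $\pi^\star T(X,\Delta)/\cF_\Delta$, and by $\cO_{X_\Delta}$-linearity the composition vanishes near $y$.

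The hard part will be the extension across the remaining strata of $\wt\Delta$ — the self-crossings of $\lceil\Delta\rceil$, the extra ramification locus, the singular locus of $\cF_\Delta$, and the loci where the tangency dichotomy above degenerates — all of codimension $\geq 2$ in $X_\Delta$. I expect to handle these by re-running the computation in the local model of Lemma \ref{ram_cov}(b), the essential input being the $G$-invariance of $\cF_\Delta$ (as used in Lemma \ref{inverse}), which pins down the local structure of $\cF_\Delta$ and of $\cF^s+\pi^\star T(X,\Delta)$ in those coordinates; alternatively one may check that the target sheaf of the composition has no sections supported in codimension $\geq 2$ and conclude from the generic vanishing. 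The point the computation makes transparent is that $\cF_\Delta\subseteq\pi^\star T(X,\Delta)$ forces $\cF_X$ to be tangent to every component of $\lceil\Delta\rceil$ it meets with a nontrivial twist, and tangency to a divisor is preserved by the bracket — which is exactly what renders the fractional differentials of \eqref{666} compatible with the Lie bracket once lifted to $X_\Delta$.
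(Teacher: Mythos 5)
Your reduction to the vanishing of the composition $\Lambda^2\cF_\Delta\to\cQ:=\pi^\star T_X/(\cF^s+\pi^\star T(X,\Delta))$, and the tangency dichotomy you use to establish it at the generic point of each component of $\pi^{-1}(\lceil\Delta\rceil)$, is correct and genuinely different from the paper's route. The paper instead constructs an explicit local lift $[\cdot,\cdot]_U$ of the Lie bracket on coordinate charts of $X_\Delta$ (equation \eqref{0760}), proves via a Fourier-type decomposition in the group coordinates and a case-by-case analysis (Proposition \ref{factor}) that $\pi^\star T(X,\Delta)$ is closed under $[\cdot,\cdot]_U$ at \emph{every} point, and then identifies $[\cdot,\cdot]_U$ modulo $\cF^s$ with \eqref{03032} (Proposition \ref{lin_2}); the factorisation then falls out from $\cF_\Delta=\cF^s\cap\pi^\star T(X,\Delta)$. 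Your argument trades that local computation for the pleasant geometric input that vector fields tangent to a smooth divisor form a Lie subalgebra.

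The gap is exactly where you flag it: passing from codimension $1$ to codimension $\geq 2$. The alternative you suggest --- that $\cQ$ has no sections supported in codimension $\geq 2$ --- is in general \emph{false}. Take $n=3$, $\Delta$ supported on $D_1=\{z_1=0\}$ with coefficient $1-b/a$ ($0<b<a$), and $\cF_X$ generated by $\xi_1=z_2\partial_{z_1}+z_3\partial_{z_2}$, $\xi_2=z_2\partial_{z_1}+\partial_{z_3}$. Over $D_1$ one finds $\cF^s+\pi^\star T(X,\Delta)=(w_1^{a-b},w_2)e_1\oplus\cO e_2\oplus\cO e_3$, so $\cQ\cong\cO/(w_1^{a-b},w_2)$ is nonzero and entirely supported on the codimension-$2$ locus $\{w_1=w_2=0\}$. (The factorisation is still true in this example --- the image carries a $w_1^{a-b}$ factor --- but generic vanishing alone cannot see this.) Your other alternative, re-running the local-model computation, is precisely what the paper's Proposition \ref{factor} does, but that requires the expansion \eqref{0755} of a local section of $\pi^\star T(X,\Delta)$ into $G$-isotypic pieces and a genuine case analysis; it cannot be reduced to the generic tangency dichotomy, because on higher-codimension strata the relevant information is not ``tangent or not tangent'' but the precise vanishing orders in the $w_j$'s. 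So your proposal identifies the right reduction and handles codimension $1$ correctly, but as it stands it does not constitute a complete proof of Proposition \ref{Lie}.
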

\noindent Our proof will unfold as follows. Let $U\subset X_\Delta$ be one of the coordinate subsets provided by Lemma \ref{ram_cov}. We first construct lifting of the usual Lie bracket on $X$ 
$$[\cdot,\cdot]_U: \Lambda^2 \pi^\star TX|_U\to \pi^\star TX|_U$$ 
which is only locally defined.
Then we show that the orbifold tangent bundle $\pi^\star T(X, \Delta)$ is closed under this map.

\noindent On the other hand, given any subsheaf $\cG\subset \pi^\star TX$
we show that the map 
$\displaystyle \Lambda^2\cG\to \pi^\star TX/\cG$ induced by
the $\pi$-lifting of the usual Lie bracket on $X$ coincides with the one
given by $[\cdot,\cdot]_U$.
The former 
is globally defined and $\cO_{X_\Delta}$-linear. The proposition follows by a linear combination of these facts.
\medskip

\begin{proof}

\noindent Let $\cL_X$ be the Lie bracket defined on vector fields on $X$: 
\begin{equation}\label{0750}
\cL_X:\Lambda^2 TX\to TX.
\end{equation}

Let $v$ be a local section of the bundle $\pi^\star TX$. 
We chose local coordinates $w = (w_1,\dots, w_n)$ and $z= (z_1,\dots, z_n)$ near $y_0: =\pi(x_0)$ given by Lemma 
\ref{ram_cov} (we remark that the finite group $G$ is not used in the following definition). 
Then the map $\pi:X_\Delta \to X$ is locally written as follows 
\begin{equation}\label{0751}
\pi(w)=(w_1^{a_1}, \dots, w_k^{a_k}, w_{k+1},\dots, w_p, w_{p+1}^{m_{p+1}},\dots, w_{n}^{m_{n}}).
\end{equation}
In order to simplify the notations, let $\displaystyle c_j:= 1-\frac{b_j}{a_j}$ be the coefficient of $D_j$ in $\Delta$;
if the index ``$j$" corresponds to one of the hypersurfaces $H_j$, then we set $c_j:= 0$.

\noindent We can write 
$v$ in a unique manner
\begin{equation}\label{0759}
v= \sum_{I\in \cE_{r, a}}w^I\pi^\star v_I
\end{equation}
where $\cE_{r, a}$ is the set of indices $I=(i_1,\dots, i_k, i_{p+1},\dots i_n)$ such that $0\leq i_j\leq a_j-1$
for $j=1,\dots, k$ and $0\leq i_\alpha\leq m_\alpha-1$ for $i\geq p+1$.
and we use the multi-index notation $w^I:= \prod_jw_1^{i_j}$. The $(v_I)$ above are
local vector fields on $X$.

Then we define 
\begin{equation}\label{0760}
[v_1, v_2]_U:= \sum_{I, J}w^{I+J}\pi^\star\big(\cL_X(v_{1I}\wedge v_{2J})\big).
\end{equation}
We have the following statement, showing that the orbifold tangent bundle is preserved by the local map \eqref{0760} (we thank B.\ Claudon for pointing out a slight inaccuracy in the previous version of it).

\begin{proposition}\label{factor}
 The orbifold tangent space $\pi^*T(X,\Delta)$ is closed under the local 
 bracket $[\cdot,\cdot]_U$.
\end{proposition}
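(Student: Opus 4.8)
The strategy is to verify the claim in the local coordinates provided by Lemma \ref{ram_cov}, exploiting the explicit form of the local frames of $\pi^\star T(X,\Delta)$ recorded in \eqref{03026}--\eqref{03027}. Write the generators of $\pi^\star T(X,\Delta)$ on $U$ as $\xi_j = w_j^{a_j-b_j}e_j$ for $j=1,\dots,k$ (and similarly for the extra-ramification indices $l=p+1,\dots,n$, with exponents $m_l-1$), and $\xi_i = e_i$ for $i=k+1,\dots,p$, where $e_j=\pi^\star(\partial/\partial z_j)$. It suffices to check that $[\xi_a,\xi_b]_U$ is again an $\cO_{X_\Delta}$-combination of the $\xi_c$'s for all pairs $(a,b)$. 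Since $[\cdot,\cdot]_U$ is derived by decomposing each vector field as in \eqref{0759} and then applying $\cL_X$ componentwise, the computation reduces to a direct calculation with the Leibniz rule, and one sees that all derivatives that can possibly lower a power $w_j^{a_j-b_j}$ are of the form $w_j\partial/\partial w_j$ acting on that monomial (because the $e_j$ are $\pi$-pullbacks and so, in the $w$-coordinates, $e_j$ is a multiple of $w_j^{1-a_j}\partial/\partial w_j$ along the ramified directions). Concretely, $[\xi_j,\xi_i]_U$ for $i$ unramified produces a term proportional to $w_j^{a_j-b_j}e_j$ again (the coefficient may acquire a factor from differentiating the coefficient functions of $e_j$, but the offending exponent is never decreased below $a_j-b_j$), so it stays in the span; the bracket of two ramified generators $[\xi_j,\xi_{j'}]_U$ with $j\neq j'$ similarly keeps both protective exponents; and the diagonal cases $[\xi_j,\xi_j]_U=0$ are trivial.

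The key bookkeeping point is the following: when one expands $\cL_X(v_{1I}\wedge v_{2J})$ in the $z$-coordinates and pulls back, a term $\partial/\partial z_j$ differentiated against a coefficient involving $z_j$ becomes, after pullback via $z_j=w_j^{a_j}$, a term with an extra factor $1/(a_j w_j^{a_j-1})$ times $\partial/\partial w_j$ — i.e. exactly $w_j^{1-a_j}\partial/\partial w_j$ up to a unit — so the ``dangerous'' contribution is always of the shape $(\text{unit})\cdot w_j^{1-a_j}\partial/\partial w_j$, and multiplying by the monomial weights $w^{I+J}$ coming from \eqref{0760} one checks the total exponent on $w_j$ in the result is at least $a_j-b_j$ whenever the inputs lay in $\pi^\star T(X,\Delta)$. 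This is precisely the inequality $0\le b_j<a_j$ (equivalently $a_j-b_j\ge 1$) being used, and it is the one place where the log-canonicity hypothesis enters this local statement. I would organize this as: (i) record the $w$-coordinate expression of each $\xi_a$; (ii) compute $[\xi_a,\xi_b]_U$ using \eqref{0760} and bilinearity, reducing to brackets of the elementary vector fields $w_j^{1-b_j}\partial/\partial w_j$, $\partial/\partial w_i$, $w_l^{1-m_l}\partial/\partial w_l$; (iii) verify each such elementary bracket lies in the $\cO_{X_\Delta}$-span of \eqref{03027} by the Leibniz rule and the exponent count above.

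\textbf{Main obstacle.} The genuinely delicate point is that the local bracket $[\cdot,\cdot]_U$ is defined through the \emph{non-canonical} decomposition \eqref{0759} into monomials $w^I\pi^\star v_I$, so one must be careful that the closedness statement does not secretly depend on a choice; but here it is only the image sheaf $\pi^\star T(X,\Delta)$ that matters, and since that sheaf is $G$-invariant and has the explicit frame \eqref{03027}, the stability claim is a clean exponent inequality once all terms are written out. The other slight subtlety, flagged by the parenthetical acknowledgment to B.\ Claudon, is to treat correctly the mixed terms where differentiation hits the \emph{coefficient functions} $v_I$ rather than the monomials $w^I$ — these are harmless because differentiating a holomorphic $v_I$ never worsens the $w_j$-order, so after multiplying by the already-present weight $w^{I+J}$ the result still carries exponent $\ge a_j-b_j$ on each ramified coordinate. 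I do not expect a conceptual difficulty beyond this careful exponent accounting; the proof is essentially a direct local verification in the coordinates of Lemma \ref{ram_cov}.
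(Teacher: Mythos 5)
Your proposed reduction does not work, and it misses the central technical step of the paper's proof. The bracket $[\cdot,\cdot]_U$ defined in \eqref{0760} is \emph{not} $\cO_{X_\Delta}$-bilinear (only its class modulo the target sheaf is, and only in the framework of Proposition \ref{lin_2}), so one cannot reduce the verification to the pairwise brackets $[\xi_a,\xi_b]_U$ of the module generators $\xi_j=w_j^{a_j-b_j}e_j$. In fact \emph{all} those pairwise brackets vanish identically: each $\xi_a$ has the single-term decomposition $w^{I_a}\pi^\star(\partial/\partial z_a)$ (or $\pi^\star(z_a\partial/\partial z_a)$ when $b_a=0$), and $\cL_X(\partial/\partial z_a,\partial/\partial z_b)=0$. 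If a reduction to generators were valid, Proposition \ref{factor} would be trivial, which it is not. The nontrivial content appears only for general sections $v\in\pi^\star T(X,\Delta)$ with non-constant coefficients, since then $\cL_X$ differentiates those coefficients and one must control the resulting $w_j$-orders.

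What makes the paper's argument close, and what is absent from your proposal, is the key divisibility observation after \eqref{0754}: writing $v=\sum_I w^I\pi^\star\rho_I$ with $\rho_I=\sum_j\psi_{Ij}\,\partial/\partial z_j$, the function $\psi_{Ij}$ is divisible by $z_j$ whenever the $j$-th entry $i_j$ of $I$ satisfies $0\le i_j\le a_jc_j-1$. This is deduced from matching coefficients in \eqref{0794} and noting that $\pi^\star(\psi_{Ij})$ is a power series in $w_j^{a_j}$. With this, $v$ decomposes as in \eqref{0755} into genuinely logarithmic pieces $w^I\pi^\star V_{Ij}$ (with $V_{Ij}$ a section of $T_X\langle\lceil\Delta\rceil\rangle$) plus already-weighted pieces $w^I\pi^\star W_{Ij}$ in which $w^I$ already carries the factor $w_j^{a_jc_j}$; the case analysis (a)--(c) then shows the bracket of any two such terms lands back in $\pi^\star T(X,\Delta)$, using closure of the logarithmic tangent bundle under $\cL_X$ and the fact that the weights $w_j^{a_jc_j}$ are external scalars to the bracket on $X$. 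Your exponent-counting intuition aims at the right conclusion, but that divisibility lemma is the mechanism that makes the count actually hold, and the decomposition into log-type and weighted pieces is the structure that organizes the verification.

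A secondary inaccuracy: step (iii) of your plan, computing ``each such elementary bracket'' of $w_j^{1-b_j}\partial/\partial w_j$ etc. ``by the Leibniz rule,'' suggests you are taking the ordinary Lie bracket on $X_\Delta$ in $w$-coordinates. That is not the same operation as $[\cdot,\cdot]_U$: the latter is built by decomposing into monomials $w^I\pi^\star v_I$ and applying $\cL_X$ componentwise on $X$, deliberately \emph{not} differentiating the monomial weights $w^I$. The two brackets differ by exactly the lower-order ``dangerous'' terms you were worried about, and the point of the construction \eqref{0760} is that those terms are absent by design.
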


\begin{proof}  We consider the restriction of $[\cdot,\cdot]_U$ to the exterior power of 
the orbifold tangent bundle, composed with the natural projection map
\begin{equation}\label{0751}
[\cdot,\cdot]_{\Delta, U}: \Lambda^2\pi^*T(X,\Delta)\to \pi^*TX/\pi^*T(X,\Delta);
\end{equation}
the claim is that this map is identically zero.

\noindent
By definition, the 
local generators as $\cO_Y$-modules of $\pi^*(TX)$ are
\begin{equation}\label{0752}
\partial_k:=\pi^*\frac{\partial}{\partial z_k}, \quad k= 1,\dots n.
\end{equation}
As already mentioned, the local generators of $\pi^\star T(X,\Delta)$
can be written explicitly as follows
\begin{equation}\label{0753}
w_1^{a_1c_1}\partial_1,\dots, w_k^{a_kc_k}\partial_k, \partial_{k+1},\dots \partial_n.
\end{equation}
Any local function $\displaystyle \varphi\in\cO_{X_{\Delta}}$ 
can be written in an unique manner 
$\varphi (w)= \sum_{I\in \cE_{r, a}}w^I\psi_I(z),$
for some holomorphic functions $(\psi_I)$ defined locally on $X$; in this expression we are using the 
same conventions as in \eqref{0759}.

Let $v=\sum_{j=0}^{n}\varphi_j(w)\partial_j$ be a local section of $\pi^*T(X,\Delta)$; in particular it can be expressed as follows
\begin{equation}\label{0754}
v= \sum_{I\in \cE_{r, a}}w^I\pi^\star \rho_I
\end{equation}
where $\displaystyle \rho_I:= \sum_{j=1}^{n}\psi_{Ij}\frac{\partial}{\partial z_j}$, for each multi-index $I$. 
\smallskip

\noindent The main observation now is that 
\emph{we can assume that the function $\psi_{Ij}$ divisible by $z_j$ provided that the $j^{\rm th}$ index of $I$ satisfies the inequality $0\leq i_j\leq a_jc_j-1$}. This is an immediate consequence of the definitions, and we detail the argument next.

By \eqref{0753} there exists a family of functions $\displaystyle (\mu_j)_{j=1,\dots, n}$ such that we have
\begin{equation}\label{0793}
v= \sum_{j=1}^k\mu_j w^{a_jc_j}\partial_j+ \sum_{j=k+1}^n\mu_j \partial_j;
\end{equation}
we by identifying the coefficients in \eqref{0793}--\eqref{0754}, we obtain
\begin{equation}\label{0794}  
\sum_{I}w^I\psi_{Ij}(z)= \mu_j(w)w_j^{a_jc_j}
\end{equation}
for each $j= 1,\dots, k$. This clearly proves our assertion, since we have $c_j\leq 1$.


\smallskip

Let $\pi^*\cV:= \pi^*\left(T_X\langle\lceil \Delta\rceil\rangle\right)$ be the inverse image of the logarithmic tangent bundle of 
corresponding to the pair $(X, \lceil \Delta\rceil)$. 

\noindent  By relation \eqref{0754} together with the observation above we obtain the decomposition
\begin{equation}\label{0755}
v= \sum_{j=1}^n\sum_{i_j=0}^{a_jc_j-1}w^I\pi^\star V_{Ij}+ 
\sum_{j=1}^n\sum_{i_j= a_jc_j}^{a_j-1}w^I\pi^\star W_{Ij}
\end{equation}
where $V_{Ij}$ above are local sections of $\cV$, and where $W_{Ij}$ are local holomorphic vector fields on $X$, multiple of $\displaystyle \frac{\partial}{\partial z_j}$. In \eqref{0755} we dropped the indexes $i_{p+1},\dots, i_n$ since they are playing no role.
\smallskip

\noindent The proof ends by a case by case analysis.

\begin{enumerate}

\item[(a)] We have 
\begin{equation}\label{0780}
[w^I \pi^\star V_I, w^J\pi^\star V_J]_U= w^{I+J}\pi^\star\big(\cL_X(V_I, V_J)\big)
\end{equation}
so it belongs to $\pi^\star T(X, \Delta)$, given the fact that the logarithmic tangent bundle is stable by the Lie bracket $\cL_X$.
\smallskip

\item[(b)] If $j, r\leq k$ then we have
\begin{equation}\label{0781}
[w_j^{a_jc_j} f\partial_j, w_r^{a_rc_r} g\partial_r]_U= w_j^{a_jc_j}w_r^{a_rc_r}
\pi^\star\big(\cL_X(f\frac{\partial}{\partial z_j}, 
g\frac{\partial}{\partial z_r})\big)
\end{equation}
which clearly belongs to $\pi^\star T(X, \Delta)$.
\smallskip

\item[(c)] If $V_I$ is a local section of $\cV$ and if $r\leq k$ then we have
\begin{equation}\label{0795}
[w^I \pi^\star V_I, w_r^{a_rc_r} g\partial_r]_U= w^{I}w_r^{a_rc_r}
\pi^\star\big(\cL_X(V_I, 
g\frac{\partial}{\partial z_r})\big)
\end{equation}
and, say, if $r\neq 1$ we have $\displaystyle \cL_X(fz_1\frac{\partial}{\partial z_1}, g\frac{\partial}{\partial z_r})= az_1\frac{\partial}{\partial z_1}+ b\frac{\partial}{\partial z_r}$ for some functions $a$ and $b$ whose expression does not matter: the point is that the $\pi$-inverse image of this vector belongs to 
$\pi^\star T(X, \Delta)$ when multiplied with $w_r^{a_rc_r}$. If $r=1$, then no additional explanations are 
required, because of the factor $w_r^{a_rc_r}$. Also, if $l\geq k+1$ we have $\displaystyle \cL_X\big(f\frac{\partial}{\partial z_l}, g\frac{\partial}{\partial z_r}\big)= a\frac{\partial}{\partial z_l}+ b\frac{\partial}{\partial z_r}$
for some (other) functions $a$ and $b$, but the result is the same: the $\pi$-inverse image of this vector belongs to 
$\pi^\star T(X, \Delta)$ when multiplied with $w_r^{a_rc_r}$--remark that there is no vanishing condition imposed for the coefficients $\geq k+1$ in \eqref{0753}.
\end{enumerate}
\noindent The Proposition \ref{factor} is proved. \end{proof}
\medskip

\begin{proposition}\label{lin_2}
Let $\cG\subset \pi^\star TX$ be a coherent subsheaf, such that there exists $\cG_X\subset TX$ with the property that $\cG= \pi^\star \cG_X$. Then the following map induced by $[\cdot,\cdot]_U$
\begin{equation}\label{0782}
\Lambda^2\cG\to \pi^\star TX/\cG
\end{equation}
 coincides with the $\pi$-inverse image of the Lie bracket $\Lambda^2\cG_X\to TX/\cG_X$. It is therefore
 $\cO_{X_\Delta}$-linear and globally defined.
\end{proposition}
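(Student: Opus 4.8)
The plan is to identify the map on $\Lambda^2\cG$ induced by $[\cdot,\cdot]_U$ with the $\pi$-inverse image of the honest Lie bracket on $\cG_X$; once this is done, $\cO_{X_\Delta}$-linearity and independence of all local choices are automatic, since a pullback of an $\cO_X$-linear map of coherent sheaves is globally defined and $\cO_{X_\Delta}$-linear. Write $\lambda\colon\Lambda^2\cG_X\to TX/\cG_X$ for the $\cO_X$-linear map induced by the usual Lie bracket $\cL_X$.

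First I would record the following descent property for the canonical expansion \eqref{0759}: near any point $x_0\in X_\Delta$, if $v$ is a local section of $\cG=\pi^\star\cG_X$, then in its unique expansion $v=\sum_{I\in\cE_{r,a}}w^I\pi^\star v_I$ every component $v_I$ is a local section of $\cG_X$. Indeed, write $v=\sum_i f_i\,\pi^\star s_i$ with $f_i\in\cO_{X_\Delta}$ and $s_i$ local generators of $\cG_X$; expanding each $f_i=\sum_I w^I\pi^\star\psi_{iI}$ with $\psi_{iI}\in\cO_X$ (possible by the local form \eqref{0751} of $\pi$, Lemma \ref{ram_cov}) gives $v=\sum_I w^I\pi^\star\!\big(\sum_i\psi_{iI}s_i\big)$, and uniqueness of \eqref{0759} forces $v_I=\sum_i\psi_{iI}s_i\in\cG_X$. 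Conversely any such expression with $v_I\in\cG_X$ is a section of $\cG$.

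Next come the base-change identifications. Since $\pi^\star$ is right exact and $\cG=\pi^\star\cG_X$ is given to be a subsheaf of $\pi^\star TX$, we have $\pi^\star TX/\cG=\pi^\star(TX/\cG_X)$, and exterior powers commute with pullback, so $\Lambda^2\cG=\Lambda^2(\pi^\star\cG_X)=\pi^\star(\Lambda^2\cG_X)$. Hence $\pi^\star\lambda$ is a well-defined, globally defined, $\cO_{X_\Delta}$-linear map $\Lambda^2\cG\to\pi^\star TX/\cG$. Now I would carry out the local comparison: take local sections $v_1,v_2$ of $\cG$ with expansions $v_\ell=\sum_I w^I\pi^\star v_{\ell I}$, $v_{\ell I}\in\cG_X$ by the previous paragraph. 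The defining formula \eqref{0760} gives $[v_1,v_2]_U=\sum_{I,J}w^{I+J}\pi^\star\big(\cL_X(v_{1I}\wedge v_{2J})\big)$, so its class in $\pi^\star TX/\cG=\pi^\star(TX/\cG_X)$ equals $\sum_{I,J}w^{I+J}\pi^\star\big(\lambda(v_{1I}\wedge v_{2J})\big)$. On the other hand $v_1\wedge v_2=\sum_{I,J}w^{I+J}\pi^\star(v_{1I}\wedge v_{2J})$ in $\Lambda^2\cG$, so $\cO_{X_\Delta}$-linearity of $\pi^\star\lambda$ yields $\pi^\star\lambda(v_1\wedge v_2)=\sum_{I,J}w^{I+J}\pi^\star\big(\lambda(v_{1I}\wedge v_{2J})\big)$ as well. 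The two expressions coincide, proving that the map induced by $[\cdot,\cdot]_U$ on $\Lambda^2\cG$ is exactly $\pi^\star\lambda$, whence it is $\cO_{X_\Delta}$-linear and globally defined.

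The only genuinely delicate step is the descent of the expansion components in the first paragraph: this is precisely where the hypothesis $\cG=\pi^\star\cG_X$ is used in an essential way (as opposed to the weaker hypotheses of Lemma \ref{inverse}), and one has to be a little careful because $\cG$ need not be locally free — but the argument survives because the expansion \eqref{0759} is unique already at the level of $\cO_{X_\Delta}$-modules. Everything after that is formal base-change bookkeeping together with the explicit formula \eqref{0760}.
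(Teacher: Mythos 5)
Your proof is correct, and it follows a genuinely different route from the one in the paper. The paper verifies $\cO_{X_\Delta}$-linearity of $[\cdot,\cdot]_U$ modulo $\cG$ by a direct computation with the Leibniz rule for $\cL_X$ (the identity $w^{qa+\rho}\pi^\star\cL_X(v_1,v_2) = w^\rho\pi^\star\cL_X(z^qv_1,v_2) + \psi\,\pi^\star v_1$, with the error term $\psi\,\pi^\star v_1$ visibly landing in $\cG$), and then deduces the identification with $\pi^\star\lambda$. You instead prove a descent property for the canonical $w^I$-expansion (that the components of a section of $\pi^\star\cG_X$ lie in $\cG_X$), set up the base-change identifications $\Lambda^2\cG\cong\pi^\star\Lambda^2\cG_X$ and $\pi^\star TX/\cG\cong\pi^\star(TX/\cG_X)$, and then simply compare both sides term-by-term in the expansion. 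Your route offloads the entire linearity question to the already-known $\cO_X$-linearity of the Frobenius tensor $\lambda$ on the base, which is cleaner and avoids redoing a Leibniz-rule calculation upstairs; the paper's route is more self-contained and makes explicit where the non-tensorial terms of the lifted bracket go (into $\cG$). Two small things worth flagging in your write-up: the isomorphism $\pi^\star TX/\cG\cong\pi^\star(TX/\cG_X)$ uses that $\pi^\star\cG_X\to\pi^\star TX$ is injective, which here is part of the hypothesis (or alternatively follows from flatness of the Kawamata cover); and the descent step for the $v_I$ is exactly parallel to the argument the paper already carries out for Lemma \ref{inverse}, so you could also cite that computation rather than redo it.
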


\begin{proof}
Let $q_j, \rho_j$ be positive integers, such that $\rho_j\leq a_j-1$. We denote by 
$w^{qa+\rho}:= \prod w_j^{q_ja_j+ \rho_j}.$
The calculation required by the Lemma \ref{lin_2} is very simple,
based on identities of the
following type

\begin{equation}\label{0783}
\begin{split}
w^{qa+\rho} \pi^\star\big( \cL_X(v_1, v_2)\big)& = w^{\rho}\pi^\star \big(z^q\cL_X(v_1, v_2)\big) \\
& = w^{\rho} \pi^\star \big(\cL_X(z^qv_1, v_2)\big)+ \psi\pi^\star v_1
\end{split}
\end{equation}
where $v_j$ are local sections of $\cG_X$, and $\psi$ is a local function on $X_\Delta$. This implies that if $V_1, V_2$ are local sections of $\pi^\star\cG_X$, then we have
\begin{equation}\label{0784}
\pi^\star\cL_X(\varphi(w) V_1\wedge V_2)\equiv \varphi(w)\pi^\star\cL_X(V_1\wedge V_2)
\end{equation}
modulo a vector in $\pi^\star \cG_X= \cG$. This is precisely what we need to prove, given the definition \eqref{0760}.
\end{proof}

\medskip

\noindent We are now ready for the proof of Proposition \ref{Lie}, as follows.
By Proposition \ref{lin_2}, the composed map:
\begin{equation}\label{0986}
\Lambda^2\cF_{\Delta}\to \Lambda^2\cF^{s}\to \pi^\star TX/\cF^{s}
\end{equation}
is $\cO_{X_\Delta}$-linear (recall that $\cF^{s}$ is the saturation of $\cF_\Delta$ in $\pi^\star T_X$). This induces a unique factorisation (cf. Proposition \ref{factor}) through:
\begin{equation}\label{0787}
\Lambda^2\cF_{\Delta}\to \pi^\star T(X, \Delta)/\cF_{\Delta},
\end{equation}
since the maps $\displaystyle \Lambda^2\cF_{\Delta}\to \Lambda^2\cF^{\rm s}$ and $\displaystyle \pi^\star T(X, \Delta)/\cF_{\Delta}\to \pi^\star TX/\cF^{s}$ are both 
\emph{injective}. We are using the fact that 
$\cF_{\Delta}= \cF^{s}\cap \pi^\star T(X, \Delta)$, hence Proposition \ref{Lie} follows from the $\cO_{X_\Delta}$-
linearity of \eqref{0986}.\end{proof}
\medskip

\noindent We have the following consequence of these considerations.

\begin{corollary}\label{flute} Let $\cF_\Delta\subset \pi^\star T(X, \Delta)$ be a coherent subsheaf. Assume that $\cF_\Delta$ is saturated and $G$-invariant. Let $\cF^{s}$  be the saturation of $\cF_\Delta$ in $\pi^\star TX$; by \ref{inverse} we have $\cF^{s}= \pi^\star \cF$. 
We assume moreover that the orbifold Lie bracket \eqref{0787}
 vanishes identically. Then the sheaf $\cF$ defines a holomorphic foliation on $X$.
\end{corollary}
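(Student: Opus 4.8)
The aim is to verify for the sheaf $\cF\subset T_X$ (the one with $\pi^\star\cF=\cF^s$ provided by Lemma \ref{inverse}) the two requirements of Definition \ref{fol}: that $T_X/\cF$ is torsion-free, and that $\cF$ is closed under the Lie bracket. I would dispose of the first point first. Let $\cG$ denote the saturation of $\cF$ in $T_X$, so $\cG\supseteq\cF$ with the same rank and $\cG/\cF$ torsion. Since $\pi$ is finite and flat (indeed faithfully flat) between smooth projective manifolds, $\pi^\star$ is exact and faithful, so $\pi^\star\cG\supseteq\pi^\star\cF=\cF^s$ is a subsheaf of $\pi^\star T_X$ of the same rank as $\cF^s$ with $\pi^\star\cG/\cF^s\cong\pi^\star(\cG/\cF)$ torsion. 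But $\cF^s$, being the saturation of $\cF_\Delta$ in $\pi^\star T_X$, is saturated there, so $\pi^\star T_X/\cF^s$ is torsion-free and contains no nonzero torsion subsheaf; hence $\pi^\star\cG=\cF^s=\pi^\star\cF$, and by faithfulness $\cG=\cF$. Thus $\cF$ is saturated in $T_X$, and $\pi^\star(T_X/\cF)=\pi^\star T_X/\cF^s$ is torsion-free.

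Next I would reduce the involutivity of $\cF$ to the vanishing hypothesis. By Proposition \ref{Lie}, the orbifold bracket \eqref{0787} is the unique factorisation of the $\cO_{X_\Delta}$-linear map \eqref{0986}, namely
\[\Lambda^2\cF_\Delta\to\Lambda^2\cF^s\to\pi^\star T_X/\cF^s,\]
through the injection \eqref{03033}; so the assumption that \eqref{0787} vanishes identically forces \eqref{0986} to vanish identically. On the other hand, Proposition \ref{lin_2}, applied to $\cG=\cF^s=\pi^\star\cF$, identifies the second arrow $\Lambda^2\cF^s\to\pi^\star T_X/\cF^s$ with the $\pi$-inverse image of the ordinary $\cO_X$-linear Lie-bracket map $\cL_X\colon\Lambda^2\cF\to T_X/\cF$ of \eqref{03031}. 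Hence the hypothesis says exactly that the composite
\[\Lambda^2\cF_\Delta\hookrightarrow\Lambda^2\pi^\star\cF\xrightarrow{\ \pi^\star\cL_X\ }\pi^\star(T_X/\cF)\]
is zero.

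Finally I would descend this vanishing. Since $\cF_\Delta\subseteq\cF^s=\pi^\star\cF$ have the same rank, the map $\Lambda^2\cF_\Delta\to\Lambda^2\pi^\star\cF$ is an isomorphism on a dense open set, so its image is a subsheaf of full rank and the cokernel is torsion. A morphism out of $\Lambda^2\pi^\star\cF$ which kills this full-rank subsheaf therefore factors through a torsion sheaf; as $\pi^\star(T_X/\cF)$ is torsion-free by the first paragraph, the morphism $\pi^\star\cL_X$ must be zero. Faithfulness of $\pi^\star$ then gives $\cL_X\colon\Lambda^2\cF\to T_X/\cF$ equal to zero, i.e.\ $\cF$ is closed under the Lie bracket; combined with $T_X/\cF$ torsion-free, this is precisely the statement that $\cF$ is a holomorphic foliation on $X$ in the sense of Definition \ref{fol}. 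The one step demanding care is the identification in the middle paragraph of the orbifold bracket, post-composed with \eqref{03033}, with $\pi^\star\cL_X$; but this is exactly what the proof of Proposition \ref{Lie} furnishes via Propositions \ref{factor} and \ref{lin_2}, and the rest is routine torsion-and-faithful-flatness bookkeeping.
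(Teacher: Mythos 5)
Your argument is correct, and it reaches the conclusion by a genuinely different route from the paper's. The paper's proof is a hands-on local computation: given two local sections $V_1, V_2$ of $\cF$, one multiplies their pullbacks by functions $\varphi_j$ to land in $\cF_\Delta$, applies the hypothesis that the orbifold bracket vanishes to get $\varphi_1\varphi_2\,\pi^\star\cL_X(V_1,V_2)\in\cF_\Delta\subset\cF^s$, and then uses saturation of $\cF^s$ to cancel the factor $\varphi_1\varphi_2$ and descend. Your version replaces this by a clean sheaf-theoretic argument: the vanishing of \eqref{0787} implies the vanishing of \eqref{0986}, which by Proposition \ref{lin_2} is $\pi^\star\cL_X$ precomposed with the full-rank inclusion $\Lambda^2\cF_\Delta\hookrightarrow\Lambda^2\pi^\star\cF$; since the cokernel of that inclusion is torsion and the target $\pi^\star(T_X/\cF)$ is torsion-free, $\pi^\star\cL_X$ must vanish, and faithful flatness of $\pi$ forces $\cL_X=0$. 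Your approach has two advantages: it avoids manipulating the auxiliary functions $\varphi_j$ (whose existence the paper invokes without comment), and it makes explicit the saturation of $\cF$ in $T_X$ — a requirement of Definition \ref{fol} that the paper's proof leaves implicit — via the clean observation that $\pi^\star\cG=\cF^s=\pi^\star\cF$ and faithfulness force $\cG=\cF$. The paper's version is perhaps more self-contained in that it doesn't lean as hard on the internal structure of the proof of Proposition \ref{Lie}, but both arguments are short and correct.
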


\begin{proof}
By hypothesis, the linear map \eqref{0787} is identically zero, so we obtain the following partial conclusion: \emph{let $v_1$ and $v_2$ be two local sections of 
$\cF_\Delta$; then $[v_1, v_2]_U\in \cF_\Delta$}.

Let $V_1, V_2$ be two local sections of $\cF$: there exists two local holomorphic functions $\varphi_1$ and $\varphi_2$ on $X_\Delta$ such that
\begin{equation}\label{0785}
v_j:= \varphi_j\pi^\star V_j\in \cF_\Delta
\end{equation}
for each $j= 1, 2$.  Hence we have $\displaystyle \varphi_1\varphi_2\pi^\star\cL_X(V_1, V_2)\in \cF_\Delta$
and thus
\begin{equation}\label{0786}
\pi^\star\cL_X(V_1, V_2)\in \cF^s.
\end{equation}
This implies that we have
\begin{equation}\label{0789}
\cL_X(V_1, V_2)\in \cF
\end{equation}
and thus $\cF$ defines a foliation on $X$.
\end{proof}


\subsection{The relative canonical bundle of an orbifold fibration} 

 The following results (Theorem \ref{KForb} and Lemma \ref{claim2}) have been shown in \cite{CP13} , Proposition 1.9 and Theorem 2.11, by computing the degree of both sides on complete intersection curves of very ample classes. We present here a different proof.

\begin{theorem}\label{KForb} Let $\cF_{\Delta}\subset \pi^*T(X,\Delta)$ be such that its saturation $\cF_{\Delta}^{sat}$ in $\pi^\star T_X$ is equal to $\pi^\star(\cF)$, where $\cF= \Ker (dp) \subset TX$ is an algebraic foliation induced by the rational fibration
$p:X\dasharrow Z$. If $K_{X}+ \Delta$ is pseudo-effective, then: 
\begin{equation}\label{6004}
\mu_{\pi^\star\beta}(\cF_\Delta)\leq 0,
\end{equation}
for any  movable class $\beta$ on $X$. 
\end{theorem}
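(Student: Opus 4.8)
The plan is to reduce the statement to the non-orbifold Theorem \ref{pseffff}, passing through the relation between $\mu_{\pi^\star\beta}(\cF_\Delta)$ and the degree of a relative canonical bundle upstairs. First I would note that since $\cF_\Delta$ is saturated in $\pi^\star T(X,\Delta)$, its determinant is computed from the defining sequence \eqref{03024}: if $\cF_\Delta^{sat}=\pi^\star\cF$ in $\pi^\star T_X$, then $\det(\cF_\Delta) = \pi^\star\det(\cF) - R$ where $R$ is the effective divisor obtained by intersecting the ``orbifold saturation defect'' with the ramification/boundary divisors — concretely $R=\sum_i b_i D_i$ restricted to the directions actually contained in $\cF$, plus possibly an effective $\pi$-exceptional piece. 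Equivalently $\det(\cF_\Delta^\star) = \pi^\star K_\cF + R$ with $R$ effective supported on $\pi^{-1}(\operatorname{Supp}\Delta)\cup\Ram(\pi)$. So $\mu_{\pi^\star\beta}(\cF_\Delta) = \frac{1}{r}\bigl(-\pi^\star K_\cF\cdot\pi^\star\beta - R\cdot\pi^\star\beta\bigr)$, and since $R$ is effective and $\pi^\star\beta$ is movable on $X_\Delta$ (by \cite{BDPP}, cf. Lemma \ref{bmq}), $R\cdot\pi^\star\beta\geq 0$; hence it suffices to prove $K_\cF\cdot\beta\geq 0$, i.e. that $K_\cF$ is pseudo-effective, when $K_X+\Delta$ is pseudo-effective and $\cF=\Ker(dp)$.

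Next I would invoke the already-established non-orbifold machinery. Pass to a neat model: modifications $\pi_X:\wh X\to X$, $\pi_Z:\wh Z\to Z$ as in Remark \ref{KF}, so that $\wh p:\wh X\to\wh Z$ is regular with snc discriminant. By Remark \ref{KF} (formula \eqref{722}), $K_{\wh\cF}=K_{\wh X/\wh Z}-D(\wh p)$ modulo a $\pi_X$-exceptional divisor, where $\wh\cF$ is the foliation induced by $\cF$. Now apply Theorem \ref{pseffff} with $\Delta$ replaced by the snc boundary $\wh\Delta$ (defined by $E_1+\pi_X^\star(K_X+\Delta)=K_{\wh X}+\wh\Delta$, which is lc since $(X,\Delta)$ is lc): since $K_X+\Delta$ is pseudo-effective, so is $K_{\wh X}+\wh\Delta$, and Theorem \ref{pseffff} gives that $K_{\wh X/\wh Z}+\wh\Delta^{\mathrm{hor}}-D(\wh p)$ is pseudo-effective on $\wh X$. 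Because $\wh\Delta^{\mathrm{hor}}$ is effective and the discrepancy divisor $E_1$ is effective and exceptional, this yields that $K_{\wh\cF}$ is pseudo-effective up to an effective exceptional correction; by Lemma \ref{l pseffff} (or directly by pairing with the movable class $\pi_X^\star\beta$ and using that exceptional divisors have zero intersection with it), $K_\cF$ is pseudo-effective on $X$. Combined with the first paragraph, $\mu_{\pi^\star\beta}(\cF_\Delta)\leq 0$.

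The main obstacle — and the step I would spend the most care on — is the bookkeeping in the first paragraph: precisely identifying the effective divisor $R$ with $\det(\cF_\Delta^\star)=\pi^\star K_\cF + R$. One has to use the local description of $\pi^\star T(X,\Delta)$ in \eqref{03026}–\eqref{03027} and track which of the ``twisted'' generators $w_i^{a_i-b_i}e_i$ lie inside $\cF_\Delta$; since $\cF_\Delta$ is the saturation of $\cF^s\cap\pi^\star T(X,\Delta)$ inside $\pi^\star T(X,\Delta)$ and $\cF^s=\pi^\star\cF$, the index of $\det(\cF_\Delta)$ in $\pi^\star\det(\cF)$ along each $D_i$ is controlled by the order of tangency of $\cF$ to $D_i$, giving an effective (possibly zero) contribution $\sum_i b_i\,(\cF\text{-tangent part of }D_i)$ plus an effective exceptional term from the extra ramification. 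Effectivity is all that is needed here, so I would not need the exact multiplicities — only the sign — which makes the argument robust. A secondary technical point is checking that $\cF_\Delta^{sat}=\pi^\star\cF$ together with $\cF=\Ker(dp)$ survives the modifications $\pi_X$ compatibly, but this is exactly the content of the neat-model construction used already in Remark \ref{KF} and §3, so I would simply cite it.
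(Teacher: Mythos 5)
Your plan has a genuine gap, and it is not a small one. In the first paragraph you write $\det(\cF_\Delta^\star)=\pi^\star K_\cF + R$ with $R$ effective and then declare that, since $R\cdot\pi^\star\beta\geq 0$, ``it suffices to prove $K_\cF\cdot\beta\geq 0$, i.e.\ that $K_\cF$ is pseudo-effective.'' This reduction is correct as far as it goes, but the target statement is false in general: $K_\cF$ need \emph{not} be pseudo-effective when only $K_X+\Delta$ is. Indeed, by Remark \ref{KF} the psef-ness of $K_\cF$ would require $K_X$ itself (not $K_X+\Delta$) to be psef. Then in the second paragraph you try to rescue this by applying Theorem \ref{pseffff}, which gives psef-ness of $K_{\wh X/\wh Z}+\wh\Delta^{\rm hor}-D(\wh p)$, and you assert that ``because $\wh\Delta^{\rm hor}$ is effective\ldots this yields that $K_{\wh\cF}$ is pseudo-effective up to an effective exceptional correction.'' But $K_{\wh\cF}=K_{\wh X/\wh Z}-D(\wh p)$ is obtained by \emph{subtracting} the effective divisor $\wh\Delta^{\rm hor}$ from the psef class, and subtracting an effective divisor does not preserve pseudo-effectivity. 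So this step simply does not hold, and the argument collapses.

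The fix — which is exactly what the paper does via Lemma \ref{claim1} — is to establish the \emph{precise} identity $\det\cF_\Delta^\star=\pi^\star(K_\cF+\Delta^{\rm hor})$, rather than lumping the boundary contribution into an effective remainder $R$ and discarding it. With this exact identity one can keep $K_\cF+\Delta^{\rm hor}$ intact and apply Lemma \ref{claim2} plus Theorem \ref{pseffff} to conclude that $(K_\cF+\Delta^{\rm hor})\cdot\beta\geq 0$, which is exactly $\mu_{\pi^\star\beta}(\cF_\Delta)\leq 0$. Your ``only the sign of $R$ matters, not its multiplicities'' heuristic is precisely what loses the information needed: the $\Delta^{\rm hor}$ term in $\det\cF_\Delta^\star$ must be matched against the $\Delta^{\rm hor}$ term produced by Theorem \ref{pseffff}, and you cannot afford to throw either one away. (A secondary, smaller issue: your description of the effective remainder as supported on the directions ``actually contained in $\cF$'' has the bookkeeping reversed — the contribution comes from the components of $\Delta$ that are \emph{transverse}, i.e.\ horizontal for $p$, which is what the case analysis in the proof of Lemma \ref{claim1} shows.)
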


\begin{proof} It is based on the 
following two statements of possibly independent interest.

\begin{lemma}\label{claim1} Let $\cF_{\Delta}\subset \pi^*T(X,\Delta)$ be such that $\cF_{\Delta}^{sat}=\pi^*(\cF)$, where $\cF= \Ker (dp) \subset TX$ is an algebraic foliation induced by the rational fibration
$p:X\dasharrow Z$. Then we have 
\begin{equation}\label{7000} 
\det \cF_\Delta^\star= \pi^\star(K_{\cF}+ \Delta^{\rm hor}).
\end{equation}

\end{lemma}
\medskip

\noindent As in Theorem \ref{KForb}, let $p: X\dasharrow Z$ be 
a rational map, and let $\cF:= \Ker(dp)$ be the foliation induced by the kernel of its differential. The following statement holds true; it appears in \cite{Dr} in a slightly different form.

\begin{lemma}\label{claim2}
Let $\pi_X: \wh X\to X$ and let $\pi_Z: \wh Z\to Z$ be a modification of 
$X$ and $Z$, respectively, such that the following properties are satisfied.
\begin{enumerate}

\item[(i)] The induced map $\wh p: \wh X\to \wh Z$ is regular, and let $ E$ be its discriminant divisor.
\smallskip

\item[(ii)] The inverse image 
\begin{equation}\label{6001}
 \wh E:= \wh p^{-1} E
\end{equation}
has normal crossings.
\smallskip

\item[(iii)] Every component of $\wh E$ which is contracted by $\wh p$ is equally contracted by 
$\pi_X$.
\end{enumerate}
Let $\wh \cF$ be the foliation induced by $\cF$ on $\wh X$; then we claim that the following equality 
holds true
\begin{equation}\label{6002}
K_{\wh \cF}= K_{\wh X/\wh Z}- D(\wh p)
\end{equation}
modulo a divisor which is $\pi_X$-exceptional.
\end{lemma}
\medskip

\noindent Before proving these statements, we show that 
they imply Theorem \ref{KForb}. Let $\beta$ be a movable class on $X$; by \eqref{6002}
we have the equality
\begin{equation}\label{7002}
c_1\big(K_{\wh \cF}+ \wh \Delta^{\rm hor}\big). \pi_X^\star \beta= c_1\big(K_{\wh X/\wh Z}+ \wh \Delta^{\rm hor}- D(\wh p)\big). \pi_X^\star \beta
\end{equation}
because $W\cdot \pi_X^\star \beta= 0$ for any $\pi_X$--exceptional divisor
$W$.
Thus we obtain:
\begin{equation}\label{7003}
c_1\big(K_{\wh \cF}+ \wh \Delta^{\rm hor}\big).\pi_X^\star \beta\geq 0
\end{equation}
by Theorem \ref{pseffff}. By (iii) of Lemma \ref{claim2} combined with the equality \eqref{7000} we have 
\begin{equation}\label{7004}
c_1\big(K_{\wh \cF}+ \wh \Delta^{\rm hor}\big). \pi_X^\star \beta= 
c_1\big(\cF_\Delta^\star\big). \pi^\star \beta,
\end{equation}
proving Theorem \ref{KForb}.

\medskip

\noindent $\bullet$ \emph{Proof of {\rm Lemma \ref{claim1}}}
\smallskip

\noindent The equality \eqref{7000} will be shown next to hold by a direct computation in local coordinates.
Let $X_1\subset X$ be a Zariski open set such that the restriction $\displaystyle \cF|_{X_1}$ is a non-singular foliation, and such that we have $X_1\cap D_j\cap D_k= \emptyset$ for each pair of indexes $j\neq k$, cf. \eqref{0717}. We equally assume that $X_1$ does not contain any of the tangency points of $\cF$ with the support of $\Delta^{\rm t}$, i.e. the set of points $z\in \cup D_j$  such that the tangent space of the divisor at $z$
contains $\cF_z$. Here we denote by $\Delta^{\rm t}$ the set of components of $\Delta$ which are not invariant by $\cF$.

\noindent We have
\begin{equation}\label{7005}
\codim_X(X\setminus X_1)\geq 2,
\end{equation}
hence it would be enough to show that \eqref{7000} holds true when restricted to $\pi^{-1}(X_1)$, given that the map 
$\pi$ is finite.
\smallskip

\noindent Let $x_0\in \pi^{-1}(D_1\cap X_1)$ be a point; we have to distinguish between two cases. 

\emph{If $D_1$ is not invariant by $\cF$}, then in particular $D_1$ is horizontal with respect to the map $p$, and moreover we 
can choose the local coordinates $(z_1,\dots, z_n)$ on an open set $U$ containing the point $\pi(x_0)$ such that 
\begin{equation}\label{7006}
D_1\cap U= (z_1= 0),
\end{equation}
and such that $\cF|_{U}$ is generated by 
\begin{equation}\label{7007}
\frac{\partial}{\partial z_1}, \dots, \frac{\partial}{\partial z_q}.
\end{equation}
Near $x_0$ the map $\pi$ is given by $\displaystyle (w_1,\dots, w_n)\to
(w_1^{a_1}, w_2,\dots, w_n)$ and the
intersection $\pi^\star\cF\cap \pi^\star T(X, \Delta)$ is generated by 
\begin{equation}\label{7008}
w_1^{a_1- b_1}\pi^\star\frac{\partial}{\partial z_1}, \pi^\star\frac{\partial}{\partial z_2},\dots, \pi^\star\frac{\partial}{\partial z_q},
\end{equation}
(notations as in Section 5.4) and the formula \eqref{7000} follows.

\emph{If $D_1$ is invariant by $\cF$}, then we first remark that $D_1$
cannot be horizontal with respect to the map $p$ (given that $\cF$ is equal to the kernel of this map generically).
An appropriate choice of coordinates will give
\begin{equation}\label{7009}
D_1\cap U= (z_{q+1}= 0),
\end{equation}
and such that $\cF|_{U}$ is generated by 
\begin{equation}\label{7010}
\frac{\partial}{\partial z_1}, \dots, \frac{\partial}{\partial z_q}
\end{equation}
and the map $\pi$ is $(w_1,\dots, w_q, w_{q+1},\dots, w_n)\to (z_1, \dots z_q, z_{q+1}^{a_{q+1}},\dots, z_n)$. The intersection 
$\pi^\star\cF\cap \pi^\star T(X, \Delta)$ is generated by 
\begin{equation}\label{7011}
\pi^\star\frac{\partial}{\partial z_1}, \pi^\star\frac{\partial}{\partial z_2},\dots, \pi^\star\frac{\partial}{\partial z_q},
\end{equation}
which settles Lemma \ref{claim1} in this second case. 

If the point $x_0$ does not belong to the support of $\pi^{-1}(\Delta)$, then the verification of \eqref{7000} is simpler. Indeed, near such point
the orbifold tangent space coincides with the inverse image of the
tangent bundle of $X$, thus we have $\cF_{\Delta, x_0}^{sat}=
\pi^\star(\cF)_{x_0}$. The formula follows
--we remark that in this case it makes no difference if $\pi$ is ramified at $x_0$
or not. 
\smallskip

\noindent All in all, the lemma is proved.
\qed

\medskip

\noindent $\bullet$ \emph{Proof of {\rm Lemma \ref{claim2}}}
\smallskip

\noindent Let $\cJ\subset \wh p^\star T_{\wh Z}$ be the image of the differential of $\wh p$, so that we have
\begin{equation}\label{639}
0\to \wh \cF\to T\wh X\to \cJ\to 0
\end{equation}
outside a set of codimension at least two. 

Let $x_0$ be a generic point of a component $W$ of $\wh E$ \emph{which is not $\wh p$- exceptional}.
Then
we have a coordinate system centered at $x_0$, say
$(z_1,\dots , z_n)$ with respect to which the map $\wh p$ can be written as follows

\begin{equation}\label{642}
(z_1,\dots , z_n)\to (z_{q+1}, \dots, z_{n-1}, z_n^{k_n})
\end{equation}
where $W= (z_n= 0)$ near $x_0$.

By a direct computation of the differential, we  deduce that $\cJ$ is generated by the vector fields
\begin{equation}\label{645}
\frac{\partial}{\partial t_1}, \frac{\partial}{\partial t_2}, \dots, z_n^{k_n- 1}\frac{\partial}{\partial t_{n-q}}
\end{equation}
near $y_0$. 

\noindent Hence the determinant of $\cJ$ is equal to 
\begin{equation}\label{647}
\det \cJ= -p^\star K_{\wh Z}- \sum_{i}(k_i-1)Y_i
\end{equation}
where the hypersurfaces appearing in the first sum in \eqref{647} correspond to the components of the pre-image of $E\subset \wh Z$ which are
not exceptional with respect to $\wh p$.

Thus, by the sequence \eqref{639} we obtain  
\begin{equation}\label{648}
\det \cF- p^\star K_{\wh Z}- \sum_{i}(k_i-1)\mu^\star Y_i= -K_{\wh X}
\end{equation}
and after rearranging the terms, this can be reformulated as follows
\begin{equation}\label{649}
K_{\wh \cF}= K_{\wh X/\wh Z}- D(\wh p) 
\end{equation}
modulo a divisor which is $\pi_X$-exceptional.
This is what we wanted to prove.
\end{proof}



\section{Proof of Theorems \ref{rco} and \ref{main}}

\subsection{Proof of Theorem \ref{rco}.}

We recall here some notions. Let $(X,\Delta)$ be a smooth projective log-canonical orbifold pair. Let $\pi:X_\Delta\to X$ be a Kawamata cover adapted to $(X,\Delta)$, and let $\pi^*\Omega^1(X,\Delta)$ and $\pi^*T(X,\Delta)$ be its cotangent and tangent bundles, respectively. Let further 
$\cF_{\Delta}\subset \pi^*T(X,\Delta)$ be a coherent, saturated subsheaf. We denote by $\cF_{\Delta}^{sat}$
the saturation of $\cF_{\Delta}$ in $\pi^\star T_X$. We assume that we have 
$\cF_{\Delta}^{sat}=\pi^*(\cF_X)$ for some uniquely determined distribution $\cF_X\subset TX$ on $X$.
\smallskip

\noindent Denote by $\Psi:\wedge^2\cF_{\Delta}\to \pi^*T(X,\Delta)/\cF_{\Delta}$ the associated orbifold Lie bracket (cf. Corollary \ref{Lie}).

\begin{defn}\label{ofol} We say that $\cF_{\Delta}$ is a foliation on $(X,\Delta)$ if the above map $\Psi$ vanishes identically.
\end{defn}

\begin{rem}\label{folo}{\rm 
    We assume that
    $$2.\mu_{\pi^*\alpha,min}(\cF_{\Delta})>\mu_{\pi^*\alpha,max}(\pi^*T(X,\Delta))/\cF_{\Delta}$$ for some movable class $\alpha$ on $X$. Then $\pi^*\cL_X$ vanishes identically, by the usual slope considerations.} \end{rem}

\begin{rem}{\rm
If $\mu_{\pi^*\alpha,max}(\pi^*T(X,\Delta))>0$ then we obtain a foliation $\cF_{\Delta}$ by choosing appropriate pieces of the Harder-Narasimhan filtration of $\pi^*T(X,\Delta)$.} 
\end{rem}

\noindent Recall the statement of Theorem \ref{rco}:

\begin{theorem}\label{rco'} Let $\cF_{\Delta}\subset \pi^*(T(X,\Delta))$ be such that: $\mu_{\pi^\star\alpha,min}(\cF_{\Delta})>0$, and such that
  $2.\mu_{\alpha,min}(\cF_{\Delta})> \mu_{\alpha,max}(\pi^*(T(X,\Delta)/\cF_{\Delta})$. The saturation of $\cF_{\Delta}$ in $\pi^*(T_X)$ then defines an algebraic foliation $\cF_X$ on $X$ such that the restriction of $K_X+\Delta$ to the closure $F$ of the generic leaf of $\cF$ is not pseudo-effective. \end{theorem}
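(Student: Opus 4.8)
The plan is to chain together the orbifold integrability criterion, a descent of positivity to $X$, the algebraicity criterion of Theorem \ref{algebraic}, and the relative pseudo-effectivity results established above. First I would note that, $\cF_{\Delta}$ being coherent, saturated and (as in Theorem \ref{rco}) $G$-invariant, the orbifold Lie bracket $\Psi\colon\wedge^2\cF_{\Delta}\to\pi^\star T(X,\Delta)/\cF_{\Delta}$ of Proposition \ref{Lie} is defined and lies in $\Hom\big(\wedge^2\cF_{\Delta},\pi^\star T(X,\Delta)/\cF_{\Delta}\big)$. By Theorem \ref{tensor, I}, $\mu_{\pi^\star\alpha,\min}(\wedge^2\cF_{\Delta})=2\,\mu_{\pi^\star\alpha,\min}(\cF_{\Delta})$ (the rank-one case being trivial), so hypothesis~(2) and Lemma \ref{negative} force $\Psi\equiv0$; this is exactly Remark \ref{folo}. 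Corollary \ref{flute} then yields that the saturation $\cF^{s}$ of $\cF_{\Delta}$ in $\pi^\star T_X$ equals $\pi^\star(\cF_X)$ for a subsheaf $\cF_X\subset T_X$, and that $\cF_X$ is a holomorphic foliation on $X$.

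Next I would descend the positivity. The inclusion $\pi^\star T(X,\Delta)\subseteq\pi^\star T_X$ is visible on the local frames \eqref{03026}, so $\cF_{\Delta}$ may be regarded as a subsheaf of $\pi^\star T_X$ whose saturation there is $\pi^\star(\cF_X)$. Applying Corollary \ref{descent} to the Kawamata cover $\pi\colon X_\Delta\to X$ — hypothesis~(a) being hypothesis~(1) of the theorem and hypothesis~(b) being the conclusion just obtained — gives $\mu_{\alpha,\min}(\cF_X)>0$. Since $\cF_X$ is integrable, Theorem \ref{algebraic} shows it is an algebraic foliation: $\cF_X=\Ker(dp)$ generically for a dominant rational map $p\colon X\dasharrow Z$, and the closure $F$ of the generic leaf is the generic fibre of $p$. (When $\Delta=0$, these two paragraphs are just Theorem \ref{rc}.)

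It then remains to show that $(K_X+\Delta)|_F$ is not pseudo-effective, and I expect this to be the delicate point. Assume it were. Passing to a neat model $\pi_X\colon\wh X\to X$, $\wh p\colon\wh X\to\wh Z$ as in Lemma \ref{claim2}, the pseudo-effectivity of $(K_X+\Delta)|_F$ is equivalent, by Remark \ref{repref} and the relative form of \cite{CP13} underlying the proof of Theorem \ref{pseffff}, to the pseudo-effectivity on $\wh X$ of $K_{\wh X/\wh Z}+\wh\Delta^{\rm hor}-D(\wh p)$. By Lemma \ref{claim2} this divisor equals $K_{\wh\cF}+\wh\Delta^{\rm hor}$ modulo a $\pi_X$-exceptional divisor, where $\wh\cF$ is the foliation induced by $\cF_X$ on $\wh X$; hence, by Lemma \ref{l pseffff}, $K_{\cF_X}+\Delta^{\rm hor}$ is pseudo-effective on $X$. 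But Lemma \ref{claim1} identifies $\det\cF_{\Delta}^{\star}$ with $\pi^\star(K_{\cF_X}+\Delta^{\rm hor})$, so $\det\cF_{\Delta}^{\star}$ would be pseudo-effective on $X_\Delta$; pairing with the movable class $\pi^\star\alpha$ gives $\mu_{\pi^\star\alpha}(\cF_{\Delta})\le0$, contradicting hypothesis~(1). In other words, this last step is the proof of Theorem \ref{KForb} with its global hypothesis ``$K_X+\Delta$ pseudo-effective'' weakened, via Remark \ref{repref}, to ``$(K_X+\Delta)|_F$ pseudo-effective''; and it is precisely this reduction — identifying the restriction to the possibly singular leaf $F\subset X$ with the relative canonical bundle of the regular model $\wh p$, and bookkeeping the $\pi_X$-exceptional divisors and the multiplicity divisor $D(\wh p)$ through Lemmas \ref{claim1} and \ref{claim2} — that requires the most care.
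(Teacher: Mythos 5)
Your proposal is correct and follows essentially the same route as the paper: vanishing of the orbifold Lie bracket via the slope hypotheses (Remark \ref{folo}), descent via Corollary \ref{flute}, positivity of $\mu_{\alpha,\min}(\cF_X)$ via Corollary \ref{descent}, algebraicity via Theorem \ref{algebraic}, and identification of $\det\cF_{\Delta}^{\star}$ with $\pi^\star(K_{\cF_X}+\Delta^{\rm hor})$ and of $K_{\cF_X}$ with the relative canonical modulo exceptional divisors through Lemmas \ref{claim1} and \ref{claim2}, combined with Remark \ref{repref}. The only cosmetic difference is that you run the final step as a proof by contradiction (assuming $(K_X+\Delta)|_F$ pseudo-effective and deducing $\mu_{\pi^\star\alpha}(\cF_\Delta)\le 0$), whereas the paper argues the contrapositive directly; the mathematical content is identical.
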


\begin{proof} By Remark \ref{folo} and Corollary \ref{flute} we infer
  that $\cF_X$ is a foliation on $X$. Moreover we have
  $$\mu_{\alpha,min}(\cF_X)\geq \mu_{\pi^*\alpha,min}(\cF_{\Delta})>0.$$
  Next, by Lemma \ref{claim1} and \ref{claim2} we see that
  $K_{X/Z}+\Delta$ is not pseudo-effective (we use the same notations as in these statements). We obtain Theorem \ref{rco'} as a consequence of
  Remark \ref{repref} (and the references therein).\end{proof}

\subsection{Proof of Theorem \ref{main}}

\begin{proof} Let $(X, \Delta)$ be a smooth lc pair as in Theorem \ref{main}, and let $\pi: X_\Delta\to X$
be a ramified cover associated to this pair, given by Lemma \ref{ram_cov}. Consider any quotient $\otimes ^m\pi^\star\Omega^1{(X, \Delta)}\to \cQ\to 0.$

By contradiction, assume that $c_1(\cQ). \pi^\star\alpha<0$ for some movable class $\alpha$ on $X$. Then we have
$\mu_{\pi^*\alpha,min}(\otimes ^m\pi^\star \Omega^1{(X, \Delta)})<0$.
By Theorem \ref{tensor, I} in Section 2 the inequality
$\mu_{\pi^*\alpha, max}(\pi^\star T{(X, \Delta)})> 0$ is equally satisfied.\smallskip

Let $0\to \cF_\Delta\to \pi^\star T{(X, \Delta)}$ be the maximal $\pi^\star\alpha$ destabilizing subsheaf of the orbifold tangent bundle. 
The maximality of $\cF_\Delta$ induces a few important properties: 
it is $\pi^\star\alpha$-semistable, $G$-invariant and saturated in $\pi^\star T{(X, \Delta)}$.

Moreover, we have: $\mu_{\pi^\star\alpha}(\cF_\Delta)> 0,$ and 
$$2\mu_{\pi^\star\alpha}(\cF_\Delta)>\mu_{\pi^\star\alpha, max}(\pi^*T(X,\Delta)/\cF_\Delta).$$ The conclusion then follows by combining
Remark \ref{folo}, Corollary \ref{flute} and Theorem \ref{KForb}. Indeed,
the saturation of $\cF_\Delta$ in the $\pi$-inverse image of $T_X$ is the inverse image of a foliation $\cF_X$ on $X$. It turns out that $\cF_X$ is algebraic, and the contradiction follows by applying \ref{KForb}. 
\end{proof}

\section{Birational stability of the orbifold cotangent bundle}

\noindent In this section we show the \emph{birational stability} of $\Omega^1(X,\Delta)$ if $K_X+\Delta$ is pseudo-effective, in the sense that, the numerical dimension of any sub-line bundle $\cL$ of $\otimes^m(\Omega^1(X,\Delta))$ is bounded from above by the numerical dimension of $K_X+\Delta$, this for any $m\geq 0$. This term was introduced in \cite{Ca07} to express the fact that the positivity of the subsheaves of the cotangent bundles (measured in terms of sections rather than slopes) is at most the same as for the cotangent bundle itself.

If the bundle $\cL$ is big, then it turns out that the assumption \emph{$K_X+\Delta$ pseudo-effective} can be dropped. This result was obtained in \cite{CP13} by delicate arguments using crucially \cite{BCHM}. The strengthening from `generic semi-positive' to `pseudo-effective' obtained here permits to give below a short and obvious argument, without using \cite{BCHM}.


\subsection{Numerical dimension}

Let $X$ be smooth and projective, and let $\cL$ be a line-bundle (or $\bQ$-line-bundle) on $X$. Let $A$ be any sufficiently ample line bundle on $X$. Recall from \cite{Nob}:

\begin{defn}\label{nu} The numerical dimension $\nu(X,\cL)$ of $\cL$ is defined by:$$\nu(X, \cL):= \max\{k\in \ZZ\vert \lim\sup_{p\to\infty} \frac{h^0(X,p.\cL+A)}{p^k}<+\infty, \}.$$
\end{defn}

We have the following properties: 

1.1. $\nu(X,\cL)\in \{-\infty,0,1,...,n\}$, and $\nu(X,\cL)\geq \kappa(X,\cL)$. 

1.2. $\nu(X,\cL)\geq 0$ if and only if $\cL$ is pseudo-effective. See \cite{Nob},\S5, Lemma 1.4.

1.3. $\nu(X,k.\cL)=\nu(X,\cL)$, for any $k>0$.

1.4. $\nu(X,k.\cL+P)\geq \nu(X,\cL)$ if $P$ is a pseudo-effective $\bQ$-line bundle.

1.5. If $\nu(X,\cL)=n$, then $\kappa(X,\cL)=n$ (ie: $\cL$ is `big').

\

This variant of the Iitaka-Moishezon dimension of $\bQ$-line bundles permits, when applied to adjoint line-bundles, to turn conjectures of Abundance-type into theorems. We define now:

\begin{defn}\label{nu+} Let $(X,\Delta)$ be a projective log-canonical pair with $X$ smooth and $\Delta$ supported on an snc divisor. Let $\pi:X_{\Delta}\to X$ be an adapted cover.
  Let
  $$\nu^+(X,\Delta):= \max\{\nu(X,\cL)\vert \exists \hbox{ $m$ such that }\pi^*(\cL)\subset
  \pi^*\otimes^{m}\Omega^1(X,\Delta)\}.$$
We define, as usual: $\nu(X,\Delta):=\nu(X,K_X+\Delta)$.

\end{defn}

\noindent We obviously have the following properties: 

P.1. $\nu^+(X,\Delta)\geq \nu(X,\Delta)\geq \kappa(X,\Delta)$. We shall show below the equality: $\nu^+(X,\Delta)= \nu(X,\Delta)$, when $K_X+\Delta$ is pseudo-effective.

P.2. When $\Delta=0$, we thus have, if $K_X$ is pseudo-efective: $$\kappa(X)\leq \kappa^+(X)\leq \nu^+(X)=\nu(X),$$ where $\kappa^+(X)$ was defined in \cite{Ca95} as:$$\kappa^+(X)=max\{\kappa(X,det(F))\vert F\subset \Omega^p(X),p>0\}.$$

P.3. Let $X=\bP^d\times Y$, with $dim(Y)=n-d<n$, and $K_Y$ pseudo-effective. Then $\nu(X)=-\infty$, while $\nu^+(X)=\nu(Y)\geq 0$. These examples show that the restriction $K_X$ pseudo-effective is needed, and explain why the this condition can be dropped when $\nu^+(X)=n$.

P.4. Let $r_X:X\to R_X$ be the `rational quotient' of $X$ (called also its `MRC-fibration'). It has rationally connected fibres and non-uniruled base (by \cite{GHS}). One can easily show that $\nu^+(X)=\nu(R_X)$.


\subsection{Birational stability of orbifold cotangent bundles}

\begin{theorem}\label{nu} Let $(X, \Delta)$ to be smooth projective orbifold pair, such that $K_X+ \Delta$ is pseudo-effective. Then $\nu^+(X,\Delta)=\nu(X,\Delta)$. In other words:

Let $\cL$ be a line bundle on $X$, together with a non-trivial morphism $\pi^\star \cL\to \otimes^{m}\pi^\star\Omega^1(X, \Delta)$. Then: $\nu(X,\cL)\leq \nu(X,K_X+ \Delta)$.
\end{theorem}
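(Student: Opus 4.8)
The plan is to argue by contradiction, following the same strategy that proves Theorem \ref{main}, but keeping track of numerical dimensions rather than merely of pseudo-effectivity. Suppose $\cL$ is a line bundle on $X$ admitting a non-trivial map $\pi^\star\cL\to \otimes^m\pi^\star\Omega^1(X,\Delta)$ with $\nu(X,\cL)>\nu(X,K_X+\Delta)=:\nu_0$. Dualizing, we obtain a non-trivial map $\wh{\otimes}^m\pi^\star T(X,\Delta)\to \pi^\star\cL^\star$, so the maximal destabilizing subsheaf of $\pi^\star T(X,\Delta)$ with respect to a suitable movable class will have positive slope; as in the proof of Theorem \ref{main}, one then extracts, for an appropriate movable class $\alpha$ on $X$, a saturated $G$-invariant subsheaf $\cF_\Delta\subset\pi^\star T(X,\Delta)$ whose saturation in $\pi^\star T_X$ descends to an algebraic foliation $\cF_X=\Ker(dp)\subset T_X$, with $p:X\dasharrow Z$ a rational fibration. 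But here I cannot immediately use a strict slope inequality, since the hypothesis is only about numerical dimension, not about the movable cone; so the first step is to promote the condition $\nu(X,\cL)>\nu_0$ into something that produces a destabilizing subsheaf, and the cleanest route is to intersect $c_1$ with a well-chosen class.

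The key idea is to test against classes of the form $\omega^{\,n-\nu_0-1}\cdot\eta^{\,\nu_0}$ (or limits thereof) where $\omega$ is ample and $\eta$ is a positive class in the direction of $K_X+\Delta$ realizing its numerical dimension, so that $K_X+\Delta$ is ``large'' only in $\nu_0$ directions. Concretely, I would first reduce to the case where $K_X+\Delta$ is, up to a modification, of the form (nef and abundant) or at least has a Zariski-type decomposition controlled by its numerical dimension; then the relevant movable class $\alpha$ on $X$ can be chosen so that $(K_X+\Delta)\cdot\alpha=0$ while $\cL\cdot\alpha>0$ (this is exactly where $\nu(X,\cL)>\nu(X,K_X+\Delta)$ is used: if $\cL$ were small in all the directions where $K_X+\Delta$ is small, its numerical dimension could not exceed $\nu_0$). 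With such an $\alpha$, the computation $c_1(\pi^\star\cL^\star)\cdot\pi^\star\alpha<0$ forces $\mu_{\pi^\star\alpha,max}(\wh\otimes^m\pi^\star T(X,\Delta))>0$, hence $\mu_{\pi^\star\alpha,max}(\pi^\star T(X,\Delta))>0$ by Theorem \ref{tensor, I}. Passing to the maximal destabilizing subsheaf $\cF_\Delta$ and applying Remark \ref{folo}, Corollary \ref{flute} and Theorem \ref{KForb} as in the proof of Theorem \ref{main}, we obtain an algebraic foliation $\cF_X=\Ker(dp)$ on $X$ such that $K_X+\Delta$ restricted to the closure $F$ of the generic leaf is not pseudo-effective.

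\textbf{Deriving the contradiction.} Once we have the rational fibration $p:X\dasharrow Z$ with generic fibre $F$ (the closure of a leaf) such that $(K_X+\Delta)|_F$ is not pseudo-effective, I would pass to a neat model $\wh p:\wh X\to Z$ (using Section 3) and invoke the pseudo-effectivity theorem \ref{pseffff}: the relative version $K_{\wh X/Z}+\wh\Delta^{\mathrm{hor}}-D(\wh p)$ is pseudo-effective, but this is precisely equivalent (Remark \ref{repref}) to $(K_{\wh X_z}+\wh\Delta|_{\wh X_z})$ being pseudo-effective for generic $z$, contradicting the conclusion $(K_X+\Delta)|_F$ not pseudo-effective. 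This is the standard endgame, so the contradiction is automatic once the foliation has been produced. Therefore no such $\cL$ with $\nu(X,\cL)>\nu(X,K_X+\Delta)$ exists, and combined with P.1 we get $\nu^+(X,\Delta)=\nu(X,\Delta)$.

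\textbf{The main obstacle.} The delicate point is the construction of the movable class $\alpha$ with $(K_X+\Delta)\cdot\alpha=0$ and $\cL\cdot\alpha>0$; this requires a structural input about the class $K_X+\Delta$ — ideally a birational model on which it is nef (or an analytic current with minimal singularities whose non-nef locus is small), so that one can build movable curves lying in the null locus of $K_X+\Delta$. In the pseudo-effective-but-not-nef case this is subtle; one resolution is to replace $X$ by a modification on which $\pi^\star(K_X+\Delta)$ admits a divisorial Zariski decomposition, observe that the numerical dimension is captured by the positive part, and choose $\alpha$ as an intersection of the positive part with ample classes in a way that kills it to the right order. Alternatively, one can avoid choosing $\alpha$ directly and instead run the destabilization argument for \emph{every} movable class simultaneously, recording that the foliation $\cF_X$ produced has $(K_X+\Delta)|_F$ with numerical dimension strictly less than $\nu_0$ on the base, then compare via the easy additivity $\nu^+(X)=\nu(R_X)$-type relation (P.4) after factoring through the rational quotient. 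I expect the first approach (Zariski decomposition of $K_X+\Delta$ plus a careful choice of $\alpha$) to be the one that goes through with least friction, but making the inequality $\cL\cdot\alpha>0$ genuinely follow from $\nu(X,\cL)>\nu(X,K_X+\Delta)$ — rather than from some stronger positivity — is where the real work lies.
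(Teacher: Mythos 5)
Your plan has a genuine gap, and — more importantly — it bypasses the short route the paper actually takes, which avoids the gap entirely.

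The gap: you reduce everything to finding a movable class $\alpha$ with $(K_X+\Delta)\cdot\alpha=0$ and $\cL\cdot\alpha>0$, and you acknowledge that this is the delicate point; but you never prove it, and it is not a formal consequence of $\nu(X,\cL)>\nu(X,K_X+\Delta)$. Numerical dimension (in the Nakayama sense \`a la Definition \ref{nu}) is a growth rate of $h^0$, not something read directly from intersection numbers against movable curves, and I do not see how to extract such an $\alpha$ without precisely the sort of Zariski-decomposition or minimal-model input you flag as uncertain. (Note also that once you \emph{had} such an $\alpha$ you would not need to reopen the foliation machinery from the proof of Theorem \ref{main} at all: $\det(Q)\cdot\pi^\star\alpha<0$ would already contradict Theorem \ref{main} directly. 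So the entire weight of your argument rests on the one step you do not establish.)

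What the paper actually does is shorter and sidesteps the issue. Form the quotient $Q:=\otimes^m\pi^\star\Omega^1(X,\Delta)/\pi^\star\cL$. Then $\det(Q)=q_m\,\pi^\star(K_X+\Delta)-\pi^\star\cL=\pi^\star P$ with $q_m=n^{m-1}$ and $P:=q_m(K_X+\Delta)-\cL$ a $\bQ$-line bundle on $X$. Theorem \ref{main} says exactly that $c_1(Q)\cdot\pi^\star\alpha\geq 0$ for every movable class $\alpha$ on $X$; hence $P\cdot\alpha\geq 0$ for all movable $\alpha$, i.e.\ $P$ is pseudo-effective by \cite{BDPP}. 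Now write $q_m(K_X+\Delta)=\cL+P$, and apply the elementary monotonicity property of numerical dimension (property 1.4 in \S 7.1: $\nu(X,k\cL+P)\geq\nu(X,\cL)$ for $P$ pseudo-effective, together with 1.3 to drop the factor $q_m$). This gives $\nu(X,K_X+\Delta)\geq\nu(X,\cL)$, hence $\nu^+(X,\Delta)=\nu(X,\Delta)$ by P.1. No contradiction argument, no choice of a special $\alpha$, no redoing the foliation construction: the whole point of the paper's ordering of results is that Theorem \ref{main} is the hard input, and Theorem \ref{nu} is a two-line corollary of it together with the soft additivity of $\nu$.

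So the suggestion is: do not try to rerun the destabilization-and-foliation argument with finer control; instead, use the pseudo-effectivity of $\det(Q)$ (Theorem \ref{main}) as a black box and push it through the monotonicity of $\nu$.
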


\begin{proof} Let $Q:=\otimes^{m}\pi^\star \Omega^1(X, \Delta)/\pi^\star \cL$ the quotient sheaf. Since $\det(Q)=q_m\pi^\star(K_X+ \Delta)-\pi^\star \cL$, where the constant $q_m= n^{m-1}$ only depends on $m$ and the dimension $n$ of $X$, we have:
  $$q_m.(K_X+\Delta)=\cL+P.$$
Here $P$ is a $\bQ$-line bundle on $X$, whose $\pi$-inverse image is equal to $\det(Q)$.

The bundle $\det(Q)$ is non-negative when evaluated on any inverse image of any  movable class on $X$, by Theorem \ref{main}. Thus
$P$ is pseudo-effective, and therefore
$\nu(K_X+\Delta)\geq \nu(\cL)$ by property P.4 above.
\end{proof}

\medskip
\begin{rem} \label{rbignu} {\rm Assume that the line bundle $\cL$ in Theorem \ref{nu} is big.
Then $K_X+\Delta$ is big if pseudo-effective. Indeed we  deduce that $\nu(X,K_X+\Delta)= n$, and then the conclusion follows from the property P.5 in subsection 7.1 (or, without it, from the fact that the sum of a pseudo-effective and of a big $\Bbb Q$-line bundle is big). We shall remove the hypothesis ``$K_X+\Delta$ pseudo-effective'' in the next subsection.}
\end{rem}

\begin{rem}\label{tajin} {\rm The following observation has been communicated by Behrouz Taji: if $X$ is smooth projective, $n$-dimensional, and if $\nu(X)=0$, with $\chi(X,\cO_X)\neq 0$, then $\pi_1(X)$ is finite of cardinality at most $2^{n-1}$. 

To see this, just apply \cite{Ca95}, theorem 4.1, which says that $X$ has a finite fundamental group of cardinality at most $2^{n-1}$ if $\kappa^+(X)\leq 0$. But now, observe that: $\kappa^+(X)\leq \nu^+(X)=\nu(X)=0$. The last equality holds by Theorem \ref{nu}, since $K_X$ is pseudo-effective if $\nu(X)=0$. Notice that $\nu(X)=0$ implies $\kappa(X)=0$, by \cite{K'}, and that the converse is conjecturally true by Abundance. It was conjectured in \cite{Ca95} that $\kappa^+(X)=\kappa(X)$ if $\kappa(X)\geq 0$.}
\end{rem}


\subsection{Criteria for pseudoeffectivity and log-general type} 

\begin{theorem}\label{rem1} Let $(X, \Delta)$ be a smooth orbifold log-canonical pair, 
and let $L$ be a pseudo-effective line bundle on $X$. We assume that there exists a non-zero map 
\begin{equation}\label{mp1}
\pi^\star L\to \pi^\star\Omega^{\otimes m}(X, \Delta)\otimes \pi^\star K_{(X, \Delta)}^{\otimes p}
\end{equation}
for integers $m\geq 0$ and $p>0$. Then $K_X+ \Delta$ is pseudo-effective. (The converse is obvious, taking $m=0,p=1$).
\end{theorem}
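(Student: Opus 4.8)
I will argue by contradiction and by induction on $n=\dim X$: assume the theorem for all smooth log-canonical pairs of dimension $<n$, and suppose $K_X+\Delta$ is \emph{not} pseudo-effective. By \cite{BDPP} there is then a movable class $\alpha$ on $X$ with $(K_X+\Delta)\cdot\alpha<0$; put $\alpha':=\pi^\star\alpha\in\Mov(X_\Delta)$. Since $\pi^\star L$ is a line bundle and the target of \eqref{mp1} is locally free, the given non-zero morphism is injective (a non-zero kernel would force the image to be a torsion subsheaf of a locally free sheaf), so I regard it as an inclusion $\sigma\colon\pi^\star L\hookrightarrow\cM:=\otimes^m\pi^\star\Omega^1(X,\Delta)\otimes\pi^\star(K_X+\Delta)^{\otimes p}$. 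Because $\det\pi^\star\Omega^1(X,\Delta)=\pi^\star(K_X+\Delta)$, one has $\mu_{\alpha'}(\pi^\star\Omega^1(X,\Delta))=\tfrac1n\deg(\pi)(K_X+\Delta)\cdot\alpha<0$, while $\mu_{\alpha',\min}(\pi^\star L)=\deg(\pi)L\cdot\alpha\ge0$ since $L$ is pseudo-effective.

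Suppose first that $\pi^\star\Omega^1(X,\Delta)$ is $\alpha'$-semistable; this is automatic when $n=1$, so it also settles the base of the induction. Then $\otimes^m\pi^\star\Omega^1(X,\Delta)$ is $\alpha'$-semistable by Theorem~\ref{tensor}, hence so is $\cM$, of slope $m\,\mu_{\alpha'}(\pi^\star\Omega^1(X,\Delta))+p\deg(\pi)(K_X+\Delta)\cdot\alpha<0$ (the second summand is strictly negative as $p>0$). Thus $\mu_{\alpha',\min}(\pi^\star L)\ge0>\mu_{\alpha',\max}(\cM)$, and Lemma~\ref{negative} gives $\Hom(\pi^\star L,\cM)=0$, contradicting $\sigma\neq0$.

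Now suppose $\pi^\star\Omega^1(X,\Delta)$, equivalently $\pi^\star T(X,\Delta)$, is not $\alpha'$-semistable. Let $\cF_\Delta\subset\pi^\star T(X,\Delta)$ be the maximal $\alpha'$-destabilizing subsheaf (Proposition~\ref{maxdest}); by uniqueness it is $G$-invariant, and it is saturated, $\alpha'$-semistable, of some rank $r$ with $1\le r<n$, and $\mu_{\alpha',\min}(\cF_\Delta)=\mu_{\alpha',\max}(\pi^\star T(X,\Delta))\ge\mu_{\alpha'}(\pi^\star T(X,\Delta))=-\tfrac1n\deg(\pi)(K_X+\Delta)\cdot\alpha>0$; moreover $\mu_{\alpha',\max}(\pi^\star T(X,\Delta)/\cF_\Delta)<\mu_{\alpha',\min}(\cF_\Delta)$ by maximality, so both hypotheses of Theorem~\ref{rco} hold. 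Hence the saturation of $\cF_\Delta$ in $\pi^\star T_X$ is $\pi^\star(\cF_X)$ for an \emph{algebraic} foliation $\cF_X=\Ker(dp)$ on $X$, with $p\colon X\dashrightarrow Z$ a dominant rational fibration, $\dim Z=n-r\ge1$, and the restriction of $K_X+\Delta$ to the general fibre $F$ of a neat model $\wh p\colon\wh X\to Z$ of $p$ is \emph{not} pseudo-effective (Theorem~\ref{rco} and Remark~\ref{repref}). For $F$ general, $F$ is smooth of dimension $r<n$, $(F,\Delta|_F)$ is again a smooth log-canonical snc pair, $(K_X+\Delta)|_F=K_F+\Delta|_F$ (the normal bundle of $F$ being trivial), and $L$ pulls back to a pseudo-effective class on $F$ (the restriction of a pseudo-effective class to a general member of a covering family is pseudo-effective). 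Transporting $\sigma$ through the modification and the Kawamata covers and restricting to $F$, the conormal sequence $0\to\cO_F^{\oplus(n-r)}\to\Omega^1(\wh X,\wh\Delta)|_F\to\Omega^1(F,\Delta|_F)\to0$ endows $\otimes^m\pi^\star\Omega^1(\wh X,\wh\Delta)|_{\tilde F}$ with a filtration whose graded pieces are direct sums of copies of $\otimes^{m'}\pi_F^\star\Omega^1(F,\Delta|_F)$, $0\le m'\le m$; for $F$ general $\sigma|_F\neq0$, so composing with projection to a suitable summand yields a non-zero morphism $\pi_F^\star(L|_F)\to\otimes^{m'}\pi_F^\star\Omega^1(F,\Delta|_F)\otimes\pi_F^\star(K_F+\Delta|_F)^{\otimes p}$ with $0\le m'\le m$ and $p>0$. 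By the induction hypothesis applied to $(F,\Delta|_F)$ this forces $K_F+\Delta|_F$ to be pseudo-effective, contradicting the previous sentence and closing the induction.

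The main obstacle is the last paragraph's final step: making precise the restriction of \eqref{mp1} to a general fibre in the orbifold setting. This means comparing $\pi^\star\Omega^1(X,\Delta)$ with the orbifold cotangent bundle of the neat model $(\wh X,\wh\Delta)$ and with that of $(F,\Delta|_F)$, identifying the fibre-direction part of the latter with a trivial bundle, and checking that the $\pi_X$-exceptional corrections introduced along the way do not affect pseudo-effectivity — exactly the kind of bookkeeping performed in Sections~3 and~5 (cf. Lemma~\ref{l pseffff}, Remark~\ref{repref}, and the local computations of Section~5.4). Everything else is a formal consequence of Theorems~\ref{rco} and~\ref{tensor} and of Lemma~\ref{negative}.
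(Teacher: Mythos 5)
Your proof is correct in outline, but it takes a genuinely different route from the one the paper uses for Theorem~\ref{rem1}. The paper argues by a ``continuity method'': set $t_{\min}=\inf\{t\ge 0: K_X+\Delta+tA\ \text{psef}\}$ for $A$ ample, choose a decreasing sequence $t_k\to t_{\min}$ of positive rationals and a smooth $H\in|A|$ so that $(X,\Delta+t_kH)$ is log-smooth lc, view \eqref{mp1} (after twisting by $K_{(X,\Delta)}^{-p}$) as a sheaf injection into $\otimes^m\pi_k^*\Omega^1(X,\Delta+t_kH)$, and apply Theorem~\ref{main} to the \emph{perturbed} pair $(X,\Delta+t_kH)$, whose $K$ is pseudo-effective by construction. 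The pseudo-effectivity of the cokernel's determinant gives $c(m,n)(K_X+\Delta+t_kH)=L-p(K_X+\Delta)+P_k$ with $P_k$ psef, whence $K_X+\Delta+\tfrac{c(m,n)}{p+c(m,n)}t_kH$ is psef; since $p>0$, this beats $t_{\min}$ for $k\gg 0$, a contradiction. No induction, no restriction to fibres, no Kawamata-cover bookkeeping along leaves. Your argument instead is the orbifold analogue of the paper's own proof of Theorem~\ref{logpseff} (the purely logarithmic case): destabilize $\pi^*T(X,\Delta)$, invoke Theorem~\ref{rco} to produce an algebraic foliation and a fibration $\wh p:\wh X\to Z$ whose general fibre $F$ has $K_F+\Delta|_F$ not psef, restrict the morphism to $F$, filter $\otimes^m\Omega^1(\wh X,\wh\Delta)|_F$ via the conormal sequence, and conclude by induction on dimension.

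Your argument is sound as far as the slope estimates, the application of Theorem~\ref{rco}, and the filtration argument go (the verification that the maximal destabilizing $\cF_\Delta$ satisfies both hypotheses of Theorem~\ref{rco} is correct, and the semistable case/base case via Theorem~\ref{tensor} and Lemma~\ref{negative} is fine). But the step you yourself flag as the ``main obstacle'' is a genuine gap, not mere bookkeeping to be waved away: when $\Delta$ has non-reduced components, $\pi^*\Omega^1(X,\Delta)$ lives on a Kawamata cover $X_\Delta$, and to run the induction on a general fibre $F\subset\wh X$ you must (i) pass from $(X,\Delta)$ to the neat model $(\wh X,\wh\Delta)$ and control the exceptional discrepancies on the Kawamata-cover side (cf.\ \eqref{626}), (ii) verify that $\pi^{-1}(F)\to F$ is (or can be replaced by) a Kawamata cover adapted to $(F,\Delta|_F)$ with the conormal sequence becoming $G$-equivariantly split by a trivial factor, and (iii) check that the restriction of the determinant identity $\det\pi^*\Omega^1(\wh X,\wh\Delta)=\pi^*(K_{\wh X}+\wh\Delta)$ matches $\pi_F^*(K_F+\Delta|_F)$ on the nose, not merely up to exceptional/vertical terms. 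None of this is done in the paper for Theorem~\ref{rem1}; precisely because of these compatibilities, the authors chose the continuity method there and restricted the induction-on-fibres argument to the purely logarithmic case (Theorem~\ref{logpseff}), where no adapted cover is needed. So your approach is viable and matches the logarithmic prototype in the paper, but completing it in the orbifold setting requires supplying the compatibility lemmas you sketch, whereas the paper's perturbation argument sidesteps them entirely and is notably shorter.
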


\begin{proof} Let $H$ be an ample line bundle on $X$. 
Let $t_{\rm min}$ be the minimum of the positive real numbers $t$ such that
\begin{equation}
K_X+ \Delta+ tA
\end{equation}
is pseudo-effective. The existence of $t_{\rm min}$ is guaranteed by the fact that the pseudo-effective cone is closed. 

We claim that $t_{\rm min}= 0$. If not, let $(t_k)\subset \bQ_+$ be a decreasing sequence of (positive) rational numbers converging to $t_{\rm min}$. Since $A$ is ample, there exists a smooth $\bQ$-divisor $H$ in the linear system $|A|$ such that the orbifold   
\begin{equation}
\left(X, \Delta+ t_kH\right)
\end{equation}
is log-smooth and log-canonical for each $k\geq 1$. If we denote by 
$\pi_k$ the corresponding ramified cover, then the map \eqref{mp1}
induces an injective morphisme of sheaves 
\begin{equation}\label{mp2}
\pi_k^\star L\otimes \pi_k^\star K_{(X, \Delta)}^{-p}\to \pi_k^\star\Omega^m(X, \Delta+ t_kH)
\end{equation}
and let $Q_k$ be the co-kernel of \eqref{mp2}. As in the proof of \ref{bignu} we infer that we have
\begin{equation}\label{mp3}
c(m,n)(K_X+ \Delta+t_kH)= L- p(K_X+ \Delta) + P_k
\end{equation}
where $P_k$ is pseudo-effective. But this implies that 
\begin{equation}\label{mp4}
K_X+ \Delta+t_k\frac{c(m,n)}{p+ c(m,n)}H
\end{equation}
is pseudo-effective, for each value of the parameter $k$. 

On the other hand, there exists 
$k_0\gg 0$ such that 
$$\displaystyle t_{k_0}\frac{c(m,n)}{p+ c(m,n)}
< t_{\rm min}$$ 
since we have assumed that $t_{\rm min}> 0$ is a strictly positive number. Combined with the fact that the $\bQ$-bundle in 
\eqref{mp4} is pseudo-effective for $k:= k_0$, this is in contradiction with the choice of $t_{\rm min}$.
\end{proof}
\medskip

\begin{rem} When $m>0,p=0$, the above situation occurs with $K_{(X, \Delta)}$ either pseudo-effective, or not pseudo-effective, as one sees by considering $X=\Bbb P^k\times Z_{n-k}$, for $0\leq k<n$, if $\Delta=0, K_Z$ pseudo-effective.
\end{rem}

\noindent When $p<0$ instead, we get a lower bound for the existence of $L$, by the same method.

\begin{theorem}\label{rem2}
Let $(X, \Delta)$ be a smooth orbifold log-canonical pair such that 
$K_X+ \Delta$ is pseudo-effective, but not numerically trivial. 

If $p>n^{m-1}$ is an integer,
every map $\pi^\star L\to \pi^\star\Omega^{\otimes m}(X, \Delta)\otimes \pi^\star K_{(X, \Delta)}^{-\otimes p}$ vanishes, for any pseudo-effective line bundle $L$ on $X$. 

In particular: $h^0\left(X_\Delta, \pi^\star\Omega^{\otimes m}(X, \Delta)\otimes \pi^\star K_{(X, \Delta)}^{-\otimes p}\right)= 0$, if $p>n^{m-1}$.
\end{theorem}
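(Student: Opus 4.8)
The plan is to mimic the argument of Theorem \ref{nu} (and Remark \ref{rbignu}), but tracking the twist by the negative power of $K_{(X,\Delta)}$ carefully, and to derive a contradiction with the assumed strict positivity $\nu(X, K_X+\Delta) \geq 1$. First I would assume, for contradiction, that there is a nonzero map $\pi^\star L \to \pi^\star\Omega^{\otimes m}(X,\Delta)\otimes \pi^\star K_{(X,\Delta)}^{-\otimes p}$ for some pseudo-effective $L$ and some integer $p > n^{m-1}$. Tensoring by $\pi^\star K_{(X,\Delta)}^{\otimes p}$, this is equivalent to a nonzero map $\pi^\star(L) \otimes \pi^\star K_{(X,\Delta)}^{\otimes p} \to \pi^\star\Omega^{\otimes m}(X,\Delta)$, i.e. a sub-line-bundle of $\otimes^m\pi^\star\Omega^1(X,\Delta)$ of the form $\pi^\star\cL$ with $\cL := L + p(K_X+\Delta)$ (using $\det(\pi^\star\Omega^1(X,\Delta)) = \pi^\star(K_X+\Delta)$ from \eqref{03025}).

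Next I would run the determinant computation exactly as in the proof of Theorem \ref{nu}: letting $Q$ be the torsion-free quotient $\otimes^m\pi^\star\Omega^1(X,\Delta)/\pi^\star\cL$, one has $\det(Q) = n^{m-1}\pi^\star(K_X+\Delta) - \pi^\star\cL$, so there is a $\bQ$-line bundle $P$ on $X$ with $\pi^\star P = \det(Q)$ and
\begin{equation}\label{rem2eq1}
n^{m-1}(K_X+\Delta) = \cL + P = L + p(K_X+\Delta) + P.
\end{equation}
By Theorem \ref{main}, $\det(Q)$ is non-negative on the $\pi$-pullback of every movable class, hence $P$ is pseudo-effective by \cite{BDPP}. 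Rearranging \eqref{rem2eq1} gives
\begin{equation}\label{rem2eq2}
(p - n^{m-1})(K_X+\Delta) + L + P = 0
\end{equation}
in $N^1(X)_\bQ$. Since $p - n^{m-1} > 0$ and $K_X+\Delta$ is pseudo-effective but not numerically trivial, while $L$ and $P$ are both pseudo-effective, the left-hand side is a sum of pseudo-effective classes with at least one strictly ``nonzero'' summand $(p-n^{m-1})(K_X+\Delta)$; I would argue this cannot be numerically zero. Concretely: pairing \eqref{rem2eq2} with a movable class $\beta$ with $(K_X+\Delta)\cdot\beta > 0$ — which exists since $K_X+\Delta$ is pseudo-effective and not numerically trivial, again by \cite{BDPP} — forces a strictly positive number to equal $-(L+P)\cdot\beta \leq 0$, a contradiction. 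This proves the vanishing of every such map, and the final cohomological statement follows by applying it with $L = \cO_X$ (which is pseudo-effective), since a nonzero section of $\pi^\star\Omega^{\otimes m}(X,\Delta)\otimes \pi^\star K_{(X,\Delta)}^{-\otimes p}$ is precisely a nonzero map $\pi^\star\cO_X \to \pi^\star\Omega^{\otimes m}(X,\Delta)\otimes \pi^\star K_{(X,\Delta)}^{-\otimes p}$.

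The main obstacle I anticipate is purely bookkeeping: getting the constant $n^{m-1}$ correct (the rank of $\otimes^m\pi^\star\Omega^1(X,\Delta)$ is $n^m$, so $\det$ of the ambient tensor bundle is $n^{m-1}\pi^\star(K_X+\Delta)$, matching the threshold $p > n^{m-1}$ in the statement), and making sure the map \eqref{mp2}-style cokernel $Q$ is genuinely torsion-free so that Theorem \ref{main} applies to its determinant — if not, one replaces $Q$ by $Q/\Tors(Q)$, which only adds an effective (hence pseudo-effective) divisor to $\det(Q)$ and does not affect the sign of the pairing with a movable class. Once these routine points are handled, the contradiction via pairing with a strictly $(K_X+\Delta)$-positive movable class is immediate, so I do not expect any deeper difficulty.
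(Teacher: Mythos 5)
Your proposal is correct and takes essentially the same route as the paper: after tensoring by $\pi^\star K_{(X,\Delta)}^{\otimes p}$ you run the determinant computation of Theorem~\ref{nu} to get $n^{m-1}(K_X+\Delta)=L+p(K_X+\Delta)+P$ with $P$ pseudo-effective, then pair with a movable $\alpha$ having $(K_X+\Delta)\cdot\alpha>0$ (using numerical non-triviality via \cite{BDPP} duality) and obtain $p\leq n^{m-1}$, a contradiction. The paper's proof is just a compressed version of this (``the same arguments as above''), so there is no gap and nothing to add beyond the bookkeeping you already flag.
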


\noindent 

\begin{proof}Let a non-zero map from $\pi^*(L)$ to $\pi^\star\Omega^{\otimes m}(X, \Delta)\otimes \pi^\star K_{(X, \Delta)}^{-\otimes p}$ be given. The same arguments as above show that $L+P+p.(K_{(X, \Delta)})=n^{m-1}.K_{(X, \Delta)}$, for some pseudo-effective $P$. Let now $\alpha\in Mov(X)$ be such that $(K_{(X, \Delta)}).\alpha>0$ (this is here that the numerical non-triviality of $K_{(X, \Delta)}$ is used). We get: $(n^{m-1}-p).(K_{(X, \Delta)}.\alpha)\geq 0$, and the conclusion by dividing by $K_{(X, \Delta)}.\alpha$. \end{proof}

\begin{rem} {\rm The trivial example of an Abelian variety $X$ together with $\Delta=0$ shows that for any $(m,p)\in \Bbb Z^{\oplus 2}$ the conclusion may fail when $K_{(X, \Delta)}$ is trivial. It however holds for any blow-up of these $X's$. This example illustrates again the fact that our results are stable by blow-ups, but not necessarily by contractions.}\end{rem} 

\begin{rem}{\rm Also, we note that this statement is considerably weaker than the version obtained in \cite{CP14}, where the same conclusion is obtained under the assumption that $p>m$. However, the technical tools needed in \cite{CP14}
for the proof of this sharper result are much more involved than the present arguments.}\end{rem}

\begin{theorem}\label{bignu} {\rm (\cite{CP13})} Let $(X, \Delta)$ be a smooth log-canonical pair, together with a big line bundle $\cL\to X$ which admits a non-trivial morphism 
\begin{equation}\label{0802}
\pi^\star \cL\to \otimes^{m}\pi^\star \Omega^1(X, \Delta).
\end{equation} 
Then $K_X+ \Delta$ is big.
\end{theorem}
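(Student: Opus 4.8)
The plan is to argue by contradiction and combine Theorem \ref{nu} with Remark \ref{rbignu}. Suppose that $K_X+\Delta$ is \emph{not} big. Since we are given a non-trivial morphism $\pi^\star\cL\to\otimes^m\pi^\star\Omega^1(X,\Delta)$ with $\cL$ big, the first step is to split into two cases according to whether $K_X+\Delta$ is pseudo-effective or not. If $K_X+\Delta$ is pseudo-effective, then Theorem \ref{nu} applies directly: it gives $\nu(X,\cL)\leq\nu(X,K_X+\Delta)$. But $\cL$ big means $\nu(X,\cL)=n$, hence $\nu(X,K_X+\Delta)=n$, and then by property P.5 in \S7.1 (equivalently, by Remark \ref{rbignu}) the bundle $K_X+\Delta$ is big, a contradiction. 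So the whole content lies in removing the pseudo-effectivity hypothesis, i.e.\ in the second case where $K_X+\Delta$ is not pseudo-effective.

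In that case I would run a continuity/threshold argument exactly parallel to the proof of Theorem \ref{rem1}. Fix an ample $H$ on $X$ and let $t_{\min}\geq 0$ be the infimum of $t$ such that $K_X+\Delta+tH$ is pseudo-effective; by closedness of the pseudo-effective cone this infimum is attained and is $>0$ precisely because $K_X+\Delta$ is not pseudo-effective. For rational $t>t_{\min}$ close to $t_{\min}$, choose a smooth $\bQ$-divisor in $|H|$ (or a multiple) so that $(X,\Delta+tH)$ is log-smooth and log-canonical, with adapted cover $\pi_t$. The morphism \eqref{0802} induces a non-trivial morphism $\pi_t^\star\cL\to\otimes^m\pi_t^\star\Omega^1(X,\Delta+tH)$: this is the key compatibility point, and it holds because $\Omega^1(X,\Delta)$ injects into $\Omega^1(X,\Delta+tH)$ after passing to a common cover (adding more boundary only enlarges the orbifold cotangent sheaf), so a section of $\otimes^m\pi^\star\Omega^1(X,\Delta)$ twisted down by $\cL$ survives. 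Now apply Theorem \ref{main} to the pair $(X,\Delta+tH)$, which \emph{is} pseudo-effective for $t>t_{\min}$: letting $Q_t$ be the cokernel of $\pi_t^\star\cL\hookrightarrow\otimes^m\pi_t^\star\Omega^1(X,\Delta+tH)$, one gets $\det Q_t=q_m\pi_t^\star(K_X+\Delta+tH)-\pi_t^\star\cL$ pseudo-effective on inverse images of movable classes, hence $q_m(K_X+\Delta+tH)=\cL+P_t$ with $P_t$ pseudo-effective. Since $\cL$ is big, $q_m(K_X+\Delta+tH)$ is big for every such $t$; letting $t\downarrow t_{\min}$ and using that bigness is an open condition (the big cone is open, and $K_X+\Delta+tH$ moves continuously), we find some $t_1<t_{\min}$ with $K_X+\Delta+t_1H$ still big, hence pseudo-effective — contradicting the minimality of $t_{\min}$. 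Therefore $t_{\min}=0$, i.e.\ $K_X+\Delta$ is pseudo-effective, and we are back in the first case, which forces $K_X+\Delta$ big.

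The main obstacle I anticipate is the passage $t\downarrow t_{\min}$: I must be careful that the bigness obtained for each $t>t_{\min}$ is \emph{uniform enough} to survive the limit. The clean way is to note that $q_m(K_X+\Delta+tH)-\cL=P_t$ is pseudo-effective for a sequence $t_k\downarrow t_{\min}$, so by closedness of the pseudo-effective cone $q_m(K_X+\Delta+t_{\min}H)-\cL$ is pseudo-effective, hence $q_m(K_X+\Delta+t_{\min}H)=\cL+(\text{pseff})$ is big (sum of a big and a pseudo-effective class), hence pseudo-effective; but then for a slightly smaller $t_1<t_{\min}$, $K_X+\Delta+t_1H=(K_X+\Delta+t_{\min}H)-(t_{\min}-t_1)H$ and one needs a genuinely strict gain. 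The correct fix, mirroring Theorem \ref{rem1}, is to track the rescaling: from $q_m(K_X+\Delta+tH)=\cL+P_t$ and $\cL$ big one actually gets that $K_X+\Delta+tH$ dominates $\tfrac{1}{q_m}\cL$ up to pseudo-effective error, and bigness of $\cL$ then yields bigness of $K_X+\Delta+\tfrac{t}{1}H$ with room to spare — concretely one shows $K_X+\Delta+ t'H$ is pseudo-effective for some $t'<t$ strictly, by absorbing the big part, exactly as the factor $\tfrac{c(m,n)}{p+c(m,n)}$ does in the proof of Theorem \ref{rem1}. Once this strict-decrease step is in place, the contradiction with $t_{\min}>0$ closes the argument, and the remaining verifications (log-canonicity of $(X,\Delta+tH)$ for generic members, the sheaf injection on the common cover, applicability of Theorem \ref{main}) are routine.
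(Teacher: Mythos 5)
Your proposal is correct. It develops the first of the two routes the paper itself alludes to for this result (``one proof is essentially the same as the one used for Theorem \ref{rem1}''), namely running the continuity/threshold argument directly, and it correctly isolates the point the paper leaves implicit: in Theorem \ref{rem1} the strict decrease of the pseudo-effective threshold $t_{\min}$ is produced by the twist $K_{(X,\Delta)}^{\otimes p}$ with $p>0$, so in the present setting something else must produce it, and your observation that bigness of $\cL$ supplies a uniform $\varepsilon>0$ with $\cL-\varepsilon H$ pseudo-effective is exactly the right substitute. Concretely, from $q_m(K_X+\Delta+t_kH)=\cL+P_k$ with $P_k$ pseudo-effective (Theorem \ref{main} applied to $(X,\Delta+t_kH)$), one gets $K_X+\Delta+(t_k-\varepsilon/q_m)H$ pseudo-effective, and since $\varepsilon/q_m>0$ is independent of $k$, this contradicts $t_{\min}>0$.

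The paper's written-out alternative instead \emph{reduces} to Theorem \ref{rem1}: bigness of $\cL$ gives an integer $q>0$ with $\cL^{\otimes q}\otimes\cO_X(K_X+\Delta)$ effective, which injects into $\otimes^{mq}\pi^*\Omega^1(X,\Delta)\otimes\pi^*(K_X+\Delta)$, so Theorem \ref{rem1} (with $p=1$) yields $K_X+\Delta$ pseudo-effective at once, and a final application of Theorem \ref{main} (equivalently your Case~1 via Theorem \ref{nu}) promotes pseudo-effective to big. Both routes are valid; the paper's is shorter on paper because the threshold argument is packaged once and for all inside Theorem \ref{rem1}, whereas yours repeats it but makes transparent where each hypothesis is used. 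One small economy available to you: the initial Case~1/Case~2 dichotomy is unnecessary --- the continuity argument of Case~2 runs whether or not $K_X+\Delta$ is already pseudo-effective, and its output is precisely ``$K_X+\Delta$ pseudo-effective'', after which your Case~1 (a single line, via $\nu(X,\cL)=n$) finishes. The remaining technical checks you mention (log-canonicity of $(X,\Delta+t_kH_0)$ for a general member $H_0$ of $|H|$, injectivity of $\pi'^*\Omega^1(X,\Delta)\hookrightarrow\pi'^*\Omega^1(X,\Delta+t_kH_0)$ on a common adapted cover $\pi'$, and the fact that pullback under a finite cover preserves non-triviality of the morphism) are indeed routine and handled identically in the proof of Theorem \ref{rem1}.
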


\begin{proof}  One proof is essentially the same as the one used for Theorem \ref{rem1} above, and also as the one used in \cite{CP13}, and of Theorem 2.3 in \cite{CPe}, which deals with the case $\Delta=0$. The statement can also however be directly deduced from the preceding Theorem \ref{rem1} by exactly the same extremely short argument used to deduce Corollary \ref{bigness} from Theorem \ref{logomegapseff}, and to which we refer.
\end{proof}

\medskip


\subsection{Cases $-(K_X+\Delta)$ either ample, or numerically trivial.}

We give here a strengthened form of a result in \cite{CP13}. The proof is exactly the same as the one of theorem \ref{nu} above, so we just state the result.

\begin{theorem}\label{K=0} Let $(X,\Delta)$ be smooth, projective and log-canonical. Assume that $K_X+\Delta\equiv 0$. 

Then $\mu_{\pi^*(\alpha),max}(\pi^*(\Omega^1(X,\Delta))\leq 0$. 

Let $\pi^*L\to \otimes^m(\pi^*\Omega^1(X,\Delta)), m>0$ be a non-zero sheaf morphism, for some line bundle $L$ on $X$. 

 Then: $-L$ is pseudo-effective. In particular: $\kappa(X,L)\leq 0$.\end{theorem}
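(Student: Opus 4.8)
The plan is to derive all three assertions from Theorem \ref{main}, exactly along the lines of the proof of Theorem \ref{nu}. Since $K_X+\Delta\equiv 0$ it is in particular pseudo-effective, so Theorem \ref{main} is available: for every $m\geq 1$, every torsion-free quotient $\cQ$ of $\otimes^m\pi^\star\Omega^1(X,\Delta)$, and every movable class $\alpha$ on $X$, one has $c_1(\cQ)\cdot\pi^\star\alpha\geq 0$. I will also use repeatedly that $\det\pi^\star\Omega^1(X,\Delta)=\pi^\star(K_X+\Delta)\equiv 0$ by \eqref{03025}, so that $\mu_{\pi^\star\alpha}\big(\pi^\star\Omega^1(X,\Delta)\big)=0$ for every movable $\alpha$.

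First I would prove $\mu_{\pi^\star\alpha,{\rm max}}\big(\pi^\star\Omega^1(X,\Delta)\big)\leq 0$. Suppose some coherent subsheaf $\cG\subset\pi^\star\Omega^1(X,\Delta)$ had $\mu_{\pi^\star\alpha}(\cG)>0$. Replacing $\cG$ by its saturation $\cG^{\mathrm{sat}}$ only increases the slope, since the determinant then changes by an effective divisor and $\pi^\star\alpha$ is a movable class; the corresponding quotient $\cQ$ is torsion-free and satisfies $c_1(\cQ)\cdot\pi^\star\alpha=-c_1(\cG^{\mathrm{sat}})\cdot\pi^\star\alpha<0$, contradicting Theorem \ref{main} with $m=1$. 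Hence $\mu_{\pi^\star\alpha,{\rm max}}\leq 0$ (in fact $=0$, by the preceding remark).

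Next, given a non-zero morphism $\pi^\star L\to\otimes^m\pi^\star\Omega^1(X,\Delta)$, I would note it is injective, being a non-zero map from a line bundle to a torsion-free sheaf. Let $\overline{\pi^\star L}$ be the saturation of its image; this is a line bundle of the form $\pi^\star L\otimes\cO(D)$ with $D$ effective, and $\cQ:=\otimes^m\pi^\star\Omega^1(X,\Delta)/\overline{\pi^\star L}$ is torsion-free. Since $\det\big(\otimes^m\pi^\star\Omega^1(X,\Delta)\big)=q_m\,\pi^\star(K_X+\Delta)\equiv 0$ for a positive integer $q_m$ depending only on $m$ and $n$, we get $c_1(\cQ)\equiv-\pi^\star L-D$. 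Applying Theorem \ref{main} gives $0\leq c_1(\cQ)\cdot\pi^\star\alpha=-\pi^\star L\cdot\pi^\star\alpha-D\cdot\pi^\star\alpha$; since $D$ is effective and $\pi^\star\alpha$ is movable, $D\cdot\pi^\star\alpha\geq 0$, hence $\pi^\star L\cdot\pi^\star\alpha\leq 0$, i.e. $L\cdot\alpha\leq 0$ after dividing by $\deg\pi$. As $\alpha$ runs over all movable classes on $X$, this is precisely the statement that $-L$ is pseudo-effective, by \cite{BDPP}.

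For the last assertion, if $\kappa(X,L)\geq 0$ then a positive multiple of $L$ is effective, so $L$ is pseudo-effective; combined with $-L$ pseudo-effective and the BDPP duality (the movable cone spanning $N_1(X)_{\bR}$) this forces $L\equiv 0$, whence $\kappa(X,L)=0$. In every case $\kappa(X,L)\leq 0$. The argument is mechanical once Theorem \ref{main} is in hand; the only point needing a little care — exactly as in the proof of Theorem \ref{nu} — is to pass to saturations so that Theorem \ref{main} is applied to honestly torsion-free quotients, observing that the resulting correction to the determinant is an effective divisor, which pairs non-negatively with movable classes and therefore only reinforces the inequalities.
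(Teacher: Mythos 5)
Your proposal is correct and follows essentially the same route as the paper, which simply says the proof is identical to that of Theorem \ref{nu}: apply Theorem \ref{main} to the (saturated) quotient of $\otimes^m\pi^\star\Omega^1(X,\Delta)$ by $\pi^\star L$, compute $\det Q \equiv q_m\pi^\star(K_X+\Delta)-\pi^\star L - D$ with $D$ effective, and use $K_X+\Delta\equiv 0$ to conclude that $-L$ pairs non-negatively with every movable class. Your care about passing to saturations and about $\kappa(X,L)\leq 0$ via the full-dimensionality of $\Mov(X)$ is exactly the right bookkeeping that the paper leaves implicit.
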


 \begin{rem} {\rm It is proved in \cite{CC}, Theorem 4.6, that if $c_1(K_X+\Delta)=0$ (resp. if $-(K_X+\Delta)$ is ample), and if the coefficients of $\Delta$ are `standard' (ie: of the form $c_j=(1-\frac{1}{m_j})$, with $m_j>0$ integer), the orbifold fundamental group $\pi_1(X,\Delta)$ is almost abelian (resp. finite).} 
 \end{rem}
\medskip

\noindent By combining these ideas with adjacent techniques, the following vanishing result is established in \cite{Camp16}, using algebro-geometric arguments in characteristic $0$ only.

\begin{theorem}\label {van2}\cite{Camp16} Let $(X,\Delta)$ be a smooth projective klt orbifold pair. Assume that $-(K_X+\Delta)$ is ample. Let $\pi:X_\Delta\to X$ be a Kawamata cover adapted to $\Delta$. Then, for any $m>0$ and any line bundle $L'\equiv 0$ on $X_\Delta$, we have: $H^0(X_\Delta,\otimes^m(\pi^*\Omega^1(X,\Delta))\otimes L')=0$. Moreover $\pi_1(X)=\{1\}$.
\end{theorem}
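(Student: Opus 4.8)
The plan is to deduce Theorem \ref{van2} by applying the vanishing criterion of Theorem \ref{van1} to the $G$-invariant vector bundle $E':=\pi^\star\Omega^1(X,\Delta)$ on the Kawamata cover $X_\Delta$. First I would verify hypothesis (1) of Theorem \ref{van1}: since $-(K_X+\Delta)$ is ample, in particular $K_X+\Delta$ is \emph{not} pseudo-effective, so I cannot invoke Theorem \ref{main} directly; instead I would run the argument behind Theorem \ref{K=0}/Theorem \ref{nu}. Concretely, if $\mu_{\pi^\star\alpha,\max}(\pi^\star\Omega^1(X,\Delta))>0$ for some ample $\alpha=H^{n-1}$ (with $H$ in the open set $U$ of Theorem \ref{van1}), then dualizing gives a destabilizing subsheaf $\cF_\Delta\subset\pi^\star T(X,\Delta)$ with $\mu_{\pi^\star\alpha,\min}(\cF_\Delta)>0$ and the bracket condition of Remark \ref{folo} (taking a maximal piece of the Harder--Narasimhan filtration). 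By Corollary \ref{flute} the saturation $\cF_\Delta^{\rm sat}=\pi^\star\cF_X$ descends to a foliation $\cF_X$ on $X$ with $\mu_{\alpha,\min}(\cF_X)>0$, which by Theorem \ref{algebraic} is algebraic with rationally connected leaves; then Lemma \ref{claim1}, Lemma \ref{claim2} and Theorem \ref{KForb} force $K_X+\Delta$ restricted to the generic leaf $F$ to be non-pseudo-effective, which is automatic here — so this gives no contradiction by itself. The extra input is that $-(K_X+\Delta)$ is \emph{ample}: this is precisely the hypothesis of the separate vanishing theorem \ref{van2} quoted from \cite{Camp16}, so at this point I would simply cite \cite{Camp16} for hypothesis (1), i.e. $\mu_{\pi^\star\alpha,\max}(E')\le 0$ for all $\alpha\in U$.

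Next I would check hypothesis (2) of Theorem \ref{van1}: $H^0(X_\Delta,\wedge^q E'\otimes L')=0$ for all $q>0$ and all numerically trivial $L'$ on $X_\Delta$. Since $E'=\pi^\star\Omega^1(X,\Delta)$ and the Kawamata cover can be chosen with $X_\Delta$ simply connected (or at least with $\widehat\pi_1=1$), I would use the variant (2$'$) noted right after Theorem \ref{van1}: it suffices to prove $H^0(X,\wedge^q\Omega^1(X,\Delta))=0$ for $q>0$, interpreted as the appropriate orbifold $q$-forms. This orbifold Bogomolov-type vanishing for Fano orbifold pairs is exactly Corollary \ref{corvan1} in its klt-Fano incarnation — or, more precisely, it is the content established in \cite{Camp16} that an orbifold pair with $-(K_X+\Delta)$ ample has no holomorphic orbifold $q$-forms twisted by a numerically trivial line bundle. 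I would invoke that statement. Then Theorem \ref{van1} yields $H^0(X_\Delta,\otimes^m E'\otimes L')=0$ for all $m>0$ and all $L'\equiv 0$, which is the first assertion.

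For the final claim $\pi_1(X)=\{1\}$: since $-(K_X+\Delta)$ is ample and the pair is klt, $X$ is rationally connected (the underlying variety of a klt Fano-type pair is rationally connected by \cite{GHS}-type arguments applied to the MRC fibration, whose base would otherwise have pseudo-effective canonical bundle, contradicting $-K_X$ being big). A rationally connected projective manifold is simply connected, so $\pi_1(X)=\{1\}$. I would phrase this in one line, citing the standard fact.

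The main obstacle I anticipate is hypothesis (1): establishing $\mu_{\pi^\star\alpha,\max}(\pi^\star\Omega^1(X,\Delta))\le 0$ in the \emph{Fano} case, where none of the pseudo-effectivity theorems of this paper apply and the argument genuinely requires the external input of \cite{Camp16} (which in turn rests on a Bott-vanishing/Kodaira-type argument adapted to orbifold log-cotangent sheaves on Kawamata covers). Everything else is a bookkeeping assembly of Theorem \ref{van1}, Corollary \ref{corvan1} and standard rational-connectedness facts; the honest statement is that Theorem \ref{van2} is quoted from \cite{Camp16} and the role here is only to record how it fits the machinery of Section 2, so the ``proof'' is really a reduction to \cite{Camp16} via Theorem \ref{van1}.
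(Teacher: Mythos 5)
Your diagnosis is correct and matches the paper: Theorem \ref{van2} is stated in this article with a citation to \cite{Camp16} and no proof, and the surrounding sentence (``the following vanishing result is established in \cite{Camp16}, using algebro-geometric arguments in characteristic $0$ only'') makes clear that the authors treat it as an external input. So any ``proof'' in the present context can only be the reduction to \cite{Camp16} that you describe, and you are right to be explicit about that circularity rather than pretend the paper's own machinery suffices: neither Theorem \ref{main} nor Theorem \ref{nu}/\ref{K=0} applies here because $K_X+\Delta$ is \emph{not} pseudo-effective, so hypothesis~(1) of Theorem \ref{van1} ($\mu_{\pi^\star\alpha,\max}(\pi^\star\Omega^1(X,\Delta))\leq 0$ for $\alpha$ in the open set $U$) is a genuinely new fact proved only in \cite{Camp16}.

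One small but real correction to your assembly: you propose to replace hypothesis~(2) of Theorem \ref{van1} by the weaker (2$'$), ``because the Kawamata cover can be chosen with $X_\Delta$ simply connected.'' This is not what the remark after Theorem \ref{van1} allows: (2$'$) is available only when $E'=\pi^\star(E)$ for a vector bundle $E$ that already lives on $X$, \emph{and} when $H_1(X,\bZ)=0$ (the condition is on $X$, not on $X_\Delta$). Here $E'=\pi^\star\Omega^1(X,\Delta)$ is genuinely an object on $X_\Delta$; it is not the pullback of a bundle on $X$ whenever $\Delta$ has non-integral coefficients. So one must verify the full hypothesis~(2), i.e.\ $H^0(X_\Delta,\wedge^q E'\otimes L')=0$ for all $q>0$ and all $L'\equiv 0$ on $X_\Delta$ --- which, again, is exactly what \cite{Camp16} supplies (Corollary \ref{corvan1} is only the purely logarithmic, reduced-$D$ special case, so it cannot be invoked verbatim either). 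Your treatment of the last claim $\pi_1(X)=\{1\}$ is fine: a klt Fano pair has rationally connected $X$ (Zhang, Hacon--McKernan; or via the MRC quotient and \cite{GHS} as you sketch, using that $-K_X=-(K_X+\Delta)+\Delta$ is big), and rationally connected projective manifolds are simply connected. In short: the proposal correctly mirrors the paper's own stance (quote \cite{Camp16}), your reduction via Theorem \ref{van1} is a plausible and useful account of how the pieces fit together, but both inputs~(1) and~(2) are external and the (2$'$) shortcut does not apply.
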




\section{Variation and Positivity for quasi-projective families.}

We mention here an application of Theorem \ref{bignu} in the theory of moduli. An extremely simplified proof of Theorem \ref{bignu} is presented in the next section.

Let $f:V\to B$ a projective submersion with connected fibres between two quasi-projective connected manifolds $V ,B$. The `variation' $Var(f)\in \{0,..., d:=dim(B)\}$ of $f$ is the rank of the Kodaira-Spencer map $\kappa\sigma: TB\to R^1f_*(TV/B)$ at the generic point of $B$. Thus $Var(f)=0$ if and only if $f$ is isotrivial.

Let $\bar B$ be any `good' smooth projective compatification of $B$, such that $D=\bar B-B$ is an snc divisor.

\

\noindent The following result was conjectured by E. Viehweg, generalizing a former hyperbolicity conjecture of I.R. Shafarevich. Special cases where obtained previously by \cite{KeKo}, \cite{JK}, \cite{Pata}.

\begin{theorem}\label{shafhyp} Let $f:V\to B$ be as above. Assume that the fibres of $f$ all have an ample canonical bundle and that $Var(f)=dim(B)$. Then the base $B$ is of log-general type,
i.e.
$$\kappa(\bar B, K_{\bar B}+D)= \dim (B).$$
\end{theorem}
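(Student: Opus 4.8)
The plan is to combine the Viehweg--Zuo construction of a ``Higgs-type'' sheaf on the base with the birational stability result Theorem~\ref{bignu}. First I would recall the key input from \cite{VZ}: after replacing $B$ by a suitable birational model and passing to the good compactification $\ol B$ with boundary $D = \ol B \setminus B$, the hypothesis $\Var(f) = \dim B$ produces, for some $m > 0$, a \emph{big} line bundle $\cL$ on $\ol B$ together with a non-trivial morphism
\begin{equation}\label{vz-input}
\cL \hookrightarrow \Sym^m \Omega^1_{\ol B}(\log D),
\end{equation}
i.e. $\cL$ embeds into a symmetric (hence a tensor) power of the logarithmic cotangent bundle of the pair $(\ol B, D)$. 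This is the technical heart borrowed from Viehweg--Zuo and I would quote it as a black box; the ampleness of the fibrewise canonical bundle and the maximality of the variation are exactly what guarantee the \emph{bigness} of $\cL$.

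Next I would interpret \eqref{vz-input} in the orbifold language of \S5. The logarithmic pair $(\ol B, D)$ is the orbifold pair with $\Delta = D$ reduced (all coefficients $c_j = 1$, i.e. $b_j = 0$), for which the adapted cover $\pi$ may be taken trivial and $\pi^\star \Omega^1(\ol B, D) = \Omega^1_{\ol B}(\log D)$. Thus \eqref{vz-input} furnishes precisely a non-trivial morphism $\pi^\star \cL \to \otimes^m \pi^\star \Omega^1(\ol B, D)$ of the shape required in the hypothesis of Theorem~\ref{bignu}, with $\cL$ big. Applying Theorem~\ref{bignu} to the log-canonical pair $(\ol B, D)$ then yields that $K_{\ol B} + D$ is big, which is exactly the assertion $\kappa(\ol B, K_{\ol B} + D) = \dim B$, i.e. that $B$ is of log-general type.

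The one genuine subtlety I expect to be the main obstacle is \emph{matching the birational models}: the Viehweg--Zuo sheaf is produced only after blowing up $B$ (to make the discriminant of $f$ and the boundary snc, and to have the comparison morphism defined), so \eqref{vz-input} lives on some modification $\mu : \ol B' \to \ol B$ rather than on $\ol B$ itself, and with boundary $D' = \mu^{-1}(D)_{\mathrm{red}}$ possibly enlarged. I would handle this exactly as in \S3: the statement ``$(X,\Delta)$ log-canonical with $K_X + \Delta$ big'' is a birational invariant of the pair in the relevant sense (by Lemma~\ref{l pseffff} and the fact that $\mu^\star(K_{\ol B}+D) = K_{\ol B'} + D' - E$ with $E$ effective and $\mu$-exceptional), so bigness of $K_{\ol B'} + D'$ descends to bigness of $K_{\ol B} + D$. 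Care is needed to check that the enlargement of the boundary under $\mu$ keeps the pair log-canonical and does not destroy the morphism \eqref{vz-input} --- but this is precisely the standard Viehweg bookkeeping, and once it is in place the argument is the ``extremely short'' deduction promised in the introduction: bigness of a sub-line-bundle of $\otimes^m \pi^\star \Omega^1(X,\Delta)$ forces bigness of $K_X + \Delta$ via Theorem~\ref{bignu}.
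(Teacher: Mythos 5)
Your argument is exactly the paper's proof: quote Viehweg--Zuo for a big sub-line-bundle $\cL \hookrightarrow \Sym^m\Omega^1_{\bar B}(\log D)$, observe that this fits the hypothesis of Theorem~\ref{bignu} (with $\Delta=D$ reduced, so the adapted cover is trivial), and conclude that $K_{\bar B}+D$ is big. The extra discussion of birational bookkeeping is a reasonable precaution but does not change the route; the paper relegates that to the Viehweg--Zuo black box just as you do.
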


\begin{proof} In \cite{VZ}, Viehweg-Zuo have shown that, in this situation, for some $m>0$, there exists a big sub-line bundle $\cL$ of $Sym^m(\Omega^1_X(Log(D))$. From Theorem \ref{bignu} we deduce that $K_{\bar B}+D$ is big. 
\end{proof}

Partial generalisations have been obtained in \cite{PoS} and \cite{Taj}, also using Theorem \ref{bignu} and variants of the Viehweg-Zuo sheaf.

In \cite{PoS}, it is shown that, for $f:V\to B$ as above, $\bar B$ is of Log-general type if the fibres of $f$ are of general type, and if the (birational) variation is maximal (equal to $d$). 

In \cite{Taj}, the `isotriviality conjecture' formulated in \cite{Ca07} is solved. This conjecture says that if $B$ is `special', and if the fibres of $f$ are canonically polarised, then $f$ is isotrivial. 

Recall that $B$ being `special' means that $\kappa(\bar B, \cL)<p$, for any $p>0$ and any $\cL\subset \Omega^p_{\bar B}(Log(D))$. (Very) particular cases of `special' quasi-projective manifolds are the ones such that $\kappa(\bar B, K_{\bar B}+D)=0$ for some (or any) good projective compactification $\bar B$ of $B$. We refer to \cite{Ca07} for more details on `specialness' and structure results.

When $d=1$, the only `special' quasi-projective curves are: $\bP^1, \bC, \bC^*$, and $E$, any elliptic curve. I.R. Shafarevich originally formulated his `hyperbolicity conjecture' as the isotriviality of smooth families of curves of genus at least $2$ parametrised by a `special' quasi-projective curve. Remark that quasi-projective curves are `special' if and only if non-hyperbolic. In higher dimensions, there are (lots of) `special' quasi-projective manifolds $\bar B$ of all possible log-Kodaira dimensions less than $d:=dim(B)$.

The preceding results suggest the more general `isotriviality question':

{\bf Question:} Let $f:V\to B$ be as above\footnote{One may even assume only that $f$ be `quasi-submersive', meaning that the reduction of each of its fibres is smooth. The conclusion should then hold by replacing $B$ with the `orbifold base' of $f$, in the sense of \cite{Ca04}. This conjecture was formulated in this form in \cite{AC} when the reduced fibres of $f$ have a semi-ample canonical bundle.} Assume that the fibres of $f$ have a pseudo-effective canonical bundle. If $B$ is `special', is then $f$ is birationally isotrivial? If the birational variation of $f$ is maximal, is then $B$ is of log-general type?

\

The question is also interesting when $f$ has Fano fibres. A. Kuznetsov in \cite{Kuz}, mentions that `Gushel-Mukai' manifolds (complete intersections in $Gr(2,5)$ of Pl\"ucker hyperplanes and one hyperquadric) provide non-isotrivial families of Fano threefolds with Picard number $1$ parametrised by a smooth projective surface. These families are, however, birationally isotrivial.

\subsection{Criteria for pseudoeffectivity and bigness of `purely' logarithmic cotangent bundles.} 
This final subsection is inspired by a very recent and elegant
article of C. Schnell cf. \cite{S}. The point in \cite{S}
is that Theorem \ref{bignu} can be obtained by combining
some of the main results established in the previous sections with
induction on the dimension on $X$. In this way one 
can avoid using the full force of the results we have in the general
orbifold context, provided that all the coefficients of the divisor $\Delta$ are equal to one. Nevertheless, the algebraicity criteria (Theorem 1.1) seems indispensable.

Notice however that, even for moduli problems, the treatment of multiple fibres requires the orbifold context. \medskip

\noindent We change slightly the notations: the orbifold
divisor $\Delta$ will be denoted here by $D= \sum D_i$, so as to indicate that
the pair $(X, D)$ is purely logarithmic. As before, $X$ is
non-singular 
and $D$ is a reduced divisor with simple normal crossings on $X$.
In what follows we will only be concerned with orbifold pairs $(X, D)$ of this type.

In this case the orbifold tangent bundle is the usual
logarithmic tangent bundle. This is a vector bundle on $X$,
sometimes denoted by
$T_X(-Log(D))$, but for the consistency's sake,
we will conserve the notation $T(X, D)$ here.

\noindent The logarithmic
tangent bundle $T(X, D)$ is closed under the Lie bracket induced from $T_X$., i.e. we have
\begin{equation}\label{mpvers01}
{\mathcal L}_{D}: \Lambda^2 T(X, D)\to T(X, D)
\end{equation}
given by the restriction of the Lie bracket of $X$ to the subsheaf
$T(X, D)$.
\smallskip

Let $\cF\subset T(X, D)$ be a coherent subsheaf. We have a
$\cO_X$-linear map
\begin{equation}\label{mpvers02}
{\mathcal L}_{D}^{\cF}:\Lambda^2 \cF\to T(X, D)/\cF
\end{equation}  
induced by ${\mathcal L}_{D}$.
\medskip

\noindent The following statements are particular cases 
 of Corollary \ref{flute} and of Theorem \ref{rco'}, respectively. In the purely logarithmic case, their proofs simplifies considerably, due to the fact that no adapted cover is needed.

\begin{lemma}\label{loglie}
  Let $\cF\subset T(X, D)$ be a coherent saturated subsheaf, such
  that the corresponding Lie bracket ${\mathcal L}_{D}^{\cF}$ vanishes identically. We denote by $\cF^s\subset T_X$ the saturation of
  $\cF$ in the tangent bundle of $X$.
  Then $\cF^s$ defines a holomorphic foliation.
\end{lemma}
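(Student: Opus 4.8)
The plan is to exploit that, in the purely logarithmic setting, the orbifold Lie bracket ${\mathcal L}_D$ is by construction just the restriction of the ordinary Lie bracket ${\mathcal L}_X$ of $X$ to the subsheaf $T(X,D)\subset T_X$. Consequently, the vanishing of ${\mathcal L}_D^{\cF}$ says precisely that $\cF$ is \emph{involutive inside $T_X$}: for all local sections $v_1,v_2$ of $\cF$ one has $[v_1,v_2]_X\in T(X,D)$ (since $T(X,D)$ is itself involutive), and in fact $[v_1,v_2]_X\in\cF$. So the substance of the lemma reduces to the assertion that the saturation in $T_X$ of an involutive coherent subsheaf is again involutive; the logarithmic data then plays no further role, which is exactly why no Kawamata cover is needed here. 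First I would record the only two properties of $\cF^s$ that will be used: $\cF^s$ and $\cF$ have the same rank $r$ (saturation does not change the rank, as $\cF^s/\cF$ is the torsion subsheaf of $T_X/\cF$), and $T_X/\cF^s$ is torsion-free.

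Next I would construct the second fundamental form of the distribution $\cF^s$. From the Leibniz identities $[fV,W]=f[V,W]-W(f)\,V$ and $[V,fW]=f[V,W]+V(f)\,W$, together with the inclusions $V,W\in\cF^s$, the operation sending a pair of local sections $(V,W)$ of $\cF^s$ to the class of $[V,W]_X$ in $T_X/\cF^s$ is $\cO_X$-bilinear and alternating, hence defines an $\cO_X$-linear morphism of coherent sheaves
\[
\tau\colon\ \wedge^2\cF^s\ \longrightarrow\ T_X/\cF^s .
\]
No regularity of $\cF^s$ is required here: $\tau$ is globally defined on $X$, and $\cF^s$ is closed under the Lie bracket precisely when $\tau$ vanishes identically.

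Then I would prove that $\tau=0$. Since $\cF\subset\cF^s$ and $\cF$ is involutive, $\tau$ annihilates the image $A$ of the natural map $\wedge^2\cF\to\wedge^2\cF^s$. As $\cF^s/\cF$ is a torsion sheaf, this map is an isomorphism over a dense open set, so $A$ has full rank $\binom{r}{2}=\rk(\wedge^2\cF^s)$ and the quotient $(\wedge^2\cF^s)/A$ is torsion. Therefore $\tau$ factors as $\wedge^2\cF^s\twoheadrightarrow(\wedge^2\cF^s)/A\to T_X/\cF^s$; but a torsion sheaf admits no nonzero morphism to the torsion-free sheaf $T_X/\cF^s$, so $\tau=0$. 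Hence $\cF^s$ is stable under $[\cdot,\cdot]_X$, and being saturated in $T_X$ it defines a holomorphic foliation in the sense of Definition \ref{fol}.

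I do not anticipate a genuine obstacle here — this is precisely the ``considerable simplification'' announced before the statement. The two points that merely require some care are the $\cO_X$-linearity and well-definedness of $\tau$ across the (codimension $\geq 2$) locus where $\cF^s$ fails to be a subbundle, and the bookkeeping of generic ranks that makes $(\wedge^2\cF^s)/A$ torsion. As a cross-check one could instead specialise the chain Proposition \ref{lin_2} $\Rightarrow$ Corollary \ref{flute} to the logarithmic case, where the local bracket $[\cdot,\cdot]_U$ of Proposition \ref{factor} is literally ${\mathcal L}_X$ and no averaging over a Galois group occurs; but the sheaf-theoretic route above is cleaner and transparently explains why the auxiliary ramified cover can be dispensed with.
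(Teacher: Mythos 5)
Your proof is correct. The paper itself offers no independent argument for this lemma — it simply declares it a particular case of Corollary \ref{flute}, whose proof (specialised to the trivial cover) runs element-wise: for local sections $V_1,V_2$ of $\cF^s$, one chooses functions $\varphi_j$ with $\varphi_j V_j\in\cF$, applies involutivity of $\cF$ and the Leibniz rule to get $\varphi_1\varphi_2[V_1,V_2]\in\cF^s$, and then invokes the torsion-freeness of $T_X/\cF^s$ to divide. Your version packages the identical mechanism — \emph{torsion source, torsion-free target} — at the sheaf level: the second fundamental form $\tau$ on $\wedge^2\cF^s$ vanishes on the full-rank image $A$ of $\wedge^2\cF$, so it factors through the torsion sheaf $(\wedge^2\cF^s)/A$ and hence vanishes. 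The two routes are mathematically equivalent, but yours is cleaner, avoids manipulating individual local sections and auxiliary functions, and makes explicit both of the precise hypotheses used ($\rk\cF=\rk\cF^s$ and $T_X/\cF^s$ torsion-free). It also explains transparently why the Kawamata cover and the $G$-averaging of Lemma \ref{inverse} and Proposition \ref{lin_2} are superfluous in the purely logarithmic case, a simplification the paper only asserts without demonstrating.
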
    

\begin{theorem}\label{logalgebraic} Let $\cF\subset T(X, D)$ be a
coherent subsheaf such that the corresponding Lie bracket ${\mathcal L}_{D}^{\cF}$ is identically zero. We assume moreover that
$\mu_{\alpha, min}(\cF)>0$, for some $\alpha\in \Mov(X)$.
Then the following are true.
\begin{enumerate}

\item[(1)] The saturation $\cF^s$ of $\cF$ in $T_X$ defines an algebraic foliation.
  \smallskip

\item[(2)] The restriction of $K_X+ D$ to the closure of the generic leaf
of the algebraic foliation $\cF^s\subset T_X$
is not pseudo-effective.   
\end{enumerate}  
\end{theorem}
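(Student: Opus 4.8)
The plan is to deduce Theorem \ref{logalgebraic} directly from the foliation results already established for the absolute case, using the purely logarithmic setup to avoid Kawamata covers altogether. First I would invoke Lemma \ref{loglie}: since the logarithmic Lie bracket $\mathcal{L}_D^{\cF}$ vanishes identically by hypothesis, the saturation $\cF^s\subset T_X$ is a genuine holomorphic foliation on $X$. Next I would transfer the slope hypothesis: because $\cF\subset\cF^s$ have the same rank (saturation preserves rank) and $\det(\cF^s)=\det(\cF)\otimes\cO(D')$ for an effective divisor $D'$, Remark \ref{sat} gives $\mu_{\alpha,\min}(\cF^s)>0$. Then Theorem \ref{algebraic} (equivalently Theorem \ref{rc}) applies to $\cF^s$ and yields claim (1): $\cF^s$ is an algebraic foliation, induced by a rational fibration $p\colon X\dashrightarrow Z$, with rationally connected generic leaf closure.

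For claim (2), I would argue by contradiction: suppose $(K_X+D)|_F$ is pseudo-effective, where $F$ is the closure of the generic leaf of $\cF^s$. The key point is to relate the positive slope $\mu_{\alpha,\min}(\cF)>0$ of the logarithmic sub-sheaf to the pseudo-effectivity of the relative log-canonical bundle. Passing to a neat model $\pi_X\colon\wh X\to X$ with $\wh p\colon\wh X\to Z$ regular (as in Remark \ref{KF} and the proof of Theorem \ref{rc}), one has $K_{\wh\cF}=K_{\wh X/Z}-D(\wh p)$ modulo $\pi_X$-exceptional divisors, where $\wh\cF$ is the foliation induced by $\cF^s$. Incorporating the horizontal part of $D$, Lemma \ref{claim1} (in the purely logarithmic guise, where it just reads $\det\cF^\star = K_{\cF}+D^{\rm hor}$ with no cover) identifies $\det\cF^\star$ with $K_{\cF^s}+D^{\rm hor}$. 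On the other hand, if $(K_X+D)|_F$ — equivalently $K_{\wh X_z}+D|_{\wh X_z}$ for generic $z$ — were pseudo-effective, Theorem \ref{pseffff} would force $K_{X/Z}+D^{\rm hor}-D(p)$ to be pseudo-effective, hence (via the neat model and Lemma \ref{l pseffff}) $\det\cF^\star=K_{\cF^s}+D^{\rm hor}$ would have non-negative degree against $\pi_X^\star\alpha$, i.e. $\mu_\alpha(\cF)\le 0$. Since $\mu_{\alpha,\min}(\cF)>0$ implies $\mu_\alpha(\cF)>0$, this is the desired contradiction.

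Concretely, I would organize it as: (i) reduce to $\cF^s$ being an algebraic foliation via Lemma \ref{loglie}, Remark \ref{sat} and Theorem \ref{algebraic}; (ii) choose a neat birational model so that the relative log-canonical bundle makes sense and its intersection with $\pi_X^\star\alpha$ equals the original slope quantity (using that exceptional divisors pair to zero with pulled-back movable classes, Lemma \ref{bmq}); (iii) observe that pseudo-effectivity of $(K_X+D)|_F$ is equivalent, by Remark \ref{repref}, to pseudo-effectivity of $K_{X/Z}+D^{\rm hor}-D(p)$, which would then be guaranteed by Theorem \ref{pseffff} applied with $\Delta=D$; (iv) evaluate $\det\cF^\star$ against $\pi_X^\star\alpha$ using Lemma \ref{claim1} and derive $\mu_\alpha(\cF)\le 0$, contradicting the hypothesis.

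The main obstacle, I expect, is step (iv): making precise the identification $\det\cF^\star\equiv K_{\wh\cF}+\wh D^{\rm hor}$ modulo $\pi_X$-exceptional divisors in the purely logarithmic setting, and checking that the horizontal-vs-vertical and invariant-vs-non-invariant bookkeeping for the components of $D$ matches exactly the statement of Theorem \ref{pseffff}. This is precisely the content of the local coordinate computation in Lemma \ref{claim1}, which here degenerates nicely (all $a_j$ can be taken equal, no ramification denominators), but one still has to verify that the leaf $F$ of $\cF^s$ meets $D^{\rm hor}$ transversally at generic points so that $D^{\rm hor}|_F$ is the genuine boundary divisor of the log-canonical pair $(F,D|_F)$. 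Once this dictionary is in place, the contradiction is immediate and requires no further computation.
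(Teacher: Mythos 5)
Your proof is correct and follows essentially the same route as the paper: the paper states Theorem~\ref{logalgebraic} is the special reduced-divisor case of Theorem~\ref{rco'}, whose proof likewise chains Corollary~\ref{flute} (here Lemma~\ref{loglie}), the slope transfer of Remark~\ref{sat}, the algebraicity criterion of Theorem~\ref{algebraic}, and then Lemmas~\ref{claim1}--\ref{claim2} together with Remark~\ref{repref} to conclude non-pseudoeffectivity of $(K_X+D)|_F$. The only cosmetic difference is that you phrase step (2) as a contradiction, and you should be a bit careful that the relevant vanishing/pseudo-effectivity direction comes from Remark~\ref{repref} (equivalence over the given fibration) rather than from the forward implication of Theorem~\ref{pseffff}, which presupposes global pseudo-effectivity of $K_X+D$.
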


\noindent 
As in the general case of an arbitrary orbifold divisor,
the conclusion of the point (2) of Theorem \ref{logalgebraic} 
means the following. There exists a birational map $p:X^\prime\to X$ such that
the support $D^\prime$ of $p^{-1}(D)$ has simple normal crossings,
together with a surjective map $f: X^\prime\to Z$ where $Z$ is a non-singular algebraic manifold, such that we have.

\noindent $\bullet$ The foliation induced by
$\cF^s$ on $X^\prime$ coincides generically with $\Ker (f)$, 

\noindent $\bullet$ The restriction
$\displaystyle K_{X^\prime}+ D^\prime|_{X_z^\prime}$ is not pseudo-effective, where $X^\prime_z$ is the fiber of $f$ at a generic point $z\in Z$.
\medskip

\begin{rem}\label{rweak} {\rm The conclusion here is considerably weaker than in the case where $D=0$ (rational connectedness being replaced by uniruledness). The analogous result in this generalised situation is established in \cite{Camp16}, after suitable equivalent definitions of rational connectedness in this context are given (based on negativity of the orbifold cotangent bundles, but without reference to `orbifold rational curves').}
\end{rem}

\noindent The next result is the particular case of Theorem \ref{rem1}, where $\Delta$ is reduced, which permits to give an extremely simple proof.

\begin{theorem}\label{logpseff} Let $(X,D)$ be a smooth projective connected purely logarithmic orbifold pair.
  Let $L$ be a pseudo-effective line bundle on $X$ such that there exists a sheaf embedding
  $$\displaystyle L\to \otimes^m
  \left(\Omega^1(X, D)\right)\otimes \big(K_X\otimes \cO_X(D)\big)^{\otimes p}$$ for some $m\geq 0, p>0$. Then $K_X+D$ is pseudo-effective.
\end{theorem}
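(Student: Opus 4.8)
The plan is to prove the statement by induction on $n=\dim X$, following the inductive strategy of \cite{S} and using the logarithmic algebraicity criterion Theorem~\ref{logalgebraic} in place of the orbifold continuity argument of Theorem~\ref{rem1}. Write $N:=K_X+D$ and suppose, for contradiction, that $N$ is not pseudo-effective; the case $n\le 1$ is elementary (on a curve $\otimes^m\Omega^1(X,D)\otimes N^{\otimes p}$ is the line bundle $N^{\otimes(m+p)}$, of negative degree, which cannot receive an embedding from a pseudo-effective $L$), so suppose $n\ge 2$ and that the theorem holds in dimension $<n$. By \cite{BDPP} there is a movable class $\alpha$ with $N\cdot\alpha<0$, hence $\mu_\alpha(T(X,D))=-(N\cdot\alpha)/n>0$ and a fortiori $\mu_{\alpha,\max}(T(X,D))>0$. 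Let $\cF\subset T(X,D)$ be the maximal $\alpha$-destabilizing subsheaf: it is saturated, $\alpha$-semistable, $\mu_{\alpha,\min}(\cF)=\mu_\alpha(\cF)>0$, and $\mu_{\alpha,\max}(T(X,D)/\cF)\le\mu_\alpha(\cF)$. Since $\mu_{\alpha,\min}(\wedge^2\cF)=2\mu_\alpha(\cF)>\mu_\alpha(\cF)\ge\mu_{\alpha,\max}(T(X,D)/\cF)$, Lemma~\ref{negative} forces the logarithmic bracket $\cL_D^{\cF}$ to vanish identically, so $\cF$ is a logarithmic foliation on $(X,D)$.

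If $\rk\cF=n$ then $\cF=T(X,D)$, so $\Omega^1(X,D)$ is $\alpha$-semistable of negative slope, and by Theorem~\ref{tensor} the bundle $\otimes^m\Omega^1(X,D)\otimes N^{\otimes p}$ is $\alpha$-semistable of slope $(m+pn)\mu_\alpha(\Omega^1(X,D))<0$; but then $0\le L\cdot\alpha\le\mu_{\alpha,\max}\big(\otimes^m\Omega^1(X,D)\otimes N^{\otimes p}\big)<0$, a contradiction. Hence $r:=\rk\cF<n$. By Lemma~\ref{loglie} the saturation $\cF^s\subset T_X$ is a foliation, and by Theorem~\ref{logalgebraic} it is algebraic: there is a birational morphism $q:X'\to X$ with $D':=\Supp q^{-1}(D)$ an snc divisor, and a surjective morphism $f:X'\to Z$ with $\dim Z=n-r\ge 1$ defining $\cF^s$ generically, such that for $z\in Z$ general the fibre $F:=X'_z$ satisfies $K_F+D'|_F=(K_{X'}+D')|_F$ \emph{not} pseudo-effective. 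Moreover, since $(X,D)$ is log-smooth (hence log-canonical) and $X$ is smooth, a discrepancy computation shows that $\mathcal E:=K_{X'}+D'-q^\star N$ is effective and $q$-exceptional.

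Now restrict the embedding to a very general such fibre $F$. Pulling back by $q$ and using the natural injection $q^\star\Omega^1(X,D)\hookrightarrow\Omega^1(X',D')$ gives $q^\star L\hookrightarrow\otimes^m\Omega^1(X',D')\otimes q^\star N^{\otimes p}$. Restricting to $F$ and invoking the relative logarithmic cotangent sequence $0\to\cO_F^{\oplus(n-r)}\to\Omega^1(X',D')|_F\to\Omega^1(F,D'|_F)\to 0$ (valid for general $F$), the sheaf $\otimes^m\Omega^1(X',D')|_F$ acquires a filtration whose graded pieces are direct sums of $\otimes^j\Omega^1(F,D'|_F)$, $0\le j\le m$; composing the (nonzero, hence injective) restricted map out of the line bundle $q^\star L|_F$ with projection onto a suitable graded piece yields an injection $q^\star L|_F\hookrightarrow\otimes^j\Omega^1(F,D'|_F)\otimes(q^\star N|_F)^{\otimes p}$ for some $0\le j\le m$. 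Since $q^\star N|_F=(K_F+D'|_F)-\mathcal E|_F$, twisting by $\cO_F(p\,\mathcal E|_F)$ produces $L_F:=q^\star L|_F\otimes\cO_F(p\,\mathcal E|_F)\hookrightarrow\otimes^j\Omega^1(F,D'|_F)\otimes(K_F+D'|_F)^{\otimes p}$, and $L_F$ is pseudo-effective because $q^\star L|_F$ is (restriction of the pseudo-effective $q^\star L$ to a very general member of the covering family of fibres of $f$) and $\mathcal E|_F\ge 0$. As $\dim F=r<n$, the inductive hypothesis applied to $(F,D'|_F)$ gives that $K_F+D'|_F$ is pseudo-effective — contradicting Theorem~\ref{logalgebraic}(2). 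This closes the induction.

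I expect the main technical point to be verifying, on the model $q:X'\to X$ furnished by Theorem~\ref{logalgebraic}, that the correction term $\mathcal E=K_{X'}+D'-q^\star N$ is effective (a discrepancy computation using log-canonicity of $(X,D)$ and smoothness of $X$) and that the relative logarithmic cotangent sequence restricts to a general fibre as stated; together these are what let the pseudo-effectivity of $L$ survive restriction to $F$ and make the ``non-pseudo-effectivity on the generic leaf'' output of Theorem~\ref{logalgebraic} the exact negation of the inductive conclusion. The remaining steps — the semistability estimate in the case $\rk\cF=n$, and the filtration of the $m$-th tensor power of an extension by a trivial bundle — are routine.
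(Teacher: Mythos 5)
Your proof is correct and follows essentially the same route as the paper's: contradiction via a movable class $\alpha$ with $(K_X+D)\cdot\alpha<0$, the maximal $\alpha$-destabilizing subsheaf $\cF\subset T(X,D)$, a direct slope computation when $\cF=T(X,D)$, and otherwise the logarithmic algebraicity criterion plus restriction to a general fibre, using the filtration of $\otimes^m\Omega^1(X',D')|_F$ by the relative log cotangent sequence to close an induction on dimension. You are somewhat more explicit than the paper about two technical points the paper leaves tacit — that $\cE=K_{X'}+D'-q^\star(K_X+D)$ is effective and $q$-exceptional (using log-canonicity of the snc pair $(X,D)$), and that twisting $q^\star L|_F$ by $\cO_F(p\,\cE|_F)$ is what makes the inductive hypothesis applicable to $(F,D'|_F)$ — but this is a welcome filling-in of detail, not a different argument.
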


\begin{proof} Assume by contradiction that $K_X+D$ is not pseudo effective. Let $\alpha\in \Mov(X)$ be such that $(K_X+D).\alpha<0$. Let $\cF\subset T(X,D)$ be a maximal destabilizing subsheaf, and $f:X\to Z$ be a fibration such that $\cF=Ker(df)$ generically (here we replace $(X,D)$ by a suitable birational smooth model in order to make $f$ regular, as explained in the bullets above). 
The generic orbifold fibre $(X_z, D_z)$ is thus smooth and $K_{X_z}+D_z$ not pseudo-effective.
\smallskip

\noindent

If $\dim(Z)=0$, then $\cF=T(X,D),$ and 
$0<\mu_{\alpha,min}(T(X,D))=-\mu_{\alpha,max}(\Omega(X,D)).$
In the following (in)equalities, we denote $\Omega(X,D):=\Omega, T$ is its dual, and $K:=K_X+D$: 

$0\leq L.\alpha\leq \mu_{\alpha, max}((\otimes^m\Omega\otimes K^p))=-m.\mu_{\alpha,min}(T)+p.K.\alpha\leq p.K.\alpha<0.$ Thus we have a contradiction.
\smallskip

If $\dim(Z)>0$, we have $0<dim(X_z)<dim(X)$, and we shall apply the preceding argument to $X_z$. Let $L_z:=L_{X_z}$; it is still pseudo-effective on $X_z$, and injects into $\otimes^m\Omega_z\otimes K_z^p$, with $\Omega_z,K_z$ the restrictions of $\Omega$ and $K_X+D$ to $X_z$ respectively. We have: $K_z=K_{X_z}+D_{X_z}$, and an exact sequence: $0\to \cO_{X_z}^{\oplus b}\to \Omega_z\to \Omega(X_z,D_z)\to 0$, with $b:=dim(Z)$. Tensoring $\otimes^m\Omega_z$ with $K_z^p$, and using the natural filtration on $\otimes ^m \Omega_z$ induced from the preceding exact sequence, we see that there is a nonzero map, for some $0\leq k\leq m$:
 $$\displaystyle L|_{X_z}\to \otimes^k
  \left(\Omega^1(X_z, D_z)\right)\otimes \big(K_{X_z}+D_z\big)^{\otimes p}$$
 By induction on $dim(X)$, we conclude that $K_{X_z}+D_z$ is pseudo-effective (when $k=0$, we need to use that $p\geq 1)$. This is a contradiction.
\end{proof}

\begin{corollary}\label{bigness} Let $(X,D)$ be a smooth projective connected purely logarithmic orbifold pair.
  Let $L$ be a line bundle on $X$, which admits an
  embedding $L\subset \otimes^m \Omega^1(X, D)$ for some $m>0$,
and such that the $\bQ$-bundle $\displaystyle \varepsilon(K_X+D)+ L$ is big for some rational number $\varepsilon \geq 0$. Then $K_X+D$ is big.
\end{corollary}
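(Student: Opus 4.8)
The plan is to derive the statement from two results already in hand: the pseudo-effectivity criterion of Theorem~\ref{logpseff} (the purely logarithmic case of Theorem~\ref{rem1}), and the birational stability of Theorem~\ref{main}, which in the purely logarithmic situation we may apply directly on $X$ with $\pi=\mathrm{id}$, since a reduced snc divisor $D$ admits the trivial adapted cover. First I would reduce to the case $\varepsilon>0$: bigness being an open condition, if $\varepsilon=0$ one replaces $\varepsilon$ by a sufficiently small positive rational number without losing the bigness of $\varepsilon(K_X+D)+L$ (the case $\varepsilon=0$ is anyway exactly Theorem~\ref{bignu}).

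The first step is to prove that $K_X+D$ is pseudo-effective. Fix $N\in\bZ_{>0}$ with $p:=N\varepsilon\in\bZ_{>0}$, and set $\mathcal{M}:=N\varepsilon(K_X+D)+NL$, which is a big, hence pseudo-effective, line bundle. Tensoring the given inclusion $L\hookrightarrow\otimes^m\Omega^1(X,D)$ with itself $N$ times, and then with $\bigl(K_X\otimes\cO_X(D)\bigr)^{\otimes p}$, produces a sheaf embedding
\[
\mathcal{M}\hookrightarrow \otimes^{mN}\bigl(\Omega^1(X,D)\bigr)\otimes\bigl(K_X\otimes\cO_X(D)\bigr)^{\otimes p}.
\]
Since $p>0$ and $\mathcal{M}$ is pseudo-effective, Theorem~\ref{logpseff} applies and gives that $K_X+D$ is pseudo-effective.

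Now that $K_X+D$ is pseudo-effective, the pair $(X,D)$ satisfies the hypotheses of Theorem~\ref{main}. Applying it (with $\pi=\mathrm{id}$) to the quotient sheaf $Q:=\bigl(\otimes^m\Omega^1(X,D)\bigr)/L$ yields $c_1(Q)\cdot\alpha\ge 0$ for every movable class $\alpha$ on $X$. Since $c_1(Q)=c(m,n)(K_X+D)-L$ for a positive constant $c(m,n)$ depending only on $m$ and $n=\dim X$, it follows from \cite{BDPP} that $P:=c(m,n)(K_X+D)-L$ is pseudo-effective. Therefore
\[
\varepsilon(K_X+D)+L=\bigl(\varepsilon+c(m,n)\bigr)(K_X+D)-P ,
\]
and, the left-hand side being big, adding the pseudo-effective class $P$ shows that $\bigl(\varepsilon+c(m,n)\bigr)(K_X+D)$ is big (a big class plus a pseudo-effective class is big). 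Dividing by $\varepsilon+c(m,n)>0$, we conclude that $K_X+D$ is big.

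I expect the only genuine difficulty to be the first step: once $K_X+D$ is known to be pseudo-effective, everything else is formal bookkeeping inside the pseudo-effective and big cones, and the point is precisely that the criterion of Theorem~\ref{logpseff} is what makes Theorem~\ref{main} applicable. Alternatively, the last step may be phrased through the numerical dimension: from $c(m,n)(K_X+D)=L+P$ with $P$ pseudo-effective one obtains $\nu(X,K_X+D)\ge\nu\bigl(X,\varepsilon(K_X+D)+L\bigr)=n$, whence $K_X+D$ is big.
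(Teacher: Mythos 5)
Your proof is correct and follows the same two-stage strategy as the paper's: first deduce pseudo-effectivity of $K_X+D$ from Theorem~\ref{logpseff}, then conclude bigness by combining Theorem~\ref{main} (applied on $X$ with $\pi=\mathrm{id}$) with the bigness hypothesis. The one genuine difference is in how the first stage invokes Theorem~\ref{logpseff}. The paper fixes the exponent $p=1$: it asserts that bigness of $\varepsilon(K_X+D)+L$ forces $L_1:=qL+(K_X+D)$ to be effective for some integer $q>0$, and then feeds $L_1\hookrightarrow\otimes^{mq}\Omega^1(X,D)\otimes(K_X\otimes\cO_X(D))$ into the criterion. That effectivity claim is clear when $\varepsilon=0$ (a big $L$ eventually dominates the fixed divisor $-(K_X+D)$), but for general $\varepsilon>0$ it is not a formal consequence of the hypothesis as written --- one would be rescaling the coefficient of $K_X+D$ from $\varepsilon$ down to $1$ inside the effective cone. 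You avoid this entirely by exploiting the freedom in the exponent $p>0$ of Theorem~\ref{logpseff}: choosing $N$ with $p:=N\varepsilon\in\bZ_{>0}$ and taking $\mathcal M=N\big(\varepsilon(K_X+D)+L\big)$ gives a big (hence pseudo-effective) line bundle with an embedding of exactly the required shape, and no auxiliary effectivity claim to verify. This is the more robust version of the same idea and covers all rational $\varepsilon\geq 0$ uniformly; your preliminary perturbation of $\varepsilon=0$ to a small positive value is harmless but also unnecessary, since that is precisely the case where the paper's $p=1$ argument works directly. The second stage is identical in both: Theorem~\ref{main} gives $P:=c(m,n)(K_X+D)-L$ pseudo-effective, and $(\varepsilon+c(m,n))(K_X+D)=[\varepsilon(K_X+D)+L]+P$ is big plus pseudo-effective, hence big.
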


\begin{proof} Since $\displaystyle \varepsilon(K_X+D)+ L$ is big,
there exists an integer $q> 0$ such that the bundle $L_1:= L^q\otimes K_X\otimes \cO_X(D)$ is effective (the number $q$ depends on
  $(X, D), L$ and $\varepsilon$).

The hypothesis of \ref{bigness} shows that that $L_1$ admits an injection into
$\otimes^{mq} \Omega^1(X, D)\otimes  K_X\otimes \cO_X(D)$. If so, 
Theorem \ref{logpseff} implies that $K_X+ D$ is pseudo-effective. The corollary
then follows, since any quotient of $\otimes^{m} \Omega^1(X, D)$
has pseudo-effective determinant.
\end{proof}


\end{document}